\def\bigset#1#2{\big\{\: #1 \: \big| \: #2 \: \big\} }
\def\Bigset#1#2{\Big\{\; #1 \; \Big| \; #2 \; \Big\} }
\numberwithin{equation}{section}
\def\scrET{\mathscr{E\!T}\!}
\def\mafo{\mathrm}
\def\AC{\mathrm{AC}}
\def\MMM{\calM}
\def\div{\mathop{\mafo{div}}}
\def\FBoltz{F_\mathrm{B}}
\def\trcos{{\cos_\pi}}
\renewenvironment{cases}{\left\lbrace \ba{cl}}{ \ea \right. }
\def\REVER{\mathop{\mbox{\Large\raisebox{0.1em}{$\rightharpoonup$}%
        \hspace*{-1em}\raisebox{-0.1em}{$\leftharpoondown$}}}}
\def\mfA{\mathfrak A}
\def\mfC{\mathfrak C}
\def\mfL{\mathfrak L}
\def\mfN{\mathfrak N}
\def\mfP{\mathfrak P}
\def\mfR{\mathfrak R}
\newcommand{\mfS}{\mathfrak S}
\newcommand{\Prob}{\calP}
\newcommand{\Length}{\mathop{\mafo{Length}}\nolimits}
\newcommand{\bigslant}[2]{{\raisebox{.2em}{$#1$}\left/\raisebox{-.2em}{$#2$}\right.}} 
\def\mfC{\mathfrak C}
\def\CC{\mfC_\Omega}    
\def\TT{{\mathfrak o}}  
\newcommand{\msHK}{\mathsf{H\hspace{-0.22em}K}} 
\newcommand{\msET}{\mathsf{E\hspace{-0.22em}T}} 
\newcommand{\res}{\mathop{\hbox{\vrule height 7pt width .5pt depth 0pt
    \vrule height .5pt width 6pt depth 0pt}}\nolimits}
\begin{document}

\title{Optimal transport in competition with reaction:\\ 
the Hellinger-Kantorovich distance and geodesic curves}

\author{Matthias Liero\footnote{Weierstra\ss-Institut f\"ur Angewandte Analysis und Stochastik, Berlin.}, 
Alexander Mielke$^{*,}$\footnote{Humboldt-Universit\"at zu Berlin.}, 
and Giuseppe Savar\'e\footnote{Universit\`a di Pavia.}} 

\date{September 24, 2015}

\maketitle

\begin{abstract}\noindent
  We discuss a new notion of distance on the space of finite and
  nonnegative measures  on $\Omega \subset \R^d$, which we call
  Hellinger--Kantorovich distance. It  can be seen as  an
  inf-convolution  of the well-known Kantorovich--Wasserstein
  distance  and the Hellinger-Kakutani distance.   The new
  distance is based on a dynamical formulation given by an Onsager
  operator that is the sum of a Wasserstein diffusion part and an
  additional reaction part describing the generation and absorption of
  mass.

  We present a full characterization of the distance and  some of
   its properties.  In particular,  the distance can be
  equivalently described by an optimal transport problem on the cone
  space over the underlying  space $\Omega$.  We give a
  construction of geodesic curves and discuss examples and their
  general  properties.
\end{abstract}

\section{Introduction}
\label{s:Intro}
Starting from the pioneering works \cite{JoKiOt97FEFP} and
\cite{JoKiOt98VFFP}, the reinterpretation of certain scalar diffusion
equations as so-called Wasserstein gradient flows led to new analytic
tools and concepts and gave deeper insight into diffusion problems,
see e.g.\ \cite{Vill09OTON} and \cite{ADPZ11LDPW}. In particular, in
connection with suitable convexity properties of the driving
functional the abstract theory of gradient flows in metric space
developed in \cite{AmGiSa05GFMS} provides a sound and comprehensive
geometric framework for these evolution equations.

The recent reformulation of classes of reaction-diffusion systems
as gradient systems, see \cite{Miel11GSRD,Miel13TMER,LieMie13GSGC},
raises the question whether the abstract metric theory can be  also developed 
for this wider class of problems. 

Following \cite{Miel11GSRD} we understand a gradient system as a
triple $(\bfX,\calF,\bbK)$ consisting of a state space $\bfX$, a
driving functional $\calF$, and an Onsager operator $\bbK$. The latter
means that $\bbK$ is a state-dependent, symmetric, and positive
semidefinite linear operator.  In many cases the Onsager operator
$\bbK$ induces a dissipation distance $\msD_\bbK$ on the state space
$\bfX$ by minimizing an action functional over all curves connecting
two states. Now, the development of a metric theory rests upon the
ability to characterize this distance and its properties.

This paper together with the companion paper
\cite{LiMiSa14?Theory} provides rigorous characterization of such a 
dissipation distance. It is based on the simple Onsager operator
\[
\bbK_{\alpha,\beta}(u)\xi= - \alpha \div(u\nabla \xi) + \beta u \xi,
\]
where $\alpha,\beta\geq 0$ are fixed parameters.  Obviously, the
Onsager operator $\bbK_{\alpha,\beta}=\alpha \bbK_\mafo{Wass} + \beta
\bbK_\text{c-a}$ is a sum of a Wasserstein part for diffusion and a
creation-annihilation part, which is the simplest case of a
reaction term. For the latter part it is not difficult to develop
a corresponding analog to the Wasserstein distance $\msW$. For this,
we simply note that $\bbK_\text{c-a}$ is the inverse of a metric
tensor, such that the formal associated Riemannian structure is given
by $v \mapsto \int_\Omega v^2/(\beta u)\dd x $. Thus, it is easy to
see that the Riemannian distance induced by $\bbK_{0,\beta}$ is a
multiple of the Hellinger--Kakutani distance $\msH$, see 
\cite{Hell09NBTQ,Kaku48EIPM} and \cite{Schi96SPLD}, where also
the correct function spaces are discussed. In particular, for measures
of the form $\mu_j=f_j\dd x$ we obtain
\[
\msD_{0,\beta}(\mu_0,\mu_1) = \frac{2}{\sqrt{\beta}}
\msH(\mu_0,\mu_1)
\text{ with } \msH(\mu_0,\mu_1)^2 = \int_\Omega \Big( 
\sqrt{f_0} - \sqrt{f_1}\Big)^2  \dd x .
\]
Of course, this distance generalizes to the space of finite,
nonnegative Borel measures, denoted by $\MMM(\Omega)$, see Section
\ref{ss:ScaRDE}.  In the following we shall always assume that the
domain $\Omega\subset\R^d$ is convex and compact.

We will show that $\bbK_{\alpha,\beta}$ generates a proper distance
$\msD_{\alpha,\beta}$ on $\MMM(\Omega)$, which is formally given
in a generalized Benamou--Brenier formulation (see \cite{BenBre00CFMS}), 
i.e., by minimizing over 
all sufficiently smooth curves $s\mapsto \mu(s)$ connecting measures $\mu_0$
and $\mu_1$, viz.
\[
\begin{aligned}
\msD_{\alpha,\beta}(\mu_0,\mu_1)^2=\inf\Big\{\int_0^1
\int_\Omega\big[\alpha|\nabla\xi|^2+\beta\xi^2\big]\dd \mu(s)\,\dd s\,
\Big|\qquad\qquad\\
\tfrac{\dd}{\dd s}\mu +\alpha\div(\mu\nabla\xi)=\beta\mu\xi,~\mu_0
 \overset{\mu}{\leadsto} \mu_1\Big\}.
\end{aligned}
\]
This characterization also works for reaction-diffusion systems,
see \cite[Sect.\,2(e)]{LieMie13GSGC}. 
 
We call $\msD_{\alpha,\beta}$ the \emph{Hellinger--Kantorovich distance},
since it can be understood as
an inf-convolution (weighted by $\alpha$ and $\beta$) of the
Kantorovich--Wasserstein distance $\msW$ and the Hellinger distance $\msH$. 
In particular, geodesic curves for the distance $\msD_{\alpha,\beta}$ 
will optimize the usage of transport against the
usage of creation or annihilation. As an outcome of our theory we will
find that transport never occurs over distances longer than
$\pi\sqrt{\alpha/\beta}$.  

For the rest of this introduction we will use the special choice
$\alpha=1$ and $\beta=4$, which simplifies the notation considerably. 

To give a full characterization of $\msD_{1,4}$, we go a detour 
which will highlight the underlying geometry of the
distance much better. Motivated by an explicit formula for the 
distance between two Dirac measures, we introduce the cone space $\CC$ over
$\Omega$ and define the Hellinger--Kantorovich distance
$\msHK(\mu_0,\mu_1)$ by lifting the measures $\mu_j$ to measures
$\lambda_j$ on the cone and then minimizing the Wasserstein distance
$\msW_\mfC$ induced by a suitable cone distance on $\CC$. It is then easy to
show that $\msHK$ is indeed a geodesic distance, since $\msW_\mfC$ is a
geodesic distance. It is  the purpose of Section \ref{se:equiDyn}  
to show that $\msD_{1,4}$ indeed equals $\msHK$. 

For this proof, we will rely on a third characterization of the
Hellinger--Kantorovich distance,  which is given in terms of the
entropy-transport functional for calibration measures $\eta\in\MMM(\Omega{\times}\Omega)$
given via
\[
\scrET_{1,4}(\eta;\mu_0,\mu_1):=  \int_\Omega \FBoltz\Big( 
  \frac{\rmd \eta_0}{\rmd\mu_0}\Big) \dd\mu_0 + \int_\Omega \FBoltz\Big( 
  \frac{\rmd \eta_1}{\rmd\mu_1}\Big) \dd\mu_1 +
\int_{\Omega\ti\Omega} \msc_{1,4}(|x_0{-}x_1|) \dd \eta,
\]
where $\FBoltz(z)=z\log z - z +1$, $\eta_i = \Pi^i_\#\eta$ denote the usual marginals, 
and the cost function $\msc_{1,4}$ is given by  
\[
\msc_{1,4}(L):=\left\{ \ba{cl}
- 2\log\big(\cos  L \big) & \text{for } L < \pi/2, \\ \infty & \text{for } 
 L \geq \pi/2. \ea \right.
\]   
Since $\scrET_{1,4}(\cdot;\mu_0,\mu_1)$ is  convex, it is easy to find 
minimizers, see  \cite{LiMiSa14?Theory} for more details and the proof
that $\msHK(\mu_0,\mu_1)^2=\min\set{\scrET_{1,4}(\eta;\mu_0,\mu_1)}{
  \eta \in \MMM(\Omega\ti \Omega) }$.

To be more specific, we return to the question of computing the distance between
to Dirac masses $\mu_j=a_j\delta_{y_j}$ with $y_j\in \Omega$ and
$a_j\geq 0$. Looking at connecting curves of the form
$\mu(s)=a(s)\delta_{x(s)} $ we can indeed minimize the length of these
one-mass point curves (1mp) and find the result 
\begin{equation}
  \label{eq:Int.1mp}
  \msD^\text{1mp}(a_0\delta_{y_0}, a_1\delta_{y_1})^2 = \left\{\ba{cl}
a_0+a_1 - 2\sqrt{a_0a_1}\cos(|y_1{-}y_0|) 
  &\text{for } |y_1{-}y_0| \leq \pi,\\[0.5em] a_0+a_1+ 2\sqrt{a_0a_1}& 
\text{for } |y_1{-}y_0| \geq \pi. \ea \right.
\end{equation}
In fact, a minimizer exists only for $|y_1{-}y_0|<\pi$ where for
$|y_1{-}y_0|\geq \pi$ the value $\msD^\text{1mp}(a_0\delta_{y_0},
a_1\delta_{y_1})$ is an infimum only. However, it will turn out that 
these curves are only optimal for $|y_1{-}y_0|
\leq \pi/2$, while for $|y_1{-}y_0| > \pi/2$, the two-mass point curve
$\mu(s)=(1{-}s)^2a_0\delta_{y_0} + s^2a_1\delta_{y_1}$ is shorter,
since its squared length is $a_0+a_1$. Thus, creation and annihilation
is better than transport in this case. 

Moreover, the formula in \eqref{eq:Int.1mp} suggest to introduce a
cone distance $\msd_\mfC$ on the cone $\CC$ over $\Omega$ given by 
the elements $[x,r]$ for $r>0$ and the tip $\TT$ which is an
identification of $\set{[x,0]}{x\in \Omega}$. The cone distance is 
defined as
\[
\msd_\mfC ([x_0,r_0],[x_1,r_1])^2:= r_0^2+r_1^2 - 2r_0r_1
  \trcos(|x_{1}{-}x_0|) 
 \text{ with }\cos_b a=\cos\big(\min\{|a|,b\}\big),
\]  
see \cite[Sect.\,3.6.2]{BuBuIv01CMG}. This distance is again a
geodesic distance and we can define the associated Wasserstein
distance $\msW_\mfC$, see Section \ref{sss:WassDistReservoir}.  

Based on this observation we can now lift measures $\mu$ on $\Omega$
to measures $\lambda$ on $\CC$ such that $\mu= \mfP\lambda$, where the 
projection $\mfP : \MMM_2(\CC) \to \MMM(\Omega)$ is defined via 
\[
\int_\Omega \phi(x)\dd (\mfP\lambda)(x) =
\int_{\CC} r^2 \phi(x)\dd\lambda([x,r]) \quad \text{for all }
 \phi\in \rmC^0(\Omega).
\]
Now, the first definition of the Hellinger--Kantorovich distance is  
\begin{equation}
  \label{eq:Int.DHKbyLift}
  \msHK(\mu_0,\mu_1)= \min\Bigset{ \msW_\mfC (\lambda_0,\lambda_1)
}{ \mfP\lambda_0 = \mu_0,\ \mfP\lambda_1 = \mu_1 }.
\end{equation}
To further analyze this construction,  one needs to study the optimality
conditions for the lifts, which can be done by exploiting the characterization via
$\scrET_{1,4}$, see Theorem~\ref{th:properties} and
Section~\ref{sss:SpecialLifts}, where the crucial duality theory
is taken from \cite{LiMiSa14?Theory}.  

In Section \ref{se:equiDyn} we finally show the identity $\msD_{1,4}=
\msHK$ by a full characterization of all absolutely continuous curves
with respect to the distance $\msHK$, see Theorem
\ref{thm:timeLower}. This is done by lifting curves in $\MMM(\Omega)$
to curves in $\calM_2(\CC)$ and using a characterization of
absolutely continuous curves with respect to $\msW_\mfC$ which can be
found in \cite{Lisi07CACC}. In Corollary \ref{co:ConeGeodesic} we
obtain the important result that \emph{all} geodesics curves in
$(\calM(\Omega),\msHK)$ are obtained as projections 
\[
\mu(s) = \mfP \lambda(s), \text{ where }\lambda:[0,1]\to \calM_2(\CC) 
\]
is a geodesic curve in $(\calM_2(\CC),\msW_\mfC)$ connecting optimal
lifts $\lambda_0$ and $\lambda_1$ in
\eqref{eq:Int.DHKbyLift}. Throughout this work, the notion ``geodesic
curve'', or shortly ``geodesic'', means constant-speed minimal
geodesic, viz.
\[
\msHK(\mu(s),\mu(t))= |s{-}t|\,\msHK(\mu(0),\mu(1)) \text{ for all
}s,t\in [0,1].
\]  

Section \ref{se:Geodesics} is devoted to various examples for geodesic
curves, which are obtained by doing optimal lifts to the cone space
$\CC$ and then constructing geodesic curves for the Wasserstein distance
$\msW_\mfC$ and projecting them down. Since the geodesic curves
on the cone $\CC$ are explicit, this provides an explicit formula for
geodesic curves $\mu:[0,1]\to \MMM(\Omega)$, as soon as the lifts are
specified. 

In particular, using this explicit construction we show that the total mass
$m(s)=\mu(s)(\Omega)$ along geodesic curves is
2-convex and 2-concave since  we have the identity 
\begin{equation}\label{eq:Int.MassIdent}
m(s) = (1{-}s) m(0) + sm(1) -s(1{-}s)\msHK(\mu_0,\mu_1)^2.
\end{equation}
We discuss geodesic $\Lambda$-convexity of some functionals, in
particular, we show that the linear functional
$\calF(\mu)=\int_\Omega \Phi(x)\dd \mu(x)$  is geodesically
$\Lambda$-convex if and only if
the function $[x,r]\mapsto r^2\Phi(x)$ is geodesically
$\Lambda$-convex in $(\CC,\msd_\mfC)$.

It is also worth to note that the unique geodesic 
connecting $\mu_1$ to the null measure
$\mu_0\equiv 0$, which has the lifts $\alpha \delta_{\TT}$ for
$\alpha \geq 0$, is done by the unique Hellinger geodesic 
\[
\mu^\rmH(s) = s^2 \mu_1.
\]
This simple observation immediately shows that the logarithmic entropy 
given by $\calE(\mu)= \int_\Omega \FBoltz(u(x))\dd x $ for $\mu = u \dd x$ is
not geodesically $\Lambda$-convex, since 
\[
\calE(s^2\mu)= s^2 \calE(\mu) + s^2\log(s^2) \mu(\Omega)
+1{-}s^2.
\]

In Section \ref{ss:AllDiracM} we reconsider our standard example of
the geodesic connections of Dirac masses $\mu_j=a_j\delta_{y_j}$. It
turns out that in the critical case $|y_0{-}y_1|=\pi/2$ there is an
infinite dimensional convex set of geodesic curves that can be 
constructed by showing that there are many optimal lifts to the
cone space $\CC$.

Section \ref{ss:Dilations} provides a generalization of the classical
dilation of measures in the Wasserstein case. For the
Hellinger--Kantorovich distance there is a similar dilation where the
mass inside the ball $\set{x}{|x{-}y_0|< \pi/2}$ is radially transported
and partly annihilated into the point $y_0$ while the mass at larger
distance is simply annihilated according to the Hellinger distance.

In Section \ref{ss:CharFunct} we show how the transport of two
characteristic functions occurs in the Hellinger--Kantorovich
case. While the too distant parts are simply annihilated or created
according to the Hellinger metric the parts that are close enough lead
to a continuous transition, see Figure \ref{fig:CharFunct}. 

In Section \ref{ss:CharGeodConn} we show that the
Hellinger-Kantorovich geodesic between two measures $\mu_0$ and
$\mu_1$ is unique if one of the two measures is absolutely continuous
with respect to the Lebesgue measure. Finally, Section
\ref{ss:NotSemiconcave} shows that $\msHK$ is not semiconcave in
$\calM(\Omega)$ if $\Omega\subset \R^d$ has dimension two or higher,
which is in sharp contrast to the Wasserstein distance, see
\cite[Def.\,12.3.1]{AmGiSa05GFMS}.  \medskip

This work, together with its companion paper \cite{LiMiSa14?Theory}, will form the basis
of subsequent work where we will explore the metric properties of the
the space $(\MMM(\Omega),\msHK)$ and study gradient systems on this
space. In particular, in the spirit of \cite{LieMie13GSGC}, 
we aim to establish a metric theory for scalar reaction-diffusion
equations of the form
\[
\dot u = - \bbK_{\alpha,\beta}(u) \delta\calF(u)= \div\big( \alpha u \nabla(\delta
\calF(u)\big) - \beta u \delta \calF(u),
\]
where $\delta\calF$ denotes a variational derivative. 

\paragraph*{Note during final preparation.} The earliest parts of the 
work presented here were first presented at the ERC Workshop on Optimal 
Transportation and Applications in Pisa in 2012. Since then the authors 
developed the theory continuously further and presented results at different 
workshops and seminars. We refer to \cite[Sect.\,A]{LiMiSa14?Theory}
for some remarks concerning the chronological development. In June 2015
they became aware of the parallel work
\cite{KoMoVo15?NOTD}. Moreover, in mid August 2015 we became aware of  
\cite{CPSV15?IDOT,CPSV15?UOTG}. So far, these independent works are
not reflected in the present version of this manuscript.

\section{Gradient structures for reaction-diffusion equations}
\label{s:GradStruct}

\subsection{General philosophy for gradient systems}
\label{ss:AbstrGS}

We call a triple $(\bfX,\calF,\Psi)$ a gradient system in the
differentiable sense, if $\bfX$ is a Banach space containing  the
states $u$, if the functional 
$\calF:\bfX\to \R_\infty:=\R\cup\{\infty\}$ has a Fr\'echet
subdifferential $\rmD \calF(u)\in \bfX^*$ on a suitable subset of
$\bfX$, and if $\Psi$ is a dissipation potential. The latter  means that 
$\Psi(u,\cdot):\bfX\to [0,\infty]$ is a lower semicontinuous and
convex functional with $\Psi(u,0)=0$. Denoting by
$\Psi^*(u,\cdot):\bfX^* \to [0,\infty]$ the
Legendre-Fenchel transform $\Psi^*(u,\xi)=\sup\set{ \langle
  \xi,v\rangle - \Psi(u,v)}{ v\in \bfX}$, the gradient evolution is
given via 
\begin{equation}
  \label{eq:genGS}
  \dot u \in  \rmD_\xi\Psi^*(u,-\rmD\calF(u)) \quad \text{or equivalently}
\quad  0=\rmD_{\dot u} \Psi(u,\dot u) +\rmD\calF(u).
\end{equation}
For simplicity, we assume that the Fr\'echet subdifferential $\rmD\calF$
and the convex subdifferentials $\rmD_{\dot u} \Psi$ and
$\rmD_\xi\Psi^*$ are single-valued, but the set-valued case can be
treated similarly by the standard generalizations. 

If the map $v\mapsto\Psi(u,v)$ is quadratic, we call the above system 
a classical gradient system while otherwise we speak of generalized gradient systems.
In the classical case we can write 
\[
\Psi(u,v)=\frac12\langle \bbG(u)v,v\rangle \quad \text{and} \quad 
\Psi^*(u,\xi)=\frac12\langle \xi,\bbK(u)\xi\rangle,
\]
where $\bbG(u):\bfX\to\bfX^*$ and $\bbK(u):\bfX^*\to\bfX$ are symmetric and positive (semi)definite
operators. Since $\Psi$ and $\Psi^*$ form a dual pair we have $\bbG(u)^{-1}=
\bbK(u)$ and $\bbK(u)^{-1}=\bbG(u)$ if we interpret these
identities in the sense of quadratic forms. We call $\bbG$ the
Riemannian operator, as it generalizes the Riemannian tensor on
finite-dimensional manifolds, while we call $\bbK$ the Onsager
operator because of Onsager's fundamental contributions in
justifying gradient systems via his reciprocal relations
$\bbK(u)=\bbK(u)^*$, cf.\ \cite[Eqn.\,(1.11)]{Onsa31RRIP} or
\cite[Eqs.\,(2-1)--(2-4)]{OnsMac53FIP}. Thus, for classical
gradient systems the general form \eqref{eq:genGS} specializes to 
\begin{equation}
  \label{eq:classGS}
  \dot u = - \bbK(u) \rmD\calF(u) \quad \text{or equivalently} \quad 
 \bbG(u) \dot u = -\rmD\calF(u). 
\end{equation}
We emphasize that $\bbK(u)$ maps (a subspace of) $\bfX^*$ to $\bfX$,
so generalized thermodynamic driving forces are mapped to
rates. Similarly $\bbG(u)$ maps rates to viscous dissipative forces,
which have to balance the potential restoring force $-\rmD\calF(u)$. 

Our work follows the same philosophy as in
\cite{JoKiOt98VFFP,Otto01GDEE}: Even though the above gradient
structure is only formal, it may generate a new dissipation distance,
which can be made rigorous such that finally the gradient structure
can be considered as a mathematically sound metric gradient flow as
discussed in \cite{AmGiSa05GFMS}. For this one introduces the
dissipation distance associated with the dissipation potential $\Psi$,
which is defined via 
\[
\msD_\bbK(u_0,u_1)^2:= \inf\Bigset{ \int_0^1 \langle \bbG(u)\dot
  u,\dot u\rangle \dd s}{ u\in \rmH^1([0,1];\bfX), u(j)=u_j}.
\]

\subsection{Dissipation distances for reaction-diffusion systems}
\label{ss:DissDistRDS}
It was shown in \cite{Miel11GSRD} that certain reaction-diffusion systems
admit a formal gradient structure, which is given by an Onsager operator
$\bbK$ and a driving functional $\calF$ of the form
\[
\bbK(\bfc)\bfxi = -\div(\bbM(\bfc)\nabla\bfxi) + \bbH(\bfc)\bfxi,\qquad
\calF(\bfc) = \int_\Omega \sum_{i=1}^IF(c_i)\dd x,
\]
where $\bfc=(c_i)_{i=1,\ldots,I}$ is the vector of non-negative
concentrations of the species $X_i$, $i=1,\ldots,I$, and 
$\bbM(\bfc)$ and $\bbH(\bfc)$ are a symmetric and positive definite mobility tensor
and a reaction matrix, respectively. With the diffusion tensor $\bbD(\bfc) = \bbM(\bfc)\rmD^2\calF(\bfc)$
and the reaction term $\bfR(\bfc)=\bbH(\bfc)\rmD\calF(\bfc)$ the generated gradient-flow equation
reads
\[
\dot\bfc= -\bbK(\bfc) \rmD \calF(\bfc) = \div\big(\bbD(\bfc)\nabla\bfc\big) -\bfR(\bfc).
\]

As in the theory of the Kantorovich--Wasserstein distance (cf.\
\cite{Otto98DLPF,JoKiOt98VFFP,Otto01GDEE,Vill09OTON}) the operator
$\bbK(\bfc)$ can be seen as the inverse of a metric tensor $\bbG(\bfc)$ that
gives rise to a geodesic distance between two densities $\bfc_0,\,\bfc_1\in
\rmL^1( \Omega;{[0,\infty[}^I)$ defined abstractly via 
\begin{equation}
  \label{eq:GeodDist0}
  \msD_\bbK(\bfc_0,\bfc_1)^2 := \inf \Bigset{ \int_0^1 \langle
    \bbK(\bfc(t))^{-1} \dot \bfc(t), \dot\bfc(t)\rangle\dd t }{ \bfc_0
    \overset{\bfc}{\leadsto} \bfc_1}. 
\end{equation}
Here ``$\bfc_0\overset{\bfc}{\leadsto} \bfc_1$'' means that $t\mapsto\bfc(t)$ is
a sufficiently smooth curve with $\bfc(0)=\bfc_0$ and
$\bfc(1)=\bfc_1$. 
 
Since in general the inversion of $\bbK$ is difficult or even not
well-defined, it is better to use the following formulation in terms
of the dual variable $\bfxi(s)=\bbK(\bfc(s))\dot\bfc(s)$, namely 
\begin{equation}
  \label{eq:GeodDist1}
  \msD_\bbK(\bfc_0,\bfc_1)^2 := \inf \Bigset{ \int_0^1 \langle
   \bfxi(t), \bbK(\bfc(t)) \bfxi(t)\rangle\dd t }{ \dot\bfc=\bbK(\bfc)\bfxi,\ \bfc_0
    \overset{\bfc}{\leadsto} \bfc_1}. 
\end{equation}
In our case of reaction-diffusion operators we can make this even more
explicit, namely 
\begin{equation}
  \label{eq:GeodDist2}
\begin{aligned}  \msD_\bbK(\bfc_0,\bfc_1)^2 := \inf \Big\{\; \int_0^1 \int_\Omega
   \nabla\bfxi\mdot \wt\bbM(\bfc)\nabla \bfxi +
   \bfxi\cdot\bbH(\bfc)\bfxi \dd x \dd t \;\Big| \qquad \\
 \dot\bfc
   =-\div(\wt\bbM(\bfc)\nabla\bfxi\big)+\bbH(\bfc) \bfxi,\ \bfc_0
    \overset{\bfc}{\leadsto}\bfc_1 \;\Big\} ...
\end{aligned}
\end{equation}
Finally, we can use the Benamou-Brenier argument \cite{BenBre00CFMS}
to find the following characterization (cf.\
\cite[Sect.\,2.5]{LieMie13GSGC}): 

\begin{proposition}\label{p:equiDist}
We have the equivalence
\begin{equation}
  \label{eq:GeodDist3}
\begin{aligned}
  \msD_\bbK(\bfc_0,\bfc_1)^2&= \inf \Big\{\; \int_0^1 \int_\Omega
   \bfXi\mdot \wt\bbM(\bfc) \bfXi +
   \bfxi\cdot\bbH(\bfc)\bfxi \dd x \,\dd t \;\Big| \qquad\qquad \\
 &\hspace{12em}\dot\bfc =-\div\big(\wt\bbM(\bfc)\bfXi\big)+\bbH(\bfc) \bfxi,\ \bfc_0
    \overset{\bfc}{\leadsto} \bfc_1 \;\Big\} \\
 &= \inf \Big\{\; \int_0^1 \int_\Omega
   \bfP\mdot \wt\bbM(\bfc)^{-1} \bfP +
   \bfs\cdot\bbH(\bfc)^{-1}\bfs\, \dd x \,\dd t \;\Big| \qquad\qquad \\
 &\hspace{12em}\dot\bfc
   =-\div(\bfP)+\bfs,\ \bfc_0
    \overset{\bfc}{\leadsto} \bfc_1 \;\Big\} 
\end{aligned}
\end{equation}
where $\bfxi(t,x)=\bbH(\bfc(t,x))^{-1}\bfs(t,x)\in \R^I$ and 
$\bfXi(t,x)=\wt\bbM(\bfc(t,x))^{-1}\bfP(t,x)\in \R^{I\ti d}$. 
\end{proposition}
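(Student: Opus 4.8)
The plan is to prove the two equalities of \eqref{eq:GeodDist3} separately: the second one, relating the velocity fields $(\bfXi,\bfxi)$ to the momentum fields $(\bfP,\bfs)$, is an elementary pointwise change of variables, whereas the first one, relating the gradient field $\nabla\bfxi$ in \eqref{eq:GeodDist2} to the free field $\bfXi$, is the genuine Benamou--Brenier step and carries all the content.

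First I would treat the equality between the two lines of \eqref{eq:GeodDist3}. Because $\wt\bbM(\bfc)$ and $\bbH(\bfc)$ are symmetric and positive definite, they are pointwise invertible, so $\bfP=\wt\bbM(\bfc)\bfXi$, $\bfs=\bbH(\bfc)\bfxi$ is a bijection between admissible fields with inverse $\bfXi=\wt\bbM(\bfc)^{-1}\bfP$, $\bfxi=\bbH(\bfc)^{-1}\bfs$. Under it the constraint $\dot\bfc=-\div(\wt\bbM(\bfc)\bfXi)+\bbH(\bfc)\bfxi$ turns into $\dot\bfc=-\div\bfP+\bfs$, and by symmetry $\bfXi\mdot\wt\bbM(\bfc)\bfXi=\bfP\mdot\wt\bbM(\bfc)^{-1}\bfP$ and $\bfxi\cdot\bbH(\bfc)\bfxi=\bfs\cdot\bbH(\bfc)^{-1}\bfs$, so the two action integrals agree term by term; taking infima gives the identity.

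The core is the identity between \eqref{eq:GeodDist2} and the first line of \eqref{eq:GeodDist3}. That the first line is bounded above by \eqref{eq:GeodDist2} is immediate: every $\bfxi$ admissible in \eqref{eq:GeodDist2} produces the admissible pair $(\bfXi,\bfxi)=(\nabla\bfxi,\bfxi)$ with identical action, so allowing $\bfXi$ to decouple from $\nabla\bfxi$ can only decrease the infimum. For the reverse I would fix the curve $\bfc(\cdot)$ and minimise the inner action over all pairs $(\bfXi,\bfxi)$ subject to the single linear constraint $-\div(\wt\bbM(\bfc)\bfXi)+\bbH(\bfc)\bfxi=\dot\bfc$. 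This is a strictly convex quadratic problem, since $\wt\bbM(\bfc)$ and $\bbH(\bfc)$ are positive definite, so its unique minimiser is characterised by stationarity against a single Lagrange multiplier $\phi$. Computing the first variations in $\bfXi$ and in $\bfxi$ gives $\wt\bbM(\bfc)\bfXi=\tfrac12\wt\bbM(\bfc)\nabla\phi$ and $\bbH(\bfc)\bfxi=\tfrac12\bbH(\bfc)\phi$, that is $\bfXi=\tfrac12\nabla\phi=\nabla\bfxi$ with the \emph{common} potential $\bfxi=\tfrac12\phi$. Hence the minimiser already lies in the restricted class $\bfXi=\nabla\bfxi$, where the constraint becomes the elliptic relation $-\div(\wt\bbM(\bfc)\nabla\bfxi)+\bbH(\bfc)\bfxi=\dot\bfc$ and the action equals the integrand of \eqref{eq:GeodDist2}. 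The decisive point is that one constraint carries one multiplier $\phi$, which simultaneously forces the diffusive field to be a gradient and the reactive potential to be that same function, so the two infima coincide curve by curve.

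It remains to chain the equalities, \eqref{eq:GeodDist2} $=$ first line of \eqref{eq:GeodDist3} $=$ second line, which yields the stated characterisation. The main obstacle is not the formal Lagrangian computation but its rigorous justification. One must choose function spaces adapted to the possibly degenerate tensors $\wt\bbM(\bfc),\bbH(\bfc)$, secure solvability of the elliptic relation $-\div(\wt\bbM(\bfc)\nabla\bfxi)+\bbH(\bfc)\bfxi=\dot\bfc$ so that the multiplier $\phi$ exists, and check that the integration by parts behind the action of \eqref{eq:GeodDist2} is licit under the natural no-flux boundary conditions. Where $\bfc$ vanishes the tensors degenerate and $\wt\bbM(\bfc)^{-1},\bbH(\bfc)^{-1}$ are singular, so the integrands in the second line must be read as the lower semicontinuous convex (perspective) relaxation, exactly as in the classical Benamou--Brenier construction, and the change of variables has to be carried out on this relaxed functional. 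Since the gradient structure is treated as formal at this stage, I would perform the multiplier argument formally and defer the degeneracy and relaxation questions to the rigorous analysis referenced in the text.
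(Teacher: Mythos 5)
Your proposal is correct and follows essentially the same route as the paper: the paper's proof likewise notes that dropping the constraint $\bfXi=\nabla\bfxi$ can only decrease the infimum, then fixes $\bfc$ and applies the Lagrange multiplier rule to the quadratic problem, deriving $0=2\wt\bbM\bfXi-\wt\bbM\nabla\bflambda$ and $0=2\bbH\bfxi-\bbH\bflambda$, hence $\bfXi=\tfrac12\nabla\bflambda=\nabla\bfxi$ with the common multiplier, exactly as in your single-multiplier argument. Your explicit treatment of the momentum change of variables $(\bfP,\bfs)=(\wt\bbM(\bfc)\bfXi,\bbH(\bfc)\bfxi)$ and your caveats about degenerate tensors and the relaxed (perspective) reading of the functional are left implicit in the paper, which carries out the computation formally, so these additions are welcome but do not constitute a different method.
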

\begin{proof} Clearly, the right-hand side in \eqref{eq:GeodDist3} gives a
  value that is smaller or equal than that in \eqref{eq:GeodDist2},
  because we have dropped the constraint $\bfXi=\nabla \bfxi$. 

  To show that the two definitions give the same value, we have to
  show that for minimizers (do they exist), the constraint
  $\bfXi=\nabla \bfxi$ is automatically satisfied. For this we use
  that $\bfxi$ and $\bfXi$ are related by the continuity equation
  $\dot\bfc =-\div(\wt\bbM(\bfc)\bfXi\big)+\bbH(\bfc) \bfxi$. 

  Keeping $\bfc$ fixed (and sufficiently smooth) we can minimize 
  the integral in \eqref{eq:GeodDist3} with
  respect to $\bfxi$ and $\bfXi$, which is a quadratic functional with
  an affine constraint. Hence, we can apply the Lagrange multiplier
  rule to 
\[
\calL(\bfXi,\bfxi,\bflambda) = \int_0^1\int_\Omega \bfXi\mdot \wt\bbM(\bfc) \bfXi +
   \bfxi\cdot\bbH(\bfc)\bfxi+ \bflambda\vdot\big(\dot\bfc
   +\div(\wt\bbM(\bfc)\bfXi\big)-\bbH(\bfc) \bfxi\big)
\dd x \dd t
\]
to obtain the Euler-Lagrange equations 
\begin{align*}
0=2\wt \bbM\bfXi -\wt\bbM \nabla \bflambda ,\quad 
0= 2\bbH\bfxi - \bbH \bflambda, \quad 0=\dot\bfc
   +\div(\wt\bbM(\bfc)\bfXi\big)-\bbH(\bfc).  
\end{align*}
>From the first two equations we conclude $\bfXi=\frac12\nabla\bflambda
= \nabla \bfxi$, which is the desired result. 
\end{proof}

\subsection{Scalar reaction-diffusion equations}
\label{ss:ScaRDE}

On $\Omega \subset \R^d$, which is a bounded and
convex domain, we consider scalar equations of the form 
\[
\dot u = \div( a(u) \nabla u) - f(u) \quad \text{in }\Omega, \qquad
\nabla u \cdot \nu=0 \text{ on }\pl\Omega, 
\]
where we assume that $f$ changes sign, such that $f(u)(u{-}1)$ is
positive for $u\in {]0,1[}\cup {]1,\infty[}$. We want to write the
above equation as a gradient system $(\bfX,\calF,\bbK)$ with
$\bfX=\rmL^1(\Omega)$, 
\[
\calF(u)=\int_\Omega F(u(x)) \dd x, \quad \text{ and } \quad 
\bbK(u) \xi = - \div\big( \mu(u)\nabla \xi\big) + k(u) \xi \text{ with
  } \nabla \xi\cdot \nu=0,
\]
where $F:\R\to [0,\infty]$ is a strictly convex function with
$F(u)=\infty$ for $u<0$ and $F(1)=0$. Moreover, we assume $\mu(u),
k(u)\geq 0$, such that the dual dissipation potential is the
nonnegative quadratic form 
\[
\Psi^*(u,\xi)=\int_\Omega \frac{\mu(u(x))}2 |\nabla \xi(x)|^2 +
\frac{k(u(x))}2 \xi(x)^2 \dd x. 
\]

Using $\rmD \calF(u)= F'(u(x))$ we obtain 
\[
\bbK(u)\rmD\calF(u)= -\div\big(\mu(u)F''(u)\nabla u\big) + k(u)F'(u).
\]
Hence we see that we obtain the above reaction-diffusion equation, if
we choose $F,\:\mu$, and $k$ such that the relations 
\[
a(u)=\mu(u)F''(u) \quad \text{ and } \quad k(u)F'(u)=f(u). 
\]

There are several canonical choices. Quite often one is interested in
the case $a\equiv 1$, which gives rise to the simple semilinear
equation $\dot u = \Delta u -f(u)$. To realize this one chooses
$\mu(u)=1/F''(u)$. This is particularly interesting in the case of the
logarithmic entropy where $F(u)=\FBoltz(u):= u \log u -u
+1$. Then, $\mu(u)=1/F''(u)=u$ and we obtain the Wasserstein operator
$\xi \mapsto - \div(u \nabla \xi)$ 
for the diffusion part. 

For the reaction part one simply chooses $k(u)=f(u)/F'(u)$, which is
positive, since $f(u)$ and $F'(u)$ change the sign at $u=1$. For
$F=\FBoltz$ and the equation 
\[
\dot u = \Delta u -\kappa( u^\beta- u^\alpha) \quad \text{with }0\leq
\alpha < \beta
\]
we obtain $k(u)= \kappa (u^\beta-u^\alpha)/\log u=
\kappa(\beta{-}\alpha) \Lambda(u^\alpha,u^\beta)$, where
the logarithmic mean is given via 
$\Lambda(a,b)=(a{-}b)/(\log a {-}\log b)$.
This equation models the evolution of
a single diffusing species undergoing the
creation-annihilation reaction
\[
\alpha X \REVER_{\kappa}^{\kappa} \beta X.
\]

The simplest example of a reaction-diffusion distance is the
Hellinger--Kantorovich distance $\msD_{\alpha,\beta}$ studied 
in Section \ref{s:HellWass} in great detail. It is defined via 
the scalar Onsager operator 
\begin{equation}\label{eq:HK-Operator}
\bbK_{\alpha,\beta}(c) \xi := - \alpha \div( c \,\nabla \xi) 
+ \beta\, c\, \xi,
\end{equation}
where $\alpha,\beta$ are nonnegative parameters. The special
property of this operator is that it is linear in the variable
$c$. This will allow us to do explicit calculations for the
corresponding dissipation distance $\msD_{\alpha,\beta}:=\msD_{\bbK_{\alpha,\beta}}$. In particular, the associated 
distance is defined for all pairs of (nonnegative and finite) measures 
$\mu_0,\mu_1 \in\MMM(\Omega)$, not just for probability measures 
$\Prob(\Omega)$. In fact, we will see that for $\beta>0$ the geodesic 
curves connecting to different probability measures will have 
mass less than one for all arclength parameters $s\in {]0,1[}$. 
   
For $\beta =0$ we obtain the scaled Kantorovich--Wasserstein 
distance, namely 
\[
\msD_{\alpha,0}(\mu_0,\mu_1)= \left\{\ba{cl} \ds\frac1{\sqrt{\alpha|\mu_0|}}
    \msW\Big( \frac{\mu_0}{|\mu_0|}, \frac{\mu_1}{|\mu_1|}\Big)&
    \text{ if } |\mu_0|=|\mu_1|,\\[1.2em] 
\infty& \text{ else.} \ea \right. 
\]
Here, $|\mu_j|= \mu_j(\Omega)$ is the total mass of the 
measure. The geodesic curves are given in terms of the classical
optimal transport, see \cite[Ch.\ 7]{AmGiSa05GFMS}.  

For $\alpha=0$ we obtain a scaled version of the Hellinger distance
(sometimes also called Hellinger--Kakutani distance), namely
\[
\msD_{0,\beta}(\mu_0,\mu_1) = \frac{2}{\sqrt{\beta}}\msH(\mu_0,\mu_1)
= \frac2{\sqrt\beta} \left(\int_\Omega \left[
\Big(\frac{\rmd \mu_0}{\rmd\mu_*} \Big)^{1/2} -
\Big(\frac{\rmd \mu_1}{\rmd\mu_*} \Big)^{1/2} \right]^2 \dd\mu_*
\right)^{1/2}
\]
for a reference measure $\mu_*$ with $\mu_i\ll\mu_*$ (e.g.\
$\mu_*=\mu_0+\mu_1$), see \cite[Theorem 4]{Schi96SPLD}. The geodesic
curves are given by linear interpolation of the square roots of the
densities, i.e.\
\begin{equation}
  \label{eq:HellGeod}
  \big(\mu^\rmH(s)\big)(A)  = \int_A \left( (1{-}s)
    \Big(\frac{\rmd \mu_0}{\rmd(\mu_0{+}\mu_1)} \Big)^{1/2} +s
    \Big(\frac{\rmd \mu_1}{\rmd(\mu_0{+}\mu_1)} \Big)^{1/2} \right)^2 
  \dd(\mu_0{+}\mu_1).
\end{equation}
By using the estimate $\mu^\rmH(s)\geq (1{-}s)^2\mu_0+ s^2\mu_1$ and
choosing $s\in[0,1]$ optimally, we obtain the lower estimate
$|\mu^\rmH(s)|\geq |\mu_0||\mu_1|/(|\mu_0|{+}|\mu_1|)$, i.e.\ the
total mass of the geodesic $\mu^\rmH(s)$ is bounded from below by half
of the harmonic mean of the total masses of $\mu_0$ and
$\mu_1$. Moreover, an elementary calculation gives the identity
\begin{equation}
|\mu^\rmH(s)| = (1{-}s)|\mu_0| + s|\mu_1| -s(1{-}s)\msH(\mu_0,\mu_1)^2.
\end{equation}

\section{The Hellinger--Kantorovich distance}
\label{s:HellWass}
In this section we discuss the dissipation distance
$\msD_{\alpha,\beta}(\mu_0,\mu_1)$ that is induced by the Onsager
operator $\bbK_{\alpha,\beta}(c)\xi = -\div (\alpha c \nabla \xi) +
\beta c \xi$, given for $\mu_0,\mu_1\in\MMM(\Omega)$ as in
\eqref{eq:GeodDist0}.  Using Proposition~\ref{p:equiDist} we can
rewrite this formulation in an equivalent form as
\begin{equation}\label{eq:dissHK1}
\begin{aligned}
\msD_{\alpha,\beta}(\mu_0,\mu_1)^2=\inf\Big\{\int_0^1
\int_\Omega\big[\alpha|\Xi|^2+\beta\xi^2\big]\dd \mu(s)\,\dd s\,
\Big|\qquad\qquad\\
\tfrac{\dd}{\dd s}\mu +\alpha\div(\mu\Xi)=\beta\mu\xi,~\mu_0
 \overset{\mu}{\leadsto} \mu_1\Big\}
\end{aligned}
\end{equation}
with $\Xi:[0,1]\times\Omega\to\R^d$ denoting the vector field.

In most of this section we will restrict ourselves 
without loss of generality to the case $\alpha=1$ and $\beta=4$ for
simplicity.  Occasionally, we will give some of the formulas for
general $\alpha$ and $\beta$ to highlight the dependence on these
parameters. Note that we can always use the simple scaling
$\bbK_{\alpha,\beta}= \beta \bbK_{\alpha/\beta,1}$ giving the general
relation $\msD_{\alpha,\beta}(\mu_0,\mu_1)=
\msD_{\alpha/\beta,1}(\mu_0,\mu_1)/\sqrt{\beta}$. Moreover, the factor
$\sqrt{\alpha/\beta}$ can be transformed away by rescaling $\Omega$,
i.e.\ $x \mapsto \sqrt{\alpha/\beta} x$.

Note that for sufficiently regular $\mu$, $\xi$, and $\Xi$ in \eqref{eq:dissHK1}
we obtain by Proposition \ref{p:equiDist} $\Xi=\nabla\xi$ and formal calculation
leads to the following system of equations for geodesic curves
\begin{align}
\dot\mu = -\alpha\div(\mu\nabla\xi) + \beta\mu\xi,\quad
\dot \xi +\frac{\alpha}{2}|\nabla\xi|^2 +\frac{\beta}{2}\xi^2 .
\end{align}
For the case $\beta = 0$ and $\alpha=1$ this corresponds to
\cite[Eqn.\,(37)]{BenBre00CFMS}. A full justification of this coupled
system is given in \cite[Sect.\,8.6]{LiMiSa14?Theory}.

\subsection[The optimal curves for ${\msD_{\alpha,\beta}}$ with one or two
  mass-points]{The optimal curves for $\bm{\msD_{\alpha,\beta}}$ with one or two
  mass-points}
\label{ss:MassDHW}

The striking feature of optimal transport is that for affine
mobilities point masses (Dirac measures) are transported as point
masses, i.e.\ the geodesic curve connecting $\mu_0=\delta_{x_0}$ and
$\mu_1=\delta_{x_1}$ is given by $\mu_s=\delta_{x(s)}$, where
$[0,1]\ni s \mapsto x(s)$ is a geodesic curve in the underlying domain
$\Omega$.

Since the Onsager operator $\bbK_{\alpha,\beta}(c)$ in \eqref{eq:HK-Operator}
depends only linearly on the state $c$, we expect a similar behavior.
In particular, note that the definition of the distance in
\eqref{eq:dissHK1} is well-defined for general curves of 
measures $\mu(s)\in \MMM(\Omega)$ if we understand the linear
constraint $\tfrac{\dd}{\dd s}\mu +\alpha\div(\mu\Xi)=\beta\mu\xi$
in the distributional sense.

As a first step, it is instructive to study the 
$\bbK_{\alpha,\beta}$-length of curves given by a moving 
point mass in the form
\[
\gamma_{x,a}:[0,1]\ni s \mapsto \mu(s)= a(s)\delta_{x(s)} \quad \text{with }
x(s)\in \Omega \text{ and } a(s)\geq0.
\]
Minimizing the action functional in \eqref{eq:dissHK1} only 
over curves of this form for given end points $a_i\delta_{x_i}$, 
$i=0,1$, always gives an upper bound for the distance 
$\msD_{\alpha,\beta}(a_0\delta_{x_0},a_1\delta_{x_1})$.
Indeed, we show that up to a certain threshold for 
the Euclidean distance $|x_0{-}x_1|$ it will even 
be the exact distance and the minimizing $\gamma_{x,a}$ 
is a geodesic curve.

The main point is that we are able to calculate the $s$-derivative
of $\mu(s) = \gamma_{x,a}(s)$ and compare it to the continuity equation. 
Multiplying the continuity equation with test functions we obtain
after integration by parts
\[
\frac\rmd{\rmd s} \mu_i(s) = 
-\div\big( \dot x(s)a(s)\delta_{x(s)}\big) + \dot a(s)\delta_{x(s)}= 
-\div\big(\dot x(s) \mu(s)\big) + \frac{\dot a(s)}{a(s)}\mu(s).
\] 
Thus, comparing with the continuity equation in the definition of
$\msD_{\alpha,\beta}$ we find the relations
\begin{equation}\label{eq:charODEa}
\Xi(s,x(s)) = \frac{1}{\alpha}\dot x(s) \quad \text{and} \quad
\xi(s,x(s)) = \frac{\dot a(s)}{\beta a(s)}.
\end{equation}
We may realize the constraint $\Xi=\nabla \xi$ via
$\xi(s,y)= \xi(s,x(s))+ \frac1\alpha\dot x(s)\cdot (y{-}x(s))$. 

Having identified the vector and scalar field $\Xi$ and $\xi$,
respectively, we obtain the $\bbK_{\alpha,\beta}$-length of the curve 
$s\mapsto a(s)\delta_{x(s)}$ via
\begin{equation}
  \label{eq:Length}
  \Length_{\alpha,\beta}(\gamma_{x,a})^2= \int_0^1 
\Big[\frac{1}{\alpha}|\dot x(s)|^2 + \frac1\beta\Big(\frac{\dot
  a(s)}{a(s)}\Big)^2\Big]a(s)\dd s,
\end{equation} 
for $\alpha=0$ and $\beta=8$ this corresponds to the representation 
in \cite[Thm.\,4]{Schi96SPLD} for the Hellinger--Kakutani distance.
Minimizing this expression for given endpoints of $\gamma_{x,a}$ 
we find that $x(s)$ travels along a straight line, which reflects the
fact that our choice of metric in $\Omega$ is the Euclidean one. However,  
the speed will not be constant. Hence, we introduce functions
\begin{equation}
  \label{eq:mfR}
\begin{split}
  \rho \in \mfR(0,1)&:= \set{ \rho\in \rmH^1(0,1)}{ \rho(0)=0,\ \rho(1)=1,\
  \dot\rho\geq 0}\\
a\in\mfA(a_0,a_1)&:=\set{ a\in \rmH^1(0,1)}{ a>0,\ a(0)=a_0,\
  a(1)=a_1}
\end{split}
\end{equation}
such that we can write $x(s) = (1{-}\rho(s))x_0+\rho(s) x_1$ and
have $|\dot x(s)|=\dot\rho(s)L$ with $L=|x_1{-}x_0|$. For the 
one-mass-point problem we define the function 
$\calJ^\mathrm{1mp}_{\alpha,\beta}:\left[0,\infty\right[\times
\left[0,\infty\right[^2\to\left[0,\infty\right[$ via  
\begin{equation}\label{eq:infJ1mp}
\calJ^\mathrm{1mp}_{\alpha,\beta}(L^2,a_0,a_1):=
\inf\Big\{ \int_0^1 \frac{L^2}\alpha \dot\rho(s)^2a(s) 
+ \frac{\dot a(s)^2}{\beta a(s)} \dd s\,\Big|\,\rho\in \mfR(0,1), \ a\in
  \mfA(a_0,a_1)\Big\}.
\end{equation}
The functional $\calJ_{\alpha,\beta}^\mathrm{1mp}$ satisfies 
a scaling identity with respect to the parameter $\alpha,\beta>0$: For
$\theta>0$ we have
\begin{equation}\label{eq:scalJ1mp}
\calJ^\mathrm{1mp}_{\alpha,\beta}(L^2,a_0,a_1)= \frac1\theta
\calJ^\mathrm{1mp}_{1,\beta/\theta}\Big(\frac{\theta L^2 }{\alpha}, a_0,a_1\Big). 
\end{equation}
Hence, we can restrict ourselves to one particular choice of
$\beta/\theta>0$ such that the general case can be recovered 
from a rescaling of the Euclidean distance in $\Omega$. In 
particular, it will prove convenient to choose $\theta = \beta/4$
such that we will consider $\calJ_{1,4}^\mathrm{1mp}$ first, which 
is also the scaling used in \cite{LiMiSa14?Theory}.

\begin{theorem}
\label{th:MassPointCost}
We have 
\begin{equation}\label{eq:MassPoiCost}
\begin{aligned}
\calJ^\mathrm{1mp}_{1,4}(L^2,a_0,a_1)&= a_0+a_1
-2\sqrt{a_0a_1}\,\trcos(L)\ 
\text{ with }\\ 
\trcos(L) &:=
  \begin{cases}
   \cos(L) &\text{for } L< \pi,\\ -1&\text{for }L\geq \pi.
  \end{cases}
\end{aligned}
\end{equation}
The infimum is a minimum for $L<\pi$, and it is attained for 
\begin{equation}
\label{eq:1mpCurve}
\begin{split}
a(s)& = (1{-}s)^2 a_0 + s^2 a_1 + 2 s (1{-}s) \sqrt{a_0a_1}\cos (L),\\
\rho(s)&=
\left\{
\begin{array}{ll}
\frac1{L}\arctan\big(\frac{s\sin(L)\sqrt{a_1}}{ (1{-}s)\sqrt{a_0}+s\cos(L)\sqrt{a_1}} \big)&
\text{if $(1{-}s)\sqrt{a_0}+s\cos(L)\sqrt{a_1}>0$},\\[0.5em]
\frac{\pi}{2L}& 
\text{if $(1{-}s)\sqrt{a_0}+s\cos(L)\sqrt{a_1}=0$},\\[0.5em]
\frac1{L}\arctan\big(\frac{s\sin(L)\sqrt{a_1}}{ (1{-}s)\sqrt{a_0}+s\cos(L)\sqrt{a_1}} \big)+ \frac{\pi}{L} &
\text{otherwise}.
\end{array}\right.
\end{split}
\end{equation}
For $L\geq \pi$ the minimizing sequences converge to $a(s)= c
(s{-}\theta)^2$ and $\dot\rho(s)=\delta_{\theta}(s)$ for certain 
$c\geq0$ and $\theta\in[0,1]$. 
\end{theorem}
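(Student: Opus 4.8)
The plan is to remove the two weights by the substitution $u=\sqrt a$ and to recognise $\calJ^{\mathrm{1mp}}_{1,4}$ as a squared distance on a two-dimensional cone. Since $\dot a^2/(4a)=\dot u^2$ and $L^2 a\,\dot\rho^2=L^2u^2\dot\rho^2$, the integrand in \eqref{eq:infJ1mp} becomes $\dot u^2+L^2u^2\dot\rho^2$. Introducing the angular variable $\phi(s):=L\rho(s)$ and the planar point $P(s):=u(s)\big(\cos\phi(s),\sin\phi(s)\big)\in\R^2$, one has $|\dot P|^2=\dot u^2+u^2\dot\phi^2=\dot u^2+L^2u^2\dot\rho^2$. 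Thus the quantity to be minimised is the Dirichlet energy $\int_0^1|\dot P|^2\,\dd s$ of a curve on the metric cone of opening angle $L$: its radius is $u=\sqrt a\ge 0$, its angle $\phi$ increases monotonically (because $\dot\rho\ge0$) from $0$ to $L$, and its endpoints are the cone points with radii $\sqrt{a_0},\sqrt{a_1}$ at angular distance $L$. In short, $\calJ^{\mathrm{1mp}}_{1,4}(L^2,a_0,a_1)$ is the least energy of an admissible curve joining these two points in $(\mfC,\msd_\mfC)$.

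First I would establish the lower bound. Because the time interval has unit length, Cauchy--Schwarz gives $\int_0^1|\dot P|^2\,\dd s\ge\big(\int_0^1|\dot P|\,\dd s\big)^2$, so the energy dominates the squared Euclidean length $\ell^2$ of $P$, with equality exactly for constant-speed curves. For $L<\pi$ I bound $\ell$ from below by the endpoint distance, $\ell\ge|P(1)-P(0)|$, and since $P(0)=(\sqrt{a_0},0)$ and $P(1)=\sqrt{a_1}(\cos L,\sin L)$ this yields $\calJ^{\mathrm{1mp}}_{1,4}\ge|P(1)-P(0)|^2=a_0+a_1-2\sqrt{a_0a_1}\cos L$. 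For $L\ge\pi$ the naive endpoint bound is too weak, and I would instead split at the first time $s^\ast$ with $\phi(s^\ast)=\pi$ (which exists since $\phi$ is continuous, nondecreasing, and runs from $0$ to $L\ge\pi$). Writing $m:=u(s^\ast)$ and using that $P(s^\ast)=(-m,0)$ is antipodal to $P(0)=(\sqrt{a_0},0)$, the endpoint bound on $[0,s^\ast]$ gives length $\ge\sqrt{a_0}+m$, while the radial bound $\int|\dot u|\ge|u(1)-u(s^\ast)|$ on $[s^\ast,1]$ gives length $\ge|\sqrt{a_1}-m|$. Adding and using $m+|\sqrt{a_1}-m|\ge\sqrt{a_1}$ yields $\ell\ge\sqrt{a_0}+\sqrt{a_1}$, hence $\calJ^{\mathrm{1mp}}_{1,4}\ge a_0+a_1+2\sqrt{a_0a_1}$. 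In both regimes this is exactly the value $a_0+a_1-2\sqrt{a_0a_1}\,\trcos(L)$ of \eqref{eq:MassPoiCost}.

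For the matching upper bound and the explicit minimiser I would exhibit the constant-speed chord. For $L<\pi$ put $P(s)=(1-s)P_0+sP_1$ with $P_0=(\sqrt{a_0},0)$, $P_1=\sqrt{a_1}(\cos L,\sin L)$; as the opening angle is below $\pi$ the segment stays in the open convex sector, so $u=|P|>0$ (hence $a>0$) and $\phi=\arg P$ increases monotonically from $0$ to $L$, making $\rho=\phi/L\in\mfR(0,1)$ and $a=|P|^2\in\mfA(a_0,a_1)$ admissible. Expanding $a(s)=|P(s)|^2$ reproduces the first line of \eqref{eq:1mpCurve}, and $\rho(s)=\tfrac1L\arg P(s)$ produces the three branches of the $\arctan$-formula according to the sign of the first component $(1-s)\sqrt{a_0}+s\sqrt{a_1}\cos L$, the middle case being the crossing of the vertical axis where $\phi=\pi/2$. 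Since the chord has constant speed $|P_1-P_0|$, its energy equals $|P_1-P_0|^2=a_0+a_1-2\sqrt{a_0a_1}\cos L$, so the lower bound is met and the infimum is a minimum. For $L\ge\pi$ the optimal curve runs straight into the tip $\TT$ and out again, which forces $u=0$ at an interior time and is therefore excluded by $a>0$ in $\mfA(a_0,a_1)$; one only gets a minimising sequence of curves dipping ever closer to $\TT$, converging in the stated sense to $a(s)=c(s-\theta)^2$ with $\dot\rho=\delta_\theta$, and the value $a_0+a_1+2\sqrt{a_0a_1}$ is an infimum that is not attained.

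The step I expect to be the real obstacle is the lower bound for $L\ge\pi$. The difficulty is conceptual rather than computational: the straightforward planar estimate $\int_0^1|\dot P|^2\,\dd s\ge|P(1)-P(0)|^2=a_0+a_1-2\sqrt{a_0a_1}\cos L$ is genuinely \emph{not} tight once $L\ge\pi$, because passing to $P\in\R^2$ discards the winding that the monotonicity $\dot\rho\ge0$ forces after $\phi$ crosses $\pi$. Capturing the extra cost is precisely what makes transport more expensive than annihilation beyond the threshold $\pi$, and is the origin of the cap in $\trcos$; the splitting argument above at the first passage of $\phi$ through $\pi$ is the device that turns this geometric fact into the clean bound $\ell\ge\sqrt{a_0}+\sqrt{a_1}$. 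Equivalently one may read off the value from the cone-distance formula for $(\mfC,\msd_\mfC)$, whose two regimes (convex sector versus geodesic through the apex $\TT$) correspond to the standard cone geometry of \cite[Sect.\,3.6.2]{BuBuIv01CMG}.
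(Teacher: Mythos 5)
Your argument is correct, and it takes a genuinely different route from the paper. Both proofs start with the same substitution $b=\sqrt a$, but from there the paper runs a direct-method/Euler--Lagrange analysis: it takes a minimizing sequence for $\calK_L(b,\rho)=\int_0^1 L^2b^2\dot\rho^2+\dot b^2\,\dd s$, extracts a weak $\rmH^1$ limit of $b_n$, and splits into the cases $\min b>0$ and $\min b=0$; in the first case lower semicontinuity plus the Euler--Lagrange equations force $a(s)=c_0+c_1(s{-}\theta)^2$ with $c_0c_1=L^2H[a]^2$, and solving the three boundary conditions yields the unique positive critical point exactly when $L<\pi$, while in the second case a piecewise-affine comparison gives the bound $a_0/s_*+a_1/(1{-}s_*)\geq(\sqrt{a_0}+\sqrt{a_1})^2$ and an explicit minimizing sequence attains it. You instead make the cone geometry explicit from the outset: the polar embedding $P=u(\cos L\rho,\sin L\rho)$ turns $\calK_L$ into planar Dirichlet energy, Cauchy--Schwarz reduces energy to squared length, the chord handles $L<\pi$ (and expanding $|P(s)|^2$ and $\arg P(s)$ reproduces \eqref{eq:1mpCurve} directly, including the three $\arctan$ branches), and your splitting at the first time $\phi=\pi$ is a clean device for the regime $L\geq\pi$, where the naive endpoint bound fails precisely because flattening to $\R^2$ forgets the winding forced by $\dot\rho\geq 0$ --- you correctly identified this as the crux. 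What each approach buys: yours is more elementary (no weak compactness, no ODE analysis), proves the cone-distance formula of $(\CC,\msd_\mfC)$ essentially from scratch, and anticipates the geometric picture the paper only introduces in the following section via \cite[Sect.\,3.6.2]{BuBuIv01CMG}; the paper's route additionally classifies \emph{all} positive critical points, which gives uniqueness of the minimizer for $L<\pi$ and the structure $a(s)=c(s{-}\theta)^2$ of the limiting profiles for free. Two points you should make explicit but which are one-liners: monotonicity of $\arg P$ along the chord follows from $\dot\phi=\det(P,\dot P)/|P|^2=\sqrt{a_0a_1}\sin L/|P|^2\geq0$; and for $L\geq\pi$ the minimizing sequence needs the angular sweep to occur at radius $\eps_n\to0$ over a time window $1/n$ with $\eps_n^2 n\to 0$, so that its cost $\sim L^2\eps_n^2 n$ vanishes --- your ``dipping ever closer to $\TT$'' is the right picture but is no more detailed than the paper's own sketch at that step.
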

\begin{proof} 
  To study the infimum of $\calJ^\text{1mp}_{1,4}$ we transform
  the system by using $b(s)=\sqrt{a(s)}$. Keeping $L > 0$ fixed we
  obtain the functional
\[
\calK_L(b,\rho)= \int_0^1  L^2\, b(s)^2\, \dot \rho(s)^2 +
\dot b(s)^2 \dd s .
\]
Clearly, the infimum of $\calK_L$ gives the infimum in the definition
of $\calJ^\text{1mp}_{1,4}$. We now consider a minimizing
sequence $(b_n,\rho_n)$ and observe that $(b_n)$ must remain bounded
in $\rmH^1(0,1)$. Hence, after choosing a suitable subsequence (not
relabeled), we may assume $b_n \rightharpoonup b$ in $\rmH^1(0,1)$. 
We distinguish between the following three cases.
\smallskip

\textbf{Case 1.} $\underline b:= \min\set{ b(s) }{s\in [0,1]} >0$:
In this case we may further conclude that $\rho_n$ is also bounded in
$\rmH^1(0,1)$. Hence, we can also assume $\rho_n \rightharpoonup \rho$
in $\rmH^1(0,1)$. Because of the lower semicontinuity of $\calK_L$ on
$\rmH^1(0,1)^2$ we conclude that $(b,\rho)$ is the global minimizer of
$\calK_L$. This implies that $(a,\rho)=(b^2,\rho)$ is the global
minimizer of $\calJ^\text{1mp}_{1,4}$, which certainly
satisfies the Euler--Lagrange equations 
\[
\tfrac{\dd}{\dd s}\big( \dot\rho a\big)=0,\quad  4\big(L \dot\rho\big)^2 
-\big( \dot a/a\big)^2 -2
\tfrac{\dd}{\dd s}\big( \dot a/a\big) =0.
\]
>From the first equation and $\int_0^1 \dot\rho\dd s =1$ we obtain 
\[
\dot\rho(s) = H[a]/a(s),\quad \text{where } H[a]=\Big( \ts \int_0^1
1/a(s)\dd s\Big)^{-1} 
\]
denotes the harmonic mean, which satisfies $H[a]\geq \underline b^2
>0$. By inserting this into the second equation we see that all solutions
are given in the form
\[
a(s) = c_0 + c_1(s{-}\theta)^2 \quad \text{with } c_0c_1= L^2 H[a]^2.
\]
Hence, together with the boundary conditions $a_0=a(0)=c_0+c_1\theta^2$
and $a_1=a(1)=c_0+c_1(1{-}\theta)^2$ we have three nonlinear equations
for the unknowns $c_0,c_1$, and $\theta$, which can be solved easily for 
$L < \pi$ giving a unique solution. For $L\geq \pi$ no solution with
positive $\underline b$ exists. In particular, since the primitive of the inverse
of a strictly positive quadratic function is given in terms of the $\arctan$ function
we obtain the formulas in \eqref{eq:1mpCurve}, using also the addition theorem
for $\arctan$.
 
Thus, in the case $\underline b>0$ the global minimizer is the unique, positive
critical point $(a,\rho)$.\smallskip

\textbf{Case 2.} $\underline b=0$: Since $b$ is continuous, there
exists $s_*\in [0,1]$ with $b(s_*)=0$. Neglecting the term
$L^2 b^2\dot \rho^2$ in the integrand in $\calK_L$ we can
minimize the remaining quadratic term subject to the boundary
conditions $b(0)=\sqrt{a_0}$, $b(1)=\sqrt{a_1}$, and $b(s_*)=0$. This
leads to a minimizer that is piecewise affine and gives the lower bound 
\begin{equation}
  \label{eq:calK-opt}
\calK_L(b,\rho)\geq  \frac{a_0}{s_*} +
\frac{a_1}{1{-}s_*}  \geq \big( \sqrt{a_0}+ \sqrt{a_1}\big)^2,
\end{equation}
where the last estimate follows from minimization in $s_*$. 

It is now easy to see that the value $\big( \sqrt{a_0}+
\sqrt{a_1}\big)^2$ is indeed the infimum, since it can be obtained as
a limit of a minimizing sequence. For this take piecewise affine
functions $(b_n,\rho_n)$ satisfying $(b_n(s),\dot\rho_n(s))=(0,n)$ 
for $s \in [s_n,s_n{+}1/n]$ with $s_n
\to s_0$, where $s_0$ is the optimal $s_*$ in
\eqref{eq:calK-opt}. On $[0,s_n]$ we take $(\dot
b_n(s),\rho(s))=({-}\sqrt{a_0}/s_n, 0)$ and similarly on $[s_n{+}1/n,1]$.
\smallskip

\textbf{General case:} Since the infimum obtained in case 2 is
strictly larger than that in case 1 (because of $L<\pi$), we see that
the two cases exclude each other. If $L<\pi$ then case 1 occurs while
for $L\geq \pi$ case 2 sets in. Hence, the theorem is established. 
\end{proof}

Although the stationary states in Theorem \ref{eq:MassPoiCost} may be the global
minimizers, they are not always the geodesic curves with respect
to $\msD_{1,4}$. To see this we consider the pure reaction
case and define the curve
\[
\wh\gamma_{\ol a_0,\ol a_1}(s)= \ol a_0(s) \delta_{x_0} + \ol a_1(s)\delta_{x_1}
\]
also connecting the measures $\mu_j= a_j\delta_{x_j}$ if $\ol a_j(j)=
a_j$ and $\ol a_j(1{-}j)=0$ for $j=0,1$. If $\ol a_0(s), \,\ol
a_1(s)>0$, this curve consists of two separated mass points that do not
move. As in the previous case of the moving mass point we can compute
the solutions of the continuity equation to obtain
\[
\Xi\equiv 0 \quad\text{ and }\quad \xi(s,x_j) = 
\frac{\dot{\ol a}_j(s)}{4\ol a_j(s)}.
\]
The squared length of these curves is given by $\frac14\sum_j\int_0^1
\dot{\ol a}_j^2/\ol a_j\dd s$ and the optimal choice for $\ol a_j$ is
$\ol a_0(s)=a_0(1{-}s)^2$ and $\ol a_1(s)=a_1 s^2$ giving the minimal
squared length 
\[
\mathrm{Length}_{1,4}(\wh\gamma_\text{opt})^2 \;=\; a_0+a_1\;
\geq \;\msD_{1,4}(a_0\delta_{x_0}, a_1 \delta_{x_1}).
\]
We see that this result is less that
$\calJ^\mathrm{1mp}_{1,4}(|x_{1}{-}x_0|^2, a_0,a_0)$ for $\pi/2
<|x_1{-}x_0| < \pi$. In fact, we will show 
later that the last estimate is sharp if and only if
$|x_1{-}x_0| \geq  \pi/2$. 

\begin{figure}[t]
\centerline{\includegraphics[width=0.8\textwidth]{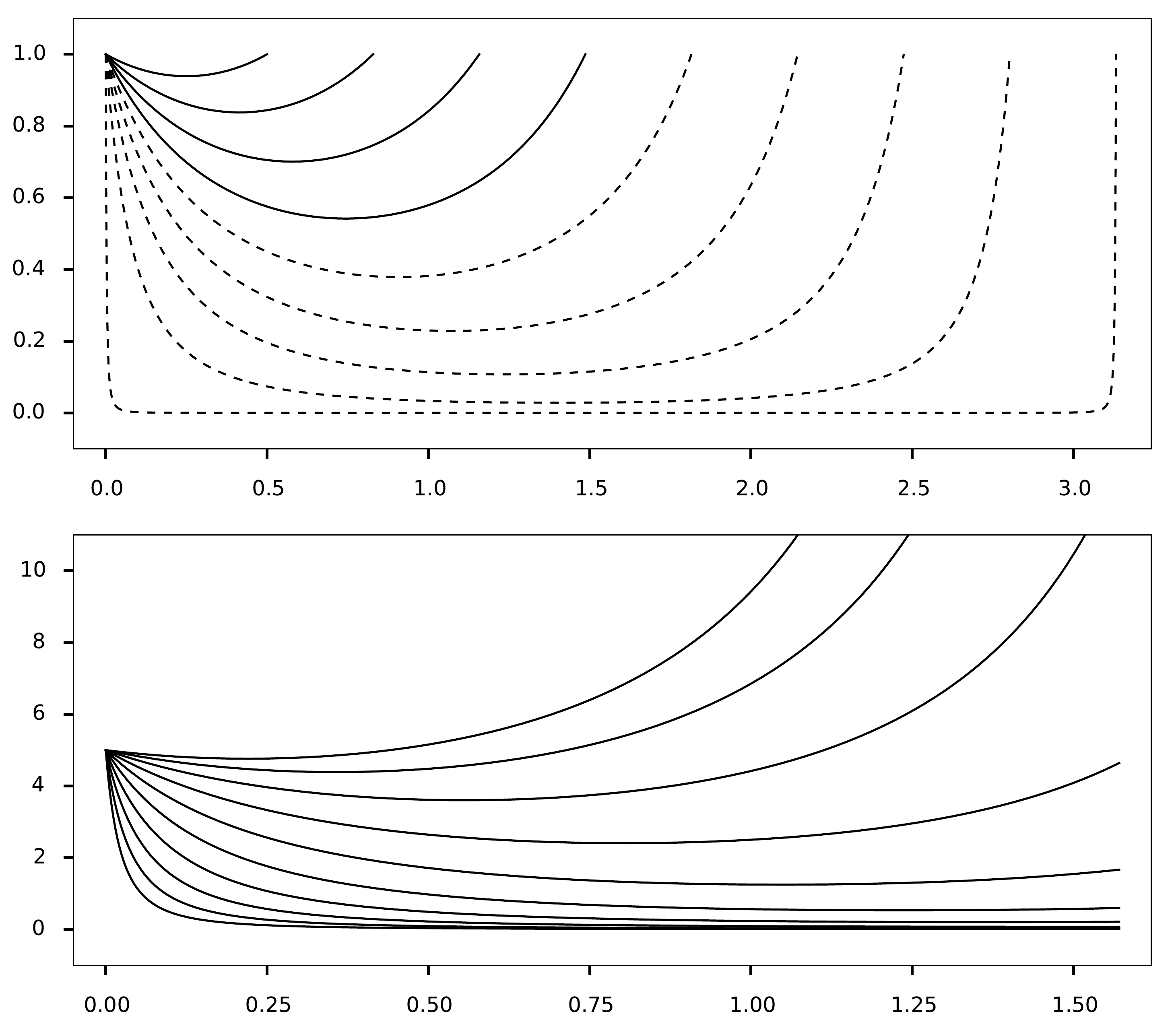}}
\caption{\emph{Top:} The curves $s\mapsto (L\int_0^s\dot\rho(\tau)\dd \tau, a(s))$ 
for different values of $0<L<\pi$. Solid curves are  true geodesics, while 
dashed curves are shortest ``one-mass-point paths'' but not geodesic curves. 
\emph{Bottom:} Curves for $L=\pi/2$ and different mass ratios $a_0/a_1$.}
\label{fig.J-curves} 
\end{figure}

\begin{figure}[t]
\includegraphics[width=\textwidth]{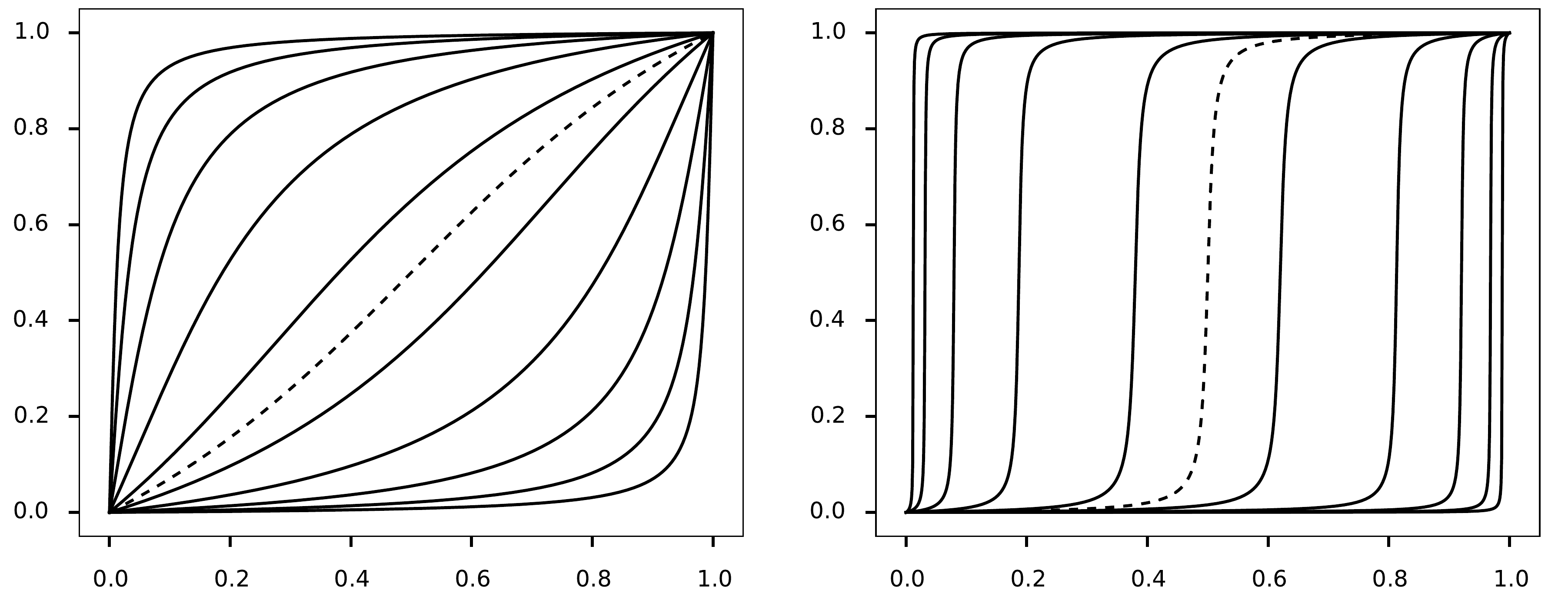}
\caption{The function $s\mapsto\rho(s)$ in Theorem~\ref{th:MassPointCost} for different 
ratios $a_0/a_1$ and $L=\pi/2$ (\emph{left}) and $L=\pi/1.1$ (\emph{right}). The dashed 
curve corresponds to $a_0/a_1=1$, while curves above and below satisfy $a_0/a_1<1$ and 
$a_0/a_1>1$, respectively.}
\label{fig.speed_fcn} 
\end{figure}

To highlight the dependencies on $\alpha$ and $\beta$ in
$\calJ^\mathrm{1mp}_{\alpha, \beta}$ we can use the scaling
\eqref{eq:scalJ1mp} for all $\alpha,\beta>0$. To include the limit
cases of the Hellinger distance (i.e.\ $\alpha=0$) and the
Kantorovich--Wasserstein distance (i.e.\ $\beta =0$) we define the
functions $ \mfS_{\alpha,\beta}$ via
\begin{equation}
  \label{eq:DD.alpha.beta}
  \mfS_{\alpha,\beta}(L^2, b_0,b_1):=
\begin{cases}
  \frac4\beta\Big( b_0^2 + b_1^2 - 2b_0b_1 \trcos\big( 
  {\ts\sqrt{\frac{\beta}{4\alpha}}\, L}\big) \Big) & \text{for }\alpha,\,\beta>0,
\\[0.5em]
  \frac{4}\beta \big( b_0^2 + b_1^2\big) & \text{for } \alpha=L=0,
  \text{ and } \beta>0,
\\[0.5em]
  \frac{L^2}{\alpha } b_0^2& \text{for }\beta=0,\ \alpha>0,\text{ and }b_1=b_0,
\\[0.5em]
  0& \text{for }\alpha=\beta=L=0 \text{ and }b_0=b_1,
\\[0.5em]
   \infty & \text{otherwise}. 
\end{cases} 
\end{equation}
We emphasize that $\mfS_{0,\beta}$ and $\mfS_{\alpha,0}$ can be
obtained as $\Gamma$-limits of $\mfS_{\alpha_n,\beta_n}$ for
$\beta_n\searrow 0$ or $\alpha_n\searrow 0$, respectively.

Using $ \mfS_{\alpha,\beta}$ we can express
$\calJ^\mathrm{1mp}_{\alpha,\beta} $ for all $\alpha,\,\beta\geq 0$,
where the cases $\alpha=0$ or $\beta =0$ mean that $\dot\rho \equiv0$
or $\dot a\equiv 0$, respectively. Moreover
$\calJ^\mathrm{1mp}_{\alpha,\beta}=+\infty$, if the set of competitors
$(a,\rho)$ providing finite values is empty.

\begin{corollary}\label{cor:boundaryCases}
For all $\alpha,\,\beta \geq 0$ we have  
\[
\calJ^\mathrm{1mp}_{\alpha,\beta}(L^2,a_0,a_1)= \mfS_{\alpha,\beta} 
\big( L^2, \sqrt{a_0}, \sqrt{a_1}\big). 
\]
\end{corollary}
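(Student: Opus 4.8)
The plan is to obtain all five branches of $\mfS_{\alpha,\beta}$ in~\eqref{eq:DD.alpha.beta} from a single interior computation together with two boundary minimizations, leaning on the scaling identity~\eqref{eq:scalJ1mp} and on Theorem~\ref{th:MassPointCost}.

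For the interior regime $\alpha,\beta>0$ I would apply~\eqref{eq:scalJ1mp} with the distinguished choice $\theta=\beta/4$, which gives
\[
\calJ^\mathrm{1mp}_{\alpha,\beta}(L^2,a_0,a_1)=\frac4\beta\,\calJ^\mathrm{1mp}_{1,4}\!\Big(\tfrac{\beta}{4\alpha}\,L^2,\,a_0,a_1\Big),
\]
and then read the right-hand side off from Theorem~\ref{th:MassPointCost} as $\frac4\beta\big(a_0+a_1-2\sqrt{a_0a_1}\,\trcos(\sqrt{\beta/(4\alpha)}\,L)\big)$. Writing $b_j=\sqrt{a_j}$, this is exactly the first line of~\eqref{eq:DD.alpha.beta}, so the interior case is an immediate corollary of the theorem already proved.

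The two degenerate boundaries are handled through the interpretations fixed just before the statement. For $\beta=0$ the rule $\dot a\equiv0$ forces $a\equiv a_0$, so the admissible set in~\eqref{eq:infJ1mp} is empty unless $a_0=a_1$ (equivalently $b_0=b_1$), which is the ``otherwise'' value $+\infty$; when $b_0=b_1$ only the transport term $\tfrac{L^2 a_0}\alpha\int_0^1\dot\rho^2\,\dd s$ survives, and Jensen's inequality (with equality at $\dot\rho\equiv1$) gives $\tfrac{L^2}\alpha b_0^2$, the third line. For $\alpha=0$ the rule $\dot\rho\equiv0$ kills the transport term, leaving the pure-reaction functional $\tfrac1\beta\int_0^1\dot a^2/a\,\dd s$; the substitution $b=\sqrt a$ converts this into the Dirichlet energy $\tfrac4\beta\int_0^1\dot b^2\,\dd s$, whose minimization over the prescribed data reproduces the $\alpha=0$ entry, with $+\infty$ whenever a non-moving support cannot meet the boundary conditions (in particular for $L>0$). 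The corner $\alpha=\beta=0$ is immediate.

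The step I expect to demand the most care is the rigorous justification that these constrained problems are the correct specializations of~\eqref{eq:infJ1mp}: one must check that enforcing $\dot a\equiv0$ or $\dot\rho\equiv0$ discards no finite-energy competitor, and that relaxing the open constraint $a>0$ to $a\ge0$ leaves the infimum unchanged, so that competitors with $a$ approaching $0$ are correctly captured as limits. A clean independent check is the remark after~\eqref{eq:DD.alpha.beta} that $\mfS_{0,\beta}$ and $\mfS_{\alpha,0}$ arise as $\Gamma$-limits of the interior family $\mfS_{\alpha_n,\beta_n}$ as $\alpha_n\searrow0$ or $\beta_n\searrow0$; matching these $\Gamma$-limits against the direct boundary minimizations confirms that the passage to the degenerate parameters is consistent and pins down the remaining boundary values.
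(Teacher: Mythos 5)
Your proposal is correct and follows essentially the same route as the paper: the interior case $\alpha,\beta>0$ is precisely the scaling identity \eqref{eq:scalJ1mp} with $\theta=\beta/4$ combined with Theorem~\ref{th:MassPointCost}, and the three degenerate cases are settled by the same direct minimizations (Jensen's inequality forcing $\dot\rho\equiv1$ when $\beta=0$, the substitution $b=\sqrt a$ reducing the reaction term to a Dirichlet energy when $\alpha=0$, and emptiness of the competitor set yielding $+\infty$). The extra care you anticipate is not needed, since the conventions $\dot\rho\equiv0$ for $\alpha=0$ and $\dot a\equiv0$ for $\beta=0$ are fixed \emph{by definition} in the paragraph preceding the corollary, and the $\Gamma$-limit statement there is an independent remark rather than an ingredient of the proof.
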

\begin{proof}
We only need to consider the boundary cases. 

For $\alpha=\beta =0 $ we have $\dot a \equiv 0 \equiv  \dot \rho$,
which implies that $\calJ^\mathrm{1mp}_{0,0}$ is finite only for $L=0$
and $a_0=a_1$.

For $\alpha=0$ and $\beta>0$ we have $\dot\rho\equiv 0$ and obtain a
finite value only for $L=0$. Clearly, the infimum of $\int_0^1 \dot
a^2/(\beta a) \dd s$ is given by
$4(\sqrt{a(1)}{-}\sqrt{a(0)})^2/\beta$, which is the desired result.

The case $\beta=0$ and $\alpha>0$ provides $\dot a\equiv 0$ and
$\dot\rho\equiv 1$. Hence, the infimum is $L^2 a_0/\alpha$ for
$a_1=a_0$ and $\infty$ otherwise. 
\end{proof}

\begin{example}[Mass splitting]\label{ex:MassSplit1}
At the end of this subsection we give a more complicated example
for an optimal curve consisting of two point masses. We want to connect the measures 
$\mu_0=a_0\delta_{x_0}$ and $\mu_1=a_1\delta_{x_0}+b_1\delta_{x_1}$, where
$L=|x_0{-}x_1|<\pi/2$, i.e.\ $\trcos(L)=\cos(L)>0$. So the
question is how much of the mass at $x_0$ is kept there, how much
of the mass is used for transport, and how much mass is created at
$x_1$. We consider the curve 
\[
\gamma(s) = a(s)\delta_{x_0} +c(s)\delta_{x(s)} + b(s)\delta_{x_1}
\]
with $a(s),b(s),c(s)\geq 0$ and the boundary conditions 
\[
x(0)=x_0,\ x(1)=x_1,\quad a(0)+c(0)=a_0,\quad a(1)=a_1,\quad b(0)=0,
\quad  c(1)+b(1)=b_1.
\]
Choosing $\alpha=1$ and $\beta=4$ and optimizing each of the given three
curves under their own boundary 
conditions gives
\[
\mathrm{Length}_{1,4}(\gamma)^2 = (\sqrt{a(0)}{-}\sqrt{a(1)})^2 +
c(0){+}c(1) - 2\sqrt{c(0)c(1)}\trcos(L) + b(1). 
\]
>From the constraint $c(1)+b(1)=b_1$ and the second last term, we see
that it is optimal to choose $c(1)$ as large as possible, namely
$c(1)=b_1$ and $b(1)=0$. In particular, we have no creation at $x_1$,
i.e.\ $b\equiv 0$. Setting  $c_0=c(0)$
and eliminating $a(0)=a_0-c_0$ we find 
\[
\mathrm{Length}_{1,4}(\gamma)^2 = a_0-2\sqrt{a_1}\sqrt{a_0{-}c_0} -
2\trcos(L)\sqrt{b_1c_0} + b_1 + a_1.
\]
The minimal value is achieved for the choice 
$c_0=a_0b_1 \trcos(L)^2/(a_1{+}b_1\trcos(L)^2)$, which means a mass splitting as
$0<c_0 < a_0$. Hence, we have established the estimate
\[
\msD_{1,4}(\mu_0,\mu_1)^2=\msD_{1,4}(a_0\delta_{x_0}\,, \, a_1\delta_{x_0} +
b_1\delta_{x_1} )^2 \leq  a_0+a_1+b_1 -
2\sqrt{a_0(a_1{+}b_1\trcos(|x_0{-}x_1|)^2)} .
\]
In fact, it will be shown in Example \ref{ex:MassSplit2} that the
curve $\gamma$ is indeed a geodesic curve, i.e.\ ``$\leq$'' can be
replaced by ``$=$''.  
\end{example}

\subsection{Optimal transport on the cone}

The crucial point in the characterization of the distance
$\msD_{\alpha,\beta}$ induced by the Onsager operator $\bbK_{\alpha,\beta}$
in \eqref{eq:dissHK1}, for $\alpha=1$ and $\beta=4$,
is that the functional $\calJ^\mathrm{1mp}_{1,4}$ in
\eqref{eq:infJ1mp}, which gives the cost for optimally transporting
a single mass point, is closely related to the metric construction
of a \emph{cone over the  metric space $(\Omega,|\cdot|)$}. 
We will briefly explain the construction in this section and refer
to \cite[Sect.\,3.6.2]{BuBuIv01CMG} for more details.

 Given the closed and
convex domain $\Omega \subset \R^d$ we construct the cone $\CC$
as the quotient of $\Omega\ti {[0,\infty[}$ over $\Omega\ti\{0\}$,
i.e.,
\[
\CC:= \bigslant{\big({\Omega}\ti {[0,\infty[}\big)}
{\big(\textstyle{\Omega}\ti \{0\}\big)}.
\]
In particular, all points in $\Omega\ti \{0\}$ are 
identified with one point, namely the tip of
the cone denoted by $\TT$. For any $x\in \Omega$ and $r>0$
the equivalence classes are denoted by $z=[x,r]\in\CC$
while for $r=0$ the equivalence class $[x,0]$ is equal
to $\TT$. 

Motivated by the previous section we define the distance 
$\msd_\mfC:\CC\ti \CC \to {[0,\infty[}$ on the cone space $\CC$ 
as follows:
\[
\msd_\mfC ([x_0,r_0],[x_1,r_1])^2:= r_0^2+r_1^2 - 2r_0r_1
  \trcos(|x_{1}{-}x_0|), 
\]
where $\trcos$ is defined as in Theorem \ref{th:MassPointCost}. 

For the special case that $\Omega =[0,\ell]\subset \R^2$ with
$0<\ell<2\pi$, we can visualize $\CC=\mfC_{[0,\ell]}$ by the
two-dimensional sector 
\[
\Sigma_\ell := \set{y=(r\cos x, r \sin x)\in \R }{ r\geq 0,\ x\in
  [0,\ell]},
\]
where $y=(0,0)$ corresponds to the tip $\TT$. The induced distance is
the Euclidian distance restricted to $\Sigma_\ell$,
i.e.\ the geodesic curve between $y_0$ and $y_1$ is a straight
segment if $|x_1{-}x_0|\leq \pi$ while it consists of the two rays
connecting $y=0$ with $y_0$ and $y_1$, respectively, if $\pi \leq
|x_1{-}x_0|\leq \ell < 2\pi$, see Figure \ref{fig.ConeSector}. 
\begin{figure}
 \centerline{\unitlength1.5em
\includegraphics[width=10\unitlength]{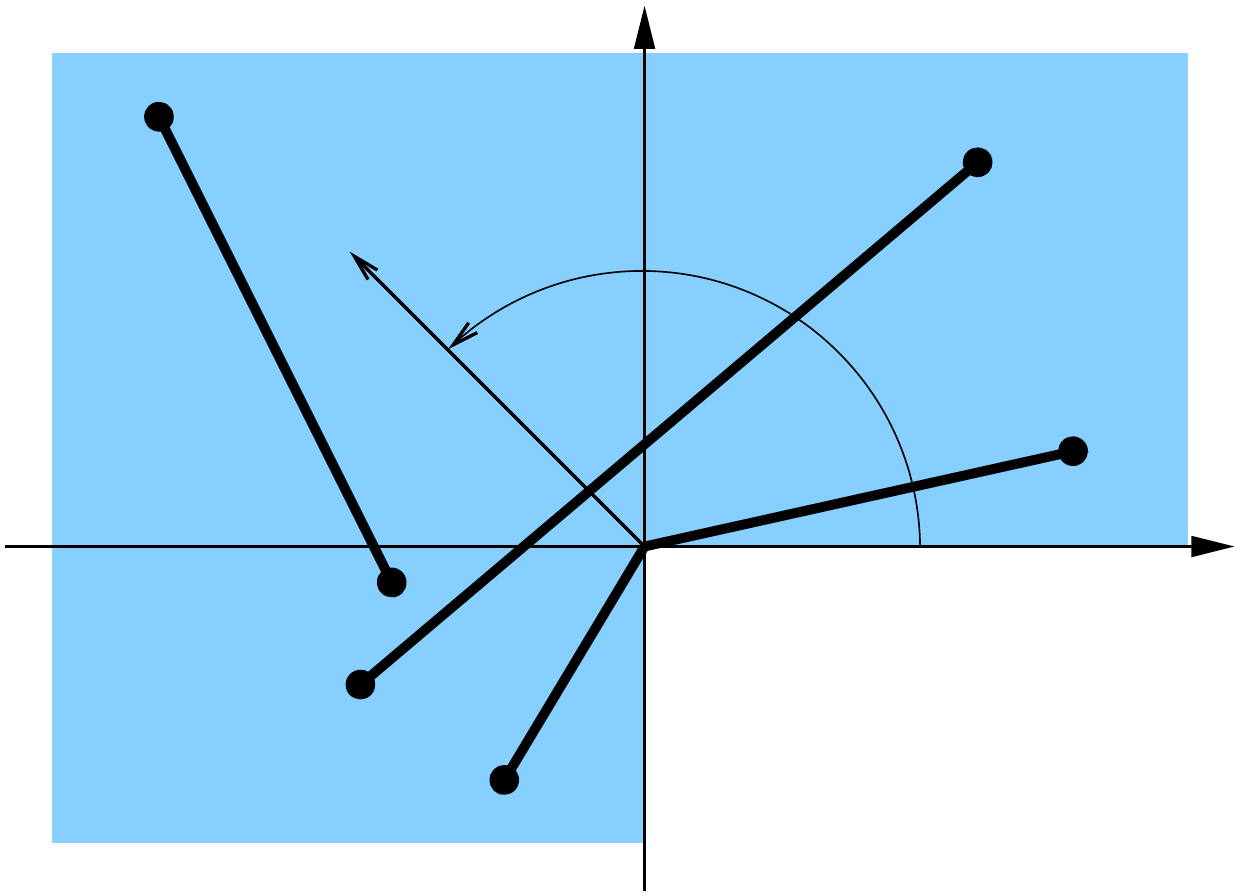}
\begin{picture}(0,0)
\put(-5.8,2.4){
\put(1,-0.2){$0$}
\put(5,-0.2){$y_1$} \put(0.5,5.0){$y_2$}
\put(1.2,2.8){$x$} \put(-0.75,1){$r$}
}
\end{picture}
}
\caption{The cone $\mfC_{[0,3\pi/2]}$ represented as sector in $\R^2$
  via $(y_1,y_2)=(r\cos x,r\sin x)$ and three geodesic curves. 
  The angle $x=\pi$ is critical for smoothness of geodesic
  curves.}
\label{fig.ConeSector} 
\end{figure} 
In the case of the traveling mass point discussed in the previous
section we identify the Dirac measures $a_i\delta_{x_i}$ with pairs
$[x_i,\sqrt{a_i}]\in\CC$. Thus, the result of Theorem \ref{th:MassPointCost} 
can be reformulated as
\[
\calJ_{1,4}^\mathrm{1mp}(|x_0{-}x_1|^2;a_0,a_1)
=\msd_\mfC\big([x_0,\sqrt{a_0}],[x_1,\sqrt{a_1}]\big)^2.
\]

For general coefficients $\alpha,\beta>0$ the distance $\msd_\mfC$
has to be replaced  by
\[
\msd_{\mfC}^{\alpha,\beta}(z_0,z_1)=\msd_{\mfC}^{\alpha,\beta}([x_0,r_0],[x_1,r_1])=
\sqrt{\mfS_{\alpha,\beta}(|x_1{-}x_0|, r_0,r_1)}, 
\]
see \eqref{eq:DD.alpha.beta}. This distance can be seen as a geodesic
distance on the cone $\CC$ induced by a Riemannian metric (outside of
the vertex $\TT$) given by the tensor
\begin{equation}
  \label{eq:bbG-Cone}
  \bbG^\mfC_{\alpha,\beta}([x,r]) := \bma{cc}\frac{r^2}{\alpha}
I_{\R^d}&0\\ 0 & 4/\beta \ema \ \in \R^{(d+1)\ti(d+1)}.
\end{equation} 
This fact is already seen in \eqref{eq:Length} if we set
$a(s)=r(s)^2$.

\subsubsection{Geodesic curves in the cone space}
\label{sss:GeoCurveCone}

As is shown in \cite[Sect.\,3.6.2]{BuBuIv01CMG}, the pair
$(\CC,\msd_\mfC)$ is a complete geodesic space, where each two points can
be connected by a unique arclength-parameterized geodesic curve. These
curves are given by the following geodesic interpolator
$Z(s;\cdot,\cdot):\CC\ti\CC\to\CC$ (recall $z_j=[x_j,r_j]$)
\begin{equation}\label{eq:GeodInterpol}
\begin{split}
Z(s;z_0,z_1) &:= [X(s;z_0,z_1),R(s;z_0,z_1)] \quad \text{where}
\\[0.7em]
R(s;z_0,z_1)^2&:=(1{-}s)^2 r_0^2 + s^2 r_1^2
+2s(1{-}s)r_0r_1 \trcos(|x_0{-}x_1|), 
\\[0.7em]
 X(s;z_0,z_1)&:= \big(1{-}\rho(s;z_0,z_1)\big) x_0 +
 \rho(s;z_0,z_1) x_1,
\\[0.7em]
\rho(s;z_0,z_1)&:= \left\{\ba{cl}\dfrac1{|x_1{-}x_0|} \arccos\Big(
\dfrac{(1{-}s)r_0 + sr_1 \cos|x_1{-}x_0|}{R(s;z_0,z_1)}\Big)
&\text{for } |x_1{-}x_0| < \pi ,  \\ 
 \dfrac12 \Big( 1+ \mathop{\mathrm{sign}}\big((1{-}s)r_0-sr_1\big)
 \Big) & \text{for }|x_1{-}x_0| \geq \pi.  \ea \right. 
\end{split}
\end{equation}
Note that in the definition of $R$ there is a ``$+$'' in front of
the cosine term, while there is a ``$-$'' in the distance $\msd_\mfC$. 
Moreover, by elementary geometric identities it is easy to see
that the formula for $\rho$ is equivalent to the one obtained in 
Theorem~\ref{th:MassPointCost}.

In particular, the curve defined by $\gamma(s)=Z(s;z_0,z_1)$
is a constant speed geodesic curve with respect to the distance
$\msd_\mfC$ connecting $z_0$ and $z_1$, i.e.,
\begin{equation}\label{eq:geodCone}
\forall\,0\leq s<t\leq 1:\quad
\msd_\mfC\big(\gamma(s),\gamma(t)\big) = |t{-}s|\msd_\mfC(z_0,z_1).
\end{equation}
As for the Wasserstein-Kantorovich distance, the geodesic curves in $\CC$ are
key for the construction of the geodesic curves with respect to the 
Hellinger--Kantorovich distance. 
We will discuss this in Section \ref{ss:GeodCurves} in detail.

\subsubsection{The transport distance modulo reservoirs on the cone space} 
\label{sss:WassDistReservoir}

Using the theory of optimal transport (cf.\
\cite{AmGiSa05GFMS,Vill09OTON}) we can define the
Kantorovich--Wasserstein distance $\msW_\mfC$ associated with the
cone distance $\msd_\mfC$ on the set of all nonnegative and finite measures
$\MMM_2(\CC)$ as follows. If $\lambda_0(\CC)\neq \lambda_1(\CC)$,
then we set $\msW_\mfC(\lambda_0,\lambda_1)=\infty$ and otherwise we
set
\[
 \msW_\mfC(\lambda_0,\lambda_1)^2:= \inf\Bigset{ \iint_{\CC\ti\CC}
   \msd_\mfC(z_0,z_1)^2\, \rmd \gamma(z_0,z_1)}{\gamma\in
   \calM(\CC{\ti}\CC),\ \Pi_\#^i\gamma=\lambda_i }.
\]
Moreover, there the geodesic interpolation (which is not unique in
general) can be described using an optimal transport plan $\gamma$
that is a minimizer in the definition of $\msW_\mfC$. We denote the
geodesic interpolator by 
\begin{equation}
\calZ_\gamma(s;\lambda_0,\lambda_1):= Z(s; \cdot,\cdot)_\#
\gamma.
\end{equation}

For the proper handling of creation and annihilation of mass 
we introduce a modified distance. The modification occurs via a reservoir of mass 
in the vertex $\TT$ of the cone such that mass is generated
from the reservoir and absorbed into it. The following result shows that 
if we assume that the reservoir is sufficiently big (and in our 
model for the Hellinger--Kantorovich operator it is in fact 
infinite), we never have any true transport over distances larger 
than $\pi/2$ (respectively, $\sqrt{\alpha/\beta}\pi$ in 
the scaled case), which is only half the critical distance 
of possible transport. 

\begin{proposition}[Optimal transport in the presence of  
    large reservoirs]\label{pr:Reservoir} 
We consider arbitrary measures $\lambda_0,\lambda_1\in\MMM_2(\CC)$ with
equal masses $\lambda_0(\CC)=\lambda_1(\CC)$. 
\begin{itemize}
\item[(a)] The function ${[0,\infty[} \ni \kappa \mapsto
w(\lambda_0,\lambda_1,\kappa):= \msW_\mfC( \kappa\delta_{\TT}
{+}\lambda_0,  \kappa\delta_{\TT}{+}\lambda_1) $ is nonincreasing. 
\item[(b)] Define the real numbers 
\[
\theta_j:=\lambda_j(\CC{\setminus} \TT),\quad
\rho_j:=\lambda_j(\{\TT\}), \quad \text{and }
\kappa_*=\max\{0,\theta_1{-}\rho_0,\theta_0 {-}\rho_1\},
\]
then for all $\kappa\geq \kappa_*$ we have
$w(\lambda_0,\lambda_1,\kappa)= w(\lambda_0,\lambda_1,\kappa_*)$
with similar transport plans differing only by
$(\kappa{-}\kappa_*)\delta_{\TT,\TT}$. 
\item[(c)] For any optimal transport plan $\gamma$ connecting $\kappa_*\delta_{\TT}
{+}\lambda_0$ and $\kappa_*\delta_{\TT}{+}\lambda_1$ we have 
$\gamma(\mfN)=0$, where
\[
\mfN:=\big\{(z_0,z_1)\in\CC{\ti}\CC\,\big|\,r_0,r_1>0,~|x_0{-}x_1|>\pi/2\big\},
\]
i.e., there is no transport in $\Omega$ over distances longer than
$\pi/2$.
\end{itemize} 
\end{proposition}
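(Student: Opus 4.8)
The plan is to prove the three parts in the given order, each building on the previous, using the explicit cone geometry encoded in $\msd_\mfC$ together with the variational structure of $\msW_\mfC$.

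For part (a), the key observation is a monotonicity-by-coupling argument. Given $\kappa_1 < \kappa_2$, I would take an optimal plan $\gamma$ for $w(\lambda_0,\lambda_1,\kappa_2)$ and produce a competitor for $w(\lambda_0,\lambda_1,\kappa_1)$ of no greater cost. The mechanism is that any mass sitting at the tip $\TT$ costs nothing to keep there, since $\msd_\mfC(\TT,\TT)=0$ (indeed $\msd_\mfC([x,0],[x',0])=0$ because $r_0=r_1=0$), and moreover any transport plan that moves mass to or from the tip can only become cheaper when more tip mass is available to absorb it. Concretely, I expect to argue that reducing the reservoir from $\kappa_2$ to $\kappa_1$ forces some mass that was being annihilated into the reservoir to instead be transported, which cannot decrease the cost — hence $w$ is nonincreasing in $\kappa$. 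The cleanest formulation is to show directly that adding $\delta_{\TT}$ to both marginals never increases $\msW_\mfC$, by taking an optimal plan and augmenting it with the diagonal atom $\delta_{\TT,\TT}$, then observing that the extra reservoir only enlarges the feasible set of transport plans.

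For part (b), I would make the stabilization explicit. The claim is that once $\kappa\geq\kappa_*$, the reservoir is large enough that no optimal plan ever needs to transport mass \emph{between} the two off-tip parts via the reservoir beyond what $\kappa_*$ already permits; the thresholds $\theta_1-\rho_0$ and $\theta_0-\rho_1$ measure exactly the deficit of tip mass on one side relative to the off-tip mass that must be created or annihilated on the other. I would show that for $\kappa\geq\kappa_*$ an optimal plan decomposes as an optimal plan at level $\kappa_*$ plus the trivial coupling $(\kappa-\kappa_*)\delta_{\TT,\TT}$ of the surplus reservoir with itself, which contributes zero cost. The point is that the surplus reservoir mass on each side exceeds any conceivable demand for tip-interaction, so it can be matched tip-to-tip; uniqueness of the stabilized value then follows from part (a) (monotone and bounded below) together with exhibiting this explicit zero-cost extension.

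For part (c), which I expect to be the main obstacle, I would argue by contradiction using the triangle-type inequality hidden in the cost function $\msc_{1,4}(L)=-2\log\cos L$, equivalently the cone distance itself. Suppose $\gamma(\mfN)>0$, so a positive amount of mass is transported between points $z_0=[x_0,r_0]$ and $z_1=[x_1,r_1]$ with $r_0,r_1>0$ and $|x_0-x_1|>\pi/2$. The strategy is to construct a strictly cheaper competitor by rerouting this transport \emph{through the tip}: replace the direct coupling of a small mass packet from $z_0$ to $z_1$ by annihilating it into $\TT$ (cost $r_0^2$, since $\msd_\mfC([x_0,r_0],\TT)^2=r_0^2$) and creating it from $\TT$ at $z_1$ (cost $r_1^2$). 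The direct cost is $r_0^2+r_1^2-2r_0r_1\cos(|x_0-x_1|)$, and since $|x_0-x_1|>\pi/2$ gives $\cos(|x_0-x_1|)<0$, the through-tip cost $r_0^2+r_1^2$ is strictly smaller. The delicate part is ensuring this rerouting is feasible, i.e.\ that there is enough reservoir mass at level $\kappa_*$ to absorb and re-emit the packet; here I would invoke the definition of $\kappa_*$ from part (b) precisely to guarantee the reservoir can accommodate the swap, and use the disintegration of $\gamma$ to localize the argument to a set of positive measure in $\mfN$. This contradicts optimality, forcing $\gamma(\mfN)=0$.
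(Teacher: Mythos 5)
Your parts (a) and (b) are essentially the paper's proof. For (a), your ``cleanest formulation'' --- augmenting an optimal plan for the smaller reservoir by the diagonal atom $(\kappa_2{-}\kappa_1)\delta_{(\TT,\TT)}$ to get an equal-cost competitor at the larger one --- is exactly what the paper does (note that your opening sentence has the construction backwards: building a cheap competitor at level $\kappa_1$ from an optimal plan at level $\kappa_2$ would prove $w$ \emph{nondecreasing}). For (b), your decomposition of an optimal plan at level $\kappa\geq\kappa_*$ into an admissible plan at level $\kappa_*$ plus $(\kappa{-}\kappa_*)\delta_{(\TT,\TT)}$ is the paper's argument; the one nontrivial step you leave implicit is the nonnegativity of $\gamma_\kappa-(\kappa{-}\kappa_*)\delta_{(\TT,\TT)}$, which the paper obtains from the marginal count $\gamma_\kappa(\{\TT\}{\times}\{\TT\})\geq \kappa+\rho_0-\theta_1\geq \kappa-\kappa_*$; your identification of the thresholds $\theta_1{-}\rho_0$, $\theta_0{-}\rho_1$ shows you have the right quantities in hand.

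The genuine gap is in the feasibility step of (c). Your cost inequality is the paper's: on $\mfN$ one has $\msd_\mfC(z_0,z_1)^2=r_0^2+r_1^2-2r_0r_1\trcos(|x_0{-}x_1|)>r_0^2+r_1^2$ since $\trcos<0$ there, so routing through the tip is strictly cheaper. But your plan to keep the marginals fixed and ``invoke the definition of $\kappa_*$ to guarantee the reservoir can accommodate the swap'' does not work as stated. A fixed-marginal rerouting of a packet $(z_0,z_1)\mapsto (z_0,\TT)+(\TT,z_1)$ requires mass on the diagonal atom $(\TT,\TT)$ to exchange with, and at level exactly $\kappa_*$ an optimal plan may charge $(\TT,\TT)$ with \emph{zero} mass (all tip mass can be coupled to off-tip points). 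Swapping instead with a pair $(\TT,w_1)$, $w_1=[y_1,s_1]$, changes the cost from $\msd_\mfC(z_0,z_1)^2+s_1^2$ to $\msd_\mfC(z_0,w_1)^2+r_1^2$, which is not sign-definite because $\msd_\mfC(z_0,w_1)^2$ can be as large as $(r_0{+}s_1)^2$. The paper sidesteps feasibility entirely: it replaces the whole restriction $\gamma\res\mfN$ by $(1{-}\rho_0)\lambda_0\otimes\delta_\TT+\delta_\TT\otimes(1{-}\rho_1)\lambda_1$ (with $\rho_i$ the densities of the marginals of $\gamma\res\mfN^c$), accepting that \emph{both} marginals grow by the same tip mass $\kappa\delta_\TT$ with $\kappa=\gamma(\mfN)>0$. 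The resulting plan is admissible at reservoir level $\kappa_*+\kappa$ and strictly cheaper, and the stabilization $w(\lambda_0,\lambda_1,\kappa_*+\kappa)=w(\lambda_0,\lambda_1,\kappa_*)$ from parts (a) and (b) converts this into the desired contradiction. Your proof of (c) needs this marginal-enlargement device (or an equivalent); the reservoir at level $\kappa_*$ by itself does not accommodate the swap.
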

\begin{proof}
\textit{ad (a) }Let $0\leq\kappa_1<\kappa_2$ be given and let 
$\gamma_{\kappa_1},\,\gamma_{\kappa_2}\in\MMM_2(\CC{\ti}\CC)$ 
be optimal transport plans for the pairs $\kappa_1\delta_{\TT}{+}\lambda_i$
and $\kappa_2\delta_{\TT}{+}\lambda_i$, $i=0,1$, respectively.
Since $\kappa_2>\kappa_1$ we can define the transport plan
$\wh\gamma_{\kappa_2}=(\kappa_2{-}\kappa_1)\delta_{(\TT,\TT)}
+\gamma_{\kappa_1}$, which satisfies $\Pi^i_{\#}\wh\gamma_{\kappa_2} =
\kappa_2\delta_{\TT} +\lambda_i$. Thus, $\wh\gamma_{\kappa_2}$ is 
an admissible plan for the minimization problem in the definition 
of $\msW_{\mfC}$ and we obtain the estimate
\begin{align*}
w(\lambda_0,\lambda_1,\kappa_2)^2 &= \msW_{\mfC}(\kappa_2\delta_{\TT}{+}\lambda_0,
\kappa_2\delta_{\TT}{+}\lambda_1)^2\leq \iint_{\CC{\ti}\CC}\msd_{\mfC}(z_0,z_1)^2
\dd \wh\gamma_{\kappa_2}(z_0,z_1)\\
&=\iint_{\CC{\ti}\CC}\msd_{\mfC}(z_0,z_1)^2\dd \gamma_{\kappa_1}(z_0,z_1)
=w(\lambda_0,\lambda_1,\kappa_1)^2.
\end{align*}
Hence, $\kappa\mapsto w(\lambda_0,\lambda_1,\kappa)$ is not increasing.

\textit{ad (b) }Let $\gamma_{\kappa}$ and $\gamma_{\kappa_*}$
denote the optimal transport plans with respect to $\kappa$
and $\kappa_*$, respectively. Similar to (a) we define the 
measure $\wh\gamma_{\kappa_*} = \gamma_{\kappa} 
- (\kappa{-}\kappa_*)\delta_{(\TT,\TT)}$. Obviously, $\wh\gamma_{\kappa_*}$
satisfies $\Pi_{\#}^i\wh\gamma_{\kappa_*} = \lambda_i + \kappa_*\delta_{\TT}$.
It remains to show that $\wh\gamma_{\kappa_*}$ is nonnegative 
and hence is an admissible transport plan. Indeed, due to the estimate 
$\gamma_{\kappa}(\{\TT\}{\times}(\CC\setminus\{\TT\}))\leq \theta_1$ 
and the definition of $\kappa_*$ we obtain
\begin{align*}
\gamma_{\kappa}(\{\TT\}{\times}\{\TT\}) &=
 \gamma_{\kappa}(\{\TT\}{\times}\CC) -
 \gamma_{\kappa}\big(\{\TT\}{\times}(\CC\setminus\{\TT\})\big)\\
&\geq \kappa +\rho_0 -\theta_1\geq \kappa-\kappa_*.
\end{align*}
Hence, $\wh\gamma_{\kappa_*}\geq0$ and, arguing as for (a) we get
$w(\lambda_0,\lambda_1,\kappa_*)\leq
w(\lambda_0,\lambda_1,\kappa)$. However, due to the first part of
the theorem even equality must hold.

\emph{ad (c)} As before let $\gamma$ denote an optimal plan for
lifts $\lambda_0$ and $\lambda_1$ of $\mu_0$ and $\mu_1$.
Assume that $\gamma(\mfN)>0$ such that
$\msd_{\mfC}(z_0,z_1)^2>r_0^2+r_1^2$ for 
$\gamma$-a.a.\ $(z_0,z_1)\in\mfN$ with $z_i=[x_i,r_i]$. 
We aim to construct a new transport plan $\wh\gamma$ 
based on $\gamma$ giving a strictly lower cost and hence 
showing the non-optimality of $\gamma$.

To this end, we introduce the characteristic function $\chi$ of the subset
$\mfN^c:=(\CC{\ti}\CC)\setminus\mfN$. Moreover,
we denote by $\wt\lambda_i\in\MMM_2(\CC)$ the marginals of
$\wt\gamma_\chi=\chi\gamma$, which are obviously absolutely continuous with
respect to $\lambda_i$. We denote the densities with $\rho_i$ such that
$\wt\lambda_i=\rho_i\lambda_i$. In particular, for
$\lambda_i$-a.e.\ $z\in\CC$ we have that  $0\leq \rho_i\leq 1$.

We define the measure $\wh\gamma$
\[
\wh\gamma(\rmd z_0,\rmd z_1) = \wt\gamma_\chi(\rmd z_0,\rmd z_1) 
+ (1{-}\rho_0)\lambda_0(\rmd z_0)\delta_{\TT}(\rmd z_1)
+\delta_{\TT}(\rmd z_0)(1{-}\rho_1)\lambda_1(\rmd z_1).
\]
We easily check that the marginals of $\wh\gamma$ are given by
$\wh\lambda_i=\lambda_i+\kappa\delta_{\TT}$, $i=0,1$, where $\kappa>0$ 
is given by $\kappa=(\gamma{-}\wt\gamma_\chi)(\CC{\ti}\CC)$. 
In particular, $\wh\lambda_i$ is an admissible lift for $\mu_i$.

It remains to show that $\wh\gamma$ has a strictly lower cost
than $\gamma$. We compute
\begin{align*}
\iint_{\CC\times\CC} \msd_{\mfC}(z_0,z_1)^2\dd\wh\gamma
&=\iint_{\CC\times\CC} \msd_{\mfC}(z_0,z_1)^2\dd\wt\gamma_\chi 
+ \int_{\CC} r_0^2(1{-}\rho_0)\dd\lambda_0+ \int_{\CC} r_1^2(1{-}\rho_1)\dd\lambda_1\\
&=\iint_{\mfN^c}\msd_{\mfC}(z_0,z_1)^2\dd \gamma
+\iint_{\mfN}\big(r_0^2+r_1^2\big)\dd\gamma\\
&<\iint_{\CC\times\CC} \msd_\mfC(z_0,z_1)^2\dd\gamma.
\end{align*}
Thus, $\gamma$ cannot be optimal and any optimal transport plan has to
vanish on $\mfN$.
\end{proof}

Using the above proposition we may define a new distance
$\msW_\text{rsv}$ on $\MMM_2(\CC)$ that assumes that the reservoir is
always big enough. Indeed, defining
\[
\msW_\text{rsv}(\lambda_0,\lambda_1):=
\inf_{\kappa>0}\msW_\mfC(\lambda_0{+}\kappa \delta_{\TT},
\lambda_1{+}\kappa \delta_{\TT}) =  \msW_\mfC(\lambda_0{+}\kappa_* \delta_{\TT},
\lambda_1{+}\kappa_* \delta_{\TT}), 
\]
where $\kappa_*$ is given as in Proposition \ref{pr:Reservoir}(b).

\subsection{The Hellinger--Kantorovich distance}
\label{sss:ProjCone}

We can now easily define a distance for measures on $\Omega$ 
by lifting measures $\mu_j\in \MMM(\Omega)$ to measures on $\MMM_2(\CC)$ 
and projecting back measures from $\MMM_2(\CC)$ into $\MMM(\Omega)$. 
We define the projection $\mfP : \MMM_2(\CC) \to \MMM(\Omega)$ via 
\[
\int_\Omega \phi(x)\dd \mfP\lambda =
\int_{\CC} r^2 \phi(x)\dd\lambda([x,r]) \quad \text{for all }
\forall\, \phi\in \rmC^0(\Omega).
\]
In the last formula we use that for $r>0$ the equivalence class
$[x,r]$ uniquely determines $x$ and $r$ and that the prefactor $r^2$
makes the function $\Phi: [x,r] \mapsto r^2\phi(x)$ continuous if we
set $\Phi(\TT)=0$. In the case $\mu =\mfP \lambda$ we call
$\lambda$ a lift of $\mu$. 

The first and most intuitive result on the distance $\msD_{1,4}$
induced by the Onsager operator $\bbK_{1,4}$ in \eqref{eq:dissHK1} is
the following formula, which we formulate as a definition first and
then show that it equals the distance $\msD_{1,4}$.

\begin{definition}\label{def:HKbyLift} The \emph{Hellinger--Kantorovich distance} on
  $\MMM(\Omega)$ is defined as
\begin{equation}
  \label{eq:DHKbyLift}
  \msHK(\mu_0,\mu_1)= \min\Bigset{ \msW_\mfC (\lambda_0,\lambda_1)
}{ \mfP\lambda_0 = \mu_0,\ \mfP\lambda_1 = \mu_1 }.
\end{equation}
\end{definition}

Before proving the identity $\msHK=\msHK_{1,4}=\msD_{1,4}$ in Section
\ref{se:equiDyn}, we collect some properties of $\msHK$.  First, we
emphasize that the projection $\mfP$ does not see the reservoirs at
$\TT$, hence the above formula already includes arbitrary reservoirs
according to Proposition \ref{pr:Reservoir}.

Next, let us remark that $\msHK$ satisfies an important scaling
invariance: Let $\vartheta:\CC\ti\CC\to \left]0,\infty\right[$ be a
Borel map and define the dilation function
$h_\vartheta:\CC{\ti}\CC\to\CC{\ti}\CC$ via
\begin{equation}
h_\vartheta(z_0,z_1) = \Big(\Big[x_0,\frac{r_0}{\vartheta(z_0,z_1)}\Big],
\Big[x_1,\frac{r_1}{\vartheta(z_0,z_1)}\Big]\Big)\quad
\text{for }z_i=[x_i,r_i].\label{eq:1}
\end{equation}
Then, given any transport plan $\gamma\in\MMM_2(\CC{\ti}\CC)$ we
define the dilated plan $\gamma_\vartheta =
({h_\vartheta})_{\#}(\vartheta^2\,\gamma) $ in $\MMM(\CC{\ti}\CC)$.
Letting $\lambda_i$ and $\lambda_i^\vartheta$ denote the marginals of
$\gamma$ and $\gamma_\vartheta$ we have that
\begin{equation}
  \label{eq:Scaling}
\iint_{\CC\times\CC} \msd_\mfC(z_0,z_1)^2\dd \gamma =
\iint_{\CC\times\CC} \msd_\mfC(z_0,z_1)^2 
\dd \gamma_\vartheta \quad\text{and}\quad
\mfP\lambda_i =\mfP\lambda_i^\vartheta.
\end{equation}
In particular, we can always assume that the transport plans $\gamma$
and the lifts $\lambda_i$ are probability measures, e.g.\ by setting
$\vartheta \equiv (\gamma(\CC{\ti}\CC))^{-1/2} $.

The main result of this section is the following structural theorem. 
For a full proof we  refer to
\cite{LiMiSa14?Theory},  where a more general case is considered.
In particular, there $\Omega$ is replaced by general complete geodesic
spaces. However, because of the strong relevance of 
part (v) for the subsequent applications, we present a full proof of
the identity $\msHK= \msD_{1,4}$ in Section \ref{se:equiDyn}. 

\begin{theorem}[Properties of $\msHK$]\label{th:properties}
The distance $\msHK:\MMM(\Omega)\ti\MMM(\Omega)\to[0,\infty[$
has the following properties:
\begin{itemize}
\item[i)] For each pair $\mu_0,\mu_1$ there exists an optimal pair
 $\lambda_0,\lambda_1$ of lifts;
\item[ii)] For all measures $\mu_0,\mu_1$ the upper bound 
$\msHK(\mu_0,\mu_1)^2\leq\mu_0(\Omega)+\mu_1(\Omega)$ is satisfied;
\item[iii)] $(\MMM(\Omega),\msHK)$ is a complete and separable metric
space;
\item[iv)] The topology induced by $\msHK$ coincides with the weak 
topology on $\MMM(\Omega)$;
\item[v)] The distance $\msHK$ is induced by the Onsager operator $\bbK_{1,4}$.
\end{itemize}
\end{theorem}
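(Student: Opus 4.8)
The plan is to reduce every assertion to the geometry of the cone--Wasserstein space $(\MMM_2(\CC),\msW_\mfC)$, which is itself a complete, separable, geodesic metric space because $(\CC,\msd_\mfC)$ is (Section~\ref{sss:GeoCurveCone}), and then to transfer these properties through the projection $\mfP$ and the lift formula \eqref{eq:DHKbyLift}. The easiest part is the bound (ii), which I would get from the explicit competitor that routes all mass through the tip. Lifting $\mu_i$ by $\lambda_i:=(\iota_1)_\#\mu_i$ with $\iota_1(x)=[x,1]$ gives $\mfP\lambda_i=\mu_i$ and $\int_\CC r^2\dd\lambda_i=\mu_i(\Omega)$; using the plan $\gamma=\lambda_0\otimes\delta_\TT+\delta_\TT\otimes\lambda_1+c\,\delta_{(\TT,\TT)}$ with $c$ balancing the masses, and $\msd_\mfC([x,r],\TT)^2=r^2$, its cost is exactly $\mu_0(\Omega)+\mu_1(\Omega)$, so $\msHK(\mu_0,\mu_1)^2\le\mu_0(\Omega)+\mu_1(\Omega)$. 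For existence of optimal lifts (i) I would run the direct method: by the dilation invariance \eqref{eq:Scaling} normalise all lifts to probability measures, use the identity $\int_\CC r^2\dd\lambda_i=\mu_i(\Omega)$ to get tightness in the radial variable while compactness of $\Omega$ handles the angular one, extract a narrowly convergent subsequence $\gamma_n\rightharpoonup\gamma$ of optimal plans, and pass to the limit by lower semicontinuity of $\gamma\mapsto\iint\msd_\mfC^2\dd\gamma$. The first technical obstacle appears here, namely passing the constraint $\mfP\lambda_i=\mu_i$ to the limit: this needs uniform integrability of the unbounded weight $r^2$, which I would extract from the \emph{exact} (not merely bounded) second-moment identity together with radial tightness, ruling out loss of mass at $r=\infty$; loss at the tip is harmless since $r^2$ vanishes there.

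With (i) available, symmetry, finiteness (via (ii)), and the implication $\msHK(\mu_0,\mu_1)=0\Rightarrow\lambda_0=\lambda_1\Rightarrow\mu_0=\mu_1$ are immediate, so the only substantial metric axiom in (iii) is the triangle inequality, which I regard as the main obstacle of the whole theorem. The difficulty is structural: the optimal middle lifts $\lambda_1',\lambda_1''$ of $\mu_1$ arising from the two problems $(\mu_0,\mu_1)$ and $(\mu_1,\mu_2)$ need not coincide, since they carry different radial profiles over the same base measure, so the Wasserstein gluing lemma does not apply directly. My plan is to reconcile them: disintegrate both optimal plans over $\mu_1$ and construct a common lift compatible with both, absorbing the mismatch of radial profiles at zero cost by combining the dilation invariance \eqref{eq:Scaling} with the reservoir stability of Proposition~\ref{pr:Reservoir} (arbitrary reservoirs at $\TT$ do not change $\msW_\mfC$). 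Once a common middle lift is achieved, one glues the two plans and invokes the triangle inequality for $\msW_\mfC$. Completeness and separability then follow from those of $(\MMM_2(\CC),\msW_\mfC)$: a countable dense family of finitely supported lifts projects under $\mfP$ to a $\msHK$-dense set, while a $\msHK$-Cauchy sequence lifts (using the gluing just described to control consecutive lifts) to a $\msW_\mfC$-Cauchy sequence whose limit projects to the desired limit measure.

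For the topological statement (iv) I would argue by quantitative comparison. From an optimal plan $\gamma$ between lifts of $\mu_0,\mu_1$ and any Lipschitz $\phi\in\rmC^0(\Omega)$, I would bound $|\int\phi\dd\mu_0-\int\phi\dd\mu_1|$ by integrating $|r_0^2\phi(x_0)-r_1^2\phi(x_1)|$ against $\gamma$ and controlling this pointwise in terms of $\msd_\mfC(z_0,z_1)$ and the radii, via elementary cone estimates such as $|r_0-r_1|\le\msd_\mfC(z_0,z_1)$ and $\sqrt{r_0r_1}\,\min\{|x_0-x_1|,\pi\}\le C\,\msd_\mfC(z_0,z_1)$; Cauchy--Schwarz together with the moment bound then gives $|\int\phi\dd\mu_0-\int\phi\dd\mu_1|\le C(\mu_0,\mu_1)\,\msHK(\mu_0,\mu_1)$, so that $\msHK$-convergence implies weak convergence (the choice $\phi\equiv1$ also forcing convergence of the masses). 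For the converse I would use that on the compact space $\Omega$ a weakly convergent sequence $\mu_n\rightharpoonup\mu$ has converging masses, build unit-radius lifts together with an explicit near-optimal plan, and show that its cost tends to zero, giving $\msHK(\mu_n,\mu)\to0$.

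Finally, the identity $\msHK=\msD_{1,4}$ of part (v), whose self-contained proof is the subject of Section~\ref{se:equiDyn}, I would establish by matching the two dynamic descriptions. One inequality lifts any curve admissible for the Benamou--Brenier action \eqref{eq:dissHK1} to a curve in $\MMM_2(\CC)$ and estimates its $\msW_\mfC$-length; the other projects a constant-speed $\msW_\mfC$-geodesic between optimal lifts back to $\MMM(\Omega)$ and reads off an admissible triple $(\mu,\xi,\Xi)$ solving the continuity equation with the correct action. The bridge is the characterisation of absolutely continuous curves in Wasserstein spaces of \cite{Lisi07CACC}, which identifies the metric speed with the minimal $\msW_\mfC$-action density, together with Proposition~\ref{p:equiDist}, which removes the constraint $\Xi=\nabla\xi$; the hard point is the lower bound, that is, showing that the projected geodesic is indeed admissible and action-minimising, which is precisely the content of Theorem~\ref{thm:timeLower}.
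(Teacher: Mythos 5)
Your proposal for part (v) follows essentially the same route as the paper: the paper itself gives a self-contained proof only of (v) in Section~\ref{se:equiDyn}, with the upper bound $\msHK\le\msD_{1,4}$ via Theorem~\ref{thm:timeUpper} (smoothing plus Maniglia's representation, compactness of the dynamic plans $\bfpi_\eps$) and the lower bound via Theorem~\ref{thm:timeLower} together with the Lisini-type Theorem~\ref{thm:dynPlan}, exactly the bridge you describe. Parts (i)--(iv), however, are not proved in this paper at all but quoted from \cite{LiMiSa14?Theory}, where they are obtained largely through the entropy-transport functional $\scrET_{1,4}$ and its duality rather than intrinsically on the cone; so your cone-geometric treatment of (i)--(iv) is a genuinely different route. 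Your competitor through the tip for (ii) is correct as written. For the triangle inequality in (iii) you have correctly located the obstruction (the two optimal middle lifts of $\mu_1$ need not coincide), but ``absorbing the mismatch of radial profiles at zero cost by dilation invariance plus reservoir stability'' is precisely the content of a nontrivial gluing lemma for $2$-homogeneous marginals in the companion paper, not a consequence of Proposition~\ref{pr:Reservoir}; as stated it is a claim, not an argument, and this is where the real work sits.

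There is one step that fails as stated: the constraint passage in your existence proof (i). Exact second moments plus radial tightness do \emph{not} give uniform integrability of $r^2$. Take $\mu_0=\mu_1=m\delta_x$ and the zero-cost probability plans $\gamma_n=(1{-}\eps_n)\delta_{([x,a_n],[x,a_n])}+\eps_n\delta_{([x,R_n],[x,R_n])}$ with $\eps_n R_n^2=c\in{]0,m[}$, $\eps_n\to0$, and $(1{-}\eps_n)a_n^2+\eps_n R_n^2=m$: every $\gamma_n$ is optimal, its marginal lifts project exactly onto $m\delta_x$, and the $\lambda_i^n$ are tight (indeed $\lambda_i^n(\{r>R\})\le m/R^2$), yet the narrow limit projects onto $(m{-}c)\delta_x\neq\mu_i$, because the weighted measures $r^2\lambda_i^n$ lose the mass $c$ carried by a vanishing amount of $\lambda$-mass at radii $R_n\to\infty$. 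So tightness of the lifts themselves, which is all your second-moment identity yields, cannot rule this out, and normalizing to probability measures does not help either. The fix is to use the dilation invariance \eqref{eq:Scaling} \emph{pointwise}: rescale each pair by $\vartheta(z_0,z_1)=\max\{r_0,r_1\}$ (and $\vartheta=1$ on $\{\TT\}\ti\{\TT\}$), so that the rescaled plan is supported in $\{r_0\le 1,\ r_1\le1\}$, has unchanged cost and projections, and total mass $\int\vartheta^2\dd\gamma\le\mu_0(\Omega)+\mu_1(\Omega)$. On this compact part of the cone the function $[x,r]\mapsto r^2\phi(x)$ is bounded and continuous, so both the constraints $\mfP\lambda_i=\mu_i$ and the lower semicontinuity of the cost pass to the narrow limit, and the direct method closes. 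With this repair your (i) is sound; the rest of your outline for (iv) (Lipschitz testing with the cone estimates $|r_0-r_1|\le\msd_\mfC(z_0,z_1)$, and unit-radius lifts for the converse) is plausible and consistent with what the companion paper proves.
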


\subsubsection{Consistency of above formulas with distance of Dirac masses}
\label{sss:ConsiDiracMass}
We come back to the example of the optimal transport and absorption/desorption
of two point masses in Subsection \ref{ss:MassDHW} and discuss the 
consistency of the above formulas. Let $\mu_0=a_0\delta_{x_0}$ and 
$\mu_1= a_1\delta_{x_1}$ denote two Dirac masses such that $a_0,a_1>0$. 
We consider lifts $\lambda_0,\lambda_1\in\MMM_2(\CC)$
of the particular form
\[
\lambda_0 = \Big(\kappa+\frac{a_1}{r_1^2}\Big)\delta_{\TT}
+\frac{a_0}{r_0^2}\delta_{[x_0,r_0]},\quad\text{and}\quad
\lambda_1 = \Big(\kappa+\frac{a_0}{r_0^2}\Big)\delta_{\TT}
+\frac{a_1}{r_1^2}\delta_{[x_1,r_1]},
\]
where $\kappa\geq 0$ and $r_i>0$ are arbitrary but fixed constants. 
In particular, we have equal mass $\lambda_0(\CC)=\lambda_1(\CC)$ and 
$\mfP\lambda_i = \mu_i$, i.e., $\lambda_i$ is indeed a lift for
$\mu_i$. 

The possible transport plans $\gamma\in\MMM_2(\CC{\ti}\CC)$
are uniquely characterized by the value ${g}:=\gamma(
\{[x_0,r_0],[x_1,r_1]\})\in[0,\min\{a_0/r_0^2,a_1/r_1^2\}]$, 
where the interval boundaries correspond to complete 
absorption/desorption and complete transport.

Denoting $z_i = [x_i,r_i]$ we find
\begin{align*}
\iint_{\CC{\ti}\CC}\msd_{\mfC}(z_0,z_1)^2\dd\gamma(z_0,z_1)&
=\frac{a_0}{r_0^2} \msd_\mfC(z_0,\TT)^2+ \frac{a_1}{r_1^2}\msd_\mfC(z_1,\TT)^2\\
&\quad+{g}\big[\msd_\mfC(z_0,z_1)^2-\msd_\mfC(z_0,\TT)^2-\msd_\mfC(z_1,\TT)^2\big]\\
&=a_0 + a_1 - 2{g}r_0r_1 \trcos(|x_0{-}x_1|).
\end{align*}
To get the optimal cost we have to minimize with respect to
${g}\in[0,\min\{a_0/r_0^2,a_1/r_1^2\}]$: For $L:=|x_0{-}x_1|>\pi/2$
the optimal value is ${g}=0$, which corresponds to the pure Hellinger
reaction case. For $L=\pi/2$ any ${g}$ is possible giving a convex set
of optimal plans.  In fact, it will be shown in
Section~\ref{ss:AllDiracM} that any pair of lifts is optimal in this
case.  Hence, the case $L\geq\pi/2$ yields $\msW_\mfC
(\lambda_0,\lambda_1) =
\sqrt{a_0{+}a_1}=\msHK(a_0\delta_{x_0},a_1\delta_{x_1})$ as desired.
For $L<\pi/2$ we have to choose ${g}=\min\{a_0/r_0^2,a_1/r_1^2\}$,
i.e., the maximal value.  With this we obtain
\[
\msW_\mfC (\lambda_0,\lambda_1)^2 = a_0 + a_1 - 2{g}_*(r_0,r_1)\cos(L),
\]
where ${g}_*(r_0,r_1):=\min\{a_0\frac{r_1}{r_0},a_1\frac{r_0}{r_1}\}$.
In particular, different lifts $\lambda_i=\lambda_i(r_0,r_1)$
give different costs. However, an easy calculation shows that
for $r_1/r_0=\sqrt{a_1/a_0}$ an optimal value is achieved,
such that $\msW_\mfC (\lambda_0,\lambda_1)^2=a_0 + a_1 - 2\sqrt{a_0a_1}\cos(L)
=\msHK(a_0\delta_{x_0},a_1\delta_{x_1})^2$.

For calculating the distance $\msHK$ the particular choice of an
optimal lift is not important, but we will see in Section
\ref{ss:AllDiracM} that in the case $L=\pi/2$ different lifts may give
rise to different geodesic curves. Hence, we highlight here that even
in the trivial case $L<\pi/2$ there are many optimal lifts. E.g.\ for
$a_1=a_0>0$ any $\eta \in \MMM_2({[0,\infty[})$ with $\int_0^\infty
r^2 \dd \eta = a_0$ defines optimal lifts $\lambda_j =
\delta_{x_j}{\otimes}\eta$. 
 
\subsubsection{Logarithmic-entropy transport functional}
\label{sss:LET}

In this subsection we give the formula for the distance via a
minimization problem and discuss a few of its properties, in
particular its consistency with the distance of Dirac masses. 
We do this for the case of general positive $\alpha$ and $\beta$. 

Using the Boltzmann function $\FBoltz(\rho)= \rho \log \rho - \rho +1 \geq 0$
with $\FBoltz'(\rho)=\log \rho$ and  $\FBoltz(\rho)=0$ for $\rho=0$ we
define the Hellinger--Kantorovich functional for any $\mu_0,\mu_1 \in
\MMM(\Omega)$ as follows. For $\eta \in \MMM(\Omega{\ti} \Omega)$ we
define the marginals $\eta_j = \Pi^j_\# \eta$ and assume $\eta_0\ll
\mu_0$ and $\eta_1\ll \mu_1$ and define the Hellinger--Kantorovich
entropy-transport functional via
\[
\scrET_{\alpha,\beta}(\eta;\mu_0,\mu_1):= 
\frac4\beta \int_\Omega \FBoltz\Big( 
  \frac{\rmd \eta_0}{\rmd\mu_0}\Big) \dd\mu_0 +
\frac4\beta \int_\Omega \FBoltz\Big( 
  \frac{\rmd \eta_1}{\rmd\mu_1}\Big) \dd\mu_1 +
\int_{\Omega\ti\Omega} \msc_{\alpha,\beta}(|x_0{-}x_1|) \dd \eta,
\]
where the cost function $\msc_{\alpha,\beta}$ is given by 
\[
\msc_{\alpha,\beta}(L):=\left\{ \ba{cl}
-\frac8\beta \log\Big(\cos\big(\sqrt{\beta/(4\alpha)} 
 L\big) \Big) & \text{for } L < \pi
\sqrt{\alpha/\beta},\\ \infty & \text{for } L \geq \pi
\sqrt{\alpha/\beta}. \ea \right.
\]   
We see that $\scrET_{1,4}(\cdot;\mu_0,\mu_1)$  is a convex functional,
thus it is easy to find minimizers.  
The following characterization is proved in full detail in
\cite{LiMiSa14?Theory}. Here we will only motivate the construction
by giving some examples. 

\begin{theorem}[Characterization of $\msHK_{\alpha,\beta}$ via minimization]
\label{th:CharDHK}
For $\alpha,\beta>0$ the distance induced by the Onsager operator
$\bbK_{\alpha,\beta}$  is given as follows:
\[
\msHK_{\alpha,\beta}(\mu_0,\mu_1)^2= \msET_{\alpha,\beta}(\mu_0,\mu_1):=
\inf\Bigset{\scrET_{\alpha,\beta}(\eta;\mu_0,\mu_1) }{ \eta \in
  \MMM(\Omega{\ti}\Omega),~ \eta_j \ll \mu_j }.  
\]
For every pair $(\mu_0,\mu_1)$ at least one minimizer $\eta$ exists,
which we call a calibration measure for this pair. 

Moreover, an optimal calibration measure $\eta$ satisfies
for $\varrho_i:=\rmd\eta_i/\rmd\mu_i$ the following optimality
conditions
\[
\begin{split}
|x_0{-}x_1| < \pi\sqrt{\alpha/\beta} \quad 
&\text{for }\eta\text{-a.e.}~(x_0,x_1)\in\Omega{\times}\Omega,\\
\varrho_0(x_0)\varrho_1(x_1)\leq \cos\big(\sqrt{\beta/(4\alpha)}|x_0{-}x_1|\big)^2
\quad&\text{for }\mu_0\text{-a.e.}~x_0\in\Omega~\text{and}~\mu_1\text{-a.e.}~x_1\in\Omega,\\
\varrho_0(x_0)\varrho_1(x_1)=\cos\big(\sqrt{\beta/(4\alpha)}|x_0{-}x_1|\big)^2>0 \quad 
&\text{for }\eta\text{-a.e.}~(x_0,x_1)\in\Omega{\times}\Omega.
\end{split}
\]
\end{theorem}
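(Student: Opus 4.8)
The plan is to reduce to the normalized case $\alpha=1$, $\beta=4$ (so that $\sqrt{\beta/(4\alpha)}=1$ and $\tfrac4\beta=1$) using the scaling relations already recorded, and then to prove $\msHK^2=\msET$ by exhibiting a \emph{single dual problem} that both the lifted cone formula of Definition~\ref{def:HKbyLift} and the entropy--transport functional $\scrET_{1,4}$ realize. Thus the core of the argument is a twofold application of Kantorovich duality, together with the observation that the two resulting dual problems literally coincide after an exponential change of variables. Existence of a calibration measure and the optimality conditions are then read off from the convex structure of $\scrET_{\alpha,\beta}$.

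First I would dualize the right-hand side of \eqref{eq:DHKbyLift}. Writing $\msW_\mfC(\lambda_0,\lambda_1)^2$ in Kantorovich-dual form $\sup\{\int\Phi_0\dd\lambda_0+\int\Phi_1\dd\lambda_1\mid \Phi_0(z_0)+\Phi_1(z_1)\le\msd_\mfC(z_0,z_1)^2\}$ and inserting it into the infimum over lifts, I would exchange the infimum over $\lambda_i$ with the supremum over $(\Phi_0,\Phi_1)$. For fixed potentials the inner infimum $\inf\{\int\Phi_i\dd\lambda_i\mid \mfP\lambda_i=\mu_i\}$ is an elementary linear problem in the radial variable: concentrating, for each $x$, all mass at the radius minimizing $\Phi_i([x,r])/r^2$ gives $\int_\Omega\psi_i\dd\mu_i$ with $\psi_i(x):=\inf_{r>0}\Phi_i([x,r])/r^2$. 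This both forces the reduction to the $2$-homogeneous potentials $\Phi_i([x,r])=r^2\psi_i(x)$ and turns the constraint into a pointwise quadratic-form condition in $(r_0,r_1)$; pairings with the vertex $\TT$ give $u_i:=1-\psi_i\ge0$, while pairings of two positive radii give $u_0(x_0)u_1(x_1)\ge\trcos(|x_0{-}x_1|)^2$, the latter being nontrivial only for $|x_0{-}x_1|<\pi/2$ since $\trcos\le0$ beyond that. Hence
\[
\msHK(\mu_0,\mu_1)^2=\mu_0(\Omega)+\mu_1(\Omega)-\inf\Big\{\int_\Omega u_0\dd\mu_0+\int_\Omega u_1\dd\mu_1\ \Big|\ u_i\ge0,\ u_0(x_0)u_1(x_1)\ge\cos\big(|x_0{-}x_1|\big)^2\ \text{for}\ |x_0{-}x_1|<\tfrac\pi2\Big\}.
\]

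Next I would dualize $\scrET_{1,4}$ by Fenchel--Rockafellar, using $\FBoltz^*(s)=e^s-1$. The soft (entropic) marginal penalties produce the dual $\sup\{\int(1-e^{-\phi_0})\dd\mu_0+\int(1-e^{-\phi_1})\dd\mu_1\}$ over potentials with $\phi_0(x_0)+\phi_1(x_1)\le\msc_{1,4}(|x_0{-}x_1|)$. The substitution $v_i=e^{-\phi_i}$ rewrites the objective as $\mu_0(\Omega)+\mu_1(\Omega)-\int_\Omega v_0\dd\mu_0-\int_\Omega v_1\dd\mu_1$ and, recalling $\msc_{1,4}(L)=-2\log\cos L$, turns the cost constraint into exactly $v_0(x_0)v_1(x_1)\ge\cos(|x_0{-}x_1|)^2$ for $|x_0{-}x_1|<\pi/2$ and no constraint otherwise. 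This is identical to the dual obtained in the previous step (with $u_i\leftrightarrow v_i$), so $\msHK^2=\msET$ follows at once.

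Existence of a calibration measure I would obtain by the direct method: $\eta\mapsto\scrET_{1,4}(\eta;\mu_0,\mu_1)$ is convex and weakly lower semicontinuous on $\MMM(\Omega\times\Omega)$, and since the competitor $\eta=0$ gives the finite value $\mu_0(\Omega)+\mu_1(\Omega)$, the superlinear growth of $\FBoltz$ yields uniform integrability of the densities $\varrho_i$, hence a weakly compact minimizing sequence. The three optimality conditions are the primal--dual relations of this convex problem: the support bound $|x_0{-}x_1|<\pi\sqrt{\alpha/\beta}$ holds because $\msc_{\alpha,\beta}=\infty$ beyond the threshold; the global inequality $\varrho_0\varrho_1\le\cos\big(\sqrt{\beta/(4\alpha)}|x_0{-}x_1|\big)^2$ is dual feasibility transcribed; and the equality $\varrho_0\varrho_1=\cos\big(\sqrt{\beta/(4\alpha)}|x_0{-}x_1|\big)^2>0$ on $\mathrm{supp}\,\eta$ is complementary slackness. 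The main obstacle I anticipate is the rigorous justification of the minimax exchange and of both duality statements on the noncompact cone $\CC$ (absence of a duality gap, attainment, measurable selection of the minimizing radius, and the delicate behaviour at the vertex $\TT$); this is precisely the technical heart developed in \cite{LiMiSa14?Theory}.
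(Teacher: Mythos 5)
Your proposal is correct in outline, but note first that this paper does not actually prove Theorem~\ref{th:CharDHK}: the text explicitly defers the full proof to the companion paper \cite{LiMiSa14?Theory} and confines itself to motivating consistency checks (the Dirac-mass computation of Section~\ref{sss:ConsiDiracMass} and the mass-splitting Example~\ref{ex:MassSplit2}). Measured against the companion's argument, your route --- Kantorovich duality for $\msW_\mfC$ on the cone, reduction of the inner infimum over lifts to $2$-homogeneous potentials $\Phi_i([x,r])=r^2\psi_i(x)$, Fenchel--Rockafellar duality for $\scrET_{1,4}$ with $\FBoltz^*(s)=e^s-1$, and identification of the two duals through $v_i=e^{-\phi_i}$ together with $e^{-\msc_{1,4}(L)}=\cos^2 L$ on $L<\pi/2$ --- reconstructs essentially the same mechanism: the dual feasibility condition $v_0(x_0)v_1(x_1)\geq \cos(|x_0{-}x_1|)^2$ is precisely the inequality whose saturation on the support of $\eta$ yields the three optimality conditions quoted in the theorem. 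What your presentation buys is a transparent explanation of \emph{why} the cone-lift construction and the entropic relaxation meet: both primal problems are dominated by one and the same linear dual problem, so equality of the duals forces $\msHK^2=\msET$.

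The gaps you flag are real and not cosmetic, so do not treat them as routine. First, the interchange of $\inf_{\lambda_i}$ with $\sup_{(\Phi_0,\Phi_1)}$ is a minimax step over a non-compact family of lifts; here the scaling invariance \eqref{eq:Scaling} (normalization to probability lifts) and the vertex must be handled carefully --- the constraint at $(\TT,\TT)$ gives $\Phi_0(\TT)+\Phi_1(\TT)\leq 0$, and unless one normalizes $\Phi_i(\TT)=0$, parking equal masses in the reservoir drives your inner infimum to $-\infty$. Second, $\psi_i(x)=\inf_{r>0}\Phi_i([x,r])/r^2$ requires a measurable selection and an a priori bound (the vertex constraint gives $\psi_i\leq 1$, but not boundedness below). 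Third, and most importantly, reading the second and third optimality conditions as ``dual feasibility plus complementary slackness'' presupposes \emph{attainment} of the dual supremum in a class of merely Borel potentials; establishing this attainment is exactly where the companion paper invests its main technical effort, so at the level of your sketch these conditions are asserted rather than proved. By contrast, your existence argument is complete as stated: superlinearity of $\FBoltz$ bounds the masses $\eta_j(\Omega)$ and forces $\eta_j\ll\mu_j$ in the limit, compactness of $\Omega$ gives tightness, and $\msc_{\alpha,\beta}$ is lower semicontinuous, so the direct method applies.
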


In the framework of this paper, the relevance of this new
characterization of $\msD_{1,4}=\msHK$ is that the minimization of
$\scrET_{1,4}$ is much simpler than the characterization of
$\msHK$ in terms of lifts to the cone space. Finding the optimal
lifts and the calculating the optimal transport on the cone space is
certainly more involved. In \cite{LiMiSa14?Theory}, it is shown that
$\scrET_{1,4}$ has a much stronger intrinsic value and it proves an
essential tool for establishing the results in Theorem
\ref{th:properties}.

\begin{example}[Mass splitting, part 2]\label{ex:MassSplit2}
We return to Example \ref{ex:MassSplit1} where we calculated the
distance between 
\[
\mu_0 = a_0\delta_{x_0} \quad \text{and} \quad \mu_1 = a_1\delta_{x_0}
+ b_1 \delta_{x_1}
\]
with $L=|x_0{-}x_1|<\pi$ and $\alpha=1$, $\beta=4$. We show that the 
formulation in Theorem \ref{th:CharDHK} indeed gives the same cost.
Since the marginals $\eta_j$ have to have a density with respect 
to $\mu_j$ and since $\eta_0$ and $\eta_1$ must have equal mass, 
we consider
\[
\eta_0 = e_0 \delta_{x_0} \quad \text{and} \quad \eta_1 =
(e_0{-}e_1)\delta_{x_0} + e_1\delta_{x_1}
\]
with $e_0,e_1\geq 0$. Using the formula in Theorem \ref{th:CharDHK}
yields for $\msET=\msET_{1,4}$ and $\msc=\msc_{1,4}$
\[
\msET(\mu_0,\mu_1) = \inf\set{ \FBoltz(\tfrac{e_0}{a_0})a_0+
\FBoltz(\tfrac{e_0{-}e_1}{a_1}) a_1 + \FBoltz(\tfrac{e_1}{b_1})b_1  +
e_1\msc(L)}{ e_0\geq e_1\geq 0}.
\]
This infimum can be evaluated explicitly and we obtain 
\[
\msET(\mu_0,\mu_1) = \msHK(\mu_0,\mu_1)^2=  a_0+a_1+b_1 - 2
\sqrt{a_0(a_1{+}b_1\cos(|x_0{-}x_1|)^2) },
\]
which is the  same as in Example \ref{ex:MassSplit1}.
\end{example}

\subsubsection{Reduction to special lifts}
\label{sss:SpecialLifts}
The characterization of the Hellinger--Kantorovich distance in terms
of the logarithmic-entropy transport functional gives rise to another
helpful property: To calculate $\msHK(\mu_0,\mu_1)$ it is
sufficient to consider lifts $\lambda_i$ of a special form
only. Indeed, assume that $\eta\in\MMM(\Omega{\times}\Omega)$ is a
minimizer of $\scrET_{1,4}$ for given $\mu_0$ and $\mu_1$ and consider
for $\eta_i = \Pi_\#^i\eta$ the Lebesgue decomposition $\mu_i =
\sigma_i\eta_i+\mu_i^\perp$. Then, the transport plan
$\gamma_\eta\in\MMM(\CC{\times}\CC)$ defined by
\[
\begin{split}
\gamma_\eta(\rmd z_0,\rmd z_1)&= \delta_{\sqrt{\sigma_0(x_0)}}(\rmd r_0)
\delta_{\sqrt{\sigma_1(x_1)}}(\rmd r_1)\eta(\rmd x_0,\rmd x_1)
\\[0.5em]
&\quad+\delta_1(\rmd r_0)\mu_0^\perp(\rmd x_0)\delta_\TT(\rmd z_1 )
+\delta_{\TT}(\rmd z_0)\delta_1(\rmd r_1)\mu_1^\perp(\rmd x_1)
\end{split}
\]
and the associated lifts $\lambda_i = \Pi_\#^i\gamma_\eta$ are optimal
in the Definition \ref{def:HKbyLift} for $\msHK$, see
\cite[Thm.~7.21]{LiMiSa14?Theory} for the proof.
In particular, we can restrict the analysis to lifts of $\mu_j$
 characterized by a single positive function $\wh r_j>0$ on $\Omega$, namely 
\begin{align*}
&\mfL(\mu,\wh r,\kappa) = \kappa \delta_{\TT}+ \frac1{\wh r(x)^2}
\delta_{\wh r(x)}(\rmd r) \mu(\rmd x), 
\quad\text{such that} \\
&\int_{\CC} \Phi(z)\dd \mfL(\mu,\wh
r,\kappa) = \kappa \Phi(\TT) + 
\int_\Omega \frac{\Phi([x,\wh r(x)])}{\wh r(x)^2} \dd \mu \ \text{ for
  all } \Phi\in\rmC^0(\CC).
\end{align*}
We collect this observation in the following result.

\begin{proposition}[$\msHK$ via special lifts]\label{pr:DHKviaSpLi}
We have the equivalent characterization
\begin{equation}
  \label{eq:HKviaSpLift}
\msHK(\mu_0,\mu_1)=\min\Bigset{\msW_\mfC\big(\mfL(\mu_0,\wh r_0,\kappa_0),
  \mfL(\mu_1,\wh r_1,\kappa_1)\big)} { \kappa_j\geq 0, \ \wh r_j >0 }.
\end{equation}
Moreover, it is sufficient to consider transport plans $\gamma\in\MMM_2(\CC{\ti}\CC)$ 
of the form
\[
\begin{split}
\gamma &= \delta_{\widehat{r}_0(x_0)}(\rmd r_0)\eta_0(\rmd x_0)\delta_\TT(\rmd z_1 )
+\delta_{\TT}(\rmd z_0)\delta_{\widehat{r}_1(x_1)}(\rmd r_1)\eta_1(\rmd x_1)\\[0.5em]
&\quad+\delta_{\widehat{r}_0(x_0)}(\rmd r_0)\delta_{\widehat{r}_1(x_1)}(\rmd r_1)
\eta(\rmd x_0,\rmd x_1)
\end{split}
\]
for positive functions $\wh r_i:\Omega\to \left]0,\infty\right[$
and measures $\eta_i\in\MMM(\Omega)$ and $\eta\in\MMM(\Omega\times\Omega)$.
\end{proposition}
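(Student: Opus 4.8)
The plan is to show that the minimization over \emph{all} admissible lifts in Definition~\ref{def:HKbyLift} can be reduced to the special lifts $\mfL(\mu_j,\wh r_j,\kappa_j)$ without increasing the value, so that the infimum over the restricted class in \eqref{eq:HKviaSpLift} coincides with $\msHK(\mu_0,\mu_1)$. Since the special lifts form a subclass of all lifts, the inequality ``$\geq$'' is immediate: every $\mfL(\mu_j,\wh r_j,\kappa_j)$ satisfies $\mfP\,\mfL(\mu_j,\wh r_j,\kappa_j)=\mu_j$ (directly from the projection formula, noting the $\kappa_j\delta_\TT$ term is invisible to $\mfP$), hence the right-hand side of \eqref{eq:HKviaSpLift} is a minimum over a subset of the competitors in \eqref{eq:DHKbyLift} and is therefore $\geq\msHK(\mu_0,\mu_1)$.

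For the reverse inequality I would invoke the entropy-transport characterization of Theorem~\ref{th:CharDHK} together with the explicit construction of $\gamma_\eta$ recalled just above the statement. First I fix an optimal calibration measure $\eta\in\MMM(\Omega{\times}\Omega)$ minimizing $\scrET_{1,4}(\cdot;\mu_0,\mu_1)$, which exists by Theorem~\ref{th:CharDHK}. Writing the Lebesgue decompositions $\mu_i=\sigma_i\eta_i+\mu_i^\perp$ with $\eta_i=\Pi_\#^i\eta$, I form the transport plan $\gamma_\eta$ displayed before the proposition. The key point, quoted from \cite[Thm.~7.21]{LiMiSa14?Theory}, is that the marginals $\lambda_i=\Pi_\#^i\gamma_\eta$ are optimal lifts realizing $\msW_\mfC(\lambda_0,\lambda_1)=\msHK(\mu_0,\mu_1)$. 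It then remains to check that these $\lambda_i$ are exactly of the special form $\mfL(\mu_i,\wh r_i,\kappa_i)$: the radial coordinate $r_i$ is supported on the single value $\sqrt{\sigma_i(x_i)}$ over the part absolutely continuous with respect to $\eta_i$, and on the singular part $\mu_i^\perp$ one sets $r_i=1$ and absorbs the remaining mass into the reservoir term $\kappa_i\delta_\TT$. Choosing $\wh r_i(x)=\sqrt{\sigma_i(x)}$ on $\mathrm{supp}\,\eta_i$ (extended by any positive value elsewhere) and reading off $\kappa_i$ from the total mass balance then exhibits $\lambda_i$ as a special lift, giving ``$\leq$''.

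For the second assertion on the structure of the optimal plan, I would simply observe that the three-term $\gamma_\eta$ already has precisely the claimed form: the mixed term $\delta_{\wh r_0(x_0)}\delta_{\wh r_1(x_1)}\eta$ encodes genuine transport along the calibration $\eta$, while the two remaining terms send the singular parts $\mu_i^\perp$ (here renamed $\eta_i$) to and from the tip $\TT$. Hence the same plan that realizes optimality in the reduction is of the stated form, and no separate argument is needed. The main obstacle I anticipate is the measure-theoretic bookkeeping in the reduction step—specifically, verifying that the degenerate choice $r_i=1$ on $\mu_i^\perp$ together with the reservoir mass $\kappa_i$ reproduces $\mu_i$ under $\mfP$ and simultaneously keeps the transport cost unchanged (the $\mu_i^\perp$ contributions must cost exactly $\mu_i^\perp(\Omega)$ via transport to $\TT$, matching the Hellinger penalty in $\scrET_{1,4}$). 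This amounts to confirming that the identity $\msd_\mfC([x,r],\TT)^2=r^2$ makes the bookkeeping consistent, and I would lean on the detailed proof in \cite[Thm.~7.21]{LiMiSa14?Theory} rather than redo it here.
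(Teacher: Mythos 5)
Your proposal is correct and follows essentially the same route as the paper: the paper also obtains the proposition directly from the construction of $\gamma_\eta$ out of an optimal calibration measure $\eta$ for $\scrET_{1,4}$ — with $\wh r_i=\sqrt{\sigma_i}$ on the part of $\mu_i$ absolutely continuous w.r.t.\ $\eta_i$, radius $1$ with transport to/from the tip $\TT$ on the singular parts $\mu_i^\perp$, and $\kappa_i$ fixed by mass balance — and, exactly as you do, it delegates the optimality of the resulting lifts $\lambda_i=\Pi^i_\#\gamma_\eta$ to \cite[Thm.~7.21]{LiMiSa14?Theory}. The easy inequality ``$\geq$'' via inclusion of the special lifts $\mfL(\mu_j,\wh r_j,\kappa_j)$ among all admissible lifts is likewise the paper's (implicit) first step, so no gap remains.
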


Using the definition of $\msW_\mfC$ in terms of $\msd_\mfC$ and
the form of the lifts, the functional in \eqref{eq:HKviaSpLift} can be
written as 
\[
\scrD(\eta,\wh r_0,\wh r_1; \mu_0,\mu_1):= \mu_0(\Omega) +
\mu_1(\Omega) - \int_{\Omega\ti \Omega} 2\wh r_0(x_0)\wh
r_1(x_1)\cos_{\pi/2}|x_0{-}x_1| \dd\eta(x_0,x_1), 
\]
and the following characterization of $\msHK$ follows:
\begin{equation}
  \label{eq:HKby-wh-r}
  \msHK(\mu_0,\mu_1)=\min\Bigset{ \scrD(\eta,\wh r_0,\wh r_1;
    \mu_0,\mu_1)}{ \eta \in \MMM(\Omega{\ti}\Omega), \Pi^j_\#\eta =
    \wh r_j^2 \mu_j + \mu_j^\bot }. 
\end{equation}

We emphasize that not all optimal transport plans are of the form depicted
in Proposition~\ref{pr:DHKviaSpLi}. In particular, using again the example
of two mass-points we show in Section~\ref{ss:AllDiracM} that in the
case of the critical distance $|x_0{-}x_1|$ lifts are quite arbitrary.

\subsubsection{Recovering the Hellinger and Wasserstein-Kantorovich distances}
The log-entropy formulation of the Hellinger--Kantorovich distance 
is well suited to pass to the limits $\alpha\to0$ or 
$\beta\to0$.

Since apart from the prefactor $1/\beta$ the functional
only depends on $\beta/\alpha$, we can set $\alpha=1$ and consider the
case $\beta\to 0$. For the cost functional we obtain the expansion 
\[
\msc_{1,\beta}(x_0,x_1) = |x_1{-}x_0|^2 + O(\beta)
\]
uniformly on $\Omega\ti \Omega$, which is compact. Hence the linear
transport functional converges to the Kantorovich functional for the
usual Euclidian cost function. Simultaneous the entropic terms blow
up, which means that in the limit $\beta =0$, we obtain the condition
$\eta_j=\mu_j$.  Thus, we expect to obtain the Wasserstein distance in
the limit, i.e.\ $\msHK_{1,0}(\mu_0,\mu_1)= \msW(\mu_0,\mu_1)$. 

Keeping $\beta=4$ fixed and considering $\alpha\to 0$ we obtain 
\[
\msc_{\alpha, 4}(x_0,x_1) \to \left\{ \ba{cl} 0& \text{for }
  x_0=x_1,\\ \infty&\text{for }x_0\neq x_1. \ea \right. 
\] 
Thus, optimal calibration measures for $\alpha=0$ will have support on
the diagonal $\set{(x,x)\in \Omega{\ti}\Omega} {x\in \Omega}$, such
that the transport cost equals 0 and that
$\nu:=\eta_0=\eta_1$. Minimizing the sum of the two entropic terms
with respect to $\nu$ we obtain the unique solution $\nu$ from the
optimality condition $\frac{\rmd \nu}{\rmd\mu_0} \frac{\rmd
  \nu}{\rmd\mu_1} \equiv 1$ and we find
$\msHK_{0,4}(\mu_0,\mu_1)=\msD_\text{Hell}(\mu_0,\mu_1)
=\|\sqrt{\mu_1}{-}\sqrt{\mu_0}\|_{\rmL^2}$.

\subsection{Geodesic curves induced by optimal transport plans} 
\label{ss:GeodCurves}
Let $\mu_0$, $\mu_1\in\MMM(\Omega)$ be two given measures. The
geodesic curves with respect to the Hellinger--Kantorovich distance
$\msHK$ are induced by the geodesic curves in the underlying cone space. 

More precisely, the construction of the geodesic curve $s\mapsto \mu(s)$ 
is based on the geodesic interpolator $Z$ defined in
\eqref{eq:GeodInterpol}: Let $\lambda_0\in\MMM_2(\CC)$ 
and $\lambda_1\in\MMM_2(\CC)$ be optimal lifts for $\mu_0$ and $\mu_1$,
respectively, and let $\gamma\in\MMM_2(\CC{\ti}\CC)$ be the
associated optimal transport plan. Then, a geodesic curve 
$\mu(s) = \calG(s; \mu_0,\mu_1)$ is obtained via
the projection of the geodesic curve for $\lambda_0$ and $\lambda_1$ 
in $\MMM_2(\CC)$ via
\begin{equation}\label{eq:geodesic}
\mu(s)=\calG(s;\mu_0,\mu_1):=\mfP \lambda(s)\quad \text{with}
\quad \lambda(s)=Z(s;\cdot,\cdot)_{\#}\gamma.
\end{equation}
Note that since the optimal transport plan $\gamma$ is not necessarily
unique, the geodesics in $\MMM(\Omega)$ are also not necessarily unique: 

\begin{example}
\begin{itemize}
\item[(i)]On $\Omega=\left]-2,2\right[^2$ we consider 
the measure $\mu_0=\delta_{(-1,0)}+\delta_{(1,0)}$ and 
$\mu_1$ is the line measure concentrated in $\{0\}\times\left]{-}1,1\right[$.
Due to the high symmetry of the problem, it is easy to see that there are 
infinitely many optimal transport plans, which give rise to different geodesic curves.

\item[(ii)] Consider case of two mass points $\mu_i=a_i\delta_{x_i}$ with $|x_0{-}x_1|=\pi/2$.
It is easy to see that in this case $\mu(s)=a(s)\delta_{x(s)}$ with 
$x(s)=(1{-}\rho(s))x_0+\rho(s)x_1$ and $a(s)$ and $\rho(s)$ as in Theorem~\ref{th:MassPointCost}
and $\wt\mu(s)=(1{-}s)^2a_0\delta_{x_0} + s^2a_1\delta_{x_1}$ are both geodesic curves.
However, the situation is even more complicated since even along a geodesic curve...
\end{itemize}
\end{example}

\begin{figure}
 \centerline{\unitlength1.5em
\includegraphics[width=10\unitlength]{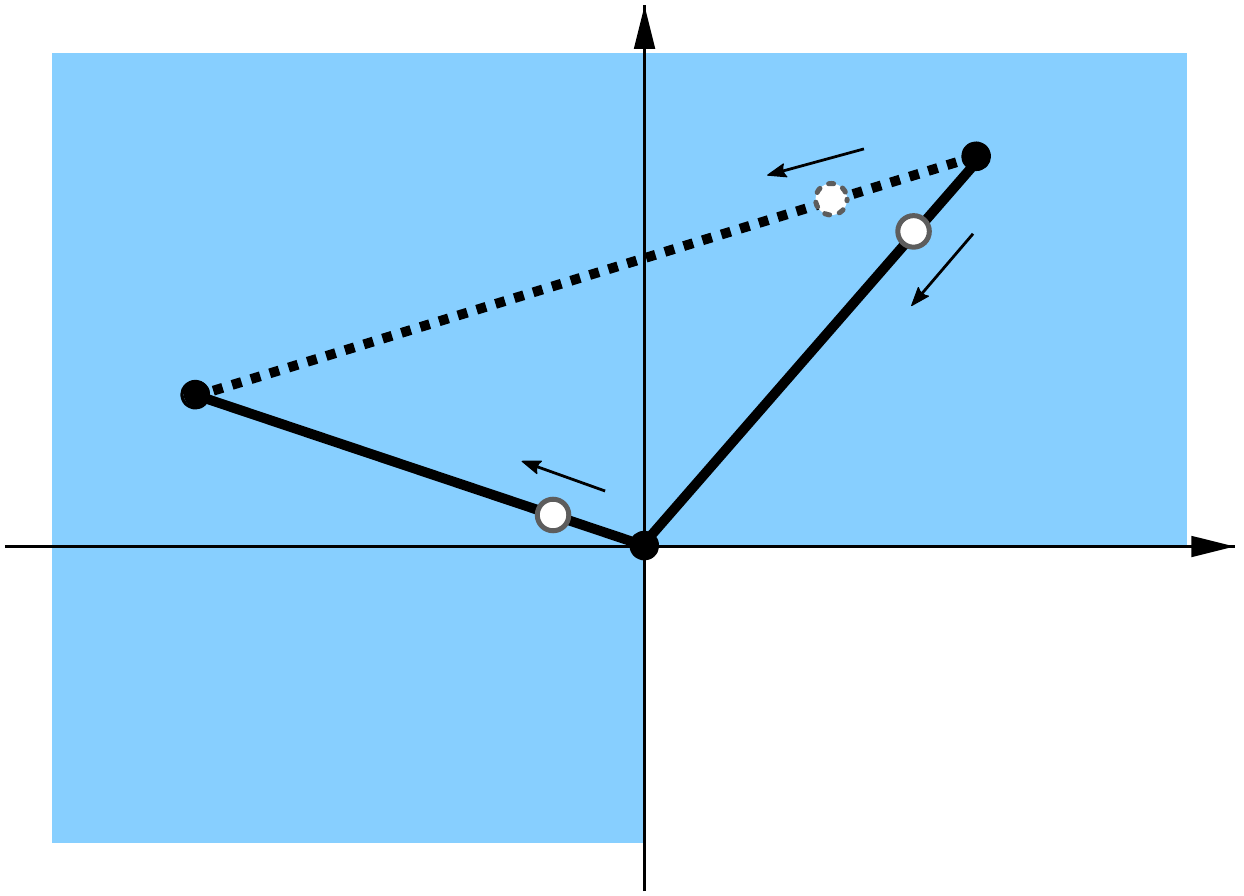}
\begin{picture}(0,0)
\put(-5.8,2.4){
\put(1,-0.2){$0$}
\put(5,-0.2){$y_1$} \put(0.5,5.0){$y_2$}
}
\end{picture}
}
\caption{Cone geodesic (dotted) for $z_0=[x_0,\sqrt{a_0}]$ and $z_1=[x_1,\sqrt{a_1}]$ 
compared to Hellinger--Kantorovich geodesic (solid) for $\mu_0=a_0\delta_{x_0}$
and $\mu_1=a_1\delta_{x_1}$ in the case $|x_0{-}x_1|>\pi/2$. The Hellinger--Kantorovich geodesic
consists of two parts: one part is going to the reservoir (absorption), while the other one is simultaneously
coming from the reservoir (generation).}
\label{fig:ConeVSHKGeodesics} 
\end{figure} 

\begin{theorem}\label{thm:geodesics}
The curve $s\mapsto\mu(s)$ defined in \eqref{eq:geodesic} is a
constant-speed geodesic with respect to the Hellinger--Kantorovich distance $\msHK$, 
i.e.,
\[
\msHK(\mu(s),\mu(t)) =
 |t{-}s|\msHK(\mu_0,\mu_1)\quad\text{for all } 0\leq s< t\leq1.
\]
\end{theorem}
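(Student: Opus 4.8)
The plan is to prove the two inequalities separately: first the upper bound $\msHK(\mu(s),\mu(t)) \le |t{-}s|\,\msHK(\mu_0,\mu_1)$ for all $s,t$, and then to promote it to an equality by a standard chaining argument based on the triangle inequality. Throughout we use that $Z(0;z_0,z_1)=z_0$ and $Z(1;z_0,z_1)=z_1$, so that $\lambda(0)=\lambda_0$, $\lambda(1)=\lambda_1$, and hence $\mu(0)=\mfP\lambda_0=\mu_0$, $\mu(1)=\mfP\lambda_1=\mu_1$; in particular the curve connects the prescribed endpoints.

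For the upper bound, fix $0\le s<t\le 1$ and push the optimal plan $\gamma$ forward under the pair of cone interpolation maps, setting
\[
\gamma_{s,t} := \big(Z(s;\cdot,\cdot),\,Z(t;\cdot,\cdot)\big)_{\#}\gamma \in \MMM_2(\CC{\ti}\CC).
\]
By construction $\Pi^1_{\#}\gamma_{s,t}=\lambda(s)$ and $\Pi^2_{\#}\gamma_{s,t}=\lambda(t)$, so $\gamma_{s,t}$ is an admissible coupling of $\lambda(s)$ and $\lambda(t)$. Its cost is controlled by the constant-speed geodesic property \eqref{eq:geodCone} of $Z$: since $\msd_\mfC(Z(s;z_0,z_1),Z(t;z_0,z_1))=|t{-}s|\,\msd_\mfC(z_0,z_1)$ pointwise and $\gamma$ is optimal for $\msW_\mfC(\lambda_0,\lambda_1)$, we obtain
\[
\msW_\mfC(\lambda(s),\lambda(t))^2 \le \iint_{\CC\ti\CC}\msd_\mfC(z_0,z_1)^2\dd\gamma_{s,t} = |t{-}s|^2\iint_{\CC\ti\CC}\msd_\mfC(z_0,z_1)^2\dd\gamma = |t{-}s|^2\,\msW_\mfC(\lambda_0,\lambda_1)^2.
\]
Since $\mfP\lambda(s)=\mu(s)$ and $\mfP\lambda(t)=\mu(t)$ by \eqref{eq:geodesic}, the measures $\lambda(s),\lambda(t)$ are admissible competitors in Definition~\ref{def:HKbyLift}; combined with the optimality of the lifts $\lambda_0,\lambda_1$ (which gives $\msW_\mfC(\lambda_0,\lambda_1)=\msHK(\mu_0,\mu_1)$) this yields
\[
\msHK(\mu(s),\mu(t)) \le \msW_\mfC(\lambda(s),\lambda(t)) \le |t{-}s|\,\msW_\mfC(\lambda_0,\lambda_1) = |t{-}s|\,\msHK(\mu_0,\mu_1).
\]

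To upgrade this to an equality, fix $0\le s<t\le 1$ and apply the triangle inequality for $\msHK$ along the chain $\mu_0\to\mu(s)\to\mu(t)\to\mu_1$,
\[
\msHK(\mu_0,\mu_1) \le \msHK(\mu_0,\mu(s)) + \msHK(\mu(s),\mu(t)) + \msHK(\mu(t),\mu_1).
\]
Inserting the three instances of the upper bound just proven (with endpoint pairs $(0,s)$, $(s,t)$, $(t,1)$, and recalling $\mu(0)=\mu_0$, $\mu(1)=\mu_1$) bounds the right-hand side by $\big(s+(t{-}s)+(1{-}t)\big)\,\msHK(\mu_0,\mu_1)=\msHK(\mu_0,\mu_1)$. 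Hence all three inequalities must be equalities, and in particular $\msHK(\mu(s),\mu(t))=|t{-}s|\,\msHK(\mu_0,\mu_1)$, which is the claim.

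The remaining points are purely measure-theoretic. One checks that $Z(s;\cdot,\cdot)$ is Borel measurable, which is immediate from the explicit piecewise formulas \eqref{eq:GeodInterpol}; that $\gamma_{s,t}$ genuinely has $\lambda(s),\lambda(t)$ as its marginals; and that $\lambda(s)\in\MMM_2(\CC)$, so that $\msW_\mfC$ is finite — the latter following from $R(s;z_0,z_1)^2\le\big((1{-}s)r_0+sr_1\big)^2$ together with the finiteness of the second moments of $\lambda_0,\lambda_1$. I expect the essential content to lie entirely in the cost computation for $\gamma_{s,t}$: it is precisely there that the constant-speed property of the cone geodesics transfers to the lifted curve in $(\MMM_2(\CC),\msW_\mfC)$, after which the projection $\mfP$ and the minimality in Definition~\ref{def:HKbyLift} do the rest without any further optimization. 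Note that this argument uses only the \emph{easy} half of the displacement-interpolation theory, so no appeal to the full geodesic characterization of $\msW_\mfC$ is required.
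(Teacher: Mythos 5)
Your proposal is correct and follows essentially the same route as the paper's own proof: you push the optimal plan $\gamma$ forward under the pair of interpolation maps $\big(Z(s;\cdot,\cdot),Z(t;\cdot,\cdot)\big)$ to obtain an admissible coupling of $\lambda(s)$ and $\lambda(t)$, deduce the upper bound $\msHK(\mu(s),\mu(t))\le |t{-}s|\,\msHK(\mu_0,\mu_1)$ from the constant-speed property \eqref{eq:geodCone} together with $1$-Lipschitzianity of the projection and optimality of the lifts, and then upgrade to equality by the same triangle-inequality chaining $\mu_0\to\mu(s)\to\mu(t)\to\mu_1$. Your added remarks on Borel measurability of $Z(s;\cdot,\cdot)$ and the second-moment bound $R(s;z_0,z_1)^2\le\big((1{-}s)r_0+sr_1\big)^2$ are correct details that the paper leaves implicit, but they do not change the argument.
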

\begin{proof}
Fix $0\leq s<t\leq 1$ and let $\gamma\in\MMM_2(\CC{\ti}\CC)$ denote
the optimal transport plan. We define the map $\Pi_{st}:\CC^2\to\CC^2$
via $\Pi_{st}(z_0,z_1) = (Z(s;z_0,z_1),Z(t;z_0,z_1))$ and introduce
the transport plan $\gamma_{st}=(\Pi_{st})_{\#}\gamma$ whose marginals
are given by $\lambda(s)$ and $\lambda(t)$, respectively. In particular,
we have the upper estimate
\[
\msHK\big(\mu(s),\mu(t)\big)\leq \msW_{\mfC}\big(\lambda(s),\lambda(t)\big)
\leq \Big(\iint_{\CC\times\CC} \msd_{\mfC}(z_0,z_1)^2\dd\gamma_{st}\Big)^{1/2}.
\]
However, using the definition of the $\gamma_{st}$ and that
$Z$ is the geodesic interpolator in $\CC$ we obtain
\begin{equation}\label{eq:ConSpeedIneq}
\msHK\big(\mu(s),\mu(t)\big)\leq |s{-}t| \msHK(\mu_0,\mu_1)
\quad\text{for all }0\leq s <t \leq 1.
\end{equation}
To see that actually equality holds we use the triangle inequality
and \eqref{eq:ConSpeedIneq} to find
\begin{align*}
\msHK(\mu_0,\mu_1)&\leq  \msHK(\mu_0,\mu_s)
 + \msHK(\mu_s,\mu_t) + \msHK(\mu_t,\mu_1)\\
&\leq \big(s + (t{-}s) + (1{-}t)\big) \msHK(\mu_0,\mu_1)
=\msHK(\mu_0,\mu_1).
\end{align*}
Thus, all inequalities are equalities, which proves theorem.
\end{proof}

\section{Equivalence to the dynamical formulation}
\label{se:equiDyn}
In this subsection we provide the proof of part (v) of Theorem
\ref{th:properties} and show the equivalence of the two definitions of
the Hellinger--Kantorovich distance, namely the formulation via lifts
and optimal transport on the cone space and the dynamical formulation
given by the Onsager operator $\bbK:=\bbK_{1,4}$, i.e.\
$\msD_{1,4}=\msHK$ with $\msD_{1,4}$ from \eqref{eq:dissHK1} and
$\msHK$ from \eqref{eq:DHKbyLift}.

The proof is based on the characterization of absolutely continuous
curves and their metric derivative with respect to $\msHK$. In
particular, we show in Theorem~\ref{thm:timeLower} that each
absolutely continuous curve whose metric derivative is square
integrable satisfies the modified continuity equation in the
definition of $\msD_{1,4}$ in the distributional sense for a suitable
vector and scalar field $\Xi$ and $\xi$.  Moreover, the
$\rmL^2(\rmd\mu)$-norms of $\Xi$ and $\xi$ provide a lower bound for
the metric derivative.

Vice versa we prove in Theorem~\ref{thm:timeUpper} that a continuous
solution $t\mapsto \mu(t)$ of the modified continuity equation for
given vector and scalar fields $\Xi$ and $\xi$ is absolutely
continuous with respect to ${\msHK}$ and the $\rmL^2(\rmd\mu)$-norms
give an upper estimate for the metric derivative.

Finally, Theorem \ref{th:properties}(v) is proven at the
end of this subsection. 

We recall that a curve
$[0,1]\ni t\mapsto u(t)$ in a metric space $(\bfY,\msD)$ is called 
absolutely continuous if  there exists a function $m\in\rmL^1(0,1)$
such that
\begin{equation}\label{eq:defAC}
\msD(u(s),u(t))\leq \int_s^t m(r)\dd r\qquad \text{for all }0\leq s<t\leq 1.
\end{equation}
We write $u\in \rmA\rmC^p(0,1;(\bfY,\msD))$ if $m\in\rmL^p(0,1)$ for $p\in[1,\infty]$.
Moreover, among all possible choices for $m$ there exists a minimal one, which
is given by the metric derivative, see e.g.\ \cite[Sect.\ 1.1]{AmGiSa05GFMS}
\begin{equation}\label{eq:defMetricDeriv}
|\dot u|_\msD(t) := \lim_{s\to t}\frac{\msD(u(t),u(s))}{|t-s|}.
\end{equation}
In particular, for any $u\in\rmA\rmC^p(0,1;(\bfY,\msD))$ the metric derivative
exists for a.a.\ $t\in\left]0,1\right[$ and satisfies $|\dot u|_\msD\in \rmL^p(0,1)$
 as well as $|\dot u|_\msD\leq m$ a.e.\ in $[0,1]$ for all $m$ in \eqref{eq:defAC}.

We start with a result for the regular case, i.e.\ the vector and scalar fields
$\Xi$ and $\xi$ are sufficiently smooth. The proof of the following result can be
found in \cite{Mani07PRUR} where representation formulas for solutions
of the inhomogeneous continuity equation 
\begin{equation}\label{eq:inhomCE}
\tfrac{\dd }{\dd t}\mu +\div(\Xi \mu) = 4\xi\mu
\end{equation}
based on dynamic plans are proved. We will
briefly recall these results, however, since the cone structure
did not play a role in \cite{Mani07PRUR}
we will reinterpret the results in our setting. In the following we understand
weak convergence in the space of measures as convergence against bounded and 
continuous functions. Moreover, a curve $s\mapsto \mu(s)\in\MMM(\Omega)$ is called
weakly continuous if and only if $\mu(s)$ weakly converges to $\mu(t)$ in $\MMM(\Omega)$ for $s\to t$.

\begin{proposition}[\cite{Mani07PRUR}, Prop.\ 3.6]\label{p:represCE}
Assume that $\Xi\in \rmL^1(0,T;\rmW^{1,\infty}(\Omega;\R^d))$ and 
$\xi\in \rmC([0,T]\times\Omega)$ is locally Lipschitz with respect 
to the spatial variable. Then, for any $\mu_0\in\MMM(\Omega)$
there exists a unique, weakly continuous solution 
$t\mapsto \mu(t)$ of \eqref{eq:inhomCE} with $\mu(0)=\mu_0$.

Moreover, for an arbitrary lift $\lambda_0\in\MMM(\CC)$ of $\mu_0$ the curve defined
by 
\begin{equation}\label{eq:ReprMani}
\lambda(t) = [X(t;\cdot), R(t;\cdot)]_{\#}\lambda_0 \in\MMM(\CC),
\end{equation}
where $t\mapsto (X(t;x),R(t;x,r))$ is the solution of the 
ODE system
\[
\dot X(t;x) = \Xi\big(t,X(t;x)\big),\qquad \dot R(t;x,r) = 2 \xi\big(t,X(t;x)\big)R(t;x,r)
\]
with initial conditions $X(0;x) = x$ and $R(0; x, r) = r$ is a lift of
$\mu(t)$.
\end{proposition}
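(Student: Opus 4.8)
The plan is to solve the inhomogeneous continuity equation~\eqref{eq:inhomCE} by the method of characteristics lifted to the cone $\CC$, arranging matters so that the reaction term $4\xi\mu$ is produced entirely by the radial dynamics together with the weight $r^2$ appearing in the projection $\mfP$. First I would solve the characteristic system. Since $\Xi\in\rmL^1(0,T;\rmW^{1,\infty}(\Omega;\R^d))$ is Lipschitz in $x$ with an integrable-in-time Lipschitz constant, the Carath\'eodory form of the Cauchy--Lipschitz theorem provides a unique flow $X(t;x)$ with $X(0;x)=x$ that is continuous in $(t,x)$ and Lipschitz in $x$ (and remains in the compact, convex $\Omega$ under the no-flux condition $\Xi\mdot\nu=0$ relevant to the application). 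The radial equation $\dot R=2\xi(t,X(t;x))R$ is \emph{linear} in $R$ and is therefore solved explicitly by $R(t;x,r)=r\exp\!\big(2\int_0^t\xi(s,X(s;x))\dd s\big)$; as $\xi$ is continuous on the compact set $[0,T]\times\Omega$ it is bounded, so $R$ is positive and continuous in $(t,x,r)$.

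For the existence statement I would fix an arbitrary lift $\lambda_0$ of $\mu_0$, define $\lambda(t)$ by the pushforward~\eqref{eq:ReprMani}, and set $\mu(t):=\mfP\lambda(t)$. Testing against $\phi\in\rmC^1(\Omega)$ and using the definition of $\mfP$ gives $\int_\Omega\phi\dd\mu(t)=\int_\CC R(t;x,r)^2\,\phi(X(t;x))\dd\lambda_0$. Differentiating under the integral and inserting $\dot X=\Xi$ and $\dot R=2\xi R$ turns the integrand into $2R\dot R\,\phi+R^2\nabla\phi\mdot\Xi=R^2\big(4\xi\phi+\nabla\phi\mdot\Xi\big)$, and rewriting the $R^2$-weighted integral back through $\mfP$ yields $\frac{\dd}{\dd t}\int_\Omega\phi\dd\mu=\int_\Omega\big(\Xi\mdot\nabla\phi+4\xi\phi\big)\dd\mu$, which is exactly the distributional form of~\eqref{eq:inhomCE}. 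The crucial bookkeeping --- and the only place where the cone structure does genuine work --- is that the exponent $2$ in the $R$-equation is doubled by the square in $\mfP$, producing precisely the prefactor $4$. Weak continuity of $t\mapsto\mu(t)$ then follows from the continuity of $(t,x,r)\mapsto(X,R)$ by dominated convergence, and $\mu(t)$ is finite because $R^2$ is uniformly bounded on $[0,T]$.

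For uniqueness I would argue by duality, which I expect to be the main obstacle, since it requires solving and controlling the adjoint problem rather than merely integrating a forward ODE. Given any terminal datum $\psi\in\rmC^1(\Omega)$, the backward equation $\partial_t\phi+\Xi\mdot\nabla\phi+4\xi\phi=0$ with $\phi(T,\cdot)=\psi$ is solvable along the very same characteristics, because $\frac{\dd}{\dd t}\phi(t,X(t;x))=-4\xi\,\phi$ is a scalar linear ODE; this produces a bounded, Lipschitz solution $\phi$. For any solution $\mu$ of~\eqref{eq:inhomCE} the chain rule combined with the equation gives $\frac{\dd}{\dd t}\int_\Omega\phi(t,\cdot)\dd\mu(t)=0$, whence $\int_\Omega\psi\dd\mu(T)=\int_\Omega\phi(0,\cdot)\dd\mu_0$ is determined by the data alone; letting $\psi$ and $T$ vary fixes $\mu(t)$ uniquely. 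This same uniqueness finally shows that $\mfP\lambda(t)$ is \emph{independent} of the chosen lift $\lambda_0$: every lift generates a solution of~\eqref{eq:inhomCE} with the identical initial datum $\mu_0$, so all of them project to the one admissible $\mu(t)$, and hence $\lambda(t)$ in~\eqref{eq:ReprMani} is a lift of that $\mu(t)$ for every $\lambda_0$, as claimed. The hypotheses on $\Xi$ and $\xi$ are precisely what make both the forward flow and the backward dual solution well defined and what justify the differentiations under the integral sign.
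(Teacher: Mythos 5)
The paper never proves this proposition itself: it is quoted from \cite{Mani07PRUR}, Prop.~3.6, and the only in-paper addition is the remark, stated immediately after, that the radial equation integrates explicitly to $R(t;x,r)=r\exp\big(2\int_0^t\xi(s,X(s;x))\dd s\big)$. So your proposal must stand on its own, and in substance it does: the existence half (push an arbitrary lift forward along the lifted characteristic flow, differentiate $t\mapsto\int_{\CC}R(t;x,r)^2\,\phi(X(t;x))\dd\lambda_0$ under the integral, and observe that the square in the projection $\mfP$ doubles the exponent $2$ of the radial ODE into the prefactor $4$ in \eqref{eq:inhomCE}) is exactly the mechanism the paper is alluding to when it says it ``reinterprets'' Maniglia's representation on the cone, and your uniqueness half by backward duality is the classical scheme for continuity equations with Lipschitz fields.

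Two points deserve mention. First, the one place where your argument is thinner than the difficulty it faces: to exploit $\tfrac{\dd}{\dd t}\int_\Omega\phi(t,\cdot)\dd\mu(t)=0$ for a \emph{merely weakly continuous} distributional solution $\mu$, you must insert into the weak formulation a test function $\phi$ that is only Lipschitz in $x$ and, since $\Xi\in\rmL^1(0,T;\rmW^{1,\infty}(\Omega;\R^d))$, only absolutely continuous in $t$ with $\partial_t\phi$ integrable in time; the distributional formulation is a priori posed against smooth test functions, so a mollification (in time, at least) or a DiPerna--Lions-type regularization step is required to legitimize this, and that is precisely where the real work in \cite{Mani07PRUR} lies --- your closing sentence asserting that the hypotheses ``justify the differentiations'' glosses over it. Second, your detour through uniqueness to obtain lift-independence is valid but unnecessary: since $R(t;x,r)/r=\exp\big(2\int_0^t\xi(s,X(s;x))\dd s\big)$ depends on $x$ alone, one has directly $\int_{\CC}R^2\,\phi(X)\dd\lambda_0=\int_\Omega e^{4\int_0^t\xi(s,X(s;x))\dd s}\,\phi(X(t;x))\dd\mu_0(x)$, which manifestly depends only on $\mu_0=\mfP\lambda_0$. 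It is also worth recording the small check that $R(t;x,0)=0$ for every $x$, so that $[x,r]\mapsto[X(t;x),R(t;x,r)]$ is well defined and continuous on $\CC$ across the tip $\TT$, which is needed for the pushforward \eqref{eq:ReprMani} to make sense.
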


Note that we can solve the equation for $R$ explicitly
and obtain
\[
R(t,x,r) = r\exp\Big(2\int_0^t\xi(s,X(s,x))\dd s\Big).
\]

It is well-known that if $\Xi$ fails to satisfy the 
regularity properties of Proposition \ref{p:represCE}
nothing guarantees uniqueness of the characteristics
$t\mapsto(X(t),R(t))$ and formula \eqref{eq:ReprMani}
does not hold. To overcome this problem probability measures 
concentrated on entire trajectories in the underlying space $\CC$
are introduced, see \cite{Lisi07CACC} and \cite[Sect.\ 8.2]{AmGiSa05GFMS}.

More precisely, we call $\bfpi\in \Prob(\rmC([0,1];\CC))$
a dynamic plan if it is concentrated on absolutely continuous
curves $\wh z\in\calA:=\AC^2([0,1];(\CC,\msd_\mfC))$ and if it satisfies
\[
\int_{\calA}\Big(\int_0^1 \big|\dot {\wh z}\big|_{\mfC}(t)^2\dd t\Big)\dd\bfpi(\wh{z})<\infty
\]
with $|\dot{\wh z}|_{\mfC}$ denoting the metric derivative 
with respect to the cone distance $\msd_\mfC$, see \eqref{eq:defMetricDeriv}. 
Note that any continuous curve $t\mapsto \wh z(t)=[\wh x(t),\wh r(t)]$
with $t\in[0,1]$ satisfies $\wh r\in \rmC([0,1])$ with values in $[0,\infty[$. 
Thus, the set $O_{\wh{r}} = \wh r^{-1}(]0,\infty[)\subset[0,1]$
is open and the restriction of $\wh x$ to $O_{\wh r}$ is also 
continuous. The following lemma gives a characterization of
the absolutely continuous curves in $\CC$ and their metric derivative.
It is proven in \cite{LiMiSa14?Theory}.

\begin{lemma}\label{lem:AbsContInCone}
A curve $t\mapsto\widehat{z}(t)=[\wh{x}(t),\wh r(t)]\in\CC$ 
satisfies $\wh z\in \rmA\rmC^p([0,1];\CC)$ if and only if
\[
\dot r\in \rmL^p(0,1)\quad\text{and}\quad \wh r \big|\dot {\wh x}\big| \in\rmL^p(O_{\wh{r}})
\quad\text{for } O_{\wh r}:= \wh{r}^{-1}(]0,\infty[).
\]
In particular, the metric time derivative is given via
\[
\big|\dot{\wh z}\big|_{\mfC}(t)^2 = \dot{\wh r}(t)^2 
+ \wh{r}(t)^2 \big|\dot{\wh x}(t)|^2\quad\text{for }t\in O_{\wh r}\quad
\text{and $\big|\dot{\wh z}\big|_{\mfC}(t)=0$ otherwise.}
\]
\end{lemma}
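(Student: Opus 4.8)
The plan is to characterize absolute continuity in the cone $(\CC,\msd_\mfC)$ by working in the ``Euclidean'' chart away from the tip and handling the tip carefully by a cut-off argument. The key idea is that $\msd_\mfC$ behaves, for small displacements with $\wh r$ bounded away from $0$, like the Riemannian distance of the tensor $\bbG^\mfC_{1,4}$ in \eqref{eq:bbG-Cone}, which in the coordinates $(x,r)$ reads $r^2|\dd x|^2 + \dd r^2$. First I would prove the pointwise estimate for the metric derivative: on the open set $O_{\wh r}$ where $\wh r(t)>0$, I would show that
\[
\big|\dot{\wh z}\big|_\mfC(t)^2 = \dot{\wh r}(t)^2 + \wh r(t)^2\big|\dot{\wh x}(t)\big|^2
\]
by comparing $\msd_\mfC(\wh z(t),\wh z(s))^2 = \wh r(t)^2 + \wh r(s)^2 - 2\wh r(t)\wh r(s)\trcos(|\wh x(t){-}\wh x(s)|)$ with its second-order Taylor expansion as $s\to t$. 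Writing $\trcos(|\Delta x|) = 1 - \tfrac12|\Delta x|^2 + o(|\Delta x|^2)$ (valid since for $s$ near $t$ with $\wh x$ continuous on $O_{\wh r}$ we have $|\Delta x|<\pi$, so $\trcos = \cos$), a direct expansion gives $\msd_\mfC(\wh z(t),\wh z(s))^2 = (\wh r(t){-}\wh r(s))^2 + \wh r(t)\wh r(s)|\Delta x|^2 + o(|\Delta t|^2)$, and dividing by $|t{-}s|^2$ and passing to the limit yields the claimed formula.

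Next I would establish the equivalence of the two conditions. For the ``only if'' direction, assume $\wh z\in\rmA\rmC^p$. Since the projection $[x,r]\mapsto r$ is $1$-Lipschitz from $(\CC,\msd_\mfC)$ to $([0,\infty[,|\cdot|)$ (because $\msd_\mfC([x_0,r_0],[x_1,r_1])^2 \geq (r_0{-}r_1)^2$, as $\trcos \leq 1$), the real curve $\wh r$ is absolutely continuous with $\dot{\wh r}\in\rmL^p(0,1)$; moreover $|\dot{\wh r}|\leq |\dot{\wh z}|_\mfC$ a.e. For the factor $\wh r|\dot{\wh x}|$, I would use the pointwise identity above together with $|\dot{\wh z}|_\mfC\in\rmL^p(0,1)$ (which holds by the general theory of absolutely continuous curves, \cite[Sect.~1.1]{AmGiSa05GFMS}) to deduce $\wh r|\dot{\wh x}| = \big(|\dot{\wh z}|_\mfC^2 - \dot{\wh r}^2\big)^{1/2}\in\rmL^p(O_{\wh r})$. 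For the ``if'' direction, assume $\dot{\wh r}\in\rmL^p(0,1)$ and $\wh r|\dot{\wh x}|\in\rmL^p(O_{\wh r})$. I would verify the defining inequality \eqref{eq:defAC} directly by integrating an infinitesimal length bound: on any subinterval of $O_{\wh r}$ one has $\msd_\mfC(\wh z(s),\wh z(t)) \leq \int_s^t \big(\dot{\wh r}^2 + \wh r^2|\dot{\wh x}|^2\big)^{1/2}\,\dd\tau$, which shows that $m := \big(\dot{\wh r}^2 + \wh r^2|\dot{\wh x}|^2\big)^{1/2}\mathbf{1}_{O_{\wh r}} + |\dot{\wh r}|\mathbf{1}_{O_{\wh r}^c}$ is an admissible $\rmL^p$ function in the definition of absolute continuity.

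The main obstacle is the behavior at the tip, i.e.\ on the closed set $O_{\wh r}^c = \{t: \wh r(t)=0\}$, where the chart $(x,r)$ degenerates and $\wh x$ need not even be defined. The resolution is that the tip is a single point and the distance from any $z=[x,r]$ to $\TT$ is exactly $r$; hence near any $t_0$ with $\wh r(t_0)=0$ the curve is controlled entirely by $\wh r$, and its metric derivative there is just $|\dot{\wh r}|$, justifying the ``$|\dot{\wh z}|_\mfC=0$ otherwise'' clause once $\dot{\wh r}=0$ a.e.\ on $O_{\wh r}^c$ (which holds since $\wh r\geq 0$ attains its minimum $0$ there). The delicate point is to glue the estimates across the boundary $\partial O_{\wh r}$: I would cover the connected components of the open set $O_{\wh r}$ and use the continuity of $\wh z$ together with the fact that $\msd_\mfC(\wh z(s),\TT)=\wh r(s)\to 0$ as $s$ approaches a boundary point, so that contributions to the total variation from crossings of the tip are absorbed by the $\dot{\wh r}$ term. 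This localization, rather than any hard analytic estimate, is where the care is needed; the expansion of $\msd_\mfC$ and the standard metric-derivative theory do the rest.
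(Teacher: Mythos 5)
The paper itself contains no proof of Lemma \ref{lem:AbsContInCone} --- it is quoted from the companion paper \cite{LiMiSa14?Theory} --- so your sketch must be judged on its own merits, and its overall architecture (two-sided comparison of $\msd_\mfC$ with the chart metric $\rmd r^2 + r^2\,\rmd x^2$ away from the tip, plus the exact identity $\msd_\mfC(z,\TT)=r$ to handle crossings of the vertex) is indeed the natural and correct route; this is essentially how the companion paper argues. There is, however, one genuine gap in your ``only if'' direction: you invoke the pointwise identity $|\dot{\wh z}|_\mfC^2=\dot{\wh r}^2+\wh r^2|\dot{\wh x}|^2$ on $O_{\wh r}$ in order to extract $\wh r|\dot{\wh x}|\in\rmL^p(O_{\wh r})$, but your derivation of that identity (Taylor expansion of $\trcos$ and division by $|t{-}s|^2$) presupposes that $\wh x$ is differentiable at a.e.\ $t\in O_{\wh r}$, which is not given when all you know is $\wh z\in\rmA\rmC^p([0,1];\CC)$. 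The only lower bound you record, $\msd_\mfC^2\geq(r_0{-}r_1)^2$, controls $\wh r$ alone. You need the matching angular lower bound: since $1-\cos\theta\geq\frac{2}{\pi^2}\theta^2$ on $[0,\pi]$, one has, for $s$ close to $t$ (so that $|\wh x(s){-}\wh x(t)|\leq\pi$ by continuity of $\wh x$ on $O_{\wh r}$),
\[
\msd_\mfC\big(\wh z(s),\wh z(t)\big)^2 \;\geq\; \big(\wh r(s){-}\wh r(t)\big)^2 \,+\, \tfrac{4}{\pi^2}\,\wh r(s)\,\wh r(t)\,\big|\wh x(s){-}\wh x(t)\big|^2.
\]
On any compact subinterval of $O_{\wh r}$ where $\wh r\geq\delta>0$ this gives $\delta\,|\wh x(s){-}\wh x(t)|\leq \frac\pi2\,\msd_\mfC(\wh z(s),\wh z(t))$, hence $\wh x$ is locally absolutely continuous on $O_{\wh r}$ with $\delta|\dot{\wh x}|\leq\frac\pi2|\dot{\wh z}|_\mfC$ a.e.; only after this step is your expansion argument, and hence your deduction $\wh r|\dot{\wh x}|=(|\dot{\wh z}|_\mfC^2-\dot{\wh r}^2)^{1/2}\in\rmL^p(O_{\wh r})$, legitimate.

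The remainder of your argument is sound and needs no repair: the $1$-Lipschitz projection $[x,r]\mapsto r$ (from $\trcos\leq 1$) gives $\dot{\wh r}\in\rmL^p(0,1)$; in the ``if'' direction the global elementary bound $\msd_\mfC^2\leq (r_0{-}r_1)^2+r_0r_1|x_0{-}x_1|^2$ (valid for all displacements, since $1-\trcos\theta\leq\min\{2,\theta^2/2\}\leq\theta^2/2$) yields the length estimate on intervals contained in $O_{\wh r}$, while for $s<t$ with $\wh r(u)=0$ for some $u\in[s,t]$ the triangle inequality through $\TT$ gives $\msd_\mfC(\wh z(s),\wh z(t))\leq\wh r(s)+\wh r(t)\leq\int_s^t|\dot{\wh r}|\,\dd\tau$, so your candidate $m$ is admissible in \eqref{eq:defAC}; and at a.e.\ $t$ with $\wh r(t)=0$ one has $\msd_\mfC(\wh z(s),\wh z(t))=\wh r(s)$ exactly, so $|\dot{\wh z}|_\mfC(t)=|\dot{\wh r}(t)|=0$ a.e.\ on the level set, as you say. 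With the local absolute continuity of $\wh x$ inserted, your proof is complete.
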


For $t\in[0,1]$ we denote by $e_t:\rmC([0,1];\CC)\to\CC$ 
the evaluation map given for $\wh z\in\rmC([0,1];\CC)$ by 
$e_t(\wh{z})=\wh{z}(t)$. With a 
dynamic plan $\bfpi\in\Prob(\rmC([0,1];\CC))$ we associate 
the curve $t\mapsto\lambda(t):=(e_t)_{\#}\bfpi$ which belongs 
to $\AC^2([0,1];(\MMM(\CC),\msW_\mfC))$, see \cite[Thm.~4]{Lisi07CACC}.
Moreover, from the 1-Lipschitz continuity of the projection
$\mfP:\MMM(\CC)\to\MMM(\Omega)$ it follows that the curve
$t\mapsto\mu(t):=\mfP\lambda(t)$ belongs to
$\AC^2([0,1];(\MMM(\Omega),\msHK))$ and the metric derivative of $\mu$ with respect to
$\msHK$ satisfies
\begin{equation}\label{eq:estDynPlan}
|\dot\mu|_{\msHK}(t)^2\leq\int_{\calA}|\dot{\wh z}|_{\mfC}(t)^2\dd\bfpi(\wh{z}).
\end{equation}
The following theorem shows that for every absolutely
continuous curve in $(\MMM(\Omega),{\msHK})$ a dynamic plan 
$\bfpi$ exists such that $\mu$ is induced by $\bfpi$ in the above sense
and equality holds in \eqref{eq:estDynPlan}. The proof is based on 
an extension of \cite[Thm.\,5]{Lisi07CACC} and can be found in 
\cite[Thm.\ 8.4]{LiMiSa14?Theory}.

\begin{theorem}\label{thm:dynPlan}
Let $\mu\in\AC^2([0,1];(\MMM(\Omega),\msHK))$ be given. Then, there exists a
dynamic plan $\bfpi\in\Prob(\rmC([0,1];\CC))$ such that
$\mu(t)=\mfP((e_t)_{\#}\bfpi)$ and
\begin{equation}
  \label{eq:dynPlanIdent}
|\dot\mu|_{\msHK}(t)^2=\int_{\calA}|\dot{\wh{z}}|_{\mfC}(t)^2\,\dd\bfpi(\wh{z})
 \ \text{ for a.a. }t\in [0,1].
\end{equation}
\end{theorem}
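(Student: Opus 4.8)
The plan is to establish the upper-bound inequality \eqref{eq:estDynPlan} as an identity by constructing, for a given $\mu\in\AC^2([0,1];(\MMM(\Omega),\msHK))$, a dynamic plan $\bfpi$ that is optimal in the sense that the integral of its squared metric derivatives recovers exactly $|\dot\mu|_{\msHK}^2$. The strategy rests on lifting the problem to the cone $\CC$, where the analogous superposition principle for $\msW_\mfC$ is already available from Lisini's work \cite{Lisi07CACC}. First I would construct a lift of the whole curve: for each $t$ choose an optimal lift $\lambda(t)\in\MMM_2(\CC)$ with $\mfP\lambda(t)=\mu(t)$, and verify that the lifted curve $t\mapsto\lambda(t)$ can be chosen so that $\lambda\in\AC^2([0,1];(\MMM_2(\CC),\msW_\mfC))$ with $|\dot\lambda|_{\msW_\mfC}(t)=|\dot\mu|_{\msHK}(t)$ for a.a.\ $t$. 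This matching of metric derivatives between the curve and its lift is exactly the content that must be imported from the finer analysis in \cite[Thm.\ 8.4]{LiMiSa14?Theory}, together with the crucial fact (coming from optimality of lifts and the reservoir construction in Proposition~\ref{pr:Reservoir}) that no mass is ``wasted'' in the lift.

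Once a lift $\lambda\in\AC^2([0,1];(\MMM_2(\CC),\msW_\mfC))$ with the correct metric speed is in hand, the second step is to apply the superposition principle of \cite[Thm.\ 5]{Lisi07CACC} directly on the cone space $(\CC,\msd_\mfC)$. Since $(\CC,\msd_\mfC)$ is a complete, separable, geodesic metric space (as recalled in Section~\ref{sss:GeoCurveCone}), this theorem yields a probability measure $\bfpi\in\Prob(\rmC([0,1];\CC))$ concentrated on $\calA=\AC^2([0,1];(\CC,\msd_\mfC))$ such that $(e_t)_\#\bfpi=\lambda(t)$ for all $t$ and
\[
|\dot\lambda|_{\msW_\mfC}(t)^2=\int_{\calA}|\dot{\wh z}|_{\mfC}(t)^2\,\dd\bfpi(\wh z)\quad\text{for a.a.\ }t\in[0,1].
\]
Combining $\mfP\lambda(t)=\mu(t)$ with the equality $|\dot\lambda|_{\msW_\mfC}(t)=|\dot\mu|_{\msHK}(t)$ immediately gives both $\mu(t)=\mfP((e_t)_\#\bfpi)$ and the desired identity \eqref{eq:dynPlanIdent}, while the reverse inequality ``$\leq$'' is already \eqref{eq:estDynPlan}.

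The main obstacle is the construction in the first step: producing a lift $\lambda$ of the \emph{entire} absolutely continuous curve $\mu$ whose metric derivative in $(\MMM_2(\CC),\msW_\mfC)$ coincides pointwise (a.e.) with that of $\mu$ in $(\MMM(\Omega),\msHK)$. The inequality $|\dot\mu|_{\msHK}\leq|\dot\lambda|_{\msW_\mfC}$ is easy from $1$-Lipschitz continuity of $\mfP$; the difficulty is the sharp reverse inequality, which requires a careful selection of optimal lifts that is measurable and consistent in $t$, and which does not lose speed under projection. This is precisely where one must invoke the duality theory and the characterization of optimal lifts developed in \cite{LiMiSa14?Theory}: the point is that an \emph{optimal} lift places the cone-mass at radii dictated by the densities $\varrho_j$ appearing in Theorem~\ref{th:CharDHK}, and for such lifts the projection $\mfP$ is an isometry at the infinitesimal level. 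Once this lifting lemma is granted, the remainder is a direct application of \cite{Lisi07CACC} on the cone, so the genuine work — and the reason the proof is deferred to the companion paper — lies in the measurable-selection and no-loss-of-speed argument for the lift.
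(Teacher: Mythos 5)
Your second step is sound: once an equal-speed lift $\lambda\in\AC^2([0,1];(\MMM_2(\CC),\msW_\mfC))$ with $\mfP\lambda(t)=\mu(t)$ and $|\dot\lambda|_{\msW_\mfC}(t)=|\dot\mu|_{\msHK}(t)$ a.e.\ is available, applying Lisini's superposition principle \cite[Thm.\,5]{Lisi07CACC} on the complete separable geodesic space $(\CC,\msd_\mfC)$ (after normalizing the lifts to probability measures via the scaling invariance \eqref{eq:Scaling}) yields $\bfpi$ and, combined with the easy inequality \eqref{eq:estDynPlan}, the identity \eqref{eq:dynPlanIdent}. The genuine gap is your first step. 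The existence of such an equal-speed AC lift is not a lemma feeding into Theorem~\ref{thm:dynPlan}; it is \emph{equivalent} to it: given $\bfpi$ one recovers the lift as $\lambda(t)=(e_t)_{\#}\bfpi$, and conversely your argument derives $\bfpi$ from the lift. Your citation of \cite[Thm.\,8.4]{LiMiSa14?Theory} for the lifting step is therefore circular, since that theorem \emph{is} the statement under discussion -- it is exactly where this paper defers the proof. No independent argument for the lift is offered, and the two ingredients you gesture at do not supply one: choosing an optimal lift of $\mu(t)$ separately for each $t$ (via Theorem~\ref{th:CharDHK} and Proposition~\ref{pr:Reservoir}) gives no measurability or consistency in $t$, and optimality of each $\lambda(t)$ individually does not control $\msW_\mfC(\lambda(s),\lambda(t))$ for nearby times, so the claimed ``infinitesimal isometry'' of $\mfP$ along such a family is an unproven heuristic, not a reduction.

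The route actually taken in \cite[Thm.\,8.4]{LiMiSa14?Theory}, as signalled by the phrase ``an extension of \cite[Thm.\,5]{Lisi07CACC},'' avoids this chicken-and-egg problem by \emph{not} presupposing a lift of the whole curve: one discretizes $[0,1]$, takes optimal cone couplings between (suitably normalized) optimal lifts of $\mu(t_{i-1})$ and $\mu(t_i)$, glues them and pushes forward by the geodesic interpolator $Z$ of \eqref{eq:GeodInterpol} to obtain approximate dynamic plans, and then uses the $\AC^2$ energy bound for tightness and lower semicontinuity of $\wh z\mapsto\int_0^1|\dot{\wh z}|_\mfC^2\dd t$ to pass to the limit; the equal-speed lift then emerges as the by-product $(e_t)_{\#}\bfpi$, with equality in \eqref{eq:dynPlanIdent} forced by combining the limit estimate with \eqref{eq:estDynPlan}. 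In short, your decomposition relabels the entire difficulty as a ``lifting lemma'' and then cites the theorem itself for that lemma; to make the proposal a proof you would have to carry out the discretization-and-limit construction (or an equivalent measurable-selection argument), which is precisely the content deferred to the companion paper.
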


Using this result we can also show that all geodesic curves for
the Hellinger-Kantorovich distance are given by projections of
geodesic curves in $\calM_2(\CC)$, i.e.\ all geodesic curves have the
representation \eqref{eq:geodesic}.

\begin{corollary}[Representation of all geodesic curves]\label{co:ConeGeodesic}
Let $[0,1]\ni s\mapsto\mu(s)$ be a geodesic curve
and $\bfpi$ the dynamic plan from Theorem~\ref{thm:dynPlan}.
Then, $s\mapsto \lambda(s) = (e_s)_{\#}\bfpi$ is a geodesic
curve in $\Prob_2(\CC)$ with respect to $\msW_\mfC$. 

In particular,
all geodesic curves in $(\calM(\Omega),\msHK)$ are given by an optimal
plan $\gamma$ for optimal lifts of $\mu(0)$ and $\mu(1)$ in the form
\eqref{eq:geodesic}.
\end{corollary}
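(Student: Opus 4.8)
The plan is to establish the two assertions of Corollary~\ref{co:ConeGeodesic} in turn, using Theorem~\ref{thm:dynPlan} as the essential bridge between curves in $\MMM(\Omega)$ and dynamic plans on the cone. First I would take a geodesic curve $s\mapsto\mu(s)$ in $(\MMM(\Omega),\msHK)$, so that $\msHK(\mu(s),\mu(t))=|t{-}s|\,\msHK(\mu_0,\mu_1)$ and in particular $|\dot\mu|_{\msHK}(t)\equiv\msHK(\mu_0,\mu_1)$ for a.e.\ $t$. Applying Theorem~\ref{thm:dynPlan} yields a dynamic plan $\bfpi$ with $\mu(t)=\mfP((e_t)_\#\bfpi)$ and the crucial equality \eqref{eq:dynPlanIdent}. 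Setting $\lambda(s):=(e_s)_\#\bfpi$, I would use the cited result \cite[Thm.~4]{Lisi07CACC} that $\lambda\in\AC^2([0,1];(\MMM_2(\CC),\msW_\mfC))$ together with the lower bound on its metric derivative in terms of $\bfpi$.

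The core of the argument is then a chain of inequalities that is forced to collapse to equalities. On one side, $\mfP$ is $1$-Lipschitz, so
\[
\msHK(\mu(s),\mu(t))\le \msW_\mfC(\lambda(s),\lambda(t)).
\]
On the other side, the length of $\lambda$ in $(\MMM_2(\CC),\msW_\mfC)$ is controlled by $\int_0^1|\dot\lambda|_{\msW_\mfC}\,\dd t$, and by \eqref{eq:estDynPlan}--\eqref{eq:dynPlanIdent} one has $|\dot\lambda|_{\msW_\mfC}(t)^2\le\int_\calA|\dot{\wh z}|_\mfC(t)^2\,\dd\bfpi=|\dot\mu|_{\msHK}(t)^2$. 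Integrating and using the geodesic property of $\mu$ gives
\[
\msW_\mfC(\lambda(0),\lambda(1))\le \int_0^1|\dot\lambda|_{\msW_\mfC}\,\dd t \le \int_0^1|\dot\mu|_{\msHK}\,\dd t = \msHK(\mu_0,\mu_1)\le \msW_\mfC(\lambda(0),\lambda(1)),
\]
where the last step is the definition of $\msHK$ as a minimum over lifts \eqref{eq:DHKbyLift} applied to the admissible lifts $\lambda(0),\lambda(1)$. Hence every inequality is an equality; in particular $\lambda(0),\lambda(1)$ are optimal lifts, $|\dot\lambda|_{\msW_\mfC}\equiv\msW_\mfC(\lambda_0,\lambda_1)$ is constant, and the length of $\lambda$ equals $\msW_\mfC(\lambda_0,\lambda_1)$. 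A curve realizing its endpoint distance as its length with constant metric derivative is exactly a constant-speed minimal geodesic, which proves the first assertion.

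For the second assertion I would invoke the known geodesic structure of the Wasserstein space $(\MMM_2(\CC),\msW_\mfC)$ over the geodesic cone $(\CC,\msd_\mfC)$: since $\CC$ is a complete geodesic space (Section~\ref{sss:GeoCurveCone}), every constant-speed $\msW_\mfC$-geodesic between $\lambda_0$ and $\lambda_1$ is represented as $\lambda(s)=Z(s;\cdot,\cdot)_\#\gamma$ for an optimal transport plan $\gamma$, via the geodesic interpolator $Z$ of \eqref{eq:GeodInterpol} (cf.\ \cite[Ch.~7]{AmGiSa05GFMS}). Applying $\mfP$ and recalling \eqref{eq:geodesic} then writes $\mu(s)=\mfP\lambda(s)=\calG(s;\mu_0,\mu_1)$, which is the desired form. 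The main obstacle, and the step requiring the most care, is justifying the equality $|\dot\lambda|_{\msW_\mfC}(t)=|\dot\mu|_{\msHK}(t)$ rather than mere inequality: this is precisely where the sharp identity \eqref{eq:dynPlanIdent} of Theorem~\ref{thm:dynPlan} is indispensable, since a generic dynamic plan only gives the upper bound \eqref{eq:estDynPlan}; closing the argument relies on the fact that the collapse of the displayed inequality chain forces $|\dot\lambda|_{\msW_\mfC}=|\dot\mu|_{\msHK}$ a.e., after which the representation of $\msW_\mfC$-geodesics completes the proof.
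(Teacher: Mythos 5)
Your proof is correct and follows essentially the same route as the paper: both rest on the sharp identity \eqref{eq:dynPlanIdent} from Theorem~\ref{thm:dynPlan}, the $1$-Lipschitz projection $\mfP$, and the minimality of $\msHK$ over lifts \eqref{eq:DHKbyLift} to force a chain of inequalities into equalities, concluding that $\lambda(s)=(e_s)_\#\bfpi$ is a constant-speed $\msW_\mfC$-geodesic between optimal lifts. The only cosmetic differences are that you phrase the collapse via the metric derivative $|\dot\lambda|_{\msW_\mfC}$ and the length of $\lambda$ (using \cite[Thm.~4]{Lisi07CACC}) where the paper estimates $\msW_\mfC(\lambda(s),\lambda(t))$ directly by H\"older's inequality on $(e_s,e_t)_\#\bfpi$, and that you invoke the general representation of $\msW_\mfC$-geodesics via the interpolator $Z$ (legitimate here since geodesics in $(\CC,\msd_\mfC)$ are unique, Section~\ref{sss:GeoCurveCone}) where the paper extracts the optimal plan directly as $\gamma=\big((e_0),(e_1)\big)_\#\bfpi$.
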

\begin{proof}
For $0\leq s<t\leq 1$, we have the elementary
estimates
\begin{align*}
\msW_\mfC(\lambda(s),\lambda(t))^2&\leq\int_{\CC^2}
\msd_\mfC(z_0,z_1)^2\dd(e_s,e_t)_{\#}\bfpi  
= \int_{\calA}\msd_\mfC(\widehat{z}(s),\widehat{z}(t))^2\dd\bfpi\\
&\leq \int_{\calA}\bigg(\int_s^t|\dot{\widehat{z}}|_{\mfC}\dd r\bigg)^2\dd\bfpi
\leq (t{-}s)\int_s^t\int_{\calA}|\dot{\widehat{z}}|_\mfC^2\dd\bfpi\dd r,
\end{align*}
where we have used H\"older's inequality. Since $\mu$ is a geodesic curve,
we have $|\dot\mu|_{\msHK}\equiv\msHK(\mu(0),\mu(1))$ and hence, with
\eqref{eq:dynPlanIdent} we have
\[
\msW_\mfC(\lambda(s),\lambda(t))\leq (t{-}s)\msHK(\mu(0),\mu(1))
\leq(t{-}s)\msW_\mfC(\lambda(0),\lambda(1)).
\]
Arguing as in the proof of Theorem~\ref{thm:geodesics} shows that
$s\mapsto\lambda(s)$ is a geodesic curve. In particular,
all inequalities above are equalities.

>From the dynamic plan $\bfpi$ we immediately find the optimal
transport plan
$\gamma:= \big((e_0),(e_1)\big)_\# \bfpi$ between the optimal lifts
$\lambda(0)$ and $\lambda(1)$, such that $\mu(s)=\mfP \lambda(s)$.
\end{proof}

The following theorem shows that for every curve
$\mu\in\rmA\rmC^2(0,1;(\MMM(\Omega),\msHK))$ we can find a vector and
a scalar field $\Xi$ and $\xi$ such that the continuity equation in
\eqref{eq:inhomCE} is satisfied. Moreover, the $\rmL^2$-norm of
$(\Xi,\xi)$ with respect to $\mu(t)$ provides a lower bound for the
metric time derivative of $\mu$.

\begin{theorem}\label{thm:timeLower}
Let $\mu\in\AC^2([0,1];(\MMM(\Omega),\msHK))$ be given. Then, there exists
a Borel vector field $(\Xi,\xi):[0,1]\ti \Omega \to \R^{d+1}$
such that the continuity equation \eqref{eq:inhomCE} is satisfied
and
\[
\int_\Omega\Big[|\Xi(t,x)|^2
+4|\xi(t,x)|^2\Big] \dd \mu(t) \leq |\dot\mu|_{\msHK}(t)^2
\quad \text{for a.e. }t\in[0,1].
\]
\end{theorem}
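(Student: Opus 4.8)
The overall strategy is the by now classical one of lifting the curve to the cone, applying the superposition principle, and then recovering $(\Xi,\xi)$ on $\Omega$ by a weighted barycentric projection; the only genuinely new feature is the weight $r^2$ hidden in the projection $\mfP$. First I would apply Theorem~\ref{thm:dynPlan} to the given curve $\mu$, obtaining a dynamic plan $\bfpi\in\Prob(\rmC([0,1];\CC))$, concentrated on $\calA$, with $\mu(t)=\mfP((e_t)_{\#}\bfpi)$ and the sharp identity \eqref{eq:dynPlanIdent}. By Lemma~\ref{lem:AbsContInCone}, for $\bfpi$-a.e.\ curve $\wh z=[\wh x,\wh r]$ the velocities $\dot{\wh x}(t)$ and $\dot{\wh r}(t)$ exist for a.e.\ $t\in O_{\wh r}$ and
\[
|\dot{\wh z}|_\mfC(t)^2=\dot{\wh r}(t)^2+\wh r(t)^2\,|\dot{\wh x}(t)|^2 .
\]

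Guided by the one-mass-point relations \eqref{eq:charODEa} (with $a=\wh r^2$, so that $\dot a/(4a)=\dot{\wh r}/(2\wh r)$), I set, along each curve and for a.e.\ $t\in O_{\wh r}$, the pointwise velocities $V(t,\wh z):=\dot{\wh x}(t)\in\R^d$ and $W(t,\wh z):=\dot{\wh r}(t)/(2\wh r(t))\in\R$, and introduce the measure $\Theta$ on $[0,1]\ti\Omega\ti\R^{d+1}$ defined by
\[
\int F\,\dd\Theta:=\int_0^1\!\!\int_{\calA}\wh r(t)^2\,
F\big(t,\wh x(t),V(t,\wh z),W(t,\wh z)\big)\,\dd\bfpi(\wh z)\,\dd t .
\]
Because $\mfP$ carries the weight $r^2$, the $[0,1]\ti\Omega$-marginal of $\Theta$ is exactly $\dd t\,\dd\mu(t)$. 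Disintegrating $\Theta$ along this marginal into fibre probabilities $\Theta_{t,x}\in\Prob(\R^{d+1})$ and taking barycenters, I define the Borel field $(\Xi(t,x),\xi(t,x)):=\int_{\R^{d+1}}(v,w)\,\dd\Theta_{t,x}(v,w)$, extended by $0$ where it is undefined.

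To verify the continuity equation I would integrate by parts along trajectories: for $\phi\in\rmC^1_c((0,1)\ti\Omega)$ the map $t\mapsto\wh r(t)^2\phi(t,\wh x(t))$ is absolutely continuous with vanishing endpoint values, so
\[
0=\int_0^1\frac{\dd}{\dd t}\big(\wh r(t)^2\phi(t,\wh x(t))\big)\,\dd t
=\int_0^1\big[\,2\wh r\dot{\wh r}\,\phi+\wh r^2\partial_t\phi+\wh r^2\nabla\phi\cdot\dot{\wh x}\,\big]\dd t .
\]
Integrating over $\bfpi$, applying Fubini, and using $2\wh r\dot{\wh r}=4\wh r^2W$ together with the definition of the barycenter turns this into
\[
\int_0^1\!\!\int_\Omega\big[\partial_t\phi+\Xi\cdot\nabla\phi+4\xi\,\phi\big]\dd\mu(t)\,\dd t=0 ,
\]
which is the distributional form of \eqref{eq:inhomCE}. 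The $\rmL^2$-estimate is then immediate from Jensen's inequality applied fibrewise, $|\Xi(t,x)|^2+4|\xi(t,x)|^2\le\int_{\R^{d+1}}(|v|^2+4|w|^2)\dd\Theta_{t,x}$: integrating against $\mu(t)$ and recalling $\wh r^2|V|^2=\wh r^2|\dot{\wh x}|^2$ and $4\wh r^2|W|^2=\dot{\wh r}^2$ gives
\[
\int_\Omega\big[|\Xi|^2+4|\xi|^2\big]\dd\mu(t)
\le\int_{\calA}\big(\wh r^2|\dot{\wh x}|^2+\dot{\wh r}^2\big)\dd\bfpi
=\int_{\calA}|\dot{\wh z}|_\mfC(t)^2\,\dd\bfpi=|\dot\mu|_{\msHK}(t)^2
\]
for a.e.\ $t\in[0,1]$, by \eqref{eq:dynPlanIdent}.

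The formal computation is short; the work lies in its measure-theoretic justification, which I expect to be the main obstacle. The three delicate points are: selecting jointly $(t,\wh z)$-measurable versions of $\dot{\wh x},\dot{\wh r}$ so that $\Theta$ and its disintegration are well defined; controlling the product $\wh r^2\nabla\phi\cdot\dot{\wh x}$ across $\partial O_{\wh r}$, where Lemma~\ref{lem:AbsContInCone} is precisely what yields $\wh r\,|\dot{\wh x}|\in\rmL^2$, hence $\wh r^2\dot{\wh x}\in\rmL^1$ and the absolute continuity of $t\mapsto\wh r(t)^2\phi(t,\wh x(t))$ used above; and the applications of Fubini, all of which rest on the square-integrability built into the dynamic plan. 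These are exactly the technical obstacles resolved in the Wasserstein setting in \cite{Lisi07CACC,AmGiSa05GFMS}, the cone structure entering only through the weight $r^2$ and Lemma~\ref{lem:AbsContInCone}.
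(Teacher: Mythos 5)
Your proposal is correct and follows essentially the same route as the paper's proof: the dynamic plan from Theorem~\ref{thm:dynPlan}, velocities along cone curves via Lemma~\ref{lem:AbsContInCone}, a barycentric (Jensen) projection carrying the weight $r^2$, and a trajectory-wise integration by parts against test functions for the continuity equation. The only difference is organizational: where you disintegrate the single time--space--velocity measure $\Theta$ over its marginal $\dd t\,\dd\mu(t)$, the paper performs two nested disintegrations (first $\bfpi$ over $\lambda(t)=(e_t)_{\#}\bfpi$ on the cone, yielding intermediate fields $\wt\Xi,\wt\xi$ on $\CC$, then $r^2\lambda$ over $\mu$ via the fibre measures $\nu_x$), which yields the same fields $(\Xi,\xi)$ and the same estimate by the tower property of conditional expectations.
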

\begin{proof}
Let $\bfpi\in\Prob(\rmC([0,1];\CC))$ be a dynamic plan
representing $\mu$ according to Theorem \ref{thm:dynPlan}.
We denote the lift $\lambda(t) = (e_t)_{\#}\bfpi$. Due to
the Disintegration Theorem, see \cite[Thm.\ 5.3.1]{AmGiSa05GFMS}, 
there exists a family of probability measures  $\pi_z(t)\in\Prob(\rmC([0,1];\CC))$ 
for $\lambda$-a.e.\ $z\in\CC$ and each $t\in[0,1]$.
Moreover, $\pi_z(t)$ is concentrated on the subset $\calA_z(t):=\{\wh{z}\in\calA:\,\wh{z}(t) = z\}$
and for every $F\in\rmL^1(\rmC([0,1];\CC);\bfpi)$ we have
\[
\int_{\calA} F(\wh{z})\dd\bfpi(\wh{z}) = 
\int_{\CC}\Big(\int_{\calA_z(t)} F(\wh{z})\dd\pi_{z}(t)
\Big)\dd \lambda(t).
\]
For $t\in[0,1]$ and $z=[x,r]\in\CC\setminus\{\TT\}$ we define the vector fields
\begin{align*}
\wt\Xi(t,z) = \int_{\calA_z(t)} \dot {\wh x}(t)\dd\pi_{z}(t)\quad\text{and}\quad
\wt\xi(t,z) = 
\frac12\int_{\calA_z(t)} \frac{\dot {\wh r}(t)}{\wh r (r)}\dd\pi_{z}(t),
\end{align*}
while for $z=\TT$ we set $\wt\xi(t,z)=\wt\Xi(t,z) = 0$.
Due to Jensen's inequality we have the estimate
\begin{align*}
\int_{\CC} \Big[ \big|\wt \Xi(t,z)\big|^2 + 4\big|\wt \xi(t,z)\big|^2\Big]r^2\dd \lambda(t)
&\leq \int_{\CC} \int_{\calA_z(t)}\Big[ \wh{r}(t)^2\big|\dot{\wh x}(t)\big|^2 
+ \big|\dot{\wh r}(t)\big|^2\Big]\dd \pi_z(t)\dd\lambda(t)\\
& = \int_{\calA} |\dot{\wh z}|_{\mfC}(t)^2 \dd \bfpi = |\dot \mu|_{\msHK}(t)^2.
\end{align*}
To obtain vector fields $\Xi$ and $\xi$ on $\Omega$ we 
employ the Disintegration Theorem for $r^2\lambda$ and $\mu=\Pi_{\#}(r^2\lambda)$
to obtain a family of probability measures $\nu_x$ concentrated 
on $[0,\infty[$ and such that $r^2\lambda =\nu_x\mu$. 
Using again Jensen's inequality we easily check that the fields $\Xi(t,x) 
=\int_{[0,\infty[}\wt\Xi(t,z)\dd \nu_x$ and $\xi(t,x) 
=\int_{[0,\infty[}\wt\xi(t,z)\dd \nu_x$ satisfy
\[
\int_\Omega\Big[|\Xi(t,x)|^2
+4|\xi(t,x)|^2\Big] \dd \mu(t) \leq |\dot\mu|_{\msHK}(t)^2
\quad \text{for a.e. }t\in[0,1].
\]

It remains to show that the continuity equation
\eqref{eq:inhomCE} is satisfied.
For this, we choose a test function of the form
$\varphi(x,t)=\eta(x)\psi(t)$ with $\eta$ and $\psi$
Lipschitz and bounded with compact support in $\Omega$ and $\left]0,1\right[$,
respectively.
We compute
\begin{align*}
\int_0^1\int_\Omega\eta(x)\dot\psi(t)\dd\mu(t)\dd t
&=\int_{\calA}\int_0^1\dot\psi(t)\eta\big(\wh x(t)\big)\,\wh r(t)^2\dd t \dd\bfpi\\
&=-\int_{\calA}\int_0^1
\psi(t)\Big[\nabla\eta\big(x(t)\big)\dot x(t) +4\eta(x(t))\frac{\dot r(t)}{2\wh{r}(t)}\Big]r(t)^2\dd t\dd\bfpi\\
&= -\int_{\CC}\int_0^1\psi(t)\big[\nabla\eta(x)\cdot\wt\Xi(t,z) +\eta(x)\wt\xi(t,z)\big]r^2\dd \lambda(t)\dd t\\
&= -\int_{\Omega}\int_0^1\psi(t)\big[\nabla\eta(x)\cdot\Xi(t,x) +\eta(x)\xi(t,x)\big]\dd \mu(t)\dd t.
\end{align*}
Thus, the continuity equation is satisfied in the distributional sense.
\end{proof}

Next, we show the reverse implication.
\begin{theorem}\label{thm:timeUpper}
Let $t\mapsto \mu(t)$ be a narrowly continuous curve
in $\MMM(\Omega)$ and suppose that there exists a Borel
vector field $\Xi:[0,1]\times\Omega\to\R^d$ and a scalar field
$\xi:[0,1]\times\Omega\to\R$ satisfying $(\Xi(t),\xi(t))\in\rmL^2(\dd\mu(t);\R^{d+1})$
\[
\int_0^1\int_\Omega\Big(|\Xi(t,x)|^2 + 4|\xi(t,x)|^2\Big)\dd \mu(t)\,\dd t<\infty
\]
such that the continuity equation \eqref{eq:inhomCE} is satisfied.
Then, $\mu\in\AC^2([0,1];(\MMM(\Omega),{\msHK}))$ and
\begin{equation}\label{eq:lowMTD}
 |\dot\mu|_{{{\msHK}}}(t)^2 \leq \int_\Omega\Big(|\Xi(t,x)|^2+4|\xi(t,x)|^2\Big)
\dd\mu(t)\quad\text{for a.e. }t\in\left]0,1\right[.
\end{equation}
\end{theorem}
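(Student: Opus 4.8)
The plan is to prove the bound first for regular fields, where Proposition~\ref{p:represCE} applies directly, and then to recover the general statement by mollification on the convex domain $\Omega$. Assume first that $\Xi\in\rmL^1(0,1;\rmW^{1,\infty}(\Omega;\R^d))$ and that $\xi$ is continuous and locally Lipschitz in the spatial variable. Then I would invoke Proposition~\ref{p:represCE}: choosing any lift $\lambda_0$ of $\mu(0)$ and solving the characteristic system $\dot X=\Xi(t,X)$, $\dot R=2\xi(t,X)R$, the curve $\lambda(t)=[X(t;\cdot),R(t;\cdot)]_\#\lambda_0$ is a lift of $\mu(t)$, and by uniqueness it represents the given solution of \eqref{eq:inhomCE}. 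Pushing $\lambda_0$ forward by the map that assigns to each initial point its whole characteristic produces a dynamic plan $\bfpi\in\Prob(\rmC([0,1];\CC))$ with $(e_t)_\#\bfpi=\lambda(t)$ and $\mfP\lambda(t)=\mu(t)$. By Lemma~\ref{lem:AbsContInCone} the metric derivative of a characteristic passing through $[x,r]$ at time $t$ equals $\dot R^2+R^2|\dot X|^2=r^2\big(|\Xi(t,x)|^2+4\xi(t,x)^2\big)$, so integrating and using $\mu(t)=\mfP\lambda(t)$ gives $\int_\calA|\dot{\wh z}|_\mfC(t)^2\dd\bfpi=\int_\Omega\big(|\Xi|^2+4\xi^2\big)\dd\mu(t)=:g(t)$. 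Combined with \eqref{eq:estDynPlan} this already yields $|\dot\mu|_\msHK(t)^2\le g(t)$, i.e.\ \eqref{eq:lowMTD}, in the regular case.

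For general Borel fields I would regularize the triple $(\mu,\,\Xi\mu,\,\xi\mu)$ by spatial mollification. Writing the momentum $E(t)=\Xi(t)\mu(t)$ and the reaction measure $\sigma(t)=\xi(t)\mu(t)$, I set $\mu^\varepsilon=\mu*\omega_\varepsilon$, $E^\varepsilon=E*\omega_\varepsilon$, $\sigma^\varepsilon=\sigma*\omega_\varepsilon$, where to keep all supports inside the convex set $\Omega$ I first push the measures inward by a dilation $x\mapsto x_*+(1{-}\delta)(x{-}x_*)$ toward an interior point $x_*$ and then convolve with a strictly positive kernel $\omega_\varepsilon$ at scale $\varepsilon\ll\delta$. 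Since mollification commutes with $\partial_t$ and $\div$, the pair $\Xi^\varepsilon=E^\varepsilon/\mu^\varepsilon$, $\xi^\varepsilon=\sigma^\varepsilon/\mu^\varepsilon$ is admissible for Proposition~\ref{p:represCE} (the density of $\mu^\varepsilon$ is smooth and strictly positive on $\Omega$) and still solves \eqref{eq:inhomCE} for $\mu^\varepsilon$. The joint convexity and $1$-homogeneity of $(m,e,s)\mapsto(|e|^2+4s^2)/m$ give, via Jensen's inequality for the convolution, the action bound $g^\varepsilon(t):=\int_\Omega\big(|\Xi^\varepsilon|^2+4(\xi^\varepsilon)^2\big)\dd\mu^\varepsilon(t)\le g(t)$ for a.e.\ $t$.

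Finally I would pass to the limit. Applying the regular case to $\mu^\varepsilon$ together with the defining property of the metric derivative gives, for all $0\le s<t\le1$, the chain $\msHK(\mu^\varepsilon(s),\mu^\varepsilon(t))\le\int_s^t|\dot\mu^\varepsilon|_\msHK(r)\dd r\le\int_s^t\sqrt{g^\varepsilon(r)}\dd r\le\int_s^t\sqrt{g(r)}\dd r$. Since $\msHK$ metrizes narrow convergence (Theorem~\ref{th:properties}(iv)) and $\mu^\varepsilon(\tau)\to\mu(\tau)$ narrowly for each $\tau$, letting $\varepsilon\to0$ yields $\msHK(\mu(s),\mu(t))\le\int_s^t\sqrt{g(r)}\dd r$ for all $s<t$. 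As $\int_0^1 g<\infty$ by hypothesis, the function $\sqrt g$ lies in $\rmL^2(0,1)$ and is an admissible $m$ in \eqref{eq:defAC}, so $\mu\in\AC^2([0,1];(\MMM(\Omega),\msHK))$; the definition of the metric derivative together with Lebesgue differentiation then gives $|\dot\mu|_\msHK(t)\le\sqrt{g(t)}$ for a.e.\ $t$, which is precisely \eqref{eq:lowMTD}.

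I expect the main obstacle to be the regularization step on the bounded convex domain: one must mollify so that the supports remain in $\Omega$, the density of $\mu^\varepsilon$ stays strictly positive (so that $\Xi^\varepsilon,\xi^\varepsilon$ are genuinely Lipschitz and Proposition~\ref{p:represCE} is applicable), and the Jensen action bound $g^\varepsilon\le g$ survives the inward dilation. Convexity of $\Omega$ is exactly what makes the dilation-plus-mollification construction lose no mass through the boundary. Once these approximations are in place the limit passage is soft, relying only on narrow continuity and the fact that $\msHK$ metrizes the narrow topology.
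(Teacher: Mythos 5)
Your proposal is correct in substance and shares the paper's first half, but takes a genuinely different route in the limit passage. Like the paper, you reduce to the regular case via Proposition~\ref{p:represCE} and a regularization that preserves the continuity equation and the action (your hand-made inward-dilation-plus-mollification with the Jensen bound for the jointly convex, $1$-homogeneous integrand $(m,e,s)\mapsto(|e|^2+4s^2)/m$ is precisely the content of Lemma~3.10 of \cite{Mani07PRUR}, which the paper invokes as a black box, stated there through the uniform estimates $\int\psi(|\Xi_\eps|)\dd\mu_\eps\le\int\psi(|\Xi|)\dd\mu$). Where you diverge is afterwards: the paper stays at the level of dynamic plans, pushing $\lambda_\eps(0)$ forward along the characteristics to plans $\bfpi_\eps\in\MMM(\rmC([0,1];\CC))$, proving tightness from the uniform action bound and the compactness of sublevels of $\wh z\mapsto\int_0^1|\dot{\wh z}|_\mfC^2\dd t$, and extracting a narrow limit $\bfpi$ that is a dynamic plan representing $\mu$, from which \eqref{eq:lowMTD} follows via \eqref{eq:estDynPlan}. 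You instead integrate the regular-case bound into distance estimates $\msHK(\mu^\eps(s),\mu^\eps(t))\le\int_s^t\sqrt{g(r)}\dd r$ and pass to the limit purely metrically, using the pointwise narrow convergence $\mu^\eps(\tau)\to\mu(\tau)$ and Theorem~\ref{th:properties}(iv), then conclude by minimality of the metric derivative among admissible $m$ in \eqref{eq:defAC}. Your route is softer (no tightness or lower-semicontinuity machinery for measures on path space) but imports the nontrivial fact that $\msHK$ metrizes the weak topology, proven in \cite{LiMiSa14?Theory} via the entropy-transport characterization --- which is independent of the dynamical formulation, so there is no circularity; the paper's route is heavier but yields more, namely a limiting dynamic plan $\bfpi$ representing $\mu$, the structural object also needed for Theorem~\ref{thm:dynPlan} and Corollary~\ref{co:ConeGeodesic}.

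One technical point deserves more care than your sketch gives it: admissibility of $\Xi^\eps=E^\eps/\mu^\eps$ for Proposition~\ref{p:represCE} requires $\Xi^\eps\in\rmL^1(0,1;\rmW^{1,\infty}(\Omega;\R^d))$, and the natural pointwise bound from a strictly positive kernel scales like $\|\Xi(t)\|_{\rmL^2(\mu(t))}/\sqrt{m(t)}$ with $m(t)=\mu(t)(\Omega)$; since the reaction term allows the total mass to degenerate (only $\sqrt{m}$ is absolutely continuous with derivative controlled by $2\|\xi(t)\|_{\rmL^2(\mu(t))}$), the time-integrability of the $\rmW^{1,\infty}$ norm is not automatic. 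This is repairable --- e.g.\ work on subintervals where $m$ is bounded away from zero and let them exhaust $\{t: m(t)>0\}$, observing that the distance estimate is trivial across intervals where the measure vanishes --- but it should be stated; the paper sidesteps exactly this issue by citing \cite{Mani07PRUR}.
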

\begin{proof}
Let $\mu$, $\Xi$, and $\xi$ be given as in the statement of the theorem. 
Due to Lemma 3.10 in \cite{Mani07PRUR} for $\eps>0$ we can obtain 
sufficiently smooth approximations $\mu_\eps$, $\Xi_\eps$, and 
$\xi_\eps$ satisfying the continuity equation \eqref{eq:inhomCE}
and converging in a suitable sense to $\mu,\Xi$, and $\xi$, respectively.

Moreover, $\mu_\eps$, $\Xi_\eps$, and $\xi_\eps$ satisfy for any
convex, nondecreasing function $\psi:\left[0,\infty\right[
\to\left[0,\infty\right[$ we have the uniform estimates
\begin{equation}
\begin{split}
\int_{\Omega}\psi(|\Xi_\eps(t,x)|)\dd\mu_\eps(t)
&\leq\int_{\Omega}\psi(|\Xi(t,x)|)\dd\mu(t)\quad\text{and}
\\
\int_{\Omega}\psi(|\xi_\eps(t,x)|)\dd\mu_\eps(t)
&\leq\int_{\Omega}\psi(|\xi(t,x)|)\dd\mu(t)
\end{split}
\end{equation}

Applying the representation result in Proposition \ref{p:represCE}
for $\mu_\eps$, $\Xi_\eps$, and $\xi_\eps$ we obtain the formula
\begin{equation}\label{eq:represForm}
\mu_\eps(t) = \mfP\lambda_\eps(t),\qquad \lambda_\eps(t)
=[X_\eps(t;\cdot),R_\eps(t;\cdot)]_{\#}\lambda_\eps(0),
\end{equation}
where $\lambda_\eps(0)$ is a lift of $\mu_\eps(0)$ and 
$X_\eps$ and $R_\eps$ are the maximal solutions of
\begin{equation}\label{eq:charODEeps}
\dot X_\eps(t;x)   = \Xi_\eps(t,X_\eps(t;x)),\quad
\dot R_\eps(t;x,r) = 2\xi(t,X_\eps(t;x)) R_\eps(t;x,r)
\end{equation}
subject to the initial conditions $X_\eps(0;x) = x$
and $R_\eps(0;x,r) = r$. With this we define the map
$\Phi_\eps:\CC\to \rmC([0,1];\CC)$ via
\[
\Phi_\eps([x,r]) = \big(t\mapsto[X_\eps(t;x),R_\eps(t;x,r)]\big),
\]
and introduce the dynamic plan $\bfpi_\eps\in\MMM(\rmC([0,1];\CC))$
as $\bfpi_\eps=(\Phi_\eps)_{\#}\lambda_\eps(0)$.

We aim to show that the sequence $\bfpi_\eps$ is tight such that 
we can find a subsequence (not relabeled) that is narrowly converging
to a dynamic plan $\bfpi$. Indeed, using Lemma \ref{lem:AbsContInCone}, 
\eqref{eq:charODEeps} and  the representation formula \eqref{eq:represForm} 
we have for $0\leq t_0<t_1\leq 1$ that
\begin{align*}
\int_{\calA}\int_{t_0}^{t_1}\big|\dot{\wh z}\big|_{\CC}(t)^2 \dd t\dd\bfpi_\eps(\wh z)
&= \int_{\CC}\int_{t_0}^{t_1}\Big\{R_\eps(t;x,r)^2|\dot X_\eps(t,x)|^2 + 
\dot R_\eps(t;x,r)^2\Big\}\dd t\,\dd \lambda_\eps(0)\\
&= \int_{\CC}\int_{t_0}^{t_1}\Big\{\big|\Xi_\eps(t,X_\eps)\big|^2 + 
4\big|\xi_\eps(t,X_\eps)\big|^2\Big\} R_\eps^2\dd t\,\dd \lambda_\eps(0)\\
&= \int_{t_0}^{t_1}\int_{\Omega}\Big\{|\Xi_\eps(t,x)|^2 + 
4|\xi_\eps(t,x)|^2\Big\} \,\dd \mu_\eps(t)\dd t\\
&\leq \int_{t_0}^{t_1}\int_{\Omega}\Big\{|\Xi(t,x)|^2 + 
4|\xi(t,x)|^2\Big\} \,\dd \mu(t)\dd t<\infty.
\end{align*}
Since the functional $\wh z\mapsto \int_0^1|\dot{\wh z}|_{\CC}^2\dd t$
has compact sublevels in $\{\wh z\in \rmC([0,T];\CC)\,|\,\wh z(0) = [x,r]\}$
we have shown the tightness of $\bfpi_\eps$ and we can extract a 
subsequence narrowly converging to a limit $\bfpi$ in $\MMM(\rmC([0,1];\CC))$. 
Moreover, due to the lower semicontinuity of $\wh z\mapsto \int_0^1|\dot{\wh z}|_{\CC}^2\dd t$ 
we immediately obtain 
\[
\int_{\calA}\int_{t_0}^{t_1}|\dot{\wh z}|_{\CC}^2 \dd t\dd\bfpi(\wh z)<\infty.
\]
Hence, $\bfpi$ is concentrated on absolutely continuous curves.

It remains to show that $\bfpi$ is a dynamic plan for $\mu$, i.e.,
$\mu(t) = \mfP((e_t)_{\#}\bfpi)$ from which \eqref{eq:lowMTD} follows.
We easily show that due to the construction of $\bfpi_\eps$ we have that
$\mfP((e_t)_{\#}\bfpi_\eps)=\mu_\eps(t)$. The claim now follows from the
continuity of the map $\bfpi\mapsto \mfP((e_t)_{\#}\bfpi$ with respect
to the weak convergence in $\MMM(\rmC([0,1];\CC))$.
\end{proof}

Finally, we can prove the equivalence of the dynamical and 
the transport plan formulation of the Hellinger--Kantorovich distance.
\\[0.3cm]

\emph{Proof of Theorem \ref{th:properties}(v). }
We want to show that for all $\mu_0,\mu_1\in\MMM(\Omega)$ 
we have
\[
{\msHK}(\mu_0, \mu_1) = \msD_{1,4}(\mu_0,\mu_1).
\]
Due to Theorem \ref{thm:timeUpper} we have for any curve $t\mapsto
\mu(t)$ connecting $\mu_0$ and $\mu_1$ and satisfying the continuity
equation \eqref{eq:inhomCE} for a vector and scalar field $\Xi$ and
$\xi$, respectively, the upper estimate
\[
{\msHK}(\mu_0,\mu_1)^2 \leq \int_0^1 |\dot\mu|_{\msHK}^2(t)\dd t 
\leq \int_0^1\int_\Omega\Big\{|\Xi(t,x)|^2+4\xi(t,x)^2\Big\}\dd\mu(t)\dd t.
\]
Thus, by minimizing over all absolutely continuous curves
$t\mapsto \mu(t)$ connecting $\mu_0$ and $\mu_1$ we have
$\msHK(\mu_0,\mu_1)\leq \msD_{1,4}(\mu_0,\mu_1)$.

To show that equality holds, we consider a geodesic curve $s\mapsto
\mu(s)$, which is obviously absolutely continuous with respect to
$\msHK$ and we have $|\dot\mu|_{\msHK}\equiv \msHK(\mu_0,\mu_1)$.  By
Theorem \ref{thm:timeLower} we then have
\[
\msHK(\mu_0,\mu_1)^2 = \int_0^1|\dot\mu|_{\msHK}(t)^2\dd t\geq
\int_0^1\int_\Omega\Big\{|\Xi(t,x)|^2+4\xi(t,x)^2\Big\}\dd\mu(t)\dd t.
\]
Hence, we have proven Theorem \ref{th:properties}(v).
\QED


\section{Geodesic curves for $\msHK$}
\label{se:Geodesics}

In this section, we provide some general results on geodesics as well
as a few illuminating examples. Section \ref{ss:CvxPotenial} deals with 
the geodesic $\Lambda$-convexity of functionals $\calF:\calM(\Omega)
\to \R{\cup}\{\infty\}$. In particular, we show
that the linear functional $\calF_\Phi: \mu \mapsto \int_\Omega
\Phi(x)\dd\mu(x) $ is
geodesically $\Lambda$-convex if and only
if the function $[x,r]\mapsto r^2\Phi(x)$ is $\Lambda$-convex on
$(\CC,\msd_{\mfC})$. In Section \ref{ss:AllDiracM}, we return to the
problem of moving the measure $\mu_0=a_0\delta_{y_0}$ to
$\mu_1=a_1\delta_{y_1}$ and show that in the case $|y_1{-}y_0|=\pi/2$
there is an infinite set of geodesic curves. Moreover, we show that
all these curves are indeed solutions of the formally derived equation 
\begin{equation}
  \label{eq:GC.eqn}
  \frac{\rmd}{\rmd s}\mu+ \div\big(\mu\nabla \xi  )=4\xi\mu, \qquad 
\frac{\rmd}{\rmd s} \xi + \frac12|\nabla \xi|^2 + 2 \xi^2=0,
\end{equation}
where $\xi$ is in fact the same for all geodesic connections. 
In Section \ref{ss:Dilations} we discuss geodesic curves that are induced 
by dilation of measures. 
Section \ref{ss:CharFunct} discusses how the geodesic curve connecting $\mu_0 =
\chi_{[0,1]}$ and $\mu_1=\chi_{[2,3]}$ can be constructed. 

Finally, Section \ref{ss:NotSemiconcave} shows that
$(\calM(\Omega),\msHK)$ is not a positively curved (PC) space in the
sense of Alexandrov (cf.\ \cite[Sect.\,12.3]{AmGiSa05GFMS}) if
$\Omega $ is two-dimensional.

\subsection{Geodesic $\Lambda$-convexity for some functionals}
\label{ss:CvxPotenial}

Here we give some first results of geodesic $\Lambda$-convexity for
functionals $\calF:\calM(\Omega)\to \R{\cup}\{\infty\}$,  
which is defined via 
\begin{equation}
  \label{eq:GLC}
\begin{aligned}   &\forall\, \text{geod.\:curves } \mu:
[0,1]\to \calM(\Omega):\\ &\quad
\calF(\mu(s))\leq (1{-}s)\calF(\mu(0))+ s \calF(\mu(1)) -\Lambda\,
\frac{s(1{-}s)}2 \msHK(\mu(0),\mu(1))^2. 
\end{aligned}
\end{equation}
We first provide an exact characterizations of $\Lambda$ 
for linear functionals, then give some preliminary results and
conjectures for nonlinear functionals. 

It is well-known that in the case of the Wasserstein distance, geodesic 
$\Lambda$-convexity of functionals of the form 
$\calF_\Phi(\mu)=\int_\Omega \Phi(x)\dd\mu(x)$ is satisfied if and only 
if $x\mapsto \Phi(x)$ is $\Lambda$-convex, see \cite[Sect.~9.3]{AmGiSa05GFMS}. 
Hence, it is natural to ask whether the same can be said for the 
Hellinger--Kantorovich distance $\msHK$ and geodesics in the cone space.

We start with a very easy relation for the total mass along the
geodesic curves. It turns out that the mass depends on the parameter
convex and quadratically.

\begin{proposition}\label{pr:TotalMass}
Consider a geodesic curve $[0,1]\ni s \mapsto \mu(s)$ given by
\eqref{eq:geodesic} and set 
\[
m(s):= |\mu|(s) =  \int_\Omega \dd\mu(s).
\]
Then, we have 
\begin{subequations}
  \label{eq:MassEstim}
\begin{align}
  \label{eq:MassEstim.A}
&m(s)= (1{-} s)^2 m(0) + s^2 m(1) + 2s(1{-}s) m_* \\
   \nonumber 
&\hspace*{6em}\text{with }
m_*:=\int_{\CC\ti \CC} r_0r_1 \trcos(|x_1{-}x_0|) \dd
\gamma([x_0,r_0],[x_1,r_1]),\\
  \label{eq:MassEstim.B}
& \tfrac{m(0)m(1)}{m(0){+}m(1)} \leq  (1{-} s)^2
m(0) + s^2 m(1) \leq m(s) \leq \Big((1{-} s)\sqrt{m(0)}+s
\sqrt{m(1)}\Big)^2 
\end{align}
\end{subequations}
for all $s\in[0,1]$. Moreover, $m''(s)=2\msHK(\mu_0,\mu_1)^2\geq 0$
which implies
\begin{equation}
  \label{eq:MassIdent}
m(s) = (1{-}s) m(0) + sm(1) -s(1{-}s)\msHK(\mu_0,\mu_1)^2.
\end{equation}
\end{proposition}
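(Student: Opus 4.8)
The plan is to evaluate $m(s)$ directly from the cone representation \eqref{eq:geodesic} and then extract every claim from the resulting integral. Since $\mu(s)=\mfP\lambda(s)$ with $\lambda(s)=Z(s;\cdot,\cdot)_\#\gamma$, testing the projection $\mfP$ against the constant function $\phi\equiv 1$ and recalling the weight $r^2$ in its definition gives
\[
m(s)=\int_\CC r^2\,\dd\lambda(s)=\int_{\CC\ti\CC}R(s;z_0,z_1)^2\,\dd\gamma(z_0,z_1).
\]
Substituting the explicit expression for $R^2$ from \eqref{eq:GeodInterpol} and using that the marginals of $\gamma$ are the optimal lifts $\lambda_i$ of $\mu_i$, so that $\int_{\CC\ti\CC}r_i^2\,\dd\gamma=\int_\CC r^2\,\dd\lambda_i=m(i)$, yields \eqref{eq:MassEstim.A} at once, with $m_*$ exactly as defined.

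For the identity \eqref{eq:MassIdent} I would exploit the algebraic relation $2r_0r_1\trcos(|x_1{-}x_0|)=r_0^2+r_1^2-\msd_\mfC(z_0,z_1)^2$, which is immediate from the definition of $\msd_\mfC$. Integrating against the optimal plan $\gamma$ and using $\int_{\CC\ti\CC}\msd_\mfC^2\,\dd\gamma=\msW_\mfC(\lambda_0,\lambda_1)^2=\msHK(\mu_0,\mu_1)^2$ gives the value $2m_*=m(0)+m(1)-\msHK(\mu_0,\mu_1)^2$. Inserting this into \eqref{eq:MassEstim.A} and collecting the coefficients of $m(0)$ and $m(1)$ via $(1{-}s)^2+s(1{-}s)=1{-}s$ and $s^2+s(1{-}s)=s$ produces \eqref{eq:MassIdent}; differentiating the latter twice gives $m''(s)=2\msHK(\mu_0,\mu_1)^2\geq 0$ immediately.

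The chain \eqref{eq:MassEstim.B} then reduces to two-sided bounds on $m_*$. The upper bound $m_*\leq\sqrt{m(0)m(1)}$ follows from $\trcos\leq 1$ together with the Cauchy--Schwarz inequality $\int r_0r_1\,\dd\gamma\leq(\int r_0^2\,\dd\gamma)^{1/2}(\int r_1^2\,\dd\gamma)^{1/2}=\sqrt{m(0)m(1)}$; inserted into \eqref{eq:MassEstim.A} this gives the rightmost inequality after recognizing the perfect square. I expect the real content to be the lower bound $m_*\geq 0$, which is not apparent from the formula since $\trcos$ takes negative values: here I would invoke Proposition~\ref{pr:Reservoir}(c), which ensures $\gamma(\mfN)=0$. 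Consequently, for $\gamma$-a.e.\ $(z_0,z_1)$ one has either $r_0r_1=0$, so the integrand vanishes, or $|x_1{-}x_0|\leq\pi/2$, so that $\trcos(|x_1{-}x_0|)=\cos(|x_1{-}x_0|)\geq 0$; in both cases the integrand is nonnegative, whence $m_*\geq 0$ and the middle inequality of \eqref{eq:MassEstim.B} holds. The leftmost inequality is finally the elementary one-variable minimization $\min_{s}\big[(1{-}s)^2m(0)+s^2m(1)\big]=m(0)m(1)/(m(0){+}m(1))$, attained at $s=m(0)/(m(0){+}m(1))$, which bounds the middle term below uniformly in $s$.
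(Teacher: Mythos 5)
Your proof is correct, and for \eqref{eq:MassEstim.A} and the two-sided bounds \eqref{eq:MassEstim.B} it coincides step by step with the paper's argument: pushing the constant test function through $\mfP$ and $Z(s;\cdot,\cdot)_\#\gamma$ to get $m(s)=\int_{\CC\ti\CC}R(s;z_0,z_1)^2\dd\gamma$, expanding the quadratic structure of $R^2$, and then controlling $m_*$ via Cauchy--Schwarz for the upper bound and via Proposition~\ref{pr:Reservoir}(c) (so that $\trcos\in[0,1]$ $\gamma$-a.e.\ where $r_0r_1>0$) for the lower bound, exactly as the paper does. Where you genuinely depart from the paper is in identifying $m(0)+m(1)-2m_*$ with $\msHK(\mu_0,\mu_1)^2$: the paper computes $m''(s)=2\big(m(0)+m(1)-2m_*\big)$ and then \emph{compares} with the special-lift characterization \eqref{eq:HKby-wh-r}, which ultimately rests on the entropy-transport machinery imported from the companion paper, whereas you integrate the pointwise identity $2r_0r_1\trcos(|x_1{-}x_0|)=r_0^2+r_1^2-\msd_\mfC(z_0,z_1)^2$ against the optimal plan and use $\iint\msd_\mfC^2\dd\gamma=\msW_\mfC(\lambda_0,\lambda_1)^2=\msHK(\mu_0,\mu_1)^2$, which holds because $\gamma$ is optimal between \emph{optimal} lifts. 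Your route is more self-contained and arguably cleaner: it needs only the definition of $\msd_\mfC$ and the optimality built into \eqref{eq:geodesic}, it avoids the slight gap in the paper's appeal to \eqref{eq:HKby-wh-r} (where one would still have to match the optimal $\gamma$ with a minimizer of the functional $\scrD$), and by establishing \eqref{eq:MassIdent} first it makes the convexity statement $m''(s)=2\msHK(\mu_0,\mu_1)^2\geq 0$ a triviality of differentiation rather than a separate computation.
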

\begin{proof} Using the definition of $\mu(s)$ via the projection 
  and $Z(s;\cdot,\cdot)$ in \eqref{eq:GeodInterpol} we have 
\[
m(s) = \int_{\Omega} \dd \mu(s)= \int_{\CC}\!\! r^2 \dd\lambda(s) 
= \int_{\CC}\!\! r^2 \dd\big(Z(s;\cdot,\cdot)_\# \gamma\big)
= \int_{\CC\ti\CC}\!\!  R(s;z_0,z_1)^2 \dd \gamma(z_0,z_1). 
\]
Now, we can use the explicit quadratic structure of $R^2$ given in
\eqref{eq:GeodInterpol} we find the quadratic formula
\eqref{eq:MassEstim.A}. 

For estimate \eqref{eq:MassEstim.B} we use the fact that $
\trcos(|x_1{-}x_0|)$ takes values only in the interval $[0,1]$ on the
support of $\gamma$, see Proposition \ref{pr:Reservoir}(c). By the
Cauchy-Schwarz estimate we have $0\leq m_* \leq \sqrt{m(0)m(1)}$,
which implies the estimates. 

Obviously, we have $m''(s)=2(m(0)+m(1)-2m_*)$, and comparing to
the characterization \eqref{eq:HKby-wh-r} we find
$m''(s)=2\msHK(\mu_0,\mu_1)^2$ as desired.
\end{proof}

Next, we consider the linear functional $\calF_\Phi(\mu)=\int_\Omega
\Phi(x)\dd \mu(x)$ with $\Phi\in \rmC^0(\Omega)$.

\begin{proposition}
Let $\Phi\in\rmC^0(\Omega)$ be given and define
$\wt\Phi([x,r]) = r^2\Phi(x)$. Then the functional $\calF_\Phi:\MMM(\Omega)\to\R$
is $\Lambda$-convex along  $[0,1]\ni s \mapsto \mu(s)$ given by
\eqref{eq:geodesic} if and only if $\wt\Phi:\CC\to\R$ is $\Lambda$-convex.
\end{proposition}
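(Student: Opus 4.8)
The plan is to reduce both implications to a single identity linking $\calF_\Phi$ evaluated along the projected curve to $\wt\Phi$ evaluated along the underlying cone geodesics. Starting from $\mu(s)=\mfP\lambda(s)$ and the very definition of $\mfP$ (observe that $\wt\Phi([x,r])=r^2\Phi(x)$ is precisely the integrand produced by $\mfP$, with $\wt\Phi(\TT)=0$), together with $\lambda(s)=Z(s;\cdot,\cdot)_\#\gamma$ from \eqref{eq:geodesic}, I obtain
\[
\calF_\Phi(\mu(s))=\int_{\CC}\wt\Phi\,\dd\lambda(s)=\int_{\CC\ti\CC}\wt\Phi\big(Z(s;z_0,z_1)\big)\dd\gamma(z_0,z_1).
\]
Evaluating at $s=0,1$ gives $\calF_\Phi(\mu(j))=\int_{\CC\ti\CC}\wt\Phi(z_j)\dd\gamma$, while optimality of $\gamma$ yields $\msHK(\mu_0,\mu_1)^2=\int_{\CC\ti\CC}\msd_\mfC(z_0,z_1)^2\dd\gamma$. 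For the direction ``$\wt\Phi$ is $\Lambda$-convex $\Rightarrow\calF_\Phi$ is $\Lambda$-convex'' I would simply insert the defining inequality of $\Lambda$-convexity of $\wt\Phi$ along each cone geodesic $s\mapsto Z(s;z_0,z_1)$ into the integrand and integrate against $\gamma$; the three resulting terms reproduce exactly the right-hand side of \eqref{eq:GLC}. Here only $\Lambda$-convexity of $\wt\Phi$ along the geodesics meeting the support of $\gamma$ is actually used.

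For the converse I would realize individual cone geodesics by genuine $\msHK$-geodesics between Dirac masses. Given $z_j=[x_j,r_j]$, set $\mu_j=r_j^2\delta_{x_j}$. When $|x_0-x_1|\le\pi/2$ the optimal ratio $r_1/r_0=\sqrt{|\mu_1|/|\mu_0|}$ is automatically met (since $|\mu_j|=r_j^2$), so by the consistency computation of Section~\ref{sss:ConsiDiracMass} the optimal plan is $\gamma=\delta_{(z_0,z_1)}$ plus a reservoir term at $(\TT,\TT)$, and the induced geodesic \eqref{eq:geodesic} is $\mu(s)=R(s)^2\delta_{X(s)}$ with $R,X$ from \eqref{eq:GeodInterpol}. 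The identity above then collapses to $\calF_\Phi(\mu(s))=\wt\Phi(Z(s;z_0,z_1))$, so the $\Lambda$-convexity hypothesis on $\calF_\Phi$ delivers precisely the pointwise $\Lambda$-convexity of $\wt\Phi$ along this short geodesic; taking $z_1=\TT$ and using the Hellinger geodesic $\mu(s)=(1-s)^2\mu_0$ covers the purely radial geodesics to the tip as well.

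The main obstacle is that, by Proposition~\ref{pr:Reservoir}(c), optimal plans never transport over angular distances exceeding $\pi/2$, so cone geodesics with $\pi/2<|x_0-x_1|<\pi$, and those through the tip with $|x_0-x_1|\ge\pi$, are \emph{not} directly realizable by $\msHK$-geodesics (and by Corollary~\ref{co:ConeGeodesic} no other $\msHK$-geodesics exist). To reach these I would use that $\Lambda$-convexity along a constant-speed geodesic $\zeta$ is equivalent to convexity of the scalar map $s\mapsto\wt\Phi(\zeta(s))-\tfrac{\Lambda}{2}|\dot\zeta|^2 s^2$, which is a \emph{local} property on $[0,1]$. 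Subdividing an arbitrary cone geodesic into pieces of angular length $<\pi/2$, each piece is short or radial and hence already controlled: for $\pi/2<|x_0-x_1|<\pi$ the interpolant satisfies $R(s)>0$ throughout, so the pieces stay off the tip and are short transport geodesics; for $|x_0-x_1|\ge\pi$ the geodesic splits at the tip into two radial pieces, whose one-sided $s$-derivatives of $\wt\Phi(\zeta(s))$ both vanish at the tip so that no concave kink arises. Local convexity on every piece then upgrades to convexity on $[0,1]$, giving $\Lambda$-convexity of $\wt\Phi$ along every cone geodesic and completing the equivalence.
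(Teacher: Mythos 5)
Your proposal is correct and takes essentially the same route as the paper: the forward direction via the pushforward identity $\calF_\Phi(\mu(s))=\int_{\CC\ti\CC}\wt\Phi\big(Z(s;z_0,z_1)\big)\dd\gamma$ combined with the optimality of $\gamma$, and the converse via geodesics between Dirac masses, reducing cone geodesics with $\pi/2<|x_0{-}x_1|<\pi$ to overlapping pieces of angular length below $\pi/2$ --- exactly the reduction the paper merely asserts in one sentence. Your write-up is in fact more complete than the paper's own proof, since you verify the optimality of the plan $\delta_{(z_0,z_1)}$ through the automatic ratio $r_1/r_0=\sqrt{a_1/a_0}$, make the local-to-global convexity step explicit, and additionally treat the radial geodesics (via the Hellinger geodesic $(1{-}s)^2\mu_0$) and the through-the-tip geodesics with $|x_0{-}x_1|\geq\pi$ (via the vanishing one-sided derivatives of $\wt\Phi\circ\zeta$ at $\TT$), cases on which the paper's argument is silent.
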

\begin{proof}
Assume that $\wt\Phi:\CC\to\R$ is $\Lambda$-convex.
We use the definition of $s\mapsto \mu(s)$ in \eqref{eq:geodesic}
to find
\[
\calF_\Phi(\mu(s))=\int_\Omega r^2\Phi(x)\dd\lambda(s) = \int_{\CC}\wt\Phi
\big(Z(s;\cdot,\cdot)\big) \dd\gamma,
\]
where $\gamma\in\MMM_2(\CC{\times}\CC)$ is an optimal plan for
$\mu_0=\mu(0)$ and $\mu_1=\mu(1)$. Thus, with the convexity
of $\wt\Phi$ and the optimality of $\gamma$ we find
\[
\calF_\Phi(\mu(s))\leq(1{-}s)\calF_\Phi(\mu_0) + s\calF_\Phi(\mu_1)
-\frac{\Lambda}{2}s(1{-}s)\msHK(\mu_0,\mu_1)^2.
\]
Conversely, if $\calF_\Phi$ is $\Lambda$-convex on $\MMM(\Omega)$
we can consider geodesic curves for two Dirac measures $\mu_0=a_0\delta_{x_0}$
and $\mu_1=a_1\delta_{x_1}$ to obtain convexity of $\wt\Phi$
along geodesics in $\CC$. However, as transport above the threshold $\pi/2$
is not optimal, this excludes geodesic curves in $\CC$ for distances
$\pi/2<|x_0{-}x_1|<\pi$. In this case, we note that we can
always reduce this case to two overlapping geodesic curves
for distances below $\pi/2$.
\end{proof}

Finally, we provide some negative results for functionals that are
geodesically $\Lambda$-convex for the Wasserstein--Kantorovich distance
but not for the Hellinger--Kantorovich distance. A simple necessary
condition is obtained by realizing that for all $\mu_1\in \MMM(\Omega)$
the Hellinger geodesic 
\[
\mu^\rmH(s):= s^2\,\mu_1
\]
is also the unique geodesic in $(\MMM(\Omega),\msHK)$ connecting
$\mu_0=0$ and $\mu_1$. Indeed, this easily follows from the fact that
the possible lifts of $\mu_0$ are given by $\alpha \delta_\TT \in
\MMM_2(\CC)$ with $\alpha\geq 0$. However, geodesics in
$(\CC,\msd_\mfC)$ connecting $\TT$ and $z_1=[x,r]$ are simply given by
$z(s)=[x,sr]$.   

Applying this to Boltzmann's logarithmic entropy
$\calE:\mu \mapsto \int_\Omega \FBoltz(\rmd \mu/\rmd x) \dd x$, we see
that it is not
geodesically $\Lambda$-convex with respect to $\msHK$. For this, 
consider the geodesic $\mu(s)=s^2 u\dd x$, where $u\in
\rmL^2(\Omega)$, $u\geq 0$, and $|\mu|=\int_\Omega u\dd x >0$ to find 
the relation
\[
\calE(\mu(s)) = s^2 \calE(u\dd x) +(1{-}s^2)\int_\Omega 1\dd x +
2 s^2\log s \:\int_\Omega u\dd x.
\]
Clearly, the last term destroys geodesic $\Lambda$-convexity.
Similarly, for $p\in {]0,1[} \cup {]1,\infty[}$ we may look at
functionals of the form
\begin{equation}
  \label{eq:calEp}
  \calE_p(\mu)= \int_\Omega \frac1{p-1} \Big( \frac{\rmd \mu}{\rmd
  x}\Big)^p \dd x.
\end{equation}
Along the geodesics $\mu(s) = s^2 u \dd x$ we obtain $e_p(s):= \calE_p(\mu(s) =
s^{2p} \calE(\mu(1))$. For $p\in {]1/2,1[}$ we conclude $e''_p(s)\to
-\infty$ for $s\downarrow 0$ due to $e_p(1)<0$. Hence, for these $p$
the functional $\calE_p$ is not geodesically $\Lambda$-convex for any
$\lambda\in \R$ with respect to $\msHK$. 

The following remark supports the conjecture that the functional $\calE_p$ is
geodesically convex on $(\MMM(\Omega),\msHK)$, or more generally on
$(\MMM(\Omega),\msD_{\alpha, \beta})$ for all $p>1$. It is
based on the formal differential calculus developed in
\cite{LieMie13GSGC}, which was in fact the stimulus of this work.  
If this is the case one may consider the geodesically $\Lambda$-convex
gradient system $(\MMM(\Omega),\calE_p{+}\calF_\Phi, \msD_{\alpha,
  \beta})$, which corresponds to the partial differential equation 
\[
\pl_t u = - \bbK_{\alpha,\beta}(u) \rmD\big(\calE_p{+}\calF_\Phi)(u) =
\alpha \Big( \Delta(u^p) + \mathop{\mathrm{div}}\big(u \nabla
\Phi\big)\Big) - \beta \Big( \Phi u + \frac{u^p}{p-1}\Big),
\]
complemented by the no-flux boundary conditions $\nabla
\big(\frac{p}{p-1}u^{p-1}+ \Phi)\cdot\nu=0$. 
Note that this equation always has the solution $u\equiv 0$, which is
different from the unique minimizer $u_\mathrm{min}$ of $\calE_p{+}\calF_\Phi$, if
$\Phi$ attains negative values somewhere. Indeed, we have 
$u_\mathrm{min}:=\Big(\frac{p-1}p \max\{ 0,
-\Phi\}\Big){}^{1/(p-1)}$. We refer to \cite[Eqn.\,(2.1)]{PeQuVa14HSAM} for an
application for modeling of tumor growth.

\begin{remark}[Geodesic convexity via Eulerian calculus]\slshape Following
  ideas in \cite{OttWes05ECCW,DanSav08ECDC} a formal calculus for
  reaction-diffusion systems was developed in \cite{LieMie13GSGC}.  The idea is
  to characterize the geodesic $\Lambda$-convexity of $\calE(u\dd
  x)=\int_\Omega E(u(x)) \dd x$ on
  $(\MMM(\Omega),\msD_{\alpha,\beta})$ by calculating the quadratic
  from $M(u,\cdot)$ generated by the contravariant Hessian of $\calE$:
\[
M(u,\xi)=\langle \xi,\rmD\bfV(u) \bbK_{\alpha,\beta}(u)\xi\rangle-
\frac12 \rmD_u\langle \xi , \bbK_{\alpha,\beta}(u)\xi\rangle 
[\bfV(u)] \text{
  with } \bfV(u)=\bbK_{\alpha,\beta}(u) \rmD\calE(u). 
\]
Then, one needs to show the estimate $M(u,\xi) \geq \Lambda \langle
\xi,\bbK_{\alpha,\beta}(u)\xi\rangle $. 

Following the methods in \cite[Sect.\,4]{LieMie13GSGC}, for
 $u\in \rmC^0_\rmc(\Omega)$ and smooth $\xi$  we obtain 
\begin{align*}
M(u,\xi)=\int_\Omega \bigg(& \alpha^2 \Big( \big( A(u){-}H(u)\big)
(\Delta \xi)^2 + H(u)\big| \rmD^2 \xi\big|^2 \Big) \\    
&+ \alpha\beta \Big(B_1(u) |\nabla\xi|^2 + B_2(u)\xi\Delta \xi \Big)
+\beta^2 B_3(u) \xi^2 \bigg) \dd x,
\end{align*}
\begin{align*}
\text{where }&A(u)= u^2 E''(u), \quad H(u)= uE'(u) - E(u), \quad
B_1(u)=\frac{3u}2 E'(u)- E(u),\\
&B_2(u)=-2u^2 E''(u) + uE'(u)- E(u),\quad B_3(u)= u^2 E''(u)+
\frac{u}2 E'(u).
\end{align*}
For the special case $E(u)=u^p/(p{-}1)$ with $p>1$  we find the relation 
\[
M_p(u,\xi)=\!\!\int_\Omega\!\! \Big[ \alpha^2 \big( (p{-}1)
(\Delta \xi)^2 {+}| \rmD^2 \xi|^2 \big)    
+ \alpha\beta \big(\tfrac{3p{-}2}{2p{-}2} |\nabla\xi|^2 {-} (2p{-}1)\xi\Delta \xi \big)
+\beta^2 \tfrac{2p^2{-}p}{2p{-}2} \xi^2 \Big]u^p \dd x,
\]
which is nonnegative, because the mixed term $\xi\Delta \xi$ can be
estimated via 
\[
{-}\alpha\beta (2p{-}1)\xi\Delta \xi\geq -\alpha^2(p{-}1)
(\Delta \xi)^2- \tfrac{(2p{-}1)^2}{4(p{-}1)} \beta^2 \xi^2.
\]
Thus, the formal Eulerian calculus suggests that $\calE_p$ is
geodesically convex with respect to $\msHK$ for all $p>1$. This
investigation will be continued in subsequent work.    
\end{remark}

\subsection{Geodesic connections for  two Dirac measures}
\label{ss:AllDiracM}
 
While for the characterization of the distance $\msHK$ the choice of
the geodesics is not so relevant, we want to highlight that the
set of geodesic connections between two measures can be very large. 
As shown in Section \eqref{ss:GeodCurves} and Corollary
\ref{co:ConeGeodesic} all geodesic curves can be constructed from optimal
couplings $\gamma \in \calM_2(\CC\ti\CC)$ in
\eqref{eq:DHKbyLift}. However, 
many $\gamma$ lead via the projection $\mfP$ to
the  same geodesic curve. Here we show that the set of geodesics may
still form an infinite-dimensional convex set. 

We treat the case of two Dirac measures $a_j\delta_{y_j}$.
The case $|y_1{-}y_0|\neq \pi/2$ is trivial, since only one geodesic
connection exists. However, for the  critical distance 
$|y_1{-}y_0|=\pi/2$ an uncountable number of linearly independent
connecting geodesics exists, such that the span of the convex set of all
geodesics is infinite dimensional. 

We consider $\mu_0=a_0\delta_{y_0}$ and $\mu_1=a_1\delta_{y_1}$ with
$a_0,a_1>0$. As was shown before there is exactly one connecting
geodesics if $|y_0{-}y_1|\neq \pi/2$. Indeed, for $|y_0{-}y_1|< \pi/2$
we have $\mu(s)=a(s)\delta_{x(s)}$ as discussed in Section
\ref{ss:MassDHW}. For $|y_0{-}y_1|> \pi/2$ we have a pure Hellinger
case with $\mu(s)=(1{-}s)^2a_0\delta_{y_0}+ s^2a_1\delta_{y_1}$. 

For the critical case $|y_0{-}y_1|=\pi/2$ we have a huge set of 
possible geodesics, since we may consider all lifts 
$\wh\lambda_0,\wh\lambda_1 \in \MMM({[0,\infty[})$ satisfying
\[
a_j=\int_{{[0,\infty[}} r^2 \dd \wh\lambda_j(r)  \text{ for }j=1,2\
\text{ and } \ \wh\lambda_0({[0,\infty[}) =
\wh\lambda_1({[0,\infty[}).
\]
Now every coupling $\wh\gamma \in \Gamma(\wh\lambda_0,\wh\lambda_1)
=\set{\wh\gamma \in\MMM ({[0,\infty[}^2)\,}{\,\Pi^i_\#\wh\gamma=\wh\lambda_i}$ 
provides an optimal coupling. To see this,  we set $\lambda_j=\delta_{y_j} \otimes 
\wh\lambda_j\in\MMM(\CC)$ and $\gamma=\delta_{y_0}  \otimes  \delta_{y_1}
 \otimes  \wh\gamma\in\MMM(\CC{\ti}\CC)$. Since $|y_0{-}y_1|=\pi/2$ implies $
\msd_{\mfC}([y_0,r_0],[y_1,r_1])^2= r_0^2 + r_1^2 $, we find
\begin{align*}
\int_{\CC \ti \CC} 
\msd_{\mfC}([x_0,r_0],[x_1,r_1])^2 \dd \gamma&= \int_{{[0,\infty[}^2} 
\msd_{\mfC}([y_0,r_0],[y_1,r_1])^2 \dd \wh\gamma \\
&= 
\int_{{[0,\infty[}^2} \big( r_0^2{+}r_1^2\big)
 \dd \wh\gamma= a_0+a_1 =
\msHK(a_0\delta_{y_0}, a_1\delta_{y_1})^2.
\end{align*}
Now, geodesic curves can be constructed for every $\wh\gamma$ as
defined in \eqref{eq:geodesic}. Obviously the set of all possible
pairs $(\wh\lambda_0,\wh\lambda_1)$ is convex and therefore also 
the set of all $\wh\gamma\in \Gamma(\wh\lambda_0,\wh\lambda_1)$.
Hence, the set of all optimal $\wh\gamma$ is convex. 

However, the mapping from $\wh\gamma$ to $\mu(\cdot)$ is not
surjective, since there is a huge redundancy. Indeed, by the definition
$\mu_s=\mathfrak  P\,Z(s;\cdot,\cdot)_\# \gamma$  we have 
\[
\int_\Omega \psi(x)\dd \mu_s(x) = \int_{{[0,\infty[}^2}
R(s,[y_0,r_0],[y_1,r_1])^2 \psi\big(X(s,[y_0,r_0],[y_1,r_1])\big)  \dd
\wh\gamma(r_0,r_1),
\]
where, using $|y_0{-}y_1|=\pi/2$,  we have
$R(s,[y_0,r_0],[y_1,r_1])^2=(1{-}s)^2r_0^2+s^2r_1^2$ and   
\[
X(s,[y_0,r_0],[y_1,r_1])=(1{-}\rho(s))y_0+ \rho(s)y_1 \text{ with } 
\rho(s)= \frac2\pi \arccos\Big[1 + \Big(\frac{s
  r_1}{(1{-}s)r_0}\Big)^2\Big]^{-1/2} ,
\]
see \eqref{eq:GeodInterpol}.  The observation is that the integrand can be
written in the form $r_1^2 \Phi(s,r_0/r_1)$. In particular, for $r_1>0$
the two geodesics
\[
\begin{split}
\Lambda(s)&=\delta_{R(s;[y_0,r_0],[y_1,r_1])} \otimes 
\delta_{X(s;[y_0,r_0],[y_1,r_1])}
\quad 
\text{and}\\
\wt\Lambda(s)&=\delta_{r_1 R(s;[y_0,r_0/r_1],[y_1,1])} \otimes
\delta_{X(s;[y_0,r_0/r_1],[y_1,1])}
\end{split}
\] 
on $\CC$ give rise, via the projection $\mathfrak P$, to the same geodesic on $\Omega$
given by 
\[
\mu(s)=R(s;[y_0,r_0],[y_1,r_1])^2\delta_{X(s;[y_0,r_0],[y_1,r_1])}.
\] 

Thus, for a coupling $\gamma \in
\Gamma(\delta_{y_0}{\otimes}\wh\lambda_0, \delta_{y_1}\otimes \wh\lambda_1)$
given by $\wh\gamma \in \Gamma(\wh\lambda_0,  \wh\lambda_1)$
we can define the normalization
$N_0\gamma\in\MMM(\CC{\ti}\CC)$ with respect to $s=0$ as follows ($N_1\gamma$ for
$s=1$ can be defined similarly): 
\begin{align*}
\int_{\CC\ti\CC} \Phi(z_0,z_1)\dd N_0\gamma\ &= \ \int_{{]0,\infty[}\ti
  {]0,\infty[} } r_1^2\Phi\big([y_0,r_0/r_1],[y_1,1]\big) \dd 
\wh\gamma(r_0,r_1)\\[0.5em]
&\quad  + \Phi(\TT,\TT)b_{\TT,\TT} +\Phi([y_0,1],\TT)b_0  +\Phi(\TT,[y_1,1])b_1,
\\[0.5em]
\text{where }b_{\TT,\TT}& :=\wh\gamma(\{(0,0)\}), \ \
b_0:=\int_{{]0,\infty[} } \!\!r_0^2 \dd \wh\gamma(r_0,0),\ \ 
b_1:=\int_{{]0,\infty[}} \!\!r_1^2 \dd \wh\gamma(0,r_1). 
\end{align*}
The  terms involving $b_j$ contain the trivial Hellinger terms, where mass is
moved into or generated out of the tip $\TT$. Note that this mass can
be concentrated to the fixed value $r_0=1$ or $r_1=1$, respectively. 
The second term gives the mass that simply stays in $\TT$. The
interesting part is the first one, where still a measure
$\mathfrak n_0\wh\gamma$ survives: 
\[
\int_{{]0,\infty[}\ti
  {]0,\infty[} } r_1^2\Phi([y_0,r_0/r_1],[y_1,1]) \dd 
\wh\gamma(r_0,r_1) =: \int_{{]0,\infty[}} \Phi([y_0,r],[y_1,1]) \dd 
(\mathfrak n_0\wh\gamma)(r).
\]
We will see below that $(\mathfrak n_0\wh\gamma)(\rmd r)$ gives the
mass that leaves $y_0$ with speed $1/r$.

It is easy to see that $\gamma$ and $N_0\gamma$ generate the same
geodesic curve $\mu(\cdot)$. In terms of ${\mathfrak n}_0\wh\gamma$, we can now
write the geodesic curve associated with $\gamma$ in a simpler form,
namely 
\begin{align*}
\int_\Omega \psi(x)\dd \mu_s(x)&=
\int_{{]0,\infty[}} \big((1{-}s)^2r^2{+}s^2\big) \psi\big(
(1{-}\wh\rho(s,r))y_0{+}\wh\rho(s,r)y_1\big) \dd(\mathfrak
n_0\wh\gamma)(r)\\ 
&\quad + 0 \ + \ (1{-}s)^2 b_0 \psi(y_0) \ +\  s^2 b_1 \psi(y_1),
\end{align*}
where
$\wh\rho(s,r)=\frac2\pi\arccos\big[1+(\frac{s}{(1{-}s)r})^2\big]^{-1/2}
\in {]0,1[}$ for $s\in {]0,1[}$ and $r>0$ 
and 
\[
a_0 = b_0+ \int_{{]0,\infty[}} r^2\dd(\mathfrak
n_0\wh\gamma)(r) \ \text{ and } \ a_1= b_1+   \int_{{]0,\infty[}} \dd(\mathfrak
n_0\wh\gamma)(r). 
\] 
  
To simplify the further notation we now assume $\Omega =[-2,2]$,
$y_0=0$, and $y_1=\pi/2$. By the definition of $\wh\rho$ we find
$x=\wh x(r,s):=(1{-}\wh\rho(s,r))y_0{+}\wh\rho(s,r)y_1$ if and only if 
$r=(s\cos x)/((1{-}s)\sin x)$. Now differentiating $
\int_{[0,\pi/2]} \psi(x) \dd\mu_s(x)$ with respect to $s$ we find 
\begin{align*}
&\frac{\rmd}{\rmd s}\int_{[0,\pi/2]} \psi(x) \dd\mu_s(x) = -2(1{-}s)b_0\psi(0)+ 2sb_1 \psi(\pi/2)
\\
&+\int_{]0,\infty[}\Big(  2(s{-}(1{-}s)r^2)\psi(\wh x(r,s)) 
+\big((1{-}s)^2r^2{+}s^2\big) \psi'(\wh x(r,s)) \pl_s \wh x(r,s) \Big) \dd(\mathfrak
n_0\wh\gamma).  
\end{align*}
Defining the function $\xi$ and  $\Xi$ explicitly via 
\begin{equation}
  \label{eq:xi.2Dirac}
  \xi(s,x)=\frac{(\sin x)^2 -s }{2s(1{-}s)} \ \text{ and } \ 
\Xi(s,x)=\pl_x\xi(s,x)
\end{equation}
and eliminating $r$ in the above integral via  $r=(s\cos
x)/((1{-}s)\sin x)$ we obtain the identity
\[
 \frac{\rmd}{\rmd s}\int_{[0,\pi/2]}\!\! \psi(x) \dd\mu_s(x)=
 \int_{[0,\pi/2]}\!\!\Big( 4\xi(s,x)\psi(x) +
 \Xi(s,x) \psi'(x)\Big) \dd\mu_s(x).
\] 
Since $\xi$ satisfies the Hamilton--Jacobi equation $\tfrac{\rmd}{\rmd s} \xi+
\frac12|\nabla \xi|^2 + 2 \xi^2=0$, we conclude that the pair
$(\mu,\xi)$ indeed satisfies the equation \eqref{eq:GC.eqn}.

It is interesting to note that $\xi$ is independent of the measure
$\mathfrak n_0 \wh\gamma$. All the information about the precise form
of the connecting geodesic is solely encoded in the information how the
singularity for $s\searrow 0$ and $s\nearrow 1$ are formed, and this
information is exactly contained in $\mathfrak n_0\wh\gamma$.

\begin{figure}
\noindent%
\includegraphics[width=0.245\textwidth]{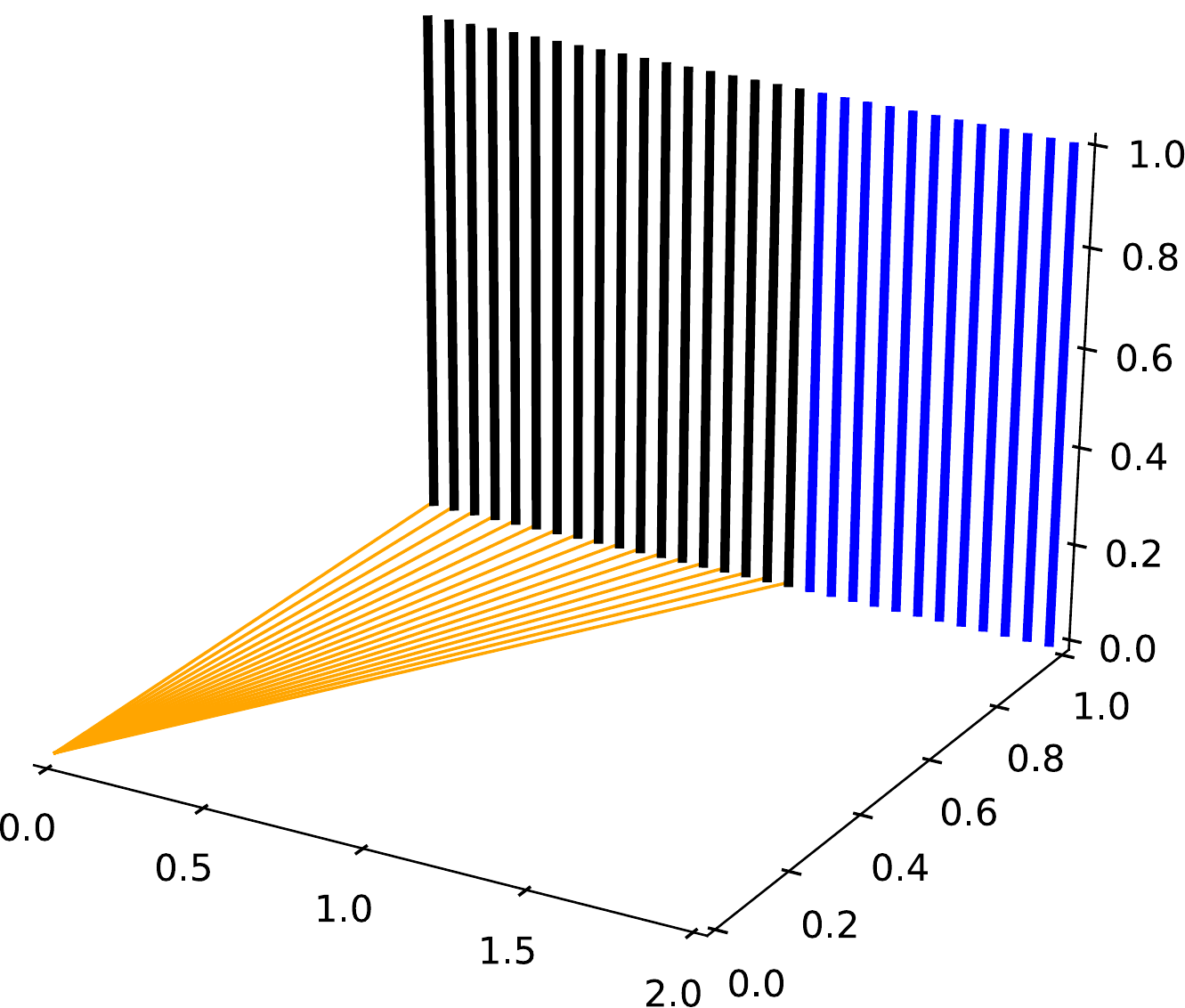}
\includegraphics[width=0.245\textwidth]{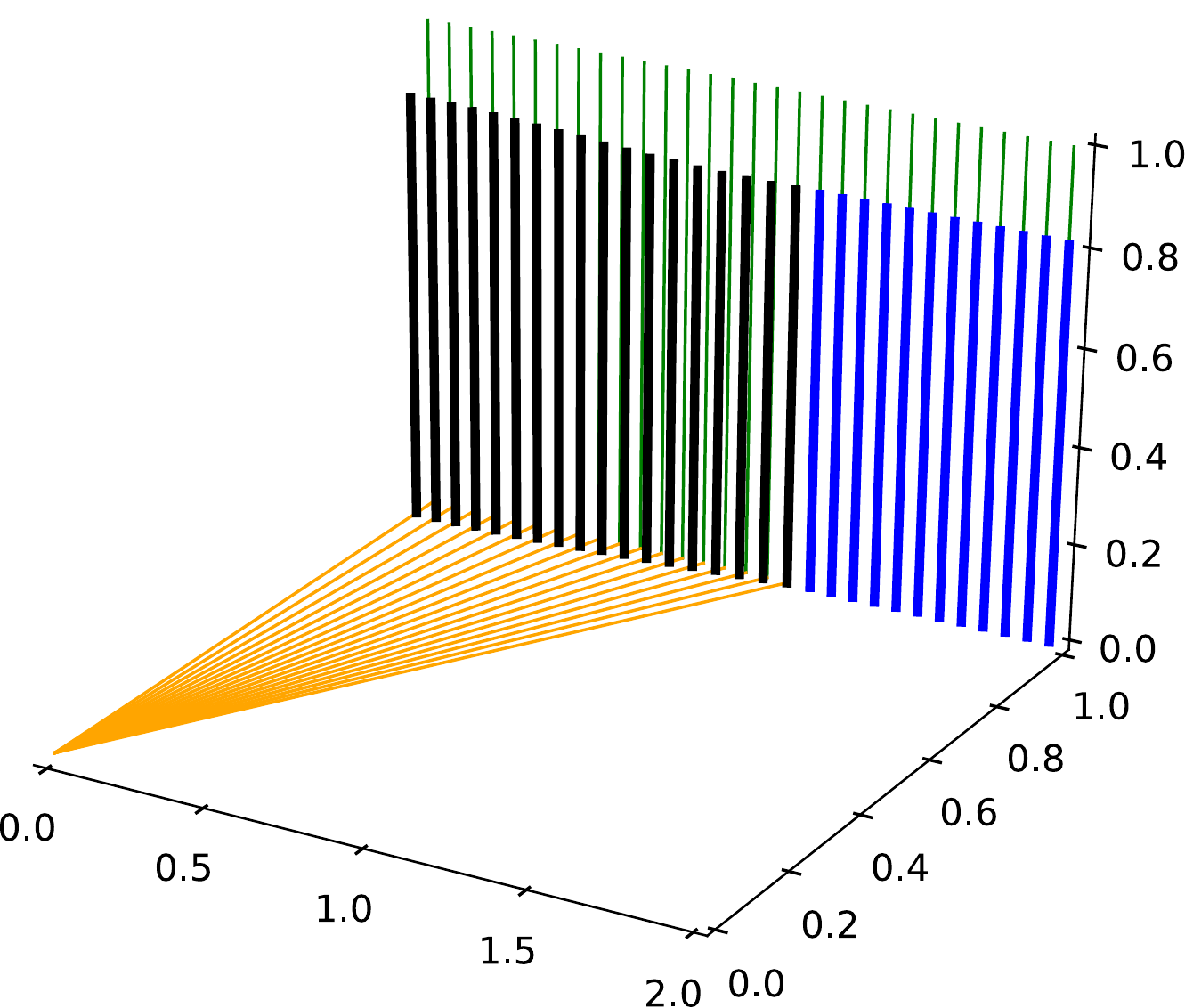}
\includegraphics[width=0.245\textwidth]{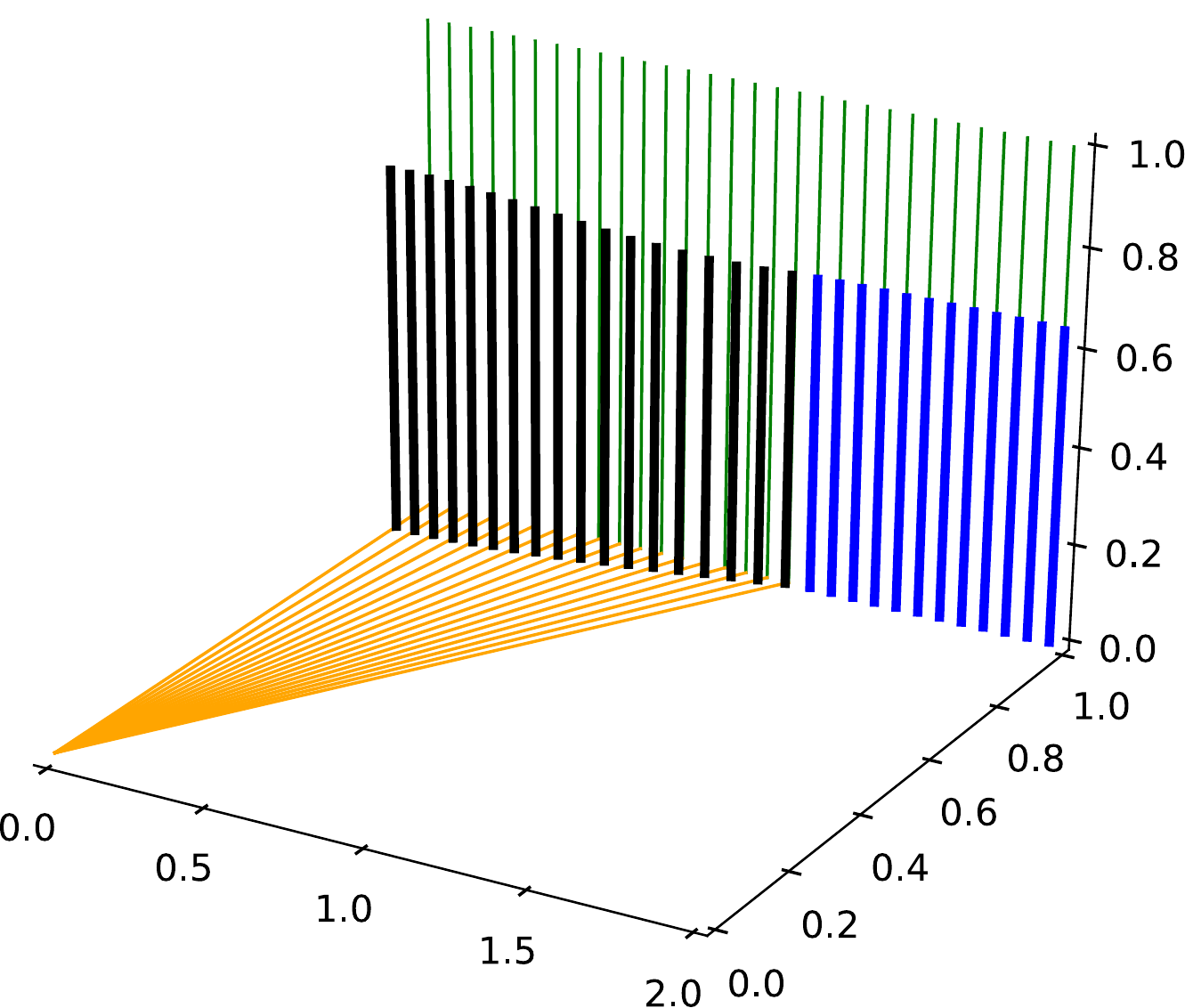}
\includegraphics[width=0.245\textwidth]{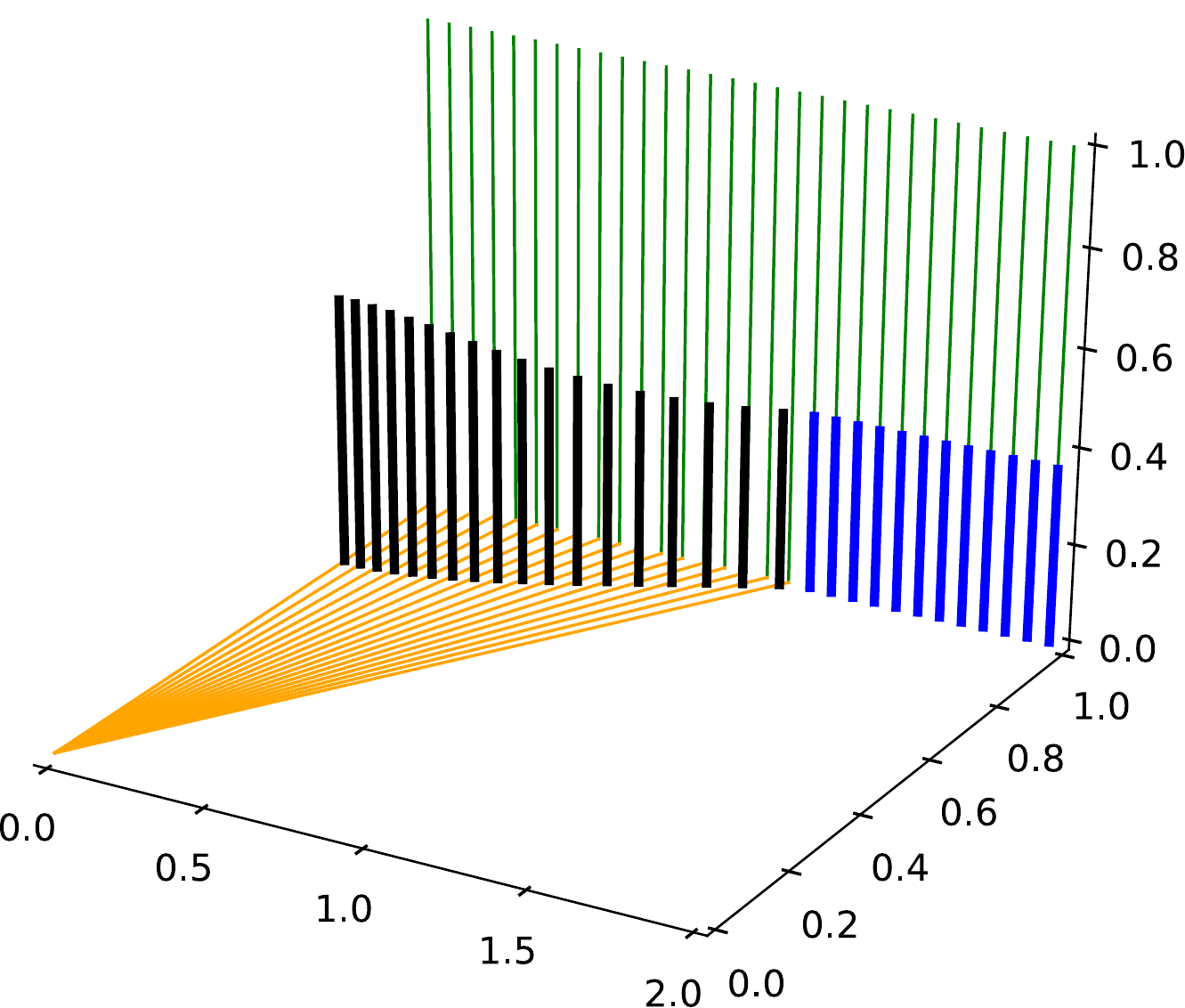}

\noindent%
\includegraphics[width=0.245\textwidth]{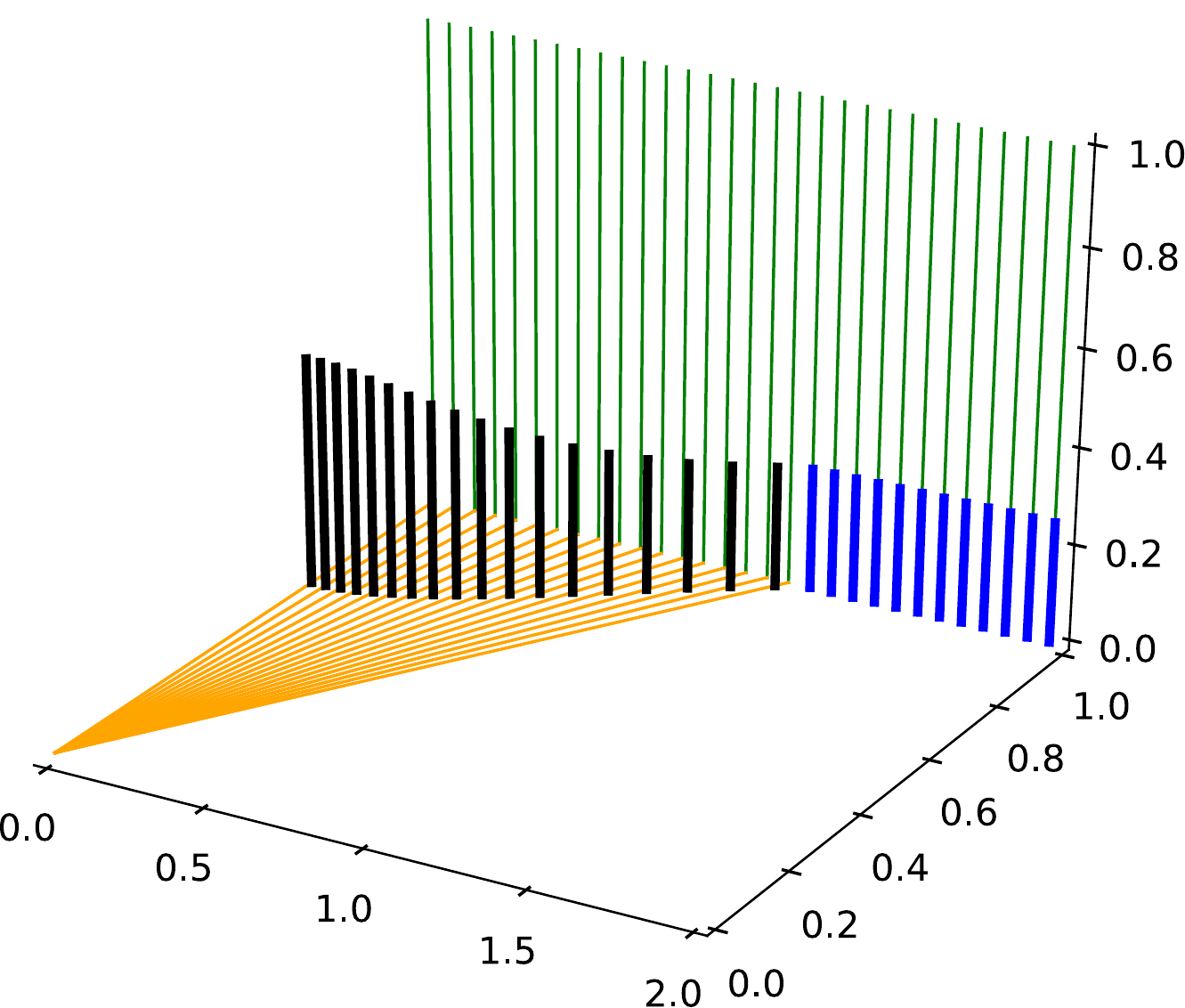}
\includegraphics[width=0.245\textwidth]{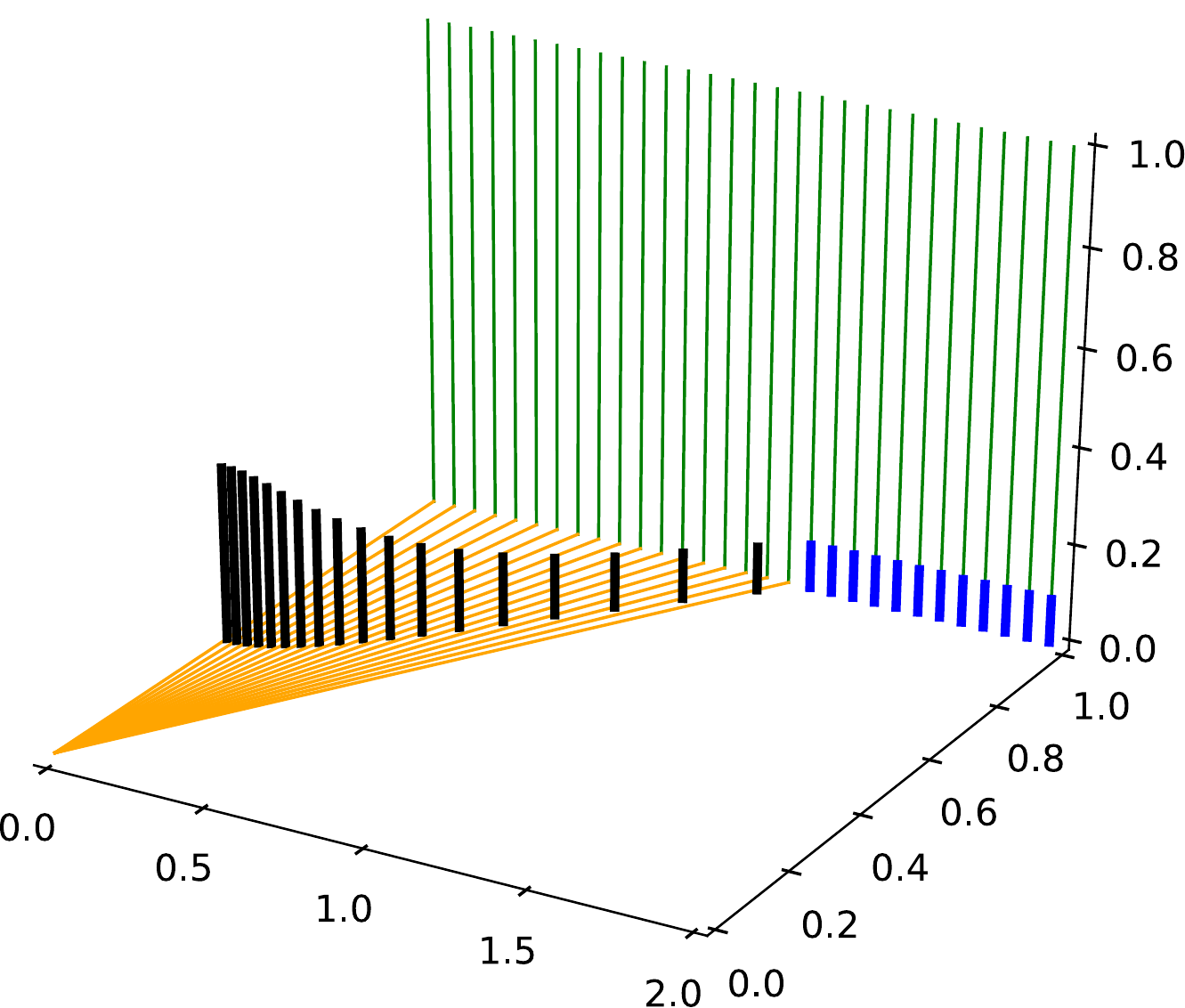}
\includegraphics[width=0.245\textwidth]{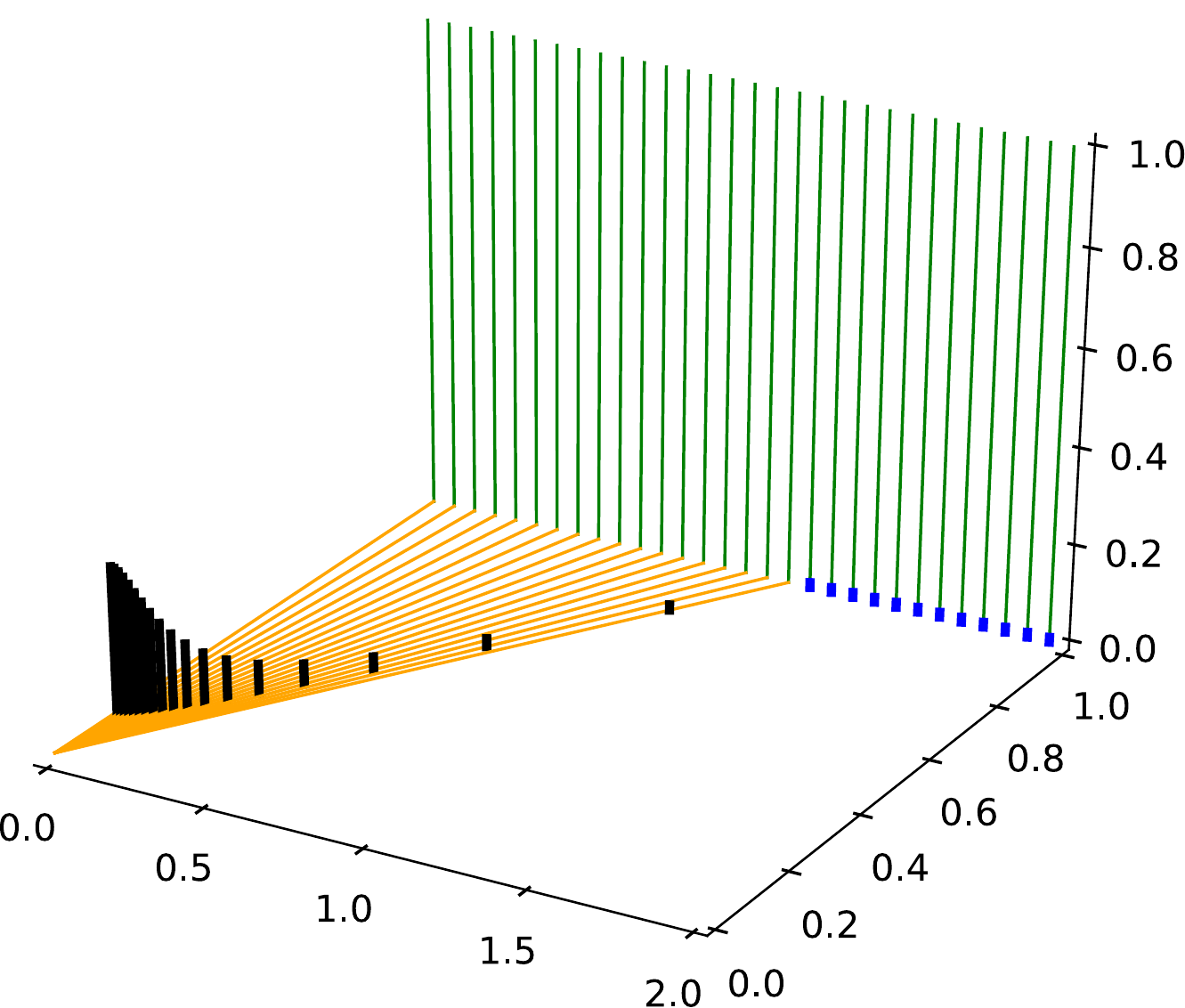}
\includegraphics[width=0.245\textwidth]{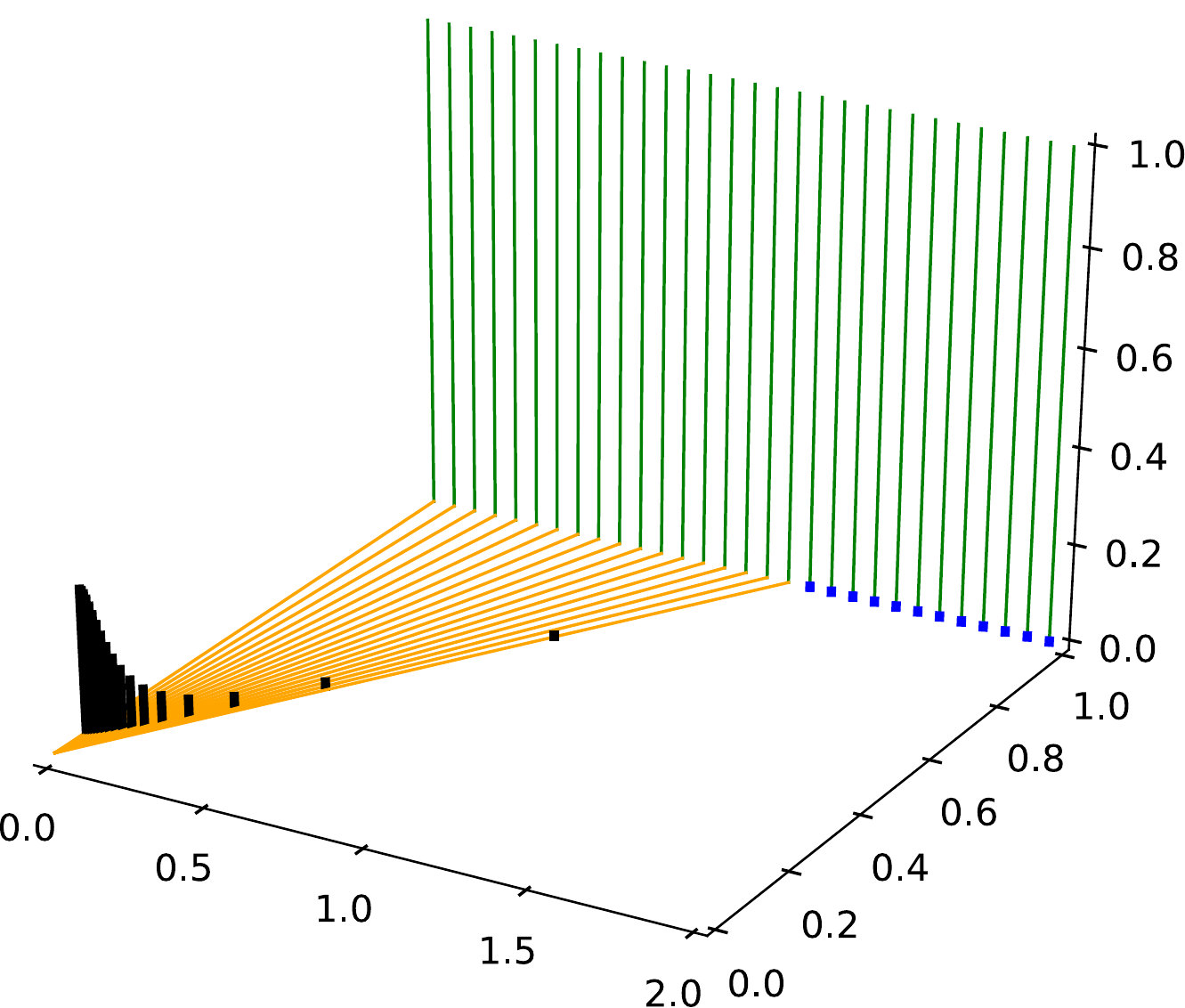}
\caption{
Geodesic curve $s\mapsto \mu(s)$ connecting superposition 
of Dirac masses and single Dirac measure in Example \ref{ex:geodDiracLine}(i) for different values of $s$. 
Here green denotes the measure $\mu_1$ for $N=30$  
while the blue and the black parts correspond to the parts of $\mu(s)$ 
that lie below and above the threshold $\pi/2$. 
The orange curves are the transport lines.}\label{fig:geodesics}
\end{figure}

\begin{figure}  
\includegraphics[width=0.245\textwidth]{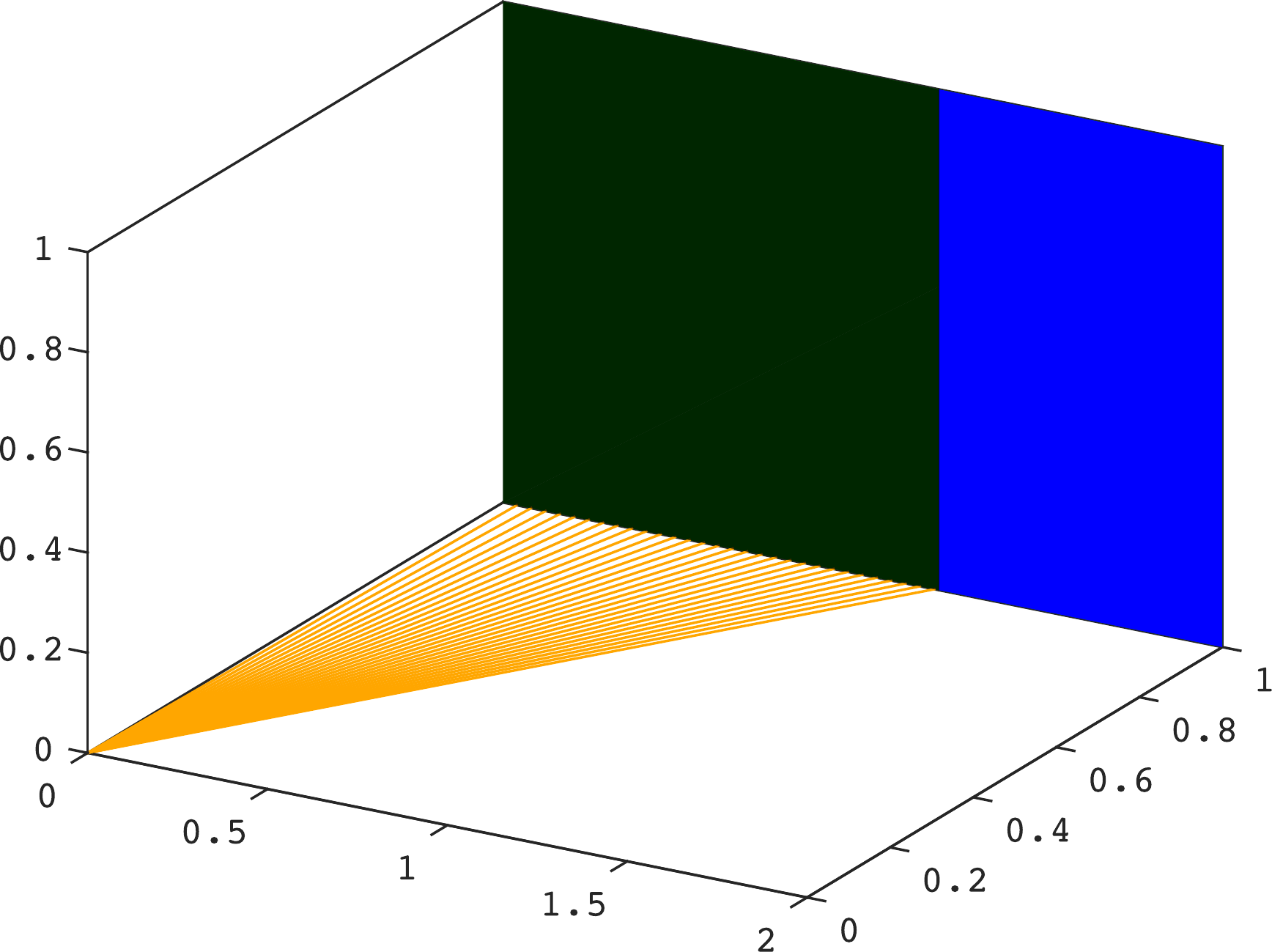}
\includegraphics[width=0.245\textwidth]{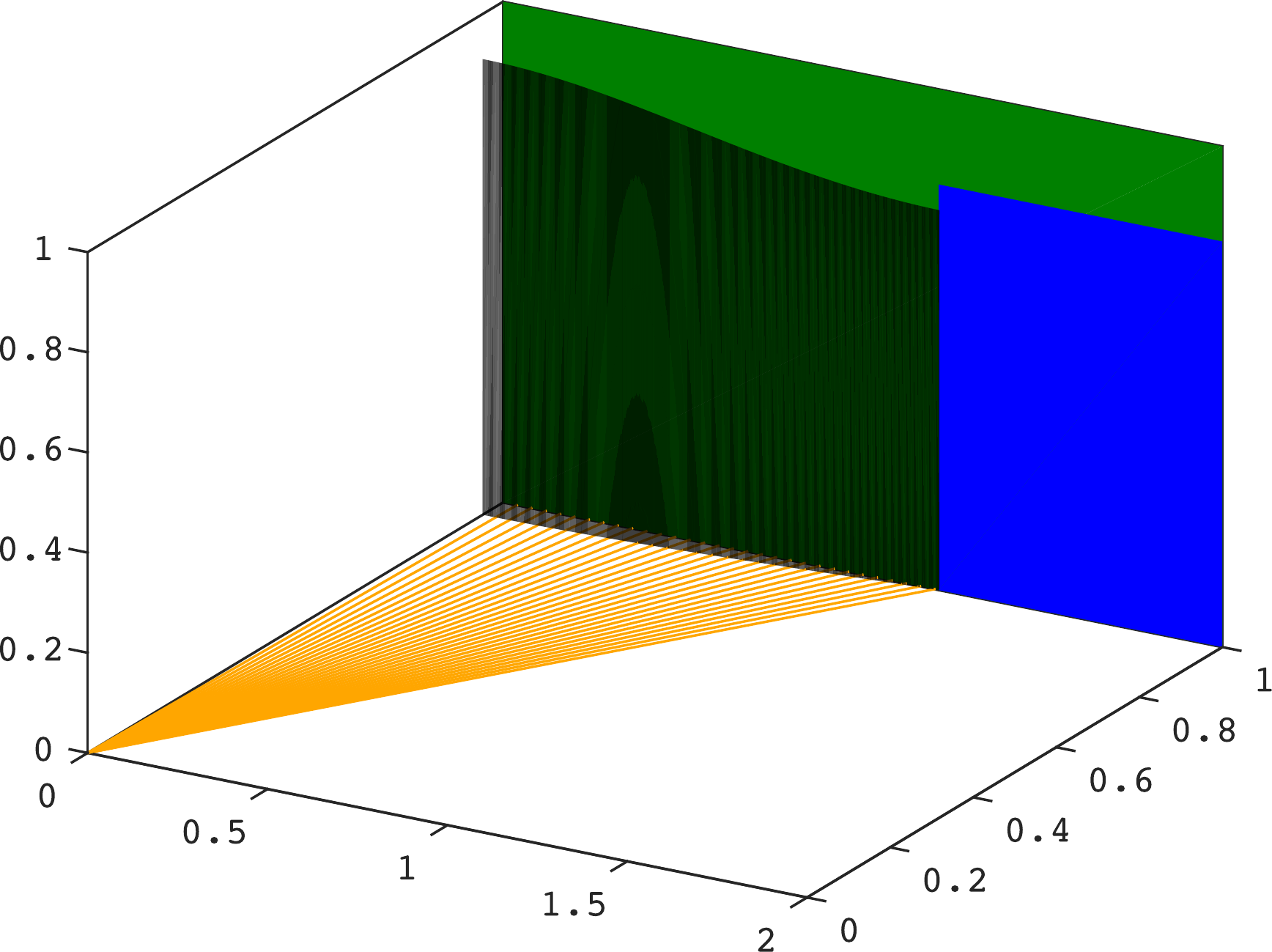}
\includegraphics[width=0.245\textwidth]{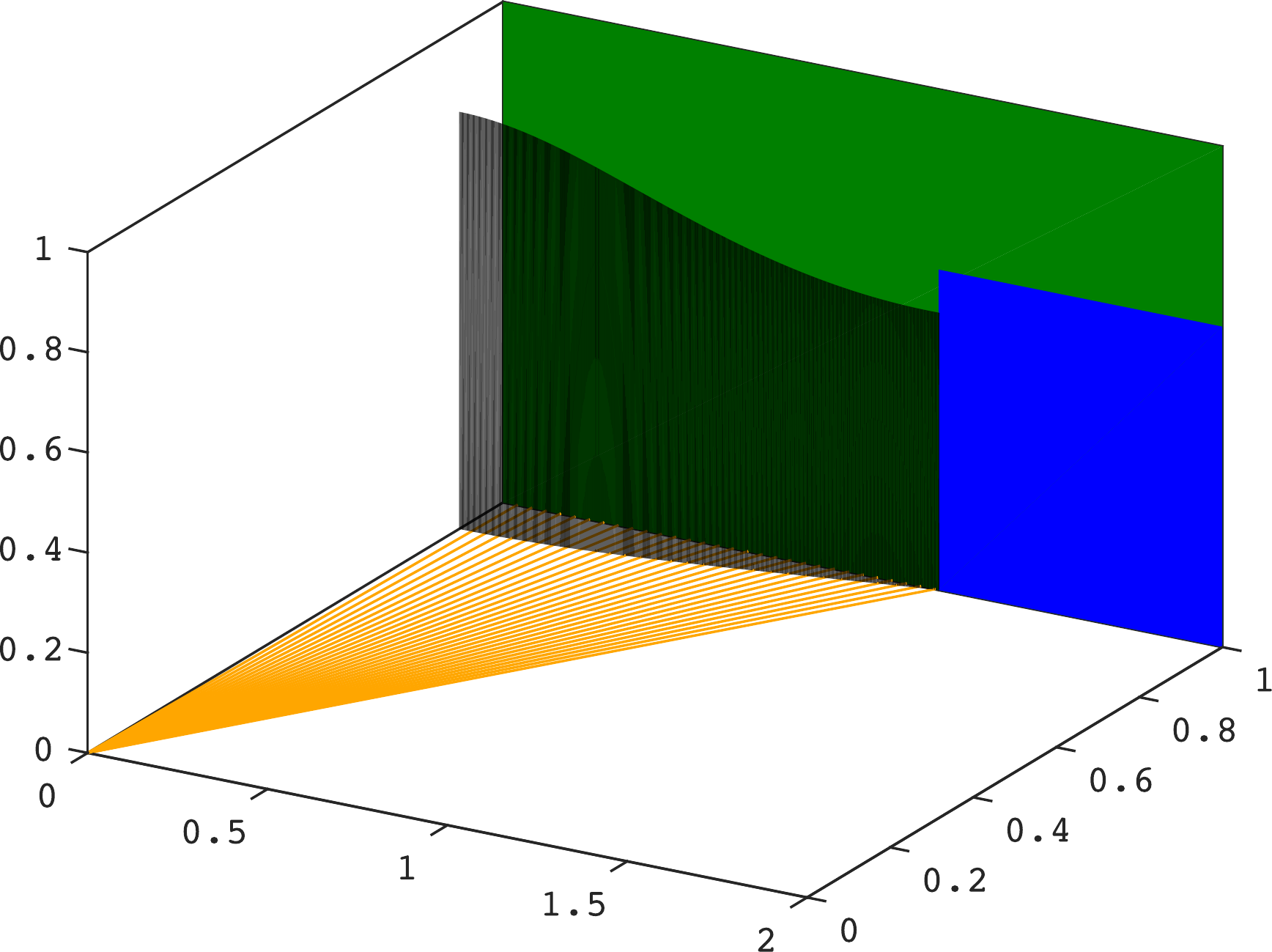}
\includegraphics[width=0.245\textwidth]{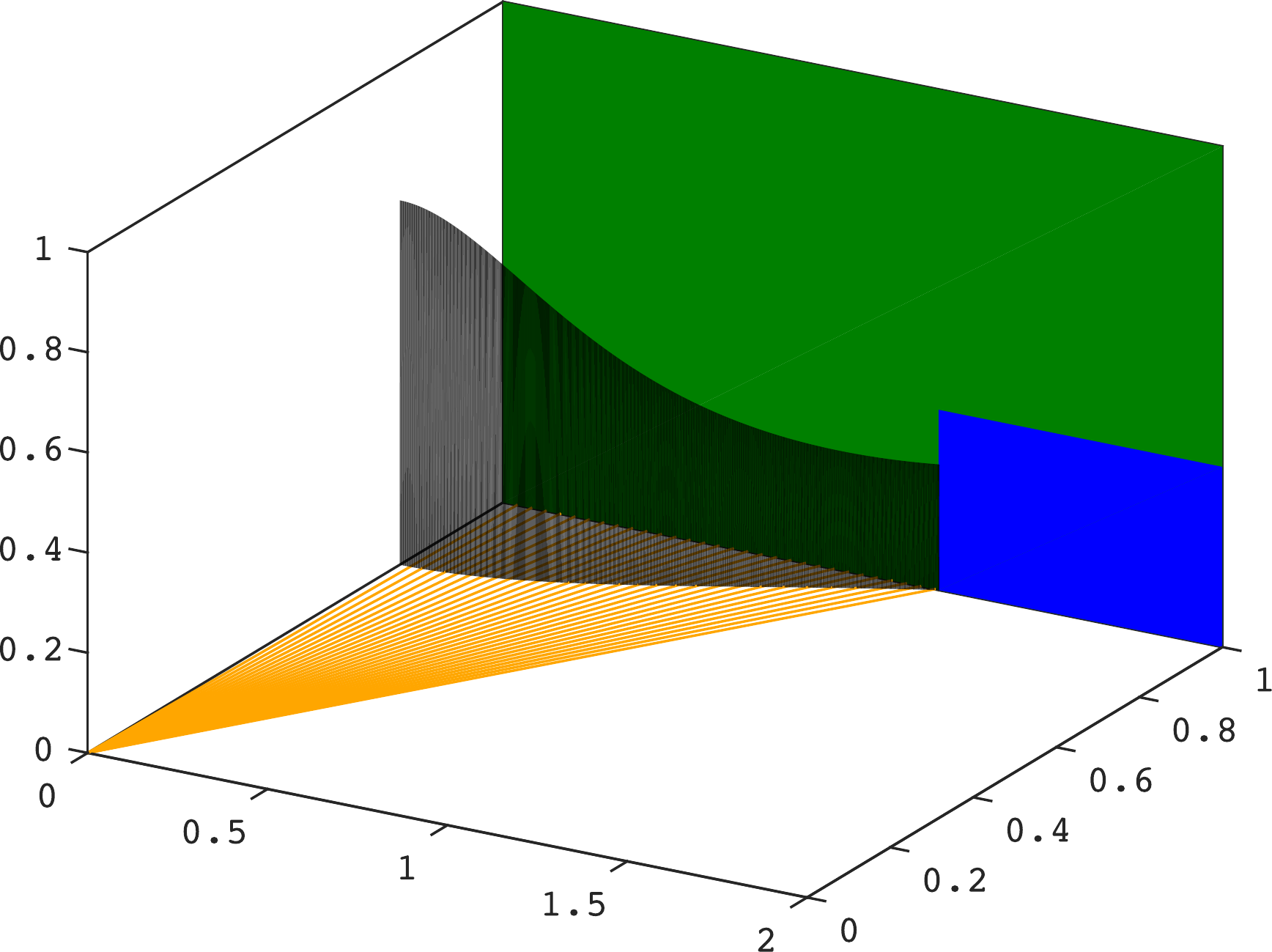}
\medskip

\noindent
\includegraphics[width=0.245\textwidth]{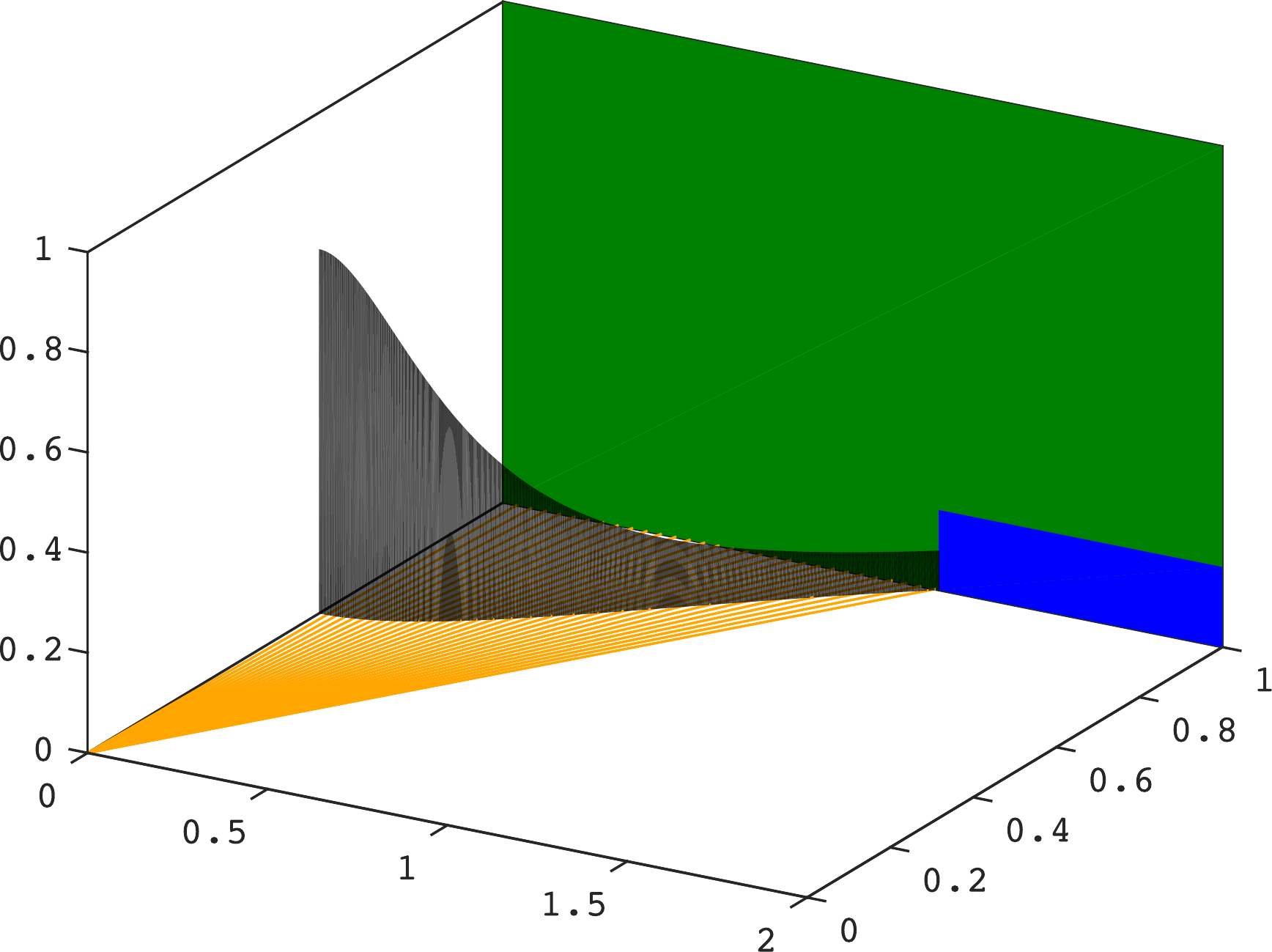}
\includegraphics[width=0.245\textwidth]{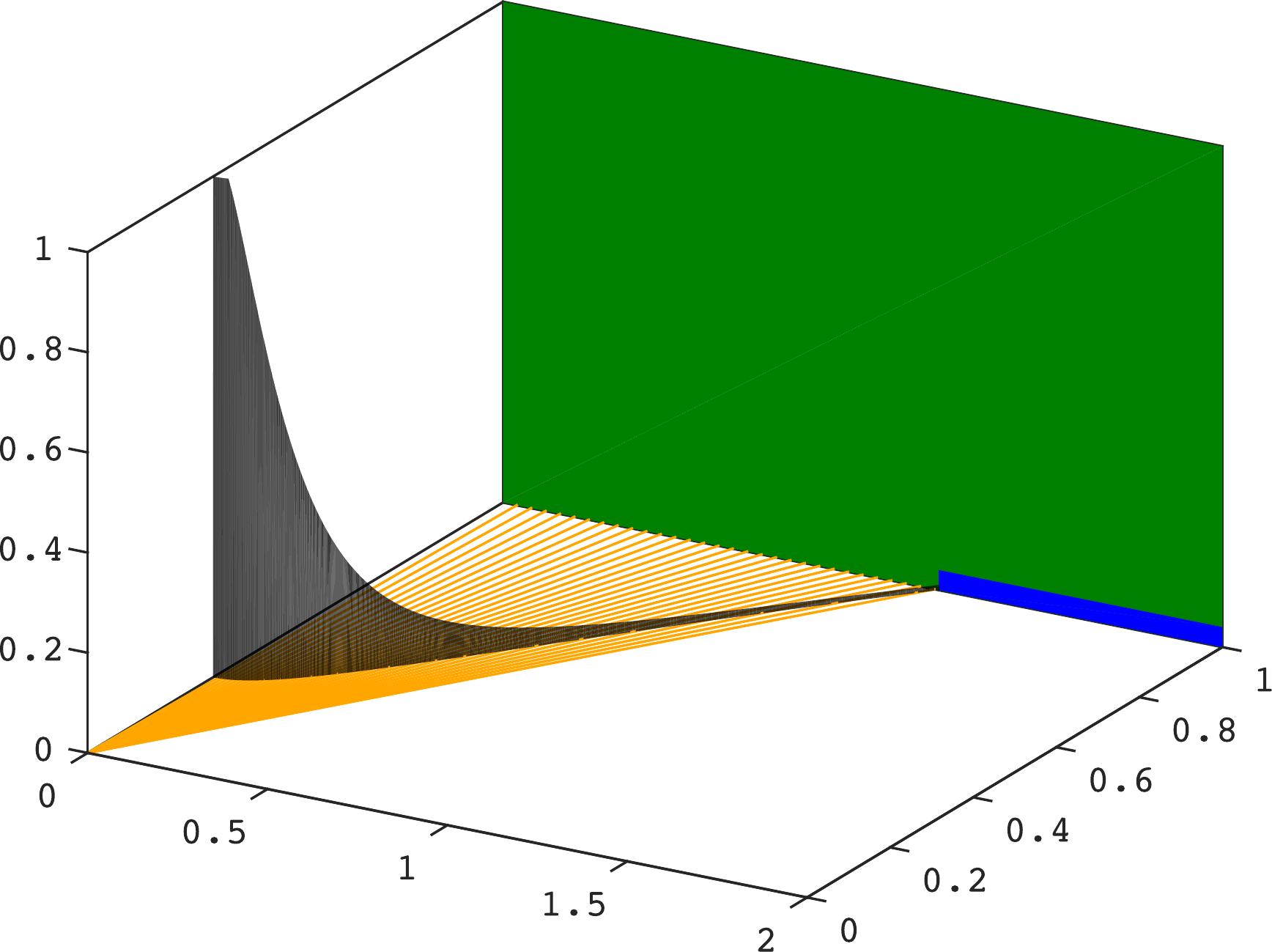}
\includegraphics[width=0.245\textwidth]{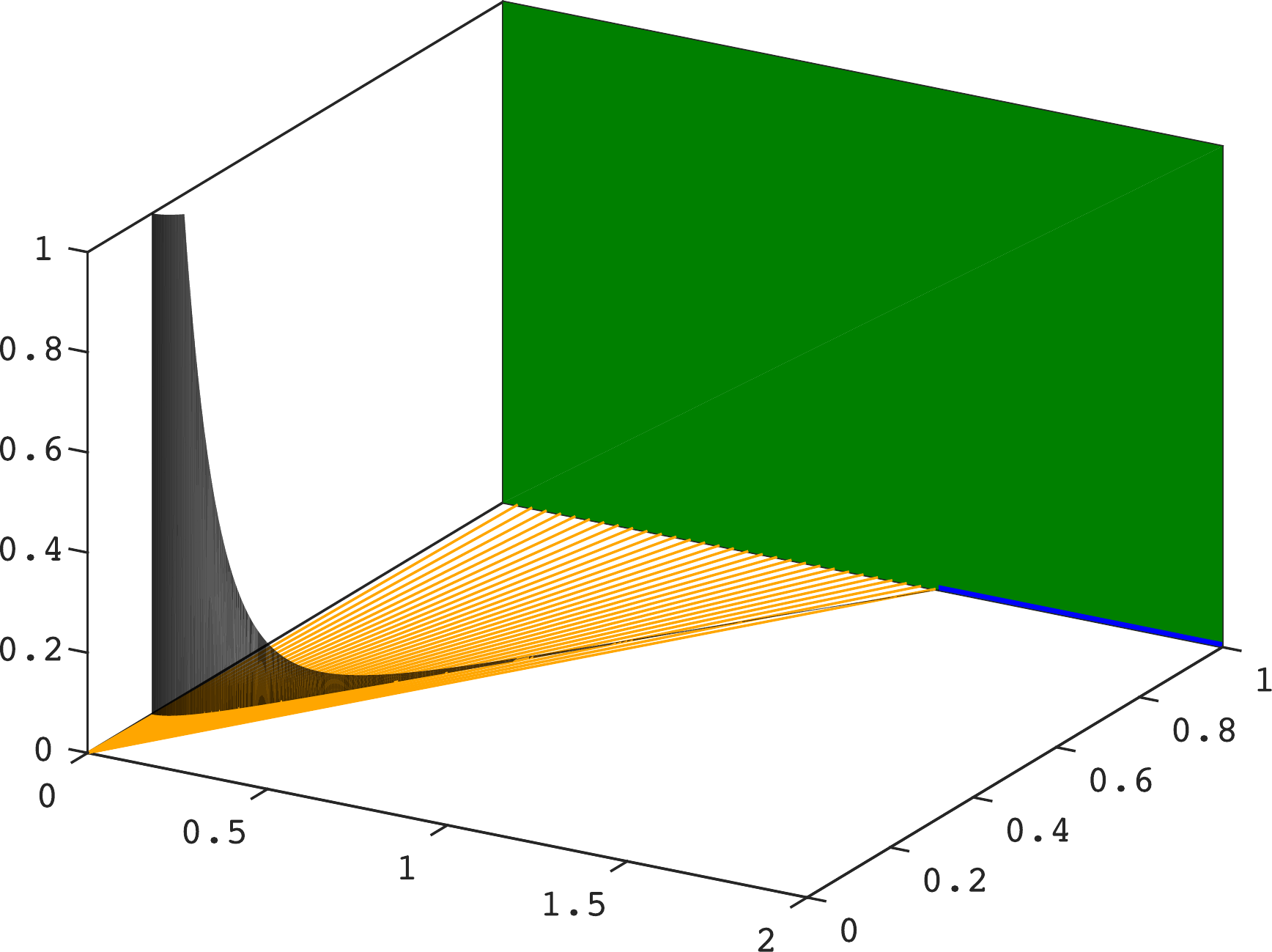}
\includegraphics[width=0.245\textwidth]{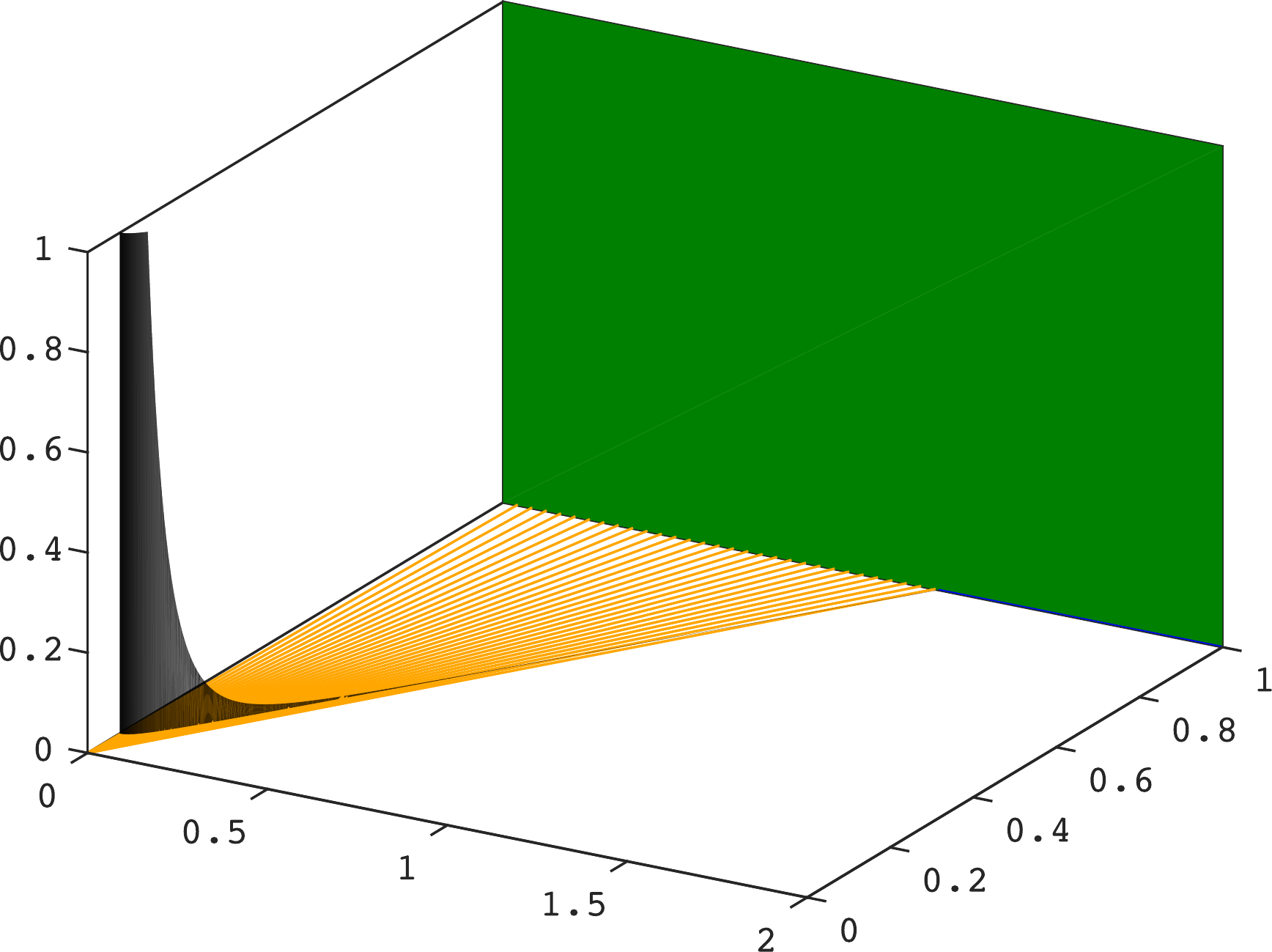}
\caption{
Geodesic curve $s\mapsto \mu(s)$ connecting line measure and single Dirac 
measure in Example \ref{ex:geodDiracLine}(ii) for different values of $s$. 
Here green denotes the measure $\mu_1$ 
while the blue and the black parts correspond to the parts of $\mu(s)$ 
that lie below and above the threshold $\pi/2$. 
The orange curves are the transport lines.
}\label{fig:geodesicsCont}
\end{figure}

\subsection{Dilation of measures}
\label{ss:Dilations}

For the Kantorovich--Wasserstein distance there is a geodesic connection
between any measure $\mu_1$ and the Dirac measure $\mu_0=\mu_1(\Omega)\delta_{y_0}$   
by radially dilating the measure, viz.\ 
\[
\mu^\mafo{KaWa}(s) =X^\mafo{KaWa}(s,\cdot)_\# \mu_1 \ \text{ where }
X^\mafo{KaWa} (s,x)= (1{-}s)y_0+ sx .
\]
This dilation corresponds to the solution $\xi(s,x)=\frac12
|x{-}y_0|^2/s$ of the standard Hamilton-Jacobi equation
$\tfrac{\rmd}{\rmd s}\xi+\frac12|\nabla\xi|^2=0$, and $\tfrac{\rmd}{\rmd s}\mu(s)+ \div\big(
\mu\nabla \xi)=0$.  

A possible generalization of this dilation to the Hellinger--Kantorovich
distance is given by a solution $\xi$ of the modified Hamilton--Jacobi
equation $\tfrac{\rmd}{\rmd s}\xi+\frac12|\nabla\xi|^2+2\xi^2=0$ having the form 
\[
\xi(s,x)= \frac{ \zeta(x)}{2s} \ \text{ with } |\nabla \zeta(x)|^2 + 4 \zeta(x)^2 -
4 \zeta(x)\equiv 0.
\]
The trivial solutions $\zeta\equiv 0$ and $\zeta \equiv 1$ correspond to
constant and pure Hellinger geodesics, respectively. However, there
are many other solutions, e.g.\ 
\[
\wt \zeta(x)= \left\{ \ba{cl}\big(\sin(|x|)\big)^2&\text{for }  
   |x|\leq \pi/2,\\ 1 &\text{for }|x|\geq \pi/2; 
\ea\right. \ \text{ and }\ol \zeta(x)= \min\set { \wt \zeta(x^k{-}y_0^k) }{
k=1,\ldots,d}
\]
if $|y_0^j{-}y_0^k|\geq \pi$ for all $j\neq k$. 

Staying with $\zeta=\wt \zeta$ we see that an arbitrary measure $\mu_1$ can be
connected to $\mu_0=a_0\delta_0$, with $a_0>0$ fixed, by the geodesic connection $\mu(s)=\mu_s$
given via
\begin{align*}
&\int_\Omega \psi(x)\dd \mu_s(x) = \int_{\Omega\cap\{|x|\geq \pi/2\}}s^2\psi(x) \dd\mu_1(x)  \\
&\qquad +\int_{\Omega\cap \{|x|<\pi/2\} } 
\big((1{-}s^2)(\cos|x|)^2+s^2\big) \psi\Big( \arctan\big[ s
\tan|x| \big] \frac x{|x|}  \Big) \dd \mu_1(x),
\end{align*}
where the first term on the right-hand side denotes the pure Hellinger
part, while the second term involves the concentration into
$a_0\delta_0$ for $s\searrow 0$, where the total mass at $s=0$ equals
$a_0:= \int_{\{|x|<\pi/2\}} (\cos|x|)^2\dd \mu_1$. 

Again it is easy to show that the pair $(\mu,\xi)$ with $\xi(s,x)=\wt
\zeta(x)/(2s)$ satisfies the formal equation \eqref{eq:GC.eqn} for
geodesic curves. We also note that the dilation operation is unique,
even if $\mu_1$ has positive mass on the sphere $
\{|x|=\pi/2\}$. This is because of the fixed function $\xi$. Of course
there might be other geodesic curves connecting $\mu_0 = a_0\delta_0$
and $\mu_1$, e.g.\ for $\mu_1=a_1\delta_{y_1}$ with $|y_1|=\pi/2$, where we
have all the solutions constructed in Section \ref{ss:AllDiracM}. 

\begin{example}\label{ex:geodDiracLine}
(i) As a more concrete example, we consider in $\Omega=[0,1]\times[0,2]$ the  measures
$\mu_1= \sum_{k=1}^N\delta_{x_k}$, $N\geq 2$, and $\mu_0=a_0\delta_0$, where
\[
x_k=\binom10 +\frac {k-1}{N-1} \binom02 \quad\text{for } k=1,\dots,N,
\quad\text{and}\quad
a_0=\sum_{k:|x_k| < \pi/2}\cos(|x_k|)^2.
\]
Using the formula above, we obtain the geodesic connection
\begin{align*}
\mu(s) &= \sum_{k:|x_k|\geq \pi/2}s^2 \delta_{x_k}
+\sum_{k:|x_k| < \pi/2}\big((1{-}s^2)(\cos|x_k|)^2+s^2\big)\delta_{\rho_k(s)x_k},
\\[0.5em]&\quad\text{where}\quad
\rho_k(s)=\frac{1}{|x_k|}\arctan\big[s\tan|x_k|\big].
\end{align*}
The geodesic connection $\mu(s)$ is depicted in Figure~\ref{fig:geodesics}
for different values of $s$.

(ii) Similarly, we can compute a geodesic connection $\mu(s)$ for the line
measure $\mu_1 = \delta_1\otimes\calL^1|_{[0,2]}$
which is collapsed into the measure $\mu_0=a_0\delta_0$
with $a_0 = \int_0^2(\cos(y))^2\dd y$. In this case, $\mu(s)$
is concentrated on the set given by the function
\[
X(s;x)=
\begin{cases}
\rho(s;x)x,&\text{for }|x|\leq \pi/2,\\
x&\text{otherwise},
\end{cases} \quad \text{ for } s\in [0,1] \text{ and } x\in\mathrm{supp}\,\mu_1,
\]
where $\rho(s;x) = \arctan(s\tan|x|)/|x|$. On these curves the density with respect
to the one-dimensional Hausdorff measure is for $y\in\Omega$ and 
$\wt X_s(x_2) := X(s;(1,x_2))$ for $x_2\in[0,2]$, given by
\[
\widetilde{a}(s,y)=\frac{a(s;\cdot)}{|\pl_{x_2} X(s;\cdot)|}\circ X_s^{-1}(y)
\]
where the profile $a$ reads
\[
a(s;x)= 
\begin{cases}
(1{-}s^2)(\cos|x|)^2+s^2&\text{for }|x|\leq\pi/2,\\
s^2 &\text{otherwise}.
\end{cases}
\]
The curve $\mu(s)$ is shown in Figure \ref{fig:geodesicsCont}.

\end{example} 

\begin{figure}\centering
\includegraphics[width=0.5\textwidth]{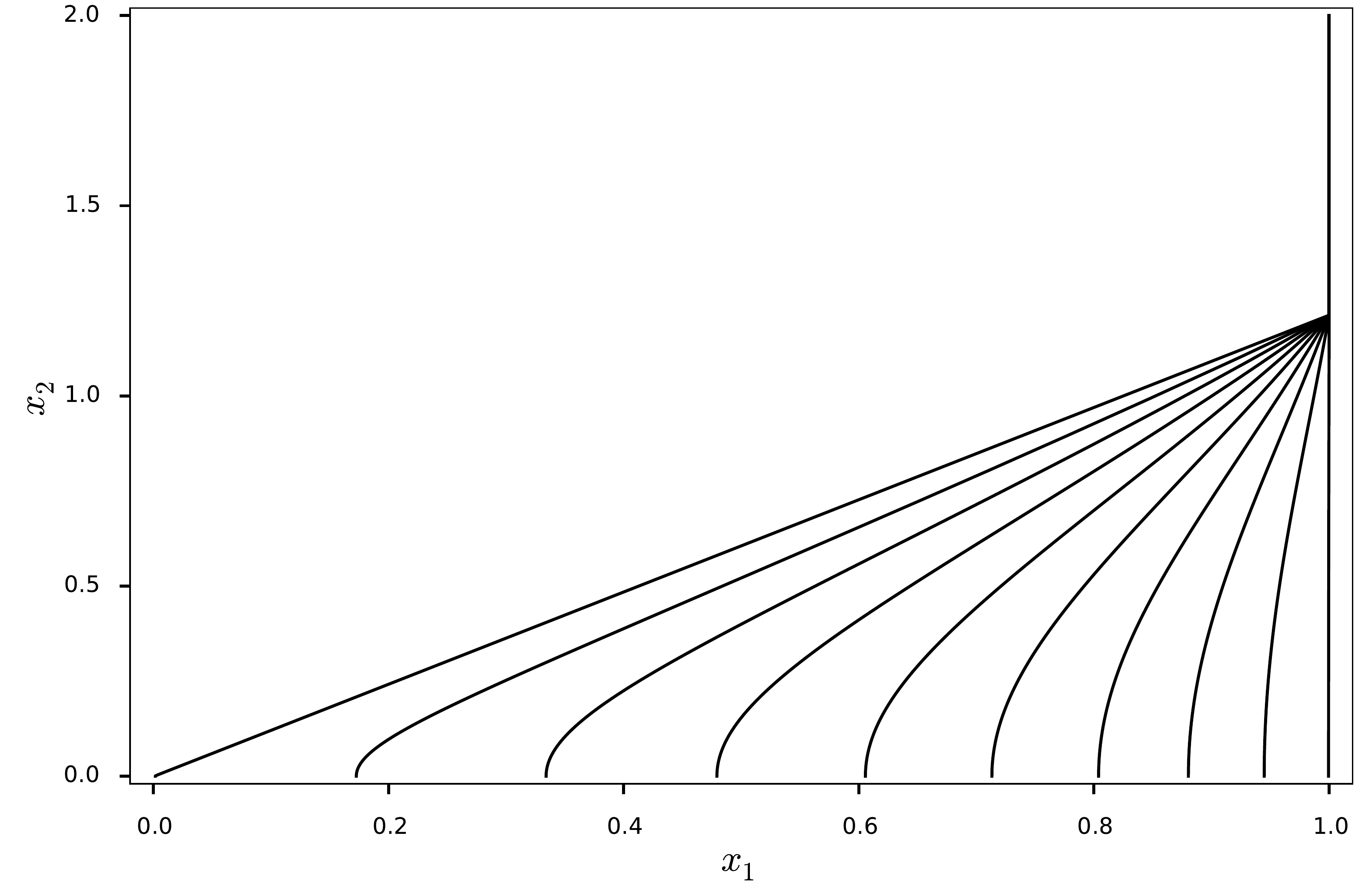}
\caption{Supports of geodesic curve $\mu(s)$ in Example~\ref{ex:geodDiracLine}(ii) 
for different values of $s$.}
\label{fig:transportLines}
\end{figure}
 
\subsection{Transport of characteristic functions}
\label{ss:CharFunct}

Here, we discuss a method to explicitly construct the geodesic
connection between two characteristic functions $\mu_j
=a_j\chi_{[x_j^\mafo{left}, x_j^\mafo{right}]}$ where $\Omega
\subset\R^1$. However, to simplify notations we will restrict to the
specific case 
\[
\mu_0=\chi_{[-\pi/4,  \pi/4]}\rmd x\ \text{ and } \ 
\mu_1=\chi_{[\pi/2,  \pi]}\rmd x.
\]
Obviously, we have the Hellinger parts $\mu_0^\bot = \chi_{[-\pi/4,
  0]}\rmd x$ and $\mu_1=\chi_{[3\pi/4,  \pi]}\rmd x$, which are
absorbed and generated, respectively, without any interaction with the
transport in between. 

To construct a transport geodesic from $\mu_0^\mafo{tr} = \chi_{I_0}
\rmd x$ and $\mu_1^\mafo{tr}=\chi_{I_1}\rmd x$, with $I_0={]0, \pi/2[}
$ and $I_1= {]\pi/2,3\pi/4[}$, we find the functions $r_j:I_j\to \R$
via minimizing the entropy-transport functional
$\calE\calT(\eta;\mu_0,\mu_1)$. We establish the calibration measure $\eta$ via a
map $h:I_0\to I_1$ in the form
\[
\int_{I_0\ti I_1} \Psi(x_0,x_1) \dd \eta(x_0,x_1) = \int_{I_0}
\Psi(x,h(x)) f(x) \dd x.
\]
Checking the marginal conditions $\Pi_\#^j \eta = \varrho_j \rmd
x$ we find $f(x)=\varrho_0(x)= \varrho_1(h(x))h'(x)$. Moreover, the
optimality conditions in Theorem \ref{th:CharDHK} give, for all $x\in I_0$ and $y\in I_1$,  
\begin{equation}
  \label{eq:h-optim}
  \varrho_0(x)\varrho_1(h(x))=\big[\cos(h(x){-}x)\big]^2 
\ \text{ and } \ 
\varrho_0(x) \varrho_1(y) \geq \big[\cos(y{-}x)\big]^2.
\end{equation}
Deriving the first-order optimality conditions  at $y=h(x)$ from the second
relation in \eqref{eq:h-optim} we find 
\[
2\sin(h(x){-}x)\cos(h(x){-}x)h'(x) =\varrho'_0(x)\varrho_1(h(x))h'(x)=
\varrho_0(x)\varrho'_0(x)=\frac12\big(\varrho_0(x)^2\big)'.
\]
Since the first relation in \eqref{eq:h-optim} has the form 
$\varrho_0(x)^2=h'(x)\big[\cos(h(x){-}x)\big]^2 $  we find  
\[
h''(x)=2\big( h'(x)^2{+}h'(x)\big) \tan\big(h(x){-}x),\quad h(0)=\pi/2,\ 
h(\pi/4)=3\pi/4,
\]
which has a unique monotone solution. Indeed, to see this let
$h(x)=\pi/2+x-w(x)$, where now $w(0)=w(\pi/4)=0$ and $w>0$. 
Then the ODE reads $w''=b(w')c(w)$ for suitable functions $b$
and $c$. Rewriting it  in the form $w'w''/b(w')=c(w)w'$ we find 
$A(w(x))=C(w(x))+ \gamma$, where $B'(y)=y/b(y)$ and $C'(y)=c(y)$. An
explicit calculation and exponentiating both sides yields 
\[
\frac{\sqrt{1{-}w'(x)}}{2{-}w'(x)} = c_* \sin w(t) , \quad
  w(0)=w(\pi/4)=0.
\]
Solving for $w'$ we find $w'=g_\pm(c_*\sin w)$ with $g_\pm(a)=
\big(4a^2{-}1\pm \sqrt{1{-}4a^2}\big)/(2a^2)$ for $0< a\leq 1/2$, 
where $\pm g_\pm(a)>0$ for $a\neq 1/2$ and $g_\pm(1/2)=0$. Thus, $w$
will have a unique maximum $w_*=w(w_*)$ with $c_*\sin w_*=1/2$, and $w_*$ can
be determined uniquely from
\begin{align*}
\frac\pi4 &= \int_0^{x_*} \dd x + \int_{x_*}^{\pi/4} \dd x =
\int_0^{w_*} \frac{\rmd w}{g_+(w)} + \int_0^{w_*} \frac{\rmd
  w}{-g_-(w)} \\ 
&= \int_0^{w_*} \frac{\sin w_*}{\sqrt{(\sin w_*)^2 -
    (\sin w)^2} } \dd w =K(w_*)\sin w_*,
\end{align*}
where $K$ is the elliptic $K$ function. Numerically, we find
$w_*=0.4895$ and thus $c_*= 1.0634$. 

Now it is straightforward to show that there is exactly one $c_*>0$
such that a solution with $w(x)>0$ and $w'(x)<1$ exists. 

Based on the function $h$ the densities $\varrho_0$ and $\varrho_1$ are explicitly
known and we may write the geodesic connection $\mu_s=\mu(s)$ for
$\mu_0^\mafo{tr} = \chi_{I_0} 
\rmd x$ and $\mu_1^\mafo{tr}=\chi_{I_1}\rmd x$ as in \eqref{eq:geodesic}
\begin{align*} 
&\int_{[0,3\pi/4]}\psi(y) \dd \mu_s(y)= \int_{I_0} \wh R(s,x)^2
\psi(Y(s,x)) \varrho_0(x) \dd x 
\\[0.5em] &\text{ with }
 \ol R(s,x)^2= (1{-}s)^2r_0^2(x) + s^2 r_1^2 (h(x)) + 2s(1{-}s)\\
& \text{ and } Y(s,y)=x+ \arccos\Big( \frac{(1{-}s)r_0(x)+ s r_1(h(x))
  \cos(h(x){-}x)}{ \ol R(s,x) }\Big), 
\end{align*}
where $r_j(x_j)=\big(\varrho_j(x_j))^{-1/2}$. Thus, the density
$f(s,\cdot)$ of $\mu_s$ satisfies the relation
\begin{align*}
&\int_0^{Y(s,x)}f(s,y)\dd y = \int_0^{3\pi/4} \chi_{\{y\leq Y(s,x)\}}(y) \dd
\mu_s(y) \\
&= \int_{I_o} \ol R(s,x_0)^2 \chi_{\{y\leq Y(s,x)\}}(Y(s,x_0))
\varrho_0(x_0) \dd x_0= \int_0^x \ol R(s,x_0)^2 \varrho(x_0) \dd x_0.
\end{align*}
Differentiation with respect to $x$ gives the explicit formula
\[
f(s,Y(s,x))= \frac{\ol R(s,x)^2 \varrho_0(x)}{\pl_x Y(s,x)} .
\]
In Figure \ref{fig:CharFunct}, we plot the densities together with the
corresponding Hellinger parts.  
\begin{figure}
\includegraphics[width=3.5cm]{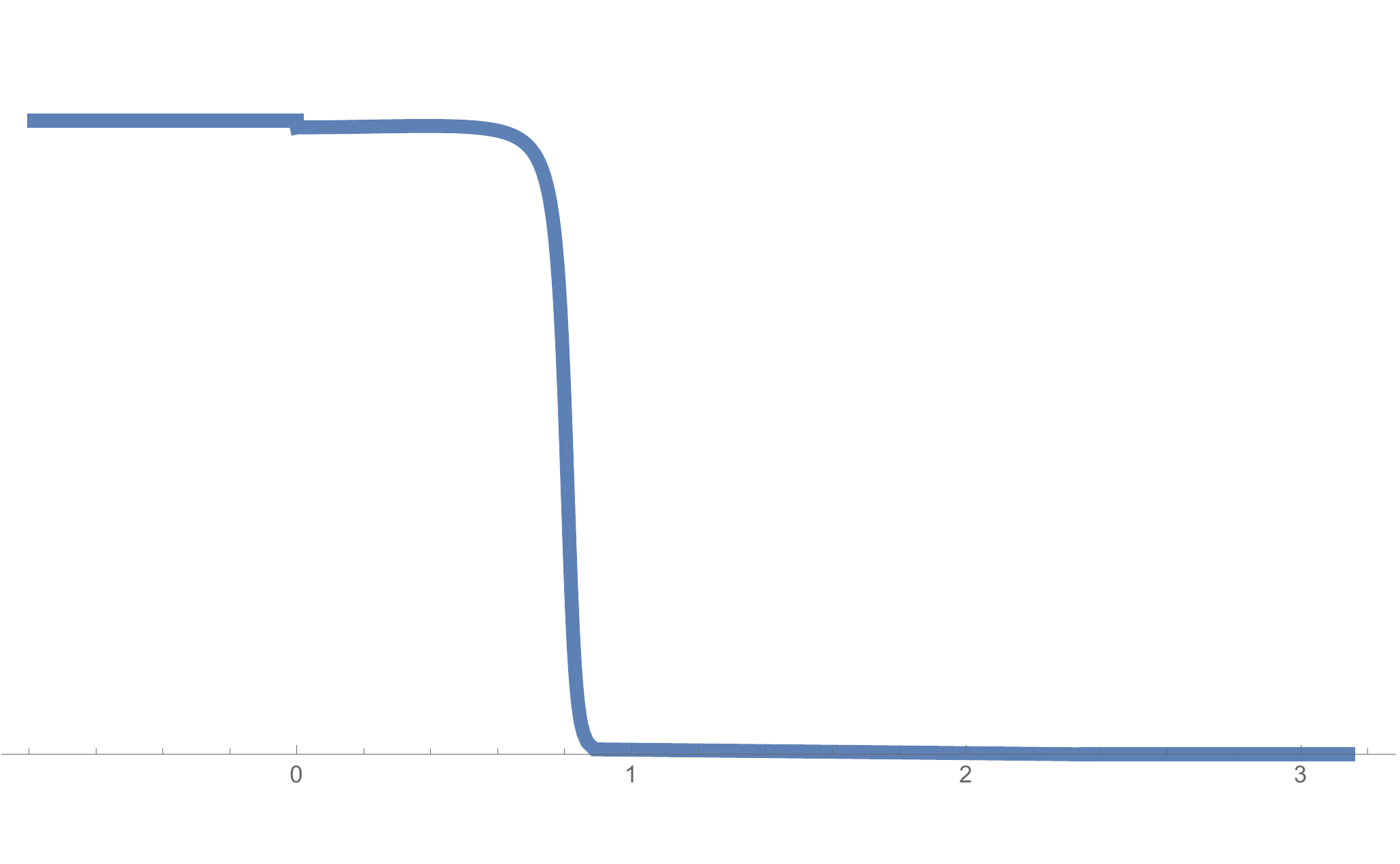}\hfill
\includegraphics[width=3.5cm]{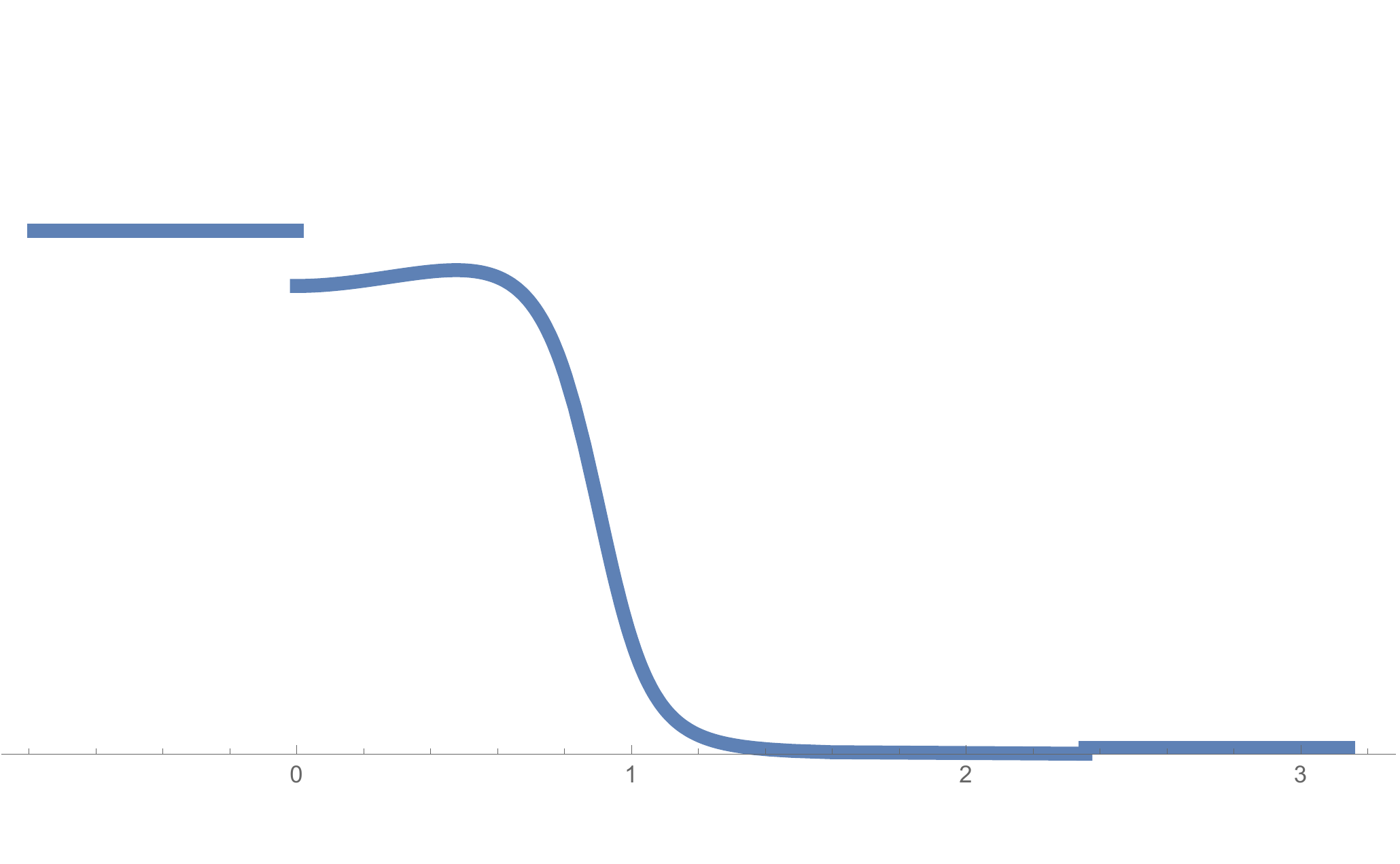}\hfill
\includegraphics[width=3.5cm]{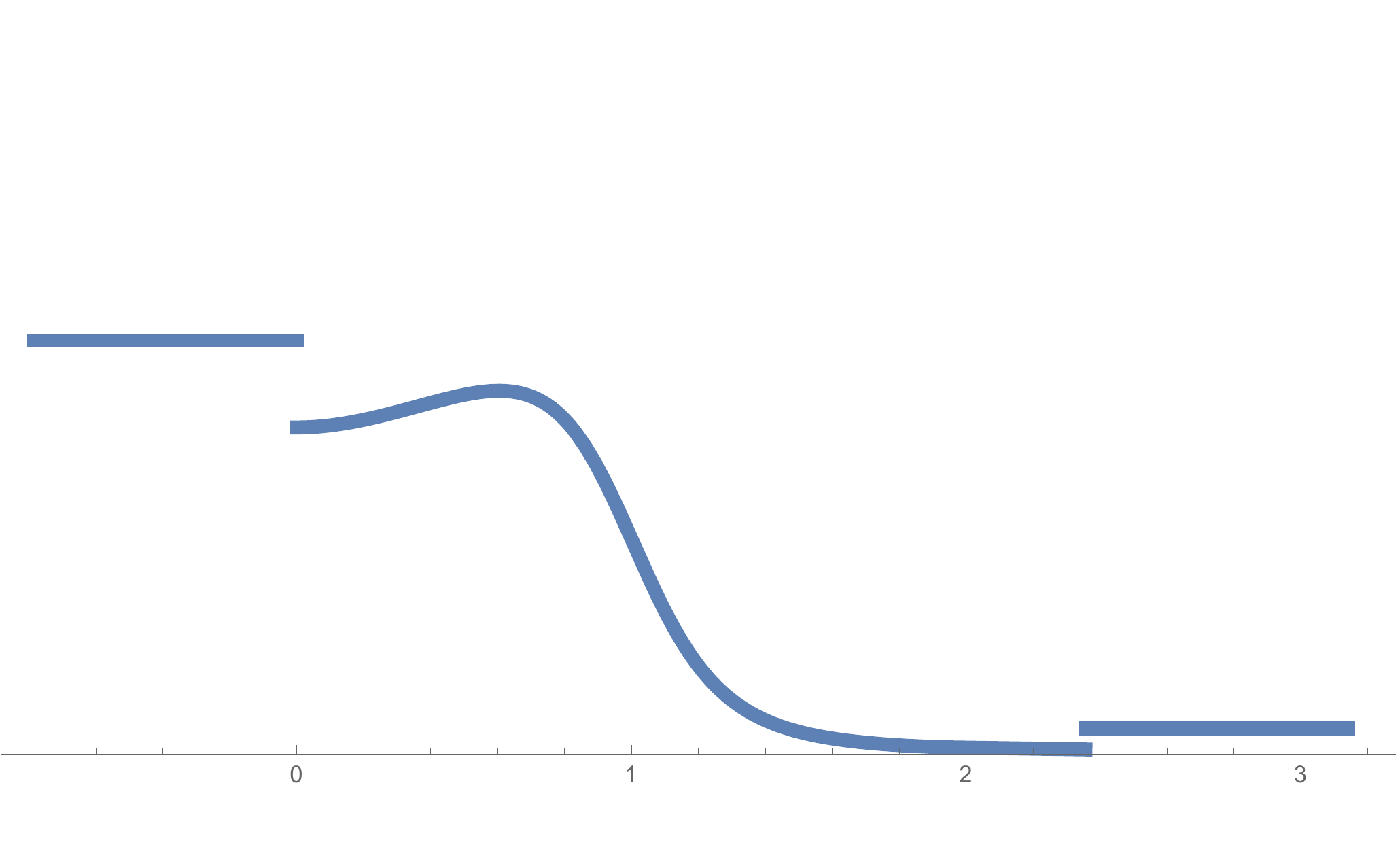}\hfill
\includegraphics[width=3.5cm]{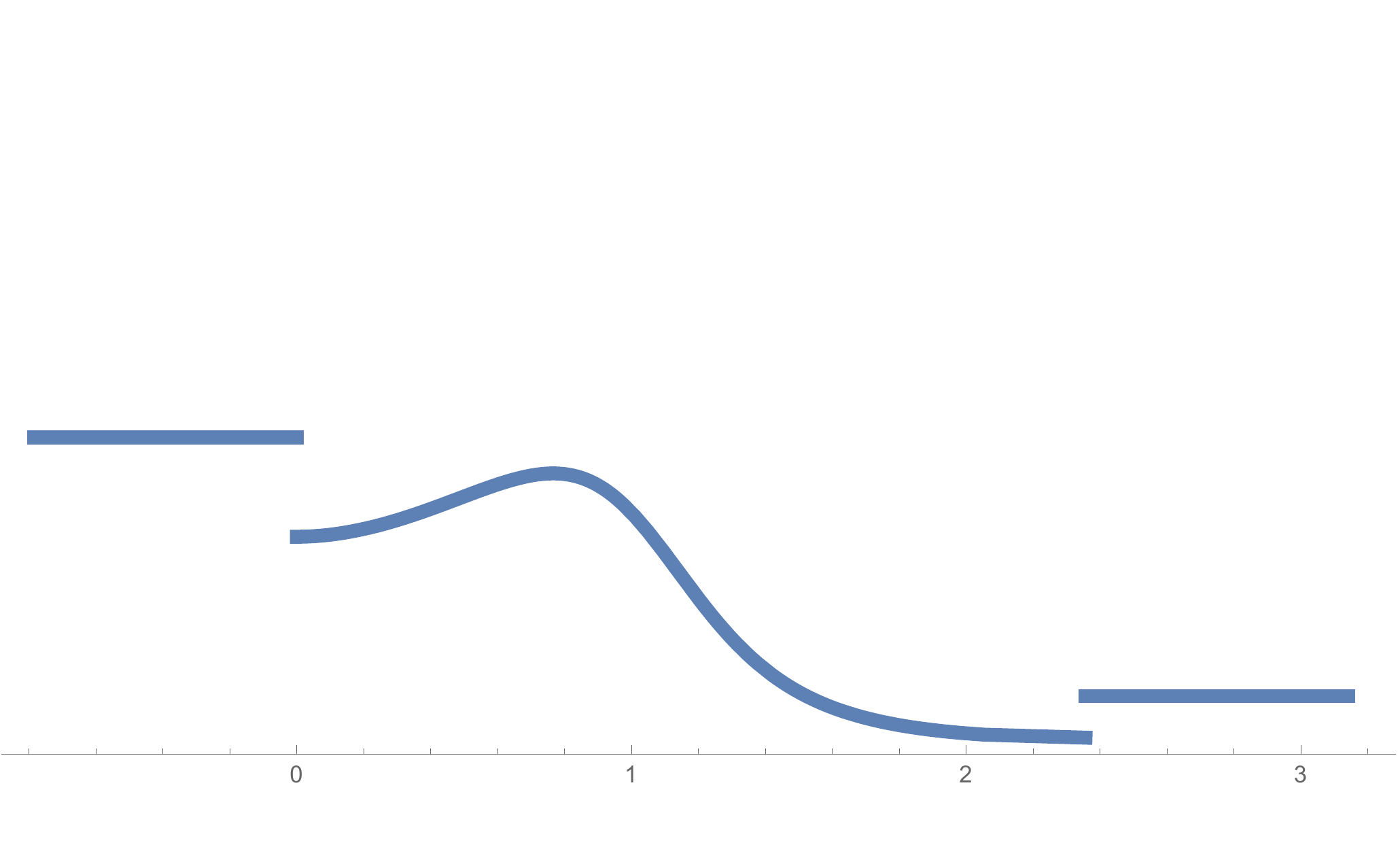}

\includegraphics[width=3.5cm]{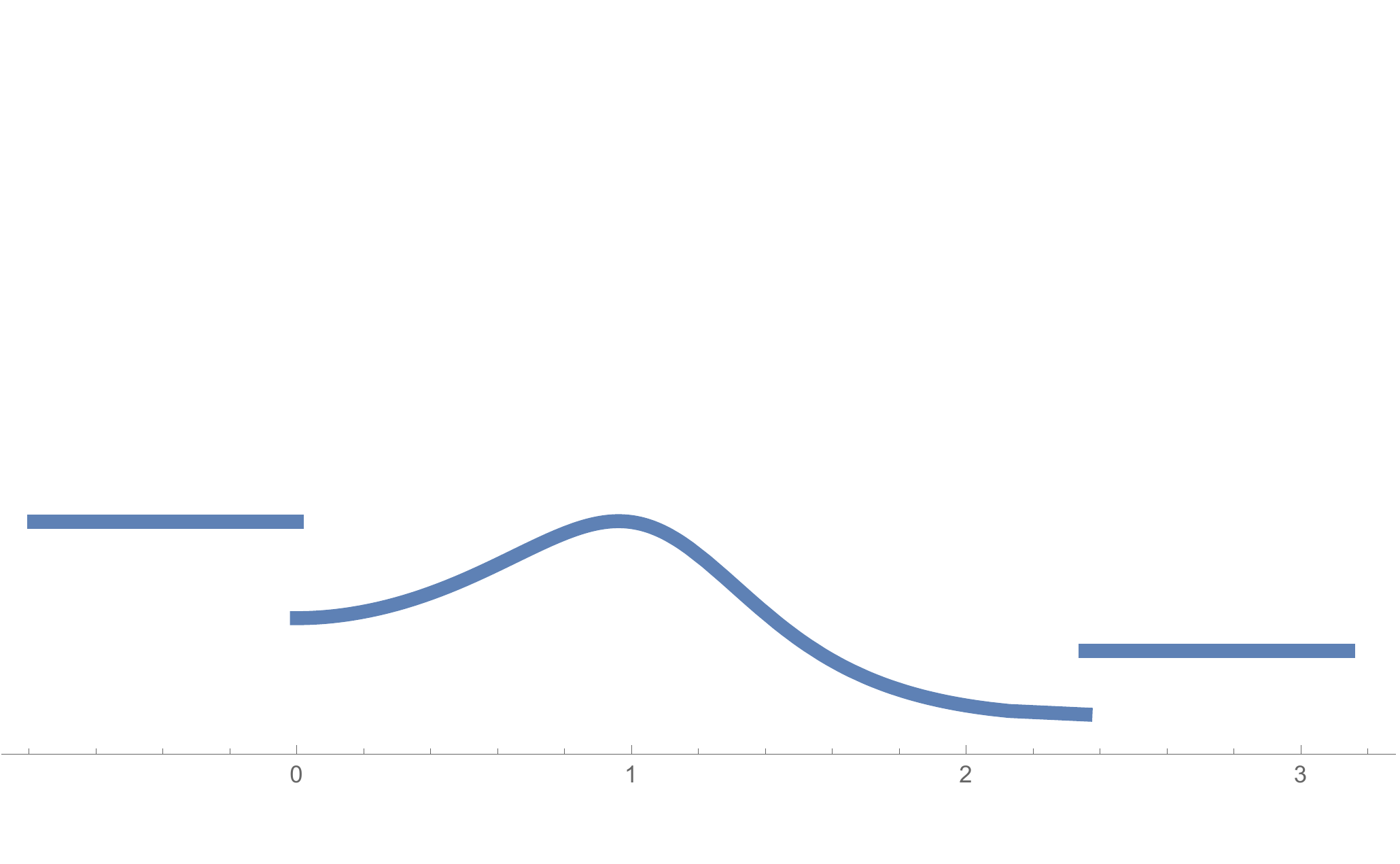}\hfill
\includegraphics[width=3.5cm]{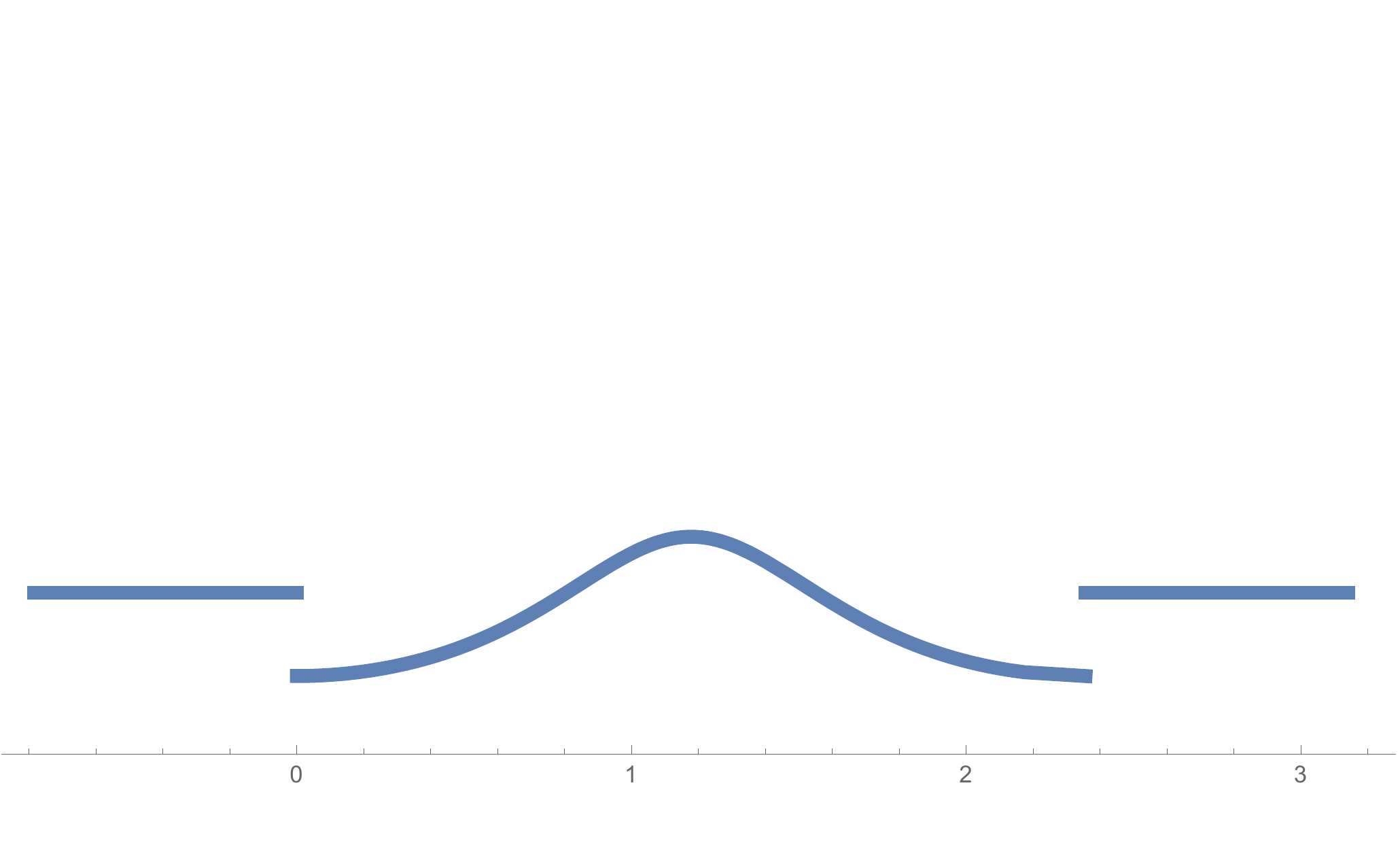}\hfill
\includegraphics[width=3.5cm]{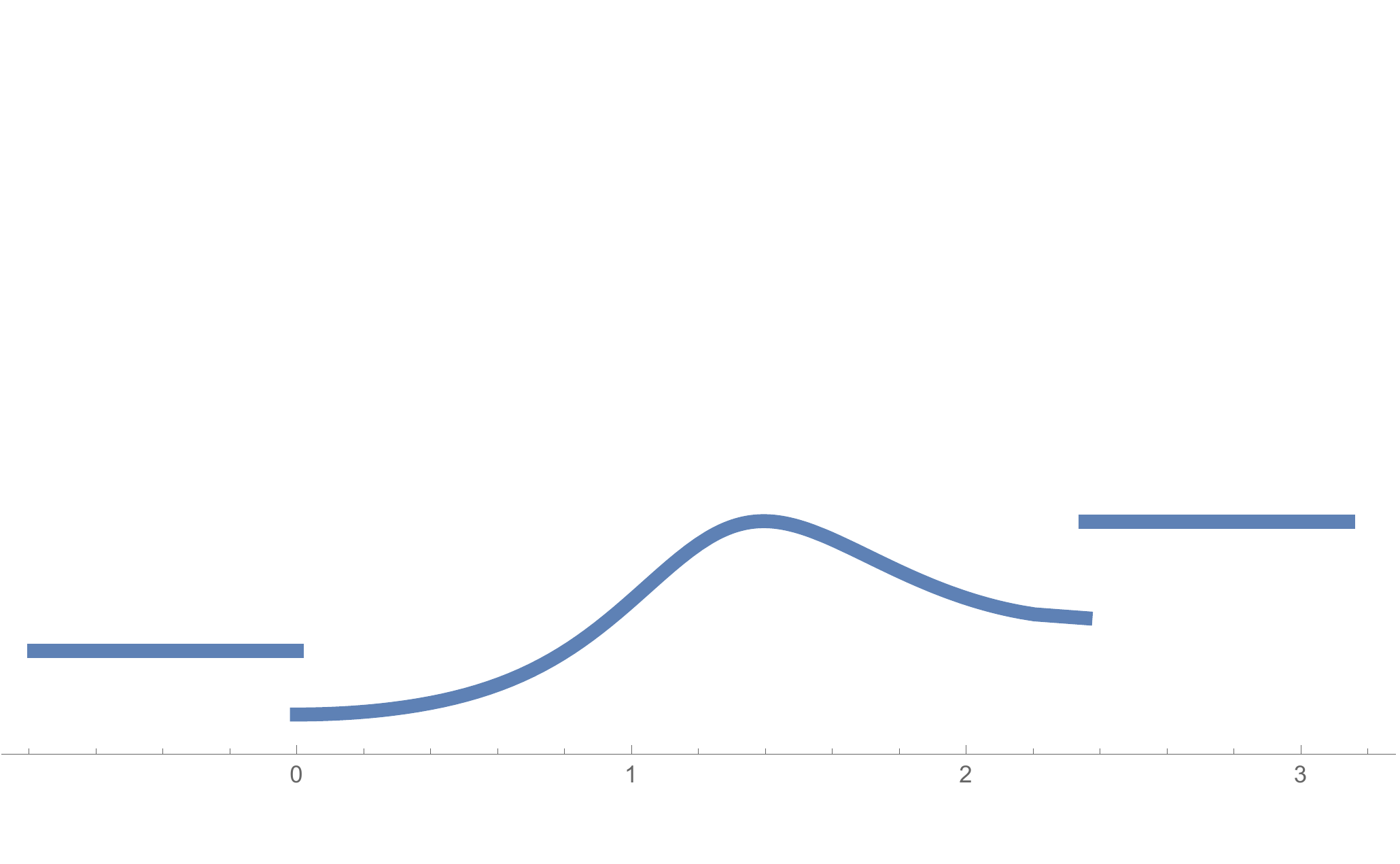}\hfill
\includegraphics[width=3.5cm]{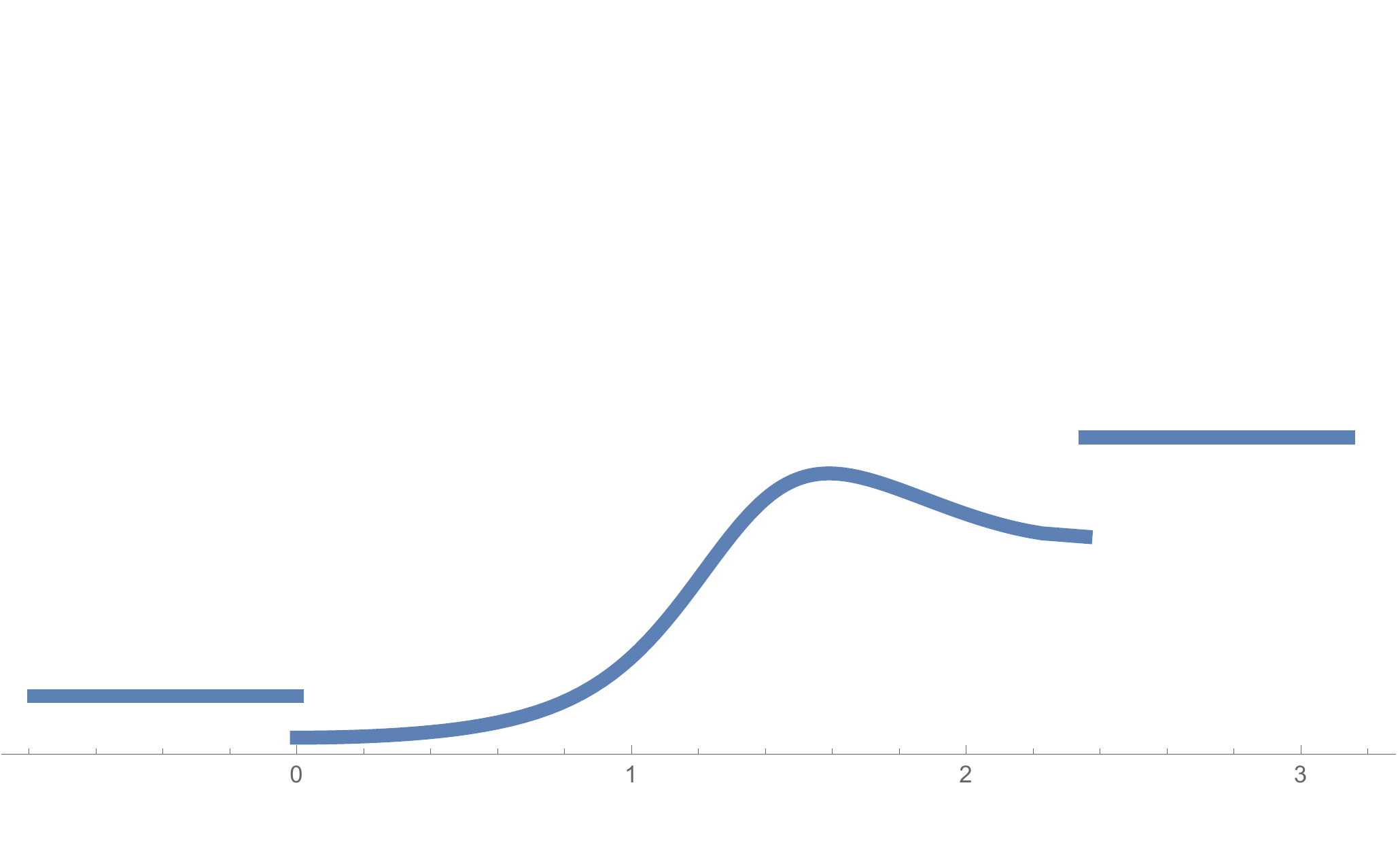}

\caption{The densities $f(s,\cdot)$ of the geodesic curves
  $\mu(s)=f(s,\cdot)\dd x $ connecting
  $\mu_0=\chi_{[-\pi/4,\pi/4]}\rmd x$ and $\mu_1
  =\chi_{[\pi/2,\pi]}\rmd x$  for $s=0.01,\,0.1,\,0.2,\,\ldots,\,0.7$.}
\label{fig:CharFunct}
\end{figure}

\subsection{Towards a characterization of all geodesic connecting two measures}
\label{ss:CharGeodConn}

Here we discuss the question of describing all geodesic curves for two
given measures $\mu_0$ and $\mu_1$. As we have seen in Section
\ref{ss:AllDiracM}, the set of all these curves can be very big, in
fact even infinite dimensional. The final aim would be to define a 
geometric tangent cone in the sense of \cite[Ch.\,12]{AmGiSa05GFMS}. 

The major tool in understanding the structure of all geodesic
connections is Corollary  \ref{co:ConeGeodesic}, which states that all
geodesic curves $s\mapsto \mu(s)$ are given as projections
$\mu(s)=\mfP \lambda(s)$ of geodesic curves $\lambda$ in $\calM_2(\CC)$. 
Thus, writing \eqref{eq:DHKbyLift} more explicitly via 
\begin{equation}
 \label{eq:OptimGamma}
\msHK(\mu_0,\mu_1)=\min\Bigset{\iint_{\CC\ti\CC} \msd_\mfC (z_0,z_1)^2
 \dd\gamma(z_0,z_1) }{ \mfP\Pi^0_\#\gamma = \mu_0,\
 \mfP\Pi^1_\#\gamma = \mu_1 }, 
\end{equation}
we can define the set of optimal plans via 
\[
\mathrm{Opt}_\msHK^\mfC(\mu_0,\mu_1):= \Bigset{ \gamma \in \calM_2(\CC{\ti}\CC)}{
\gamma\text{ is optimal in \eqref{eq:OptimGamma}} },
\]
which is a convex set. Every optimal plan $\gamma$ gives rise to
different geodesic $\lambda(s)=Z(s; \cdot,\cdot)_\#\gamma$ in
$\calM_2(\CC)$. While for different $\gamma$ the geodesics $\lambda$
are different, the same is no longer true for the projections
$\mu(\cdot)=\mfP\lambda(\cdot)$. 

The major redundancies in the set $ \mathrm{Opt}_\msHK^\mfC(\mu_0,\mu_1)$ of
optimal plans is seen through the scaling invariance given in relation
\eqref{eq:Scaling}. If $\gamma$ contains a transport of mass $m$ along
a cone geodesic connecting $[x_0,r_0]$ and $[x_1,r_1]$, then the
contribution to the projection $\mu(s)=\mfP\lambda(s)$ is equal to a
transport of mass $\vartheta^2 m$ along the cone geodesic connecting
$[x_0,r_0/\vartheta]$ and $[x_1,r_1/\vartheta]$ for all $\vartheta>0$.
Thus, we can define a normalization operator $N$ action on plans
$\gamma$, that does not change the projection. 

For this we consider the partition of  $\CC{\ti}\CC$ given by 
the sets $\mathfrak G,\:\mathfrak G_{12}', \:\mathfrak G_1', \: \mathfrak
G_2'$, and $\mathfrak G_0'$:
\begin{gather*}
  \mathfrak G :=\Big\{([x_1,r_1],[x_2,r_2])\in \CC{\ti}\CC:r_1r_2>0,\
  |x_1-x_2|<\pi/2\Big\},\\
  \mathfrak G_{12}':=\Big\{([x_1,r_1],[x_2,r_2])\in \CC{\ti}\CC:r_1r_2>0,\
  |x_1-x_2|\ge\pi/2\Big\},\\
  \mathfrak G_1':=\Big\{([x_1,r_1],\TT):\ r_1>0\Big\},\quad
  \mathfrak G_2':=\Big\{(\TT, [x_2,r_2]):\ r_2>0\Big\},\quad
  \mathfrak G_0':=\{\TT\}{\ti}\{\TT\}.
  \end{gather*}
With these sets we define the scaling function
\begin{displaymath}
  \vartheta(z_1,z_2):=
  \begin{cases}
    \Big(r_1r_2\cos(|x_1-x_2|)\Big)^{1/2}&\text{if }(z_1,z_2)\in
    \mathfrak G,\\
    r_1&\text{if }(z_1,z_2)\in \mathfrak G_{1,2}'\cup \mathfrak G_1',\\
    r_2&\text{if }(z_1,z_2)\in \mathfrak G_2',\\
    1&\text{if }z_1=z_2=\TT
  \end{cases}
\end{displaymath}
and employ the dilation map $h_\vartheta$ from \eqref{eq:1} to
generate the corresponding rescaling $N:\calM_2(\CC{\ti}\CC)\to
\calM_2(\CC{\ti}\CC)$ by $N\gamma:=(h_\vartheta)_\sharp
(\vartheta^2\gamma)\res(\CC{\ti}\CC\setminus \mathfrak G_0')$, 
where $\res$ denotes restriction of a measure. bBy the
definition of $\mfP$ and the scaling property of $\msd_\mfC$ we first
find $\mfP\Pi^j_\#\gamma =\mfP\Pi^j_\#N\gamma $ and $\iint_{\CC\ti\CC}
\msd_\mfC (z_0,z_1)^2 \dd\gamma = \iint_{\CC\ti\CC} \msd_\mfC
(z_0,z_1)^2 \dd(N\gamma)$.  Hence, for each $\gamma \in
\mathrm{Opt}_\msHK^\mfC(\mu_0,\mu_1)$ we again have $N\gamma \in
\mathrm{Opt}_\msHK^\mfC(\mu_0,\mu_1)$. Thus, we define the normalized
optimal plans via
\begin{equation}
 \label{eq:NOP}
\mathrm{NormOpt}_\msHK^\mfC(\mu_0,\mu_1):= \Bigset{ \gamma \in
  \mathrm{Opt}_\msHK^\mfC(\mu_0,\mu_1)  }{ \gamma = N\gamma },
\end{equation}
which is a much smaller but still closed and convex set. 

Using the scaling properties of the geodesics on $\CC$, which are
given via the interpolating functions $Z=(X,R)$ as $\mu(s)=\mfP
Z(s;\cdot,\cdot)_\# \gamma$ (cf.\ \eqref{eq:geodesic}), and the fact
that $\vartheta$ depends on $x_1,x_2$ only through their distance
$|x_1 {-}x_2|$, we also see that $ \gamma$ and $N\gamma$ generate the
same geodesic, viz.\
\[
\wh\mu_\gamma(s):=\mfP\big(Z(s;\cdot,\cdot)_\#)\gamma\big)= \mfP\big( 
Z(s;\cdot,\cdot)_\#(N\gamma)\big) = \wh\mu_{N\gamma}(s). 
\]

 With these preparations we are able to prove that 
there exists a unique geodesic connecting $\mu_0$ to $\mu_1$ if
$\mu_0$ is absolutely continuous with respect to the Lebesgue measure. 
 More precisely, we show that in this case  $N$ eliminates 
all redundancies: $\mathrm{NormOpt}_\msHK^\mfC(\mu_0,\mu_1)$ 
contains only one element and $N\gamma$
characterize the geodesic connection of $\mu_0$ and $\mu_1$ uniquely.

\begin{theorem} 
  \label{thm:unique-gamma}
  For every couple $\mu_0,\mu_1$ 
  in $\MMM(\Omega)$ with $\mu_0$ absolutely continuous
  w.r.t.~the Lebesgue measure, 
  there exists a unique geodesic $\mu$ connecting $\mu_0$ to $\mu_1$
  and a unique $\gamma \in
  \mathrm{NormOpt}_\msHK^\mfC(\mu_0,\mu_1)$.
  In particular, for each geodesic curve $\mu$ 
  connecting $\mu_0$ to $\mu_1$ there exists a unique $\gamma \in
 \mathrm{NormOpt}_\msHK^\mfC(\mu_0,\mu_1) $ such that $\mu=\wh
 \mu_\gamma$. 
\end{theorem}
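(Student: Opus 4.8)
The plan is to reduce the whole statement to a single uniqueness assertion for normalized optimal plans and then to extract that uniqueness from the entropy--transport description of Theorem~\ref{th:CharDHK} together with the absolute continuity of $\mu_0$. By Corollary~\ref{co:ConeGeodesic} every geodesic $\mu$ joining $\mu_0$ and $\mu_1$ has the form $\wh\mu_\gamma$ for some $\gamma\in\mathrm{Opt}_\msHK^\mfC(\mu_0,\mu_1)$, and since $\wh\mu_\gamma=\wh\mu_{N\gamma}$ with $N\gamma\in\mathrm{NormOpt}_\msHK^\mfC(\mu_0,\mu_1)$, it suffices to prove that $\mathrm{NormOpt}_\msHK^\mfC(\mu_0,\mu_1)$ contains exactly one element. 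Indeed, once this is shown, every geodesic coincides with $\wh\mu_\gamma$ for this unique normalized plan, which yields both asserted uniqueness statements simultaneously.

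The next step is to establish uniqueness of the optimal calibration measure $\eta$ of Theorem~\ref{th:CharDHK}. The functional $\scrET_{1,4}(\,\cdot\,;\mu_0,\mu_1)$ is convex, its transport part $\int\msc_{1,4}\,\dd\eta$ is linear in $\eta$, whereas the two Boltzmann terms $\int\FBoltz(\dd\eta_i/\dd\mu_i)\,\dd\mu_i$ are strictly convex as functions of the marginals $\eta_i=\Pi^i_\#\eta$. Hence, averaging two minimizers $\eta,\eta'$ and using strict convexity of the entropy forces all optimal calibrations to share the same marginals $\eta_0,\eta_1$; equivalently the densities $\varrho_i=\dd\eta_i/\dd\mu_i$ are uniquely determined $\mu_i$-a.e. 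Writing $\psi_0:=-\log\varrho_0$ on $\{\varrho_0>0\}$, the optimality conditions of Theorem~\ref{th:CharDHK} yield $\psi_0(x_0)=\sup_{x_1}\bigl[\log\varrho_1(x_1)+\msc_{1,4}(|x_0{-}x_1|)\bigr]$, a supremum of functions $x_0\mapsto\msc_{1,4}(|x_0{-}x_1|)$ which are convex in $x_0$ because $\msc_{1,4}$ is convex and increasing. Thus $\psi_0$ is locally convex, hence differentiable $\calL^d$-a.e., and therefore $\mu_0$-a.e. since $\mu_0\ll\calL^d$.

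At every differentiability point the envelope identity $\nabla\psi_0(x_0)=\msc_{1,4}'(|x_0{-}x_1|)\,\tfrac{x_0-x_1}{|x_0-x_1|}$ holds at the maximizing $x_1$; since $\msc_{1,4}'(L)=2\tan L$ is strictly increasing on $[0,\pi/2)$, the magnitude $|\nabla\psi_0(x_0)|=2\tan(|x_0{-}x_1|)$ fixes $|x_0{-}x_1|$ while its direction fixes $x_1$. This is the required twist condition, and it produces a single-valued Borel map $T$ with $x_1=T(x_0)$ for $\mu_0$-a.e.\ $x_0$, so that $\eta=(\mathrm{id},T)_\#\eta_0$ is uniquely determined (the same conclusion may also be quoted from the companion uniqueness theory in \cite{LiMiSa14?Theory}). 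I expect this transport-uniqueness argument to be the main obstacle: one must control $\psi_0$ near the singular set $\{|x_0{-}x_1|=\pi/2\}$, where $\msc_{1,4}$ and its derivatives blow up, and on $\{\varrho_0=0\}$, where $\psi_0=+\infty$ and no transport occurs. The optimality conditions guarantee $|x_0{-}x_1|<\pi/2$ on $\mathrm{supp}\,\eta$, so the relevant maximizers stay in the smooth regime, but making the local semiconvexity, the differentiability-a.e.\ statement, and the definition of $T$ $\eta_0$-a.e.\ fully rigorous is the technical heart.

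Finally, I would promote uniqueness of $\eta$ to uniqueness of the normalized plan. Over a coupled pair $(x_0,T(x_0))$ a normalized plan might a priori spread mass over radii $[x_i,r_i]$ subject only to $r_0r_1\trcos(|x_0{-}x_1|)=1$; however, the cone cost of transporting prescribed total source and target physical masses $P_0,P_1$ between $x_0$ and $x_1$ equals $P_0+P_1-2\cos(|x_0{-}x_1|)\sqrt{P_0P_1}$, so by strict concavity of $\sqrt{\cdot}$ (Cauchy--Schwarz) any splitting into distinct radius-ratios strictly increases the cost. Optimality therefore forces a single ratio $r_1/r_0$ over each pair, namely the special-lift value $r_i=\varrho_i(x_i)^{-1/2}$ of Section~\ref{sss:SpecialLifts}, which combined with the normalization constraint determines $r_0,r_1$ individually. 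The remaining mass is carried by the Hellinger parts $\mu_i^\perp$ of the Lebesgue decompositions $\mu_i=\sigma_i\eta_i+\mu_i^\perp$, which are determined because $\eta_i$ is, and these are sent to $\TT$ with normalized radius $1$. Hence $\gamma$ is unique, $\mathrm{NormOpt}_\msHK^\mfC(\mu_0,\mu_1)$ is a singleton, and the theorem follows.
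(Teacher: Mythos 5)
There is a genuine gap, and it sits exactly at the step the paper singles out as the heart of its proof: you never exclude optimal plans that transport mass over the distance \emph{exactly} $\pi/2$. A coupling of $([x_0,r_0],[x_1,r_1])$ with $|x_0{-}x_1|=\pi/2$ has cone cost $r_0^2+r_1^2$, which is identical to the cost of annihilating at $x_0$ and creating at $x_1$; such couplings are therefore cost-neutral, and all three of your tools are blind to them. The entropy--transport functional cannot see them because $\msc_{1,4}(L)=\infty$ for $L\geq\pi/2$, so every optimal calibration $\eta$ satisfies $|x_0{-}x_1|<\pi/2$ $\eta$-a.e.\ and uniqueness of $\eta$ says nothing about mass moved at the critical distance. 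Your Cauchy--Schwarz step degenerates because the cross term carries the factor $\trcos(\pi/2)=0$, so no radius-ratio is selected. And the normalization $N$ does not remove the ambiguity: a plan charging $\mathfrak G_{12}'$ is mapped by $N$ into $\wt{\mathfrak G}_{12}'$ (radii rescaled so that $r_1=1$), not into couplings with $\TT$, so two distinct such plans remain distinct elements of $\mathrm{NormOpt}_\msHK^\mfC(\mu_0,\mu_1)$ --- and they project to genuinely different geodesics, as Section \ref{ss:AllDiracM} shows for two Dirac masses at distance $\pi/2$. Your final step silently assumes that every normalized optimal plan decomposes into the $\eta$-coupled part plus pure Hellinger couplings with the tip; that assumption is precisely what fails in general and what must be proved.

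This is where the hypothesis $\mu_0\ll\mathscr{L}^d$ has to be used a second time, and the paper's argument here is geometric rather than convex-analytic: setting $\wt\mu_0:=\mfP\Pi^1_\#\big(\gamma\res\wt{\mathfrak G}_{12}'\big)\leq\mu_0$, hence $\wt\mu_0\ll\mathscr{L}^d$, the optimality conditions from \cite[Thm.\,6.3(b)]{LiMiSa14?Theory} force $\wt\mu_0$-a.e.\ point to lie on the level set $\{f=\pi/2\}$ of the $1$-Lipschitz function $f(x)=\min_{y\in\mathrm{supp}\,\mu_1}|x{-}y|$, and the coarea formula shows $\mathscr{L}^d\big(\{f=\pi/2\}\big)=0$, whence $\gamma(\mathfrak G_{12}')=0$. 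Once you insert this lemma, the rest of your proposal becomes a workable route: your strict-convexity argument for uniqueness of the marginals $\eta_i$, the twist condition from the strict monotonicity of $\msc_{1,4}'(L)=2\tan L$, and the Cauchy--Schwarz collapse of the radii to a single ratio together serve as a more self-contained substitute for the paper's citations of \cite[Thms.\,7.21 and 6.7]{LiMiSa14?Theory} on the sub-critical region $\mathfrak G$ (the technical care you flag near $\{\varrho_0=0\}$ and the blow-up of $\msc_{1,4}$ is indeed needed there). But without the critical-distance exclusion the proof does not close, and no amount of refinement of the entropy--transport analysis alone can supply it.
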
 
\begin{proof}
  By the above discussion we have just to check the uniqueness of
  $\gamma$.
  We first show that 
  any $\gamma\in \mathrm{NormOpt}_\msHK^\mfC(\mu_0,\mu_1) $ does not
  charge $\mathfrak G_{12}'$. 
  
  Since $\gamma$ is optimal,  the rescaling given by $N$ and
  \cite[Thm.~7.21]{LiMiSa14?Theory}
  yield that  $\gamma(\mathfrak G_{12}')=
  \gamma(\wt{\mathfrak G}_{12}')$
  where 
  \[
  \wt{\mathfrak
  G}_{12}'=\Big\{([x_1,1],[x_2,r_2])\in \CC{\ti}\CC:r_2>0,\
  |x_1-x_2|=\pi/2\Big\}\subset \mathfrak G_{12}'.
  \]
  Setting $\wt\mu_0:=\Pi^1_\sharp (\gamma\res \wt{\mathfrak
    G}_{12}') =\mfP\Pi^1_\sharp (\gamma\res \wt{\mathfrak
    G}_{12}')$, we have $\wt\mu_0=\eta\mu_0\le \mu_0$; in
  particular $\wt\mu_0$ is absolutely continuous with respect to
  the Lebesgue measure.  If we set $f(x):=\min_{y\in
    \mathrm{supp}(\mu_1)}|x-y|$, applying
  \cite[Thm.\,6.3(b)]{LiMiSa14?Theory} we deduce that
  \begin{displaymath}
    f(x)=\pi/2\quad \text{for }\wt\mu_0\text{-a.e.~}x\in \Omega.
  \end{displaymath}
  Applying the co-area formula to $f$, see
  \cite[Lem.\,3.2.34]{Federer69}, we have
  \begin{displaymath}
    \mathscr{L}^d\Big(\bigset{x\in \R^d }{ \pi/2 {-} \eps\le f(x)\le
      \pi/2 {+}\eps }\Big)=
    \int_{\pi/2-\eps}^{\pi/2+\eps} \!\!
    \mathscr H^{d-1}(f^{-1}(p))\dd p\quad\text{for every }\eps>0,
  \end{displaymath}
  so that passing to the limit as $\eps\downarrow 0$ we get $
  \mathscr{L}^d\big(\set{x\in \R^d}{ f(x)= \pi/2 }\big)=0$.  It
  follows that $\wt\mu_0$ is the null measure and 
  $\gamma({\mathfrak G}_{12}') =0$.

   Let us now suppose that $\gamma',\gamma''\in
   \mathrm{NormOpt}_\msHK^\mfC(\mu_0,\mu_1)$.
  %

  Combining Theorems 7.2.1 (iv) and 6.7 of \cite{LiMiSa14?Theory}
  (where we use the absolute continuity of $\mu_0$ again) we see
  that the restrictions of $\gamma'$ and $\gamma''$ to $\mathfrak
  G$ coincide.  By subtracting this common part from both of them and
  the corresponding homogeneous marginals from $\mu_0$ and $\mu_1$, it
  is not restrictive to assume that $\gamma'$ and $\gamma''$ are
  concentrated in the complement of $\mathfrak G$. By the previous
  claim, we obtain that $\gamma'$ and $\gamma''$ are concentrated on
  $\mathfrak G_1'\cup \mathfrak G_2'$.
  
  It is also easy to see that the restrictions of $\gamma'$ and
  $\gamma''$ to $\mathfrak G_i$, $i=1,2$, coincide as well:
  considering e.g.~$\mathfrak G_1'$, by construction we have that
  $\Pi^1_\sharp\gamma'=\Pi^1_\sharp\gamma''=\mu_1\otimes \delta_1$,
  whereas
  $\Pi^2_\sharp\gamma'=\Pi^2_\sharp\gamma''=\mu_1(\R^d)\delta_\TT.$ It
  follows that $\gamma'\res \mathfrak G_1'= \gamma''\res \mathfrak
  G_1'= (\mu_1\otimes \delta_1)\otimes \delta_\TT$.  A similar
  argument holds for $\mathfrak G_2'$. This proves the result.
\end{proof}

 The major point in the above proof is to show that $\gamma\big(
\wt{\mathfrak G}'_{12}\big)=0$, i.e.\ there is no transport
over the distance $\pi/2$. From Section \ref{ss:AllDiracM} we know
that in the opposite case $\mathrm{NormOpt}_\msHK^\mfC(\mu_0,\mu_1)$
can be infinite dimensional. 

We expect that, by  refining the arguments above and using the
dual characterization of $\msHK$,  it is  possible to prove
that for each geodesic curve connecting $\mu_0$ and $\mu_1$ (both
not necessarily  absolutely continuous w.r.t.~$\mathscr L^d$)
there exists a unique $\gamma \in \mathrm{NormOpt}_\msHK^\mfC
(\mu_0,\mu_1) $ such that $\mu=\wh \mu_\gamma$.  As in the
Kantorovich-Wasserstein case, the optimal plan $\gamma$ should be
uniquely determined by fixing an intermediate point $\wh
\mu_\gamma(s)$  with $s\in {]0,1[}$,  along a geodesic.  In
particular, geodesics for the Hellinger-Kantorovich distance $\msHK$
in $\mathcal M(\Omega)$  will then be  nonbranching. 
Indeed, for the rich set of geodesics connecting two Dirac masses at
distance $\pi/2$ discussed in Section \ref{ss:AllDiracM} this can be
shown by direct inspection.  We will address these questions in a
forthcoming paper.

\subsection{$\msHK$ is not semiconcave}
\label{ss:NotSemiconcave} 

The metric on a geodesic space $(Y,\msd)$ is called $K$-semiconcave,
if for all points $y_0,y_1, y_* \in Y$ and all minimal geodesic curves
$\wt y:[0,1]\to Y$ with $\wt y(i)=y_i$ for $i\in \{0,1\}$,  we have 
\begin{equation}
  \label{eq:SemiConcave}
\msd(\wt y(s),y_*)^2 \geq (1{-}s)\msd(y_0,y_*)^2 + s \msd(y_1,y_*)^2 -
Ks(1{-}s) \msd(y_0,y_1)^2. 
\end{equation}
It is well-known that the Wasserstein distance on a domain
$\Omega\subset \R^d$ is 1-semiconcave (such that
$(\calP_2(\Omega),\msW)$ is a positively curved (PC) space, see
\cite[Ch.\,12.3]{AmGiSa05GFMS}), and it is easy to check that the
Hellinger-Kakutani distance $\msH$ is 1-semiconcave. Indeed, with the
notation of Section \ref{ss:ScaRDE} we have the identity
\[
\msH(\mu^\msH(s),\mu_*)^2 = (1{-}s)\msH(\mu_0,\mu_*)^2+ s
\msH(\mu_1,\mu_*)^2 -s(1{-}s) \msH(\mu_0,\mu_1)^2
\]
for any $\mu_0,\mu_1,\mu_*\in \calM(\Omega)$, where $s\mapsto
\mu^\msH(s)\in \calM(\Omega)$ is the Hellinger geodesic from 
\eqref{eq:HellGeod}. 

In contrast, the Hellinger-Kantorovich distance is not $K$-semiconcave
for any $K$ if $\Omega$ is not one-dimensional. For the
one-dimensional case  $\Omega \subset \R$  it is shown in
\cite[Thm.\,8.9]{LiMiSa14?Theory} that $(\calM([a,b]),\msHK)$ is a PC
space, which means that 1-semiconcavity holds. The following result
shows that $(\calM(\Omega),\msHK)$ is not a PC space if $\Omega$ has
dimension $d\geq 2$.

For this case we consider a simple example, namely 
\[
\mu_0=  \delta_{x_0},\ \mu_1=\delta_{x_1}, \ \mu_*=b\delta_z,
\quad\text{with } x_0=0, \ x_1=\tfrac{\pi}2 e_1,  
\ z= \tfrac\pi4 e_1+ ye_2,
\]
where $e_1$ and $e_2$ are the first two unit vectors and $y>0$. 
As a geodesic curve we choose 
\[
\mu(s)= a(s)\delta_{\rho(s)e_1} \quad\text{with }a(s)=(1{-}s)^2+ s^2
\text{ and } \rho(s)=\arctan(s/(1{-}s)).
\]
We have $\msHK(\mu_0,\mu_1)^2=2$ and $\mu(1/2)=\frac\pi4
e_1$, and all the quantities in the semiconcavity condition
\eqref{eq:SemiConcave} can be evaluated explicitly. This yields a
lower bound for $K$, namely
\begin{align*}
K&\geq
\frac{\frac12\msHK(\mu_0,\mu_*)^2+ \frac12\msHK(\mu_1,\mu_*)^2 -
  \msHK(\mu(\frac12),\mu_*)^2 }{\frac14\msHK(\mu_0,\mu_1)^2 }  \\
&= 1 + \sqrt b \:\phi(y) \ \text{ with } \phi(y)= 1 + \sqrt{8} \cos_{\pi/2}(y) -
4\cos_{\pi/2} \sqrt{y^2+ \pi^2/16}, 
\end{align*}
where $\cos_{\pi/2}a=\cos\big(\min\{|a|,\pi/2\}\big)$. Since
$\phi(y)>0$ for $y \in {]0,\sqrt{3}\pi/4[}$ and since $b$ can be
chosen arbitrarily large, we see that there cannot
exist a finite $K$ such that $\msHK$ is $K$-semiconcave.  


\subsection*{Acknowledgments}
M.L.\ was partially supported by the
Einstein Stiftung Berlin via the ECMath/\textsc{Matheon} project SE2.
A.M.\ was partially supported by 
DFG via project C5 within CRC 1114 (Scaling cascades in complex
systems) and by the ERC AdG.\,267802 \emph{AnaMultiScale}.
G.S.\ was partially supported by PRIN10/11 grant from
MIUR for the project \emph{Calculus of Variations}. 

{\footnotesize
\newcommand{\etalchar}[1]{$^{#1}$}
\def\cprime{$'$} \def\ocirc#1{\ifmmode\setbox0=\hbox{$#1$}\dimen0=\ht0
  \advance\dimen0 by1pt\rlap{\hbox to\wd0{\hss\raise\dimen0
  \hbox{\hskip.2em$\scriptscriptstyle\circ$}\hss}}#1\else {\accent"17 #1}\fi}

}

\end{document}